\newtheorem{theorem}{Theorem}[section]
\newtheorem{lemma}[theorem]{Lemma}
\newtheorem{corollary}[theorem]{Corollary}
\newtheorem{proposition}[theorem]{Proposition}
\theoremstyle{definition}
\newtheorem{example}[theorem]{Example}
\theoremstyle{remark}
\newtheorem{remark}[theorem]{Remark}
\numberwithin{equation}{section}
\newcommand{\largewedge}{\mbox{\Large $\wedge$}}
\begin{document}

\setlength\parskip{0.5em plus 0.1em minus 0.2em}

\title[The mod 2 Seiberg--Witten invariants of spin structures]{The mod 2 Seiberg--Witten invariants of spin structures and spin families}
\author{David Baraglia}

\address{School of Mathematical Sciences, The University of Adelaide, Adelaide SA 5005, Australia}

\email{david.baraglia@adelaide.edu.au}


\date{\today}

\begin{abstract}
We completely determine the mod $2$ Seiberg--Witten invariants for any spin structure on any closed, oriented, smooth $4$-manifold $X$. Our computation confirms the validity of the simple type conjecture mod $2$ for spin structures. Our proof also works for families of spin $4$-manifolds and thus computes the mod $2$ Seiberg--Witten invariants for spin families.

The proof of our main result uses $Pin(2)$-symmetry to define an enhancement of the mod $2$ Seiberg--Witten invariants. We prove a connected sum formula for the enhanced invariant using localisation in equivariant cohomology. Unlike the usual Seiberg--Witten invariant, the enhanced invariant does not vanish on taking connected sums and by exploiting this property, we are able to compute the enhanced invariant.

\end{abstract}

\maketitle


\section{Introduction}

The Seiberg--Witten invariant is an invariant of smooth $4$-manifolds which has proven to be very effective in distinguishing smooth structures on homeomorphic $4$-manifolds. In contrast it has been observed that in a number of cases, the mod $2$ Seiberg--Witten invariant for spin-structures is a topological invariant \cite{ms,rs,bau,li} and a similar ``rigidity" phenomenon occurs for the families Seiberg--Witten invariants \cite{kkn}. As we will show, when $b_+(X) \ge 3$ this rigidity will imply that the mod $2$ Seiberg--Witten invariants of spin structures can not be used to distinguish smooth structures. When $b_+(X) = 1$ or $2$, we will show that the mod $2$ Seiberg--Witten invariant is ``essentially'' a topological invariant in a sense that will be clarified below. Nevertheless, there are many good reasons for wanting to compute these invariants. Their vanishing can be used to obstruct the existence of symplectic structures. Their non-vanishing can be used to obstruct the existence of positive scalar curvature metrics and gives a lower bound for the genus of embedded surfaces through the adjunction inequality. Furthermore, using various operations such as blowup, rational blowdown \cite{fs1} and Fintushel--Stern knot surgery \cite{fs2}, we can also calculate the mod $2$ Seiberg--Witten invariants for various spin$^c$-structures that are not spin.

In this paper we completely determine the mod $2$ Seiberg--Witten invariants for any spin structure and also the mod $2$ families Seiberg--Witten invariants for spin families (under some mild assumptions on the family). Some consequences of these results and some related results are examined.

\subsection{Main results}

Let $X$ be a closed, oriented, smooth $4$-manifold with $b_+(X)>1$ and $\mathfrak{s}$ a spin$^c$-structure. The mod 2 Seiberg--Witten invariant is a homomorphism
\[
SW_{X , \mathfrak{s}} : \mathbb{A}(X) \to \mathbb{Z}_2
\]
where $\mathbb{A}(X) = \largewedge^* H^1(X;\mathbb{Z})^*  \otimes_{\mathbb{Z}} \mathbb{Z}[x]$ is the tensor product of the exterior algebra on $H^1(X ; \mathbb{Z})^* = Hom( H^1(X ; \mathbb{Z}) , \mathbb{Z})$ with the polynomial ring $\mathbb{Z}[x] \cong H^*_{S^1}(pt ; \mathbb{Z})$ on a single generator $x$ (see eg. \cite{os}). The Seiberg--Witten invariant is also defined when $b_+(X)=1$ but depends in addition on the choice of a chamber. For our purposes it is convenient to recast this in the following form. Let $Pic^{\mathfrak{s}}(X)$ denote the space of gauge equivalence classes of spin$^c$-connections with curvature equal to a fixed $2$-form representing $-2\pi i c(\mathfrak{s})$. This is a torsor over $Pic(X)$, the group of flat unitary line bundles on $X$ and hence there are canonical isomorphisms
\[
H^*( Pic^{\mathfrak{s}}(X) ; \mathbb{Z}_2 ) \cong H^*( Pic(X) ; \mathbb{Z}_2 ) \cong {\largewedge}^* H^1(X ; \mathbb{Z})^* \otimes_{\mathbb{Z}} \mathbb{Z}_2.
\]
So $SW_{X,\mathfrak{s}}$ can be viewed as a map $H^*( Pic^{\mathfrak{s}}(X) ; \mathbb{Z}_2) \otimes_{\mathbb{Z}_2} H^*_{S^1}(pt ; \mathbb{Z}_2) \to \mathbb{Z}_2$, or as a map $H^*_{S^1}(pt ; \mathbb{Z}_2) \to H^*( Pic^\mathfrak{s}(X) ; \mathbb{Z}_2)^*$. By Poinca\'re duality, $H^*( Pic^\mathfrak{s}(X)  ; \mathbb{Z}_2)^* \cong H^*( Pic^\mathfrak{s}(X) ; \mathbb{Z}_2)$, so $SW_{X,\mathfrak{s}}$ is equivalent to a map
\[
SW_{X,\mathfrak{s}} : H^*_{S^1}(pt ; \mathbb{Z}_2) \to H^*( Pic^{\mathfrak{s}}(X) ; \mathbb{Z}_2).
\]
This map has degree $-d(X,\mathfrak{s})$, where we set
\[
d(X,\mathfrak{s}) =  \frac{ c(\mathfrak{s})^2 - \sigma(X)  }{4} - b_+(X) - 1.
\]
This is the form of the Seiberg--Witten invariants that most naturally emerges from from the Bauer--Furuta invariant and it is the form that we will use throughout this paper. Since $H^*_{S^1}(pt ; \mathbb{Z}_2) \cong \mathbb{Z}_2[x]$, $SW_{X,\mathfrak{s}}$ is determined by the classes
\[
SW_{X,\mathfrak{s}}( x^m ) \in H^{2m - d(X,\mathfrak{s})}( Pic^{\mathfrak{s}}(X)  ; \mathbb{Z}_2)
\]
where $m \ge 0$.

Let $D_\mathfrak{s} \to Pic^{\mathfrak{s}}(X)$ denote the families index of the family of spin$^c$ Dirac operators parametrised by $Pic^{\mathfrak{s}}(X)$. Let $s_j(D_\mathfrak{s}) \in H^{2j}( Pic^{\mathfrak{s}}(X) ; \mathbb{Z})$ denote the $j$-th Segre class of $D_\mathfrak{s}$ (recall that the total Segre class $s(V) = 1 + s_1(V) + s_2(V) + \cdots $ of a virtual bundle is defined by $c(V)s(V) = 1$, where $c(V) = 1 + c_1(V) + c_2(V) + \cdots $ is the total Chern class). From the families index theorem (see Section \ref{sec:swspin} for details), it follows that $s_j(D_\mathfrak{s}) = 0$ for odd $j$ and
\[
s_{2j}(D_\mathfrak{s}) = \frac{1}{j!} s_2(D_\mathfrak{s}),
\]
where
\begin{equation}\label{equ:seg2}
s_2(D_\mathfrak{s}) = \sum_{i_1 < i_2 < i_3 < i_4} \langle y_{i_1} y_{i_2} y_{i_3} y_{i_4} , [X] \rangle x_{i_1} x_{i_2} x_{i_3} x_{i_4}.
\end{equation}
Here $y_1, \dots y_{b_1(X)}$ is a basis for $H^1(X ; \mathbb{Z})$ and $x_1 , \dots , x_{b_1(X)}$ is a corresponding dual basis for $H^1( Pic^\mathfrak{s}(X) ; \mathbb{Z}) \cong H^1(X ; \mathbb{Z})^*$.

To describe the Seiberg--Witten invariant in the cases $b_+(X) = 1$ or $2$, we need to introduce one more concept, namely functions 
\[
q_{\mathfrak{s}}^k : H^1(X ; \mathbb{Z})^k \to \mathbb{Z}_2
\]
for $k=2,3$ defined as follows. Let $a_1, \dots , a_k \in H^1(X ; \mathbb{Z})$. Each class $a_i$ is equivalent to specifying a homotopy class of map $X \to S^1$, hence we get a map $f : X \to T^k$, where $T^k$ is a $k$-dimensional torus. The level set $C = f^{-1}(t)$ of a regular value of $f$ is a normally framed submanifold of $X$ of dimension $4-k$. The framing allows us to define the restriction $\mathfrak{s}|_C$ of $\mathfrak{s}$ to $C$ and we define $q^k_{\mathfrak{s}}(a_1 , \dots , a_k)$ to the be mod $2$ index of $\mathfrak{s}|_{C}$. Cobordism invariance of the index implies that $q^k_{\mathfrak{s}}$ is a well-defined function (see Section \ref{sec:b+12} for further details).

Our first main result is a complete formula for $SW_{X,\mathfrak{s}}$ for spin structures. Note that in the case $b_+(X) = 1$, the mod $2$ Seiberg--Witten invariant of a spin structure does not depend on the choice of chamber (Lemma \ref{lem:wcf1}).

\begin{theorem}\label{thm:thm1}
Let $X$ be a compact, oriented, smooth $4$-manifold with $b_+(X) > 0$ and let $\mathfrak{s}$ be a spin-structure on $X$. Then $SW_{X,\mathfrak{s}}(x^m) = 0$ for all $m > 0$. Furthermore, we have:
\begin{itemize}
\item[(1)]{if $b_+(X) = 1$, then for any $a,b \in H^1(X ; \mathbb{Z}) \cong H_1(Pic^{\mathfrak{s}}(X) ; \mathbb{Z})$,
\[
\langle SW_{X,\mathfrak{s}}(1) , a \smallsmile b \rangle = q^2_{\mathfrak{s}}( a , b) \; ({\rm mod} \; 2),
\]
}
\item[(2)]{if $b_+(X) = 2$, then for any $a,b,c \in H^1(X ; \mathbb{Z}) \cong H_1(Pic^{\mathfrak{s}}(X) ; \mathbb{Z})$,
\[
\langle SW_{X,\mathfrak{s}}(1) , a \smallsmile b \smallsmile c \rangle = q^3_{\mathfrak{s}}( a , b , c) \; ({\rm mod} \; 2),
\]
}
\item[(3)]{if $b_+(X) = 3$, then 
\[
SW_{X,\mathfrak{s}}( 1 ) = s_{2+\sigma(X)/8}(D_\mathfrak{s}) \; ({\rm mod} \; 2)
\]
where we set $s_l(D_\mathfrak{s}) = 0$ if $l < 0$,}
\item[(4)]{if $b_+(X) > 3$, then $SW_{X,\mathfrak{s}}(1) = 0$.}
\end{itemize}

\end{theorem}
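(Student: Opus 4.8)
The plan is to reduce everything to a single algebraic structure — the enhanced, $Pin(2)$-equivariant refinement of the Bauer--Furuta invariant — and then exploit a connected sum formula for that enhancement. The key point is that the usual mod $2$ Seiberg--Witten invariant vanishes on connected sums (because one summand contributes a reducible), but the $Pin(2)$-enhanced invariant does not: after localising in $Pin(2)$-equivariant cohomology the reducible locus contributes a nonzero Euler-class factor rather than zero. So I would first set up the enhanced invariant $\widetilde{SW}_{X,\mathfrak{s}}$ as an element of a localised equivariant cohomology ring of $Pic^{\mathfrak{s}}(X)$, show it refines $SW_{X,\mathfrak{s}}$ under the forgetful map $H^*_{Pin(2)} \to H^*_{S^1}$, and prove the connected sum formula $\widetilde{SW}_{X \# X'} = \widetilde{SW}_X \cdot \widetilde{SW}_{X'}$ (up to an explicit universal factor coming from the index and $b_+$ bookkeeping).

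Next I would compute $\widetilde{SW}$ on a generating set of spin $4$-manifolds. Using standard building blocks — $S^4$, $S^1 \times S^3$ and products of spheres, $T^4$, the $K3$ surface, and connected sums thereof — every closed spin $4$-manifold is spin-cobordant to (indeed, for the purpose of these invariants can be reduced to) an expression built from these under connected sum, and the multiplicativity of $\widetilde{SW}$ then pins down the answer everywhere. Concretely: on $K3$ the enhanced invariant is a known nonzero class (this forces the $\sigma/8$ shift in the Segre class in part (3)); the $S^1$-factors introduce exactly the classes $x_i$ dual to $H^1$, and matching against the Segre class formula \eqref{equ:seg2} — note $s_2(D_\mathfrak{s})$ is precisely the degree-$4$ part built from the quadruple cup products on $H^1(X)$ — gives part (3). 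The vanishing $SW_{X,\mathfrak{s}}(x^m) = 0$ for $m>0$ and part (4) follow because the enhancement lives in a fixed low range of equivariant degrees: the extra $Pin(2)$-symmetry kills the classes divisible by the $S^1$-generator $x$ once $b_+$ is large enough, which is the structural reason the simple type conjecture holds mod $2$ here.

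For the low $b_+$ cases (1) and (2), the connected sum formula alone does not determine $\widetilde{SW}$ because there is a chamber/wall-crossing subtlety and the invariant is no longer forced by multiplicativity; here I would instead compute directly. Take the classes $a, b$ (and $c$) as maps $X \to T^k$, put the Dirac operator in a product form near a regular level set $C = f^{-1}(t)$, and identify the relevant piece of the Seiberg--Witten moduli space with one fibred over $C$; the contribution is then the mod $2$ index of $\mathfrak{s}|_C$, which is exactly $q^k_{\mathfrak{s}}$ by definition. Cobordism invariance of the mod $2$ index (already noted in the excerpt) guarantees well-definedness. I expect the main obstacle to be the connected sum formula for the enhanced invariant: proving that the $Pin(2)$-equivariant localisation genuinely produces a nonzero, computable factor (rather than an indeterminate or vanishing one) requires careful control of the finite-dimensional approximation, the Conley-index/Borel construction, and the precise identification of the fixed-point data and its Euler class — this is where the real work lies, and everything else is bookkeeping with Chern/Segre classes and standard manifold decompositions.
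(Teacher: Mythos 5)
Your high-level frame (a $Pin(2)$-enhanced Bauer--Furuta invariant plus a localisation-based connected sum formula) is indeed the paper's strategy, but the step where you claim the answer is then ``pinned down everywhere'' by multiplicativity over a generating set of model manifolds does not work: a general spin $4$-manifold is not a connected sum of $K3$'s, tori and sphere products, and spin cobordism does not preserve these invariants, so computing $\widetilde{SW}$ on building blocks determines nothing about an arbitrary $X$. The missing idea is a stabilisation argument: one computes the enhanced invariant of $X \# 22m(S^2\times S^2)$ in two different ways, once as $X \# 22m(S^2\times S^2)$ (where the sphere summand contributes trivially) and once as $(X\# m\overline{K3})\# mK3$ using $K3\#\overline{K3}\cong 22(S^2\times S^2)$, where $X\# m\overline{K3}$ has vanishing invariant for degree reasons and $SW^{Pin(2)}_{mK3}$ is computed by induction; equating the two expressions and cancelling powers of $u$ yields $SW^{Pin(2)}_{X,\mathfrak{s}}(q^j)=u^{b_+(X)-3}s_{2(j+1+\sigma(X)/16),\mathbb{Z}_2}(D)$ for \emph{every} $X$. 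A second genuine gap is your claim that $SW_{X,\mathfrak{s}}(x^m)=0$ for $m>0$ ``follows because the enhancement lives in a fixed low range of equivariant degrees'': that degree argument only gives part (4) ($b_+>3$, via $u^3=0$). For $b_+\le 3$ the formula above gives $SW(x^{2k})=s_{2(k+1+\sigma/16)}(D)$, which is not zero for degree reasons; the paper kills it using the Steenrod square identity $Sq^2(SW(x^m))=(-\sigma/8+m)SW(x^{m+1})+\cdots$ on the torus $Pic^{\mathfrak{s}}(X)$ (together with charge conjugation for odd powers), and this also yields the nontrivial fact that the higher Segre classes vanish mod $2$ when $b_+=3$.

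For cases (1) and (2) your diagnosis and method also diverge from what is actually needed. There is no wall-crossing subtlety for spin structures with $b_+=1$ (the wall-crossing term is a Segre class that vanishes), and the enhanced formula remains valid there; the real issue is that $SW_{X,\mathfrak{s}}(1)$ is the $u$- or $u^2$-coefficient of the equivariant Segre class $s_{2,\mathbb{Z}_2}(D)$, which is not an ordinary characteristic class but an invariant of the class of $D$ in Quaternionic $K$-theory $KH(Pic^{\mathfrak{s}}(X))$. The paper identifies it by computing $KH(T^2_-)$ and $KH(T^3_-)$, showing the relevant coefficient is the projection to a $\mathbb{Z}_2$ summand (non-triviality checked on a hyperelliptic and a Kodaira surface), and then expressing $D$ as the Real Fourier--Mukai transform of the Albanese pushforward $a_*(1)$, whose restriction over a point is exactly the mod $2$ index of $\mathfrak{s}|_C$, i.e.\ $q^k_{\mathfrak{s}}$, followed by a lattice/primitivity argument to pass from basis elements to arbitrary $a,b,c$. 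Your proposed shortcut --- identifying ``the relevant piece of the moduli space fibred over $C$'' and reading off the mod $2$ index directly --- is precisely the hard analytic content and is left unsubstantiated (transversality, the reducible locus, and why the count localises on $C$ are not addressed), so as written it is a sketch of a hope rather than a proof.
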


We note here that the Seiberg--Witten invariant $SW_{X , \mathfrak{s}}$ of a spin structure $\mathfrak{s}$ depends only on the underlying spin$^c$-structure associated to $\mathfrak{s}$. We will say that two spin structures are {\em equivalent} if they have the same underlying spin$^c$-structure. 

Recall that if $X$ is a compact, oriented, smooth spin $4$-manifold with $b_+(X) = 3$, then $\sigma(X) = 0$ or $-16$. When $\sigma(X) = -16$, Theorem \ref{thm:thm1} gives $SW_{X , \mathfrak{s}}(1) = 1 \; ({\rm mod} \; 2)$. The $b_1(X) = 0$ case of this result was proven by Morgan and Szab\'o \cite{ms}. When $\sigma(X) = 0$, Theorem \ref{thm:thm1} gives $SW_{X , \mathfrak{s}}(1) = s_2(D_\mathfrak{s}) \; ({\rm mod} \; 2)$. The $b_1(X) = 4$ case of this result was proven by Ruberman and Strle \cite{rs}. Other special cases of Theorem \ref{thm:thm1} were proven by Bauer \cite{bau} and Li \cite{li}. The cases $b_+(X) = 1,2$ appear to be new.

\begin{example}\label{ex:kod0}
We give examples illustrating cases (1)-(3) of Theorem \ref{thm:thm1}. Interestingly these examples are all compact complex surfaces of Kodaira dimension $0$.

1. Let $X = K3$ be a $K3$-surface, then $b_+(X) = 3$, $\sigma(X) = -16$ and $b_1(X) = 0$. Hence by case (3) of Theorem \ref{thm:thm1}, $SW_{K3 , \mathfrak{s}}(1) = 1 \; ({\rm mod} \; 2)$ for the unique spin structure $\mathfrak{s}$.

2. Let $X = T^4$ be a $4$-torus, then $b_+(X) = 3$, $\sigma(X) = 0$ and $b_1(X) = 4$. Hence by case (3) of Theorem \ref{thm:thm1}, $SW_{T^4,\mathfrak{s}}(1) = s_2(D_\mathfrak{s}) \; ({\rm mod} \; 2)$ for the unique equivalence class of spin structure $\mathfrak{s}$. From Equation \ref{equ:seg2}, it is easily seen that $s_2(D_\mathfrak{s})$ is the non-trivial element of $H^4( Pic^{\mathfrak{s}}(X) ; \mathbb{Z}_2) \cong \mathbb{Z}_2$.
 
3. Let $X$ be a primary Kodaira surface with $H_1(X ; \mathbb{Z})$ torsion-free. Then $X$ is diffeomorphic to $S^1 \times Y$, where $Y \to C$ is the total space of a degree $1$ circle bundle over an elliptic curve $C$. We have $b_+(X) = 2$, $b_1(X) = 3$ and $X$ has a unique equivalence class $\mathfrak{s}$ of spin structure. Since $C$ is an elliptic curve, $TC$ has a translation invariant trivialisation. Then using a connection to split the tangent bundle of $Y$ into horizontal and vertical components, we obtain a trivialisation of $TY$ and hence a spin structure $\mathfrak{s}_Y$. Taking the product of $\mathfrak{s}_Y$ with any spin structure on $S^1$ defines a spin structure $\mathfrak{s}$ on $X$. Let $a,b,c$ denote generators of $H^1(X ; \mathbb{Z}) \cong \mathbb{Z}^3$. Then $a \smallsmile b \smallsmile c$ is Poincar\'e dual to a fibre $F$ of $Y \to C$. By construction, the normal framing on $F$ is translation invariant under the circle action on $F$ and it follows that $\mathfrak{s}|_F$ is the unique translation invariant spin structure on the circle, which has index $1$. Then by case (2) of Theorem \ref{thm:thm1}, $SW_{X,\mathfrak{s}}(1)$ is the non-trivial element of $H^3( Pic^{\mathfrak{s}}(X) ; \mathbb{Z}_2) \cong \mathbb{Z}_2$.

4. Let $X$ be a hyperelliptic surface constructed as follows. Let $\Lambda$ be the lattice in $\mathbb{C}^2$ with basis $e_1 = (1,0)$, $e_2 = (i,0)$, $e_3 = (0,1)$, $e_4 = (0,\omega)$, where $\omega = e^{\pi i/3}$. Let $T$ be the $4$-torus $T = \mathbb{C}/\Lambda$. Then $\mathbb{Z}_6 = \langle g \rangle$ acts freely on $T$ by $g(z_1, z_2) = (z_1 + 1/6 , \omega z_2)$ and we set $X = T/\mathbb{Z}_6$. Then $b_+(X) = 1$, $b_1(X) = 2$ and $X$ has a unique equivalence class $\mathfrak{s}$ of spin structure. Let $C$ be the elliptic curve given by the quotient of $\mathbb{C}$ by the lattice spanned by $1/6$ and $i$ and let $\pi : X \to C$ be the map induced by the projection $(z_1,z_2) \mapsto z_1$. This gives $X$ the structure of an elliptic fibre bundle over $C$. Let $a,b$ denote generators of $H^1(X ; \mathbb{Z}) \cong \mathbb{Z}^2$. Then $a \smallsmile b$ is Poincar\'e dual to a fibre $F$ of $\pi : X \to C$. Using a similar argument to the previous case, we find that $\mathfrak{s}|_F$ is the unique translation invariant spin structure on the $2$-torus, which has index $1$. Then by case (1) of Theorem \ref{thm:thm1}, $SW_{X,\mathfrak{s}}(1)$ is the non-trivial element of $H^2( Pic^{\mathfrak{s}}(X) ; \mathbb{Z}_2) \cong \mathbb{Z}_2$.

\end{example}

Theorem \ref{thm:thm1} shows that when $b_+(X) \ge 3$, $SW_{X,\mathfrak{s}}(x^m)$ can be expressed in terms of the Segre class $s_{2}(D_{\mathfrak{s}})$, which in light of Equation (\ref{equ:seg2}) is a topological invariant. So we obtain:

\begin{corollary}
For $b_+(X) \ge 3$, the mod $2$ Seiberg--Witten invariant of a spin structure $\mathfrak{s}$ on a compact, oriented, smooth $4$-manifold $X$ depends only on the underlying topology of $X$.
\end{corollary}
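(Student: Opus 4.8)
The plan is to deduce the corollary directly from Theorem~\ref{thm:thm1}, the only real content being the observation that, for $b_+(X)\ge 3$, every term appearing on the right-hand side of that theorem is manifestly an invariant of the oriented homotopy type of $X$, once one adopts the canonical description of the target group. Recall from the introduction that $H^*(Pic^{\mathfrak s}(X);\mathbb Z_2)\cong H^*(Pic(X);\mathbb Z_2)\cong \bigwedge^* H^1(X;\mathbb Z)^*\otimes_{\mathbb Z}\mathbb Z_2$ via isomorphisms that use only the torsor structure of $Pic^{\mathfrak s}(X)$ over $Pic(X)$, and are therefore natural with respect to homeomorphisms; likewise the basis $x_1,\dots,x_{b_1(X)}$ of $H^1(Pic^{\mathfrak s}(X);\mathbb Z)$ dual to a chosen basis $y_1,\dots,y_{b_1(X)}$ of $H^1(X;\mathbb Z)$ is determined by $H^1(X;\mathbb Z)$ alone.

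With these identifications fixed, I would argue as follows. For every $b_+(X)\ge 3$, the first sentence of Theorem~\ref{thm:thm1} gives $SW_{X,\mathfrak s}(x^m)=0$ for $m>0$, so it suffices to control $SW_{X,\mathfrak s}(1)$. If $b_+(X)>3$, part~(4) gives $SW_{X,\mathfrak s}(1)=0$ and there is nothing more to prove. If $b_+(X)=3$, part~(3) gives $SW_{X,\mathfrak s}(1)=s_{2+\sigma(X)/8}(D_{\mathfrak s})\pmod 2$; here $\sigma(X)/8$ is an even integer by Rokhlin's theorem, and since $X$ is spin with $b_+(X)=3$ one has $\sigma(X)\in\{0,-16\}$, as recalled before Example~\ref{ex:kod0}. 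When $\sigma(X)=-16$ the relevant class is $s_0(D_{\mathfrak s})=1\in H^0(Pic^{\mathfrak s}(X);\mathbb Z_2)$; when $\sigma(X)=0$ it is $s_2(D_{\mathfrak s})$, which by Equation~(\ref{equ:seg2}) equals $\sum_{i_1<i_2<i_3<i_4}\langle y_{i_1}y_{i_2}y_{i_3}y_{i_4},[X]\rangle\, x_{i_1}x_{i_2}x_{i_3}x_{i_4}$. This expression is built solely from the cup-product pairing $\bigwedge^4 H^1(X;\mathbb Z)\to\mathbb Z$ evaluated on the fundamental class $[X]$, together with the dual basis $x_i$; in particular it involves neither the smooth structure nor even the choice of $\mathfrak s$. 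As $b_+(X)$ and $\sigma(X)$ are themselves topological invariants, this shows that $SW_{X,\mathfrak s}(1)$, and hence the entire homomorphism $SW_{X,\mathfrak s}\colon H^*_{S^1}(pt;\mathbb Z_2)\to H^*(Pic^{\mathfrak s}(X);\mathbb Z_2)$, is determined by the cohomology ring of $X$ with its fundamental class, which is a fortiori a homeomorphism invariant.

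I do not expect any serious obstacle: the corollary is essentially a bookkeeping consequence of Theorem~\ref{thm:thm1}. The two points that need a little care are (i) spelling out that the isomorphisms identifying the target with an exterior algebra on $H^1(X;\mathbb Z)$ are topologically natural, so that comparing $SW_{X,\mathfrak s}$ across two smooth structures on a single topological manifold is meaningful, and (ii) the finiteness of the list $\sigma(X)\in\{0,-16\}$ in the $b_+(X)=3$ case, which rests on the spin hypothesis together with Furuta's $10/8$-theorem and is quoted from earlier in the paper.
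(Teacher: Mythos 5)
Your proposal is correct and follows essentially the same route as the paper: the corollary is deduced directly from Theorem~\ref{thm:thm1}, using that the only possibly nonzero value $SW_{X,\mathfrak{s}}(1)$ (for $b_+(X)=3$) is a Segre class of $D_{\mathfrak{s}}$, which by Equation~(\ref{equ:seg2}) is determined by the cup products $\langle y_{i_1}y_{i_2}y_{i_3}y_{i_4},[X]\rangle$ and hence by the topology of $X$. Your additional remarks on the naturality of the identification $H^*(Pic^{\mathfrak{s}}(X);\mathbb{Z}_2)\cong\bigwedge^* H^1(X;\mathbb{Z})^*\otimes\mathbb{Z}_2$ and the constraint $\sigma(X)\in\{0,-16\}$ are consistent with the paper's discussion and do not change the argument.
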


Strictly speaking, to say that $SW_{X,\mathfrak{s}}$ is a topological invariant requires a notion of spin structure on topological $4$-manifolds. For details on how this is done, see for example \cite[\textsection 4.2]{bk}.

When $b_+(X) = 1$ or $2$, it is not immediately clear from Theorem \ref{thm:thm1} whether $SW_{X, \mathfrak{s}}$ depends on the smooth structure of $X$. However we can show that $SW_{X , \mathfrak{s}}$ is a topological invariant up to certain identifications, as described in the theorem below:

\begin{theorem}\label{thm:almosttop}
Let $X_1, X_2$ be two compact, oriented, smooth spin $4$-manifolds which are homeomorphic to each other and with $b_+(X_1) = b_+(X_2) = 1$ or $2$. Let $S_{X_1}, S_{X_2}$ denote the set of spin structures on $X_1$ and $X_2$. Then there exists a bijection $\varphi : S_{X_1} \to S_{X_2}$ and an isomorphism $\psi : H^1(X_1 ; \mathbb{Z}) \to H^1(X_2 ; \mathbb{Z})$ inducing an isomorphism $\psi : H^*(Pic^{\mathfrak{s}}(X_1) ) \to H^*( Pic^{\varphi({\mathfrak{s})}}(X_2))$ for any $\mathfrak{s} \in S_{X_1}$, such that
\[
\psi( SW_{X_1 , \mathfrak{s}}) = SW_{X_2 , \varphi(\mathfrak{s})}
\]
for all $\mathfrak{s} \in S_{X_1}$.
\end{theorem}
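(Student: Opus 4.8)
The plan is to derive Theorem~\ref{thm:almosttop} from Theorem~\ref{thm:thm1} by showing that the functions $q^2_{\mathfrak{s}}$ and $q^3_{\mathfrak{s}}$ are topological invariants. First note that a closed spin $4$-manifold with $b_+ \le 2$ necessarily has $\sigma(X) = 0$: by Furuta's $10/8$-theorem a spin $4$-manifold with $\sigma \ne 0$ has $b_+ \ge 3$. Hence when $b_+(X) = 1$ (resp. $b_+(X)=2$) we have $d(X,\mathfrak{s}) = -2$ (resp. $-3$), so $SW_{X,\mathfrak{s}}(1)$ lies in $H^2(Pic^{\mathfrak{s}}(X);\mathbb{Z}_2)$ (resp. $H^3$). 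Since $Pic^{\mathfrak{s}}(X)$ is a torus, the products $a \smallsmile b$ (resp. $a \smallsmile b \smallsmile c$), as $a,b,c$ range over $H_1(Pic^{\mathfrak{s}}(X);\mathbb{Z}_2) \cong H^1(X;\mathbb{Z}) \otimes \mathbb{Z}_2$, span $H_2(Pic^{\mathfrak{s}}(X);\mathbb{Z}_2)$ (resp. $H_3$). Combining this with the vanishing $SW_{X,\mathfrak{s}}(x^m) = 0$ for $m > 0$, with Theorem~\ref{thm:thm1}, and with the chamber-independence of Lemma~\ref{lem:wcf1} in the case $b_+ = 1$, we see that $SW_{X,\mathfrak{s}}$ is completely determined by $q^2_{\mathfrak{s}}$ (if $b_+ = 1$) or $q^3_{\mathfrak{s}}$ (if $b_+ = 2$). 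It therefore suffices to produce, from a homeomorphism $X_1 \cong X_2$, identifications $\psi$ and $\varphi$ intertwining $q^k_{\mathfrak{s}}$ on $X_1$ with $q^k_{\varphi(\mathfrak{s})}$ on $X_2$, where $k=2$ (resp. $3$).

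The key step is to observe that $q^k_{\mathfrak{s}}$ depends only on topological data. By definition $q^k_{\mathfrak{s}}(a_1,\dots,a_k)$ is the mod $2$ index of the closed spin $(4-k)$-manifold $(C,\mathfrak{s}|_C)$, i.e. the image of its bordism class under the $\alpha$-invariant isomorphism $\Omega^{spin}_1 \cong \mathbb{Z}_2$ (resp. $\Omega^{spin}_2 \cong \mathbb{Z}_2$). The cobordism-invariance argument that makes $q^k_{\mathfrak{s}}$ well defined shows in fact that it depends only on the homotopy class of $f : X \to T^k$ — equivalently on $(a_1,\dots,a_k) \in H^1(X;\mathbb{Z})^k \cong [X,T^k]$ — and on $\mathfrak{s}$, since homotopic maps have framed-bordant regular fibres and $\mathfrak{s}$ restricts to the bordism. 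Now all of this data is topological: $[X,T^k]$ is a homotopy invariant; spin structures on $4$-manifolds form a topologically defined set on which homeomorphisms act (see \cite[\textsection 4.2]{bk}); manifolds of dimension $\le 3$ admit a unique smooth structure; and the $\alpha$-invariant is a topological invariant of a spin $1$- or $2$-manifold. Hence $q^k_{\mathfrak{s}}$ is unchanged under transporting everything along a homeomorphism.

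To conclude, fix a homeomorphism $h : X_1 \to X_2$ (orientations are immaterial since $\sigma(X_i) = 0$). Set $\psi := (h^{-1})^* : H^1(X_1;\mathbb{Z}) \to H^1(X_2;\mathbb{Z})$; as an isomorphism of lattices $\psi$ induces, in the standard way, an isomorphism on the cohomology of the tori $Pic^{\mathfrak{s}}(X_i)$, and we let $\varphi : S_{X_1} \to S_{X_2}$ be the bijection of spin structures induced by $h$. Given $\mathfrak{s}_1 \in S_{X_1}$ and $a_1,\dots,a_k \in H^1(X_1;\mathbb{Z})$ represented by $f_1 : X_1 \to T^k$, the map $f_1 \circ h^{-1}$ represents $(\psi(a_1),\dots,\psi(a_k))$, and $h$ carries a smooth regular fibre $C_1 = f_1^{-1}(t)$, with its normal framing and with $\mathfrak{s}_1|_{C_1}$, to a topologically embedded framed spin submanifold of $X_2$; being of dimension $\le 2$ it is canonically smooth, and by the facts of the previous paragraph (unique smoothability through dimension $3$, together with $f_1 \circ h^{-1}$ being homotopic to its smoothings) it is spin-bordant in $X_2$ to a smooth regular fibre of a smoothing of $f_1 \circ h^{-1}$ equipped with $\varphi(\mathfrak{s}_1)$. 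Since the $\alpha$-invariant is a spin-bordism invariant,
\[
q^k_{\mathfrak{s}_1}(a_1,\dots,a_k) = q^k_{\varphi(\mathfrak{s}_1)}(\psi(a_1),\dots,\psi(a_k)),
\]
and feeding this into Theorem~\ref{thm:thm1} yields $\psi(SW_{X_1,\mathfrak{s}_1}) = SW_{X_2,\varphi(\mathfrak{s}_1)}$ for all $\mathfrak{s}_1 \in S_{X_1}$.

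The main obstacle is the middle step: verifying that $q^k_{\mathfrak{s}}$, whose definition a priori uses the smooth structure of $X$ (regular values of smooth maps, smooth submanifolds, Dirac operators), genuinely descends to an invariant of the underlying topological manifold together with its topological spin structure. This rests on the low-dimensional input that $1$-, $2$- and $3$-manifolds are uniquely smoothable, that the $\alpha$-invariant is the standard $\mathbb{Z}_2$-valued topological invariant of spin bordism in these dimensions, and that spin structures on $4$-manifolds and the action of homeomorphisms on them are topologically defined. An alternative, more computational route would be to derive an explicit Wu-type formula expressing $q^2_{\mathfrak{s}}$ and $q^3_{\mathfrak{s}}$ in terms of $\mathfrak{s}$, cup products and Wu classes of $X$, making topological invariance manifest; we would not pursue this here.
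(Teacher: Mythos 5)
Your reduction of the theorem to the statement that the functions $q^k_{\mathfrak{s}}$ can be matched up under suitable $\varphi,\psi$ is correct and is exactly how the paper concludes (via Theorem \ref{thm:thm1}). The problem is your middle step. You assert that $q^k_{\mathfrak{s}}$ is an invariant of the underlying topological manifold with its topological spin structure, transported along a homeomorphism $h:X_1\to X_2$; the crucial sentence is that the image $h(C_1)$, with its transported framing and spin structure, ``is spin-bordant in $X_2$ to a smooth regular fibre of a smoothing of $f_1\circ h^{-1}$ equipped with $\varphi(\mathfrak{s}_1)$.'' This is precisely the assertion that needs proof, and nothing in your sketch establishes it. To run the cobordism-invariance argument of Proposition \ref{prop:embedded} in the topological category you would need, at least: topological transversality for maps out of $4$- and $5$-manifolds (Quinn/Freedman--Quinn, a genuinely delicate input, not ``unique smoothability through dimension $3$''); a normal framing for the merely locally flat image $h(C_1)$ and for the topological cobordism in $X_2\times[0,1]$; and a verified compatibility between the restriction of the transported TOP spin structure along such topological framings and the restriction of the smooth spin structure $\varphi(\mathfrak{s}_1)$ along smooth framings. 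None of these is supplied, and they are not formalities: the authors explicitly remark after the theorem that they do \emph{not} know whether $\varphi,\psi$ can be taken to be induced by a homeomorphism $X_1\to X_2$ --- which is exactly what your argument, if complete, would show.

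The paper avoids this issue entirely by a stabilization argument: it first checks that $q^k_{X,\mathfrak{s}}=q^k_{X',\mathfrak{s}'}$ under the natural identifications when $X'=X\#(S^2\times S^2)$ (the dual submanifolds and hence $C$ can be chosen away from the connect-sum neck), and then invokes Wall--Gompf stable diffeomorphism: homeomorphic smooth $4$-manifolds become diffeomorphic after summing with $k(S^2\times S^2)$, and a diffeomorphism manifestly intertwines the $q^k$. Combining these gives $\varphi,\psi$ with $\psi(q^k_{X_1,\mathfrak{s}})=q^k_{X_2,\varphi(\mathfrak{s})}$, staying entirely in the smooth category. If you want to keep your route, you would have to carry out the topological-category program above (or the ``Wu-type formula'' you mention, which you do not prove); as written, the proposal has a genuine gap at its central step.
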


We do not know whether the maps $\varphi, \psi$ are induced by a homeomorphism $X_1 \to X_2$, however they are induced by a diffeomorphism $X_1 \# k(S^2 \times S^2) \to X_2 \# k(S^2 \times S^2)$ for some $k \ge 0$ (see Section \ref{sec:b+12}).

For $4$-manifolds with $b_1(X) = 0$, there is only one possible value of $m$ for which $SW_{X,\mathfrak{s}}(x^m)$ can be non-zero, which is $m = d(X,\mathfrak{s})/2$, provided $d(X,\mathfrak{s})$ is even and non-negative. When $b_1(X) = 0$, $d(X,\mathfrak{s})$ is the dimension of the moduli space of solutions to the Seiberg--Witten equations, so $m$ is half the dimension. In all cases where the Seiberg--Witten invariants have been computed and $b_+(X)>1$, we have $SW_{X,\mathfrak{s}}( x^m ) = 0$ unless $m=0$, that is, unless the dimension of the moduli space is zero. This has become known as the {\em simple type conjecture}. One can also formulate a simple type conjecture for $4$-manifolds with $b_1(X) > 0$. In this case one usually considers only the ``pure" Seiberg--Witten invariant
\[
SW(X , \mathfrak{s}) = \int_{Pic^{\mathfrak{s}}(X)} SW_{X,\mathfrak{s}}( x^m ) \in \mathbb{Z},
\]
where $2m = d(X,\mathfrak{s}) + b_1(X)$ is the dimension of the moduli space. The simple type conjecture then states that $SW(X , \mathfrak{s}) = 0$ unless $m=0$. From Theorem \ref{thm:thm1}, we immediately see that the mod $2$ simple type conjecture is true for spin structures:

\begin{corollary}
Let $X$ be a compact, oriented, smooth $4$-manifold with $b_+(X) > 0$ and let $\mathfrak{s}$ be a spin structure on $X$. Then $SW_{X,\mathfrak{s}}(x^m) = 0 \; ({\rm mod} \; 2)$, unless $m = 0$. In particular the mod $2$ simple type conjecture holds for spin structures.
\end{corollary}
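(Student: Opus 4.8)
The plan is to derive this corollary directly from Theorem \ref{thm:thm1} with essentially no further work. First I would observe that the assertion $SW_{X,\mathfrak{s}}(x^m) = 0 \pmod{2}$ for all $m > 0$ is precisely the opening sentence of Theorem \ref{thm:thm1}, valid for every $b_+(X) > 0$ and every spin structure $\mathfrak{s}$ (and independent of the choice of chamber when $b_+(X) = 1$, by Lemma \ref{lem:wcf1}). So the first half of the corollary requires nothing beyond invoking that theorem.

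Next I would address the statement about the pure invariant $SW(X,\mathfrak{s}) = \int_{Pic^{\mathfrak{s}}(X)} SW_{X,\mathfrak{s}}(x^m) \in \mathbb{Z}$, where $2m = d(X,\mathfrak{s}) + b_1(X)$ is the dimension of the Seiberg--Witten moduli space. I would note that if this dimension is positive then $m > 0$, so the class $SW_{X,\mathfrak{s}}(x^m) \in H^{2m - d(X,\mathfrak{s})}(Pic^{\mathfrak{s}}(X);\mathbb{Z}_2)$ vanishes by Theorem \ref{thm:thm1}; hence its integral over $Pic^{\mathfrak{s}}(X)$, which is the mod $2$ reduction of $SW(X,\mathfrak{s})$, is also zero. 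Therefore $SW(X,\mathfrak{s}) \equiv 0 \pmod{2}$ unless $m = 0$, which is exactly the mod $2$ simple type conjecture for spin structures.

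As for the main obstacle: there is no genuine obstacle in the corollary itself, since all the difficulty has been pushed into Theorem \ref{thm:thm1} — in particular into establishing the vanishing $SW_{X,\mathfrak{s}}(x^m) = 0$ for $m > 0$ and the explicit Segre class formulas in the various $b_+(X)$ cases. If one wanted a self-contained route rather than citing Theorem \ref{thm:thm1}, the hard part would be precisely the $Pin(2)$-equivariant enhancement, its connected sum formula via localisation in equivariant cohomology, and the ensuing computation advertised in the abstract; but with Theorem \ref{thm:thm1} in hand the corollary is immediate.
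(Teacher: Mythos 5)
Your proposal is correct and matches the paper, which treats this corollary as an immediate consequence of Theorem \ref{thm:thm1}: the vanishing $SW_{X,\mathfrak{s}}(x^m)=0$ for $m>0$ is the first assertion of that theorem, and integrating over $Pic^{\mathfrak{s}}(X)$ gives the mod $2$ simple type statement exactly as you describe. No gaps; the only substance lies in Theorem \ref{thm:thm1} itself, which you correctly cite rather than reprove.
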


It is interesting to consider $SW_{X , \mathfrak{s}}(1)$ as a function of the spin structure $\mathfrak{s}$. Recall that the set of spin structures is a torsor for $H^1(X ; \mathbb{Z}_2)$. If $\mathfrak{s}$ is a spin structure and $A \in H^1(X ; \mathbb{Z}_2)$, then we denote the torsor structure by $(A , \mathfrak{s}) \mapsto A \otimes \mathfrak{s}$.

The following theorem shows that $SW_{X,\mathfrak{s}}(1)$ is quadratic, linear or constant as a function of $\mathfrak{s}$ according to whether $b_+(X)$ is $1$, $2$ or $3$.

\begin{theorem}\label{thm:thm2}
Let $X$ be a compact, oriented, smooth spin $4$-manifold with $b_+(X) > 0$. 
\begin{itemize}
\item[(1)]{If $b_+(X) = 1$, then for any spin structure $\mathfrak{s}$, $A,B \in H^1(X ; \mathbb{Z}_2)$ and $a,b \in H^1(X ; \mathbb{Z})$, we have:
\begin{align*}
& \langle SW_{X , A \otimes B \otimes \mathfrak{s}}(1) , a \smallsmile b \rangle + \langle SW_{X , A \otimes \mathfrak{s}}(1) , a \smallsmile b \rangle + \langle SW_{X , B \otimes \mathfrak{s}}(1) , a \smallsmile b \rangle + \langle SW_{X , \mathfrak{s}}(1) , a \smallsmile b \rangle \\
& \quad \quad \quad  = \langle [X] , A \smallsmile B \smallsmile a \smallsmile b \rangle.
\end{align*}
}
\item[(2)]{If $b_+(X) = 2$, then for any spin structure $\mathfrak{s}$, $A \in H^1(X ; \mathbb{Z}_2)$ and $a,b,c \in H^1(X ; \mathbb{Z})$, we have:
\[
\langle SW_{X , A \otimes \mathfrak{s}}(1) , a \smallsmile b \smallsmile c \rangle + \langle SW_{X , \mathfrak{s}}(1) , a \smallsmile b \smallsmile c \rangle  = \langle [X] , A \smallsmile a \smallsmile b \smallsmile c \rangle.
\]
}
\item[(3)]{If $b_+(X) = 3$, then $SW_{X,\mathfrak{s}}(1)$ taken mod $2$ does not depend on the choice of spin structure.}
\end{itemize}
\end{theorem}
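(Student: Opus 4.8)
The plan is to read everything off from the explicit formulas of Theorem~\ref{thm:thm1}, turning each part of Theorem~\ref{thm:thm2} into a statement about the dependence on $\mathfrak{s}$ of the corresponding right-hand side. Part~(3) is then immediate: for a spin $4$-manifold with $b_+(X)=3$ one has $\sigma(X)\in\{0,-16\}$, so $2+\sigma(X)/8\in\{2,0\}$, and Theorem~\ref{thm:thm1}(3) gives $SW_{X,\mathfrak{s}}(1)=s_2(D_\mathfrak{s})$ when $\sigma(X)=0$ and $SW_{X,\mathfrak{s}}(1)=s_0(D_\mathfrak{s})=1$ when $\sigma(X)=-16$. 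In the first case Equation~(\ref{equ:seg2}) exhibits $s_2(D_\mathfrak{s})$ as a combination of cup products on $H^1(X;\mathbb{Z})$ paired against $[X]$, with no dependence on $\mathfrak{s}$; in the second case the answer is the constant $1$. Either way $SW_{X,\mathfrak{s}}(1)\bmod 2$ is independent of the spin structure.

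For parts~(1) and~(2) I would first use Theorem~\ref{thm:thm1} to replace $\langle SW_{X,\mathfrak{s}}(1), a\smallsmile b\rangle$ and $\langle SW_{X,\mathfrak{s}}(1), a\smallsmile b\smallsmile c\rangle$ by $q^2_\mathfrak{s}(a,b)$ and $q^3_\mathfrak{s}(a,b,c)$ respectively, and then study how $q^k_\mathfrak{s}$ transforms under the torsor action $\mathfrak{s}\mapsto A\otimes\mathfrak{s}$. The key point is that if $C\subset X$ is the normally framed level set used to define $q^k_\mathfrak{s}(a_1,\dots,a_k)$, then $(A\otimes\mathfrak{s})|_C=(A|_C)\otimes(\mathfrak{s}|_C)$ with $A|_C\in H^1(C;\mathbb{Z}_2)$, so it suffices to understand how the mod $2$ index of a spin structure on a closed $1$- or $2$-manifold changes under twisting. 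When $k=3$, $C$ is a disjoint union of circles and the two spin structures on a circle have mod $2$ indices $0$ and $1$; hence twisting by $A|_C$ flips the index on exactly those components of $C$ on which $A$ restricts nontrivially, so $q^3_{A\otimes\mathfrak{s}}(a,b,c)+q^3_\mathfrak{s}(a,b,c)$ is, mod $2$, the number of such components, namely $\langle A|_C,[C]\rangle=\langle A\smallsmile a\smallsmile b\smallsmile c,[X]\rangle$ once one uses that $[C]$ is Poincar\'e dual to $a\smallsmile b\smallsmile c$ mod $2$. By Theorem~\ref{thm:thm1}(2) this is exactly part~(2).

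When $k=2$, $C$ is a closed surface, and I would invoke the classical fact (due essentially to Atiyah) that the mod $2$ index of the Dirac operator of a spin structure on a surface is the Arf invariant of the associated $\mathbb{Z}_2$-valued quadratic refinement $q_C$ of the intersection form on $H_1(C;\mathbb{Z}_2)$. Twisting $\mathfrak{s}|_C$ by $\alpha\in H^1(C;\mathbb{Z}_2)$ changes $q_C$ to $q_C+\ell_\alpha$, where $\ell_\alpha(x)=\langle\alpha\smallsmile x,[C]\rangle$ is the intersection pairing with the Poincar\'e dual $v_\alpha\in H_1(C;\mathbb{Z}_2)$ of $\alpha$, and one has the standard identity $\mathrm{Arf}(q_C+\ell_\alpha)=\mathrm{Arf}(q_C)+q_C(v_\alpha)$. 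Feeding this, together with $q_C(v_A+v_B)=q_C(v_A)+q_C(v_B)+v_A\cdot v_B$, into the four-term sum in part~(1), all of the $\mathrm{Arf}(q_C)$, $q_C(v_A)$ and $q_C(v_B)$ contributions cancel in pairs mod $2$, leaving only $v_A\cdot v_B=\langle A|_C\smallsmile B|_C,[C]\rangle=\langle A\smallsmile B\smallsmile a\smallsmile b,[X]\rangle$; Theorem~\ref{thm:thm1}(1) then yields part~(1).

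I expect the main obstacle to be the surface case: pinning down exactly the identification of the mod $2$ Dirac index on the (possibly disconnected, normally framed) surface $C$ with the Arf invariant of a concrete quadratic form, tracking the role of the normal framing in this identification, and verifying compatibility of both the quadratic form and its Arf invariant with the $H^1(C;\mathbb{Z}_2)$-torsor structure. Much of this may instead be extracted from the analysis of $q^k_\mathfrak{s}$ in Section~\ref{sec:b+12}. The remaining ingredients --- the Poincar\'e duality identities expressing $[C]$ as the dual of $a\smallsmile b$ (respectively $a\smallsmile b\smallsmile c$) and their compatibility with restriction to $C$ --- are routine.
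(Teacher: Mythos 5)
Your proposal is correct and follows essentially the same route as the paper: reduce everything via Theorem \ref{thm:thm1} to the dependence of $q^k_{\mathfrak{s}}$ on the spin structure, handle $b_+=3$ by the $\mathfrak{s}$-independence of the Segre class, $b_+=2$ by counting circle components of $C$ where $A$ restricts nontrivially, and $b_+=1$ by the quadratic behaviour of the mod $2$ index on the dual surface. The only cosmetic difference is that you unpack the surface case through the Arf-invariant identities, whereas the paper directly cites Atiyah's theorem that the mod $2$ index is a quadratic function of the spin structure with associated bilinear form the intersection form --- the same underlying fact, yielding the same four-term polarization identity.
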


From Theorems \ref{thm:thm1} and \ref{thm:thm2} we obtain a non-vanishing theorem concerning Seiberg--Witten invariants on spin $4$-manifolds:

\begin{corollary}\label{cor:nonzero}
Let $X$ be a compact, oriented, smooth spin $4$-manifold.
\begin{itemize}
\item[(1)]{If $b_+(X) = 1$ and if there exists classes $a,b \in H^1(X ; \mathbb{Z})$ and classes $A,B \in H^1(X ; \mathbb{Z}_2)$ such that $A \smallsmile B \smallsmile a \smallsmile b \neq 0$, then there is a spin structure on $X$ with non-zero mod $2$ Seiberg--Witten invariant.}
\item[(2)]{If $b_+(X) = 2$ and if there exists classes $a,b,c \in H^1(X ; \mathbb{Z})$ and a class $A \in H^1(X ; \mathbb{Z}_2)$ such that $A \smallsmile a \smallsmile b \smallsmile c \neq 0$, then there is a spin structure on $X$ with non-zero mod $2$ Seiberg--Witten invariant.}
\item[(3)]{If $b_+(X) = 3$, $\sigma(X)=0$ and if there exists classes $a,b,c,d \in H^1(X ; \mathbb{Z})$ such that $a \smallsmile b \smallsmile c \smallsmile d \neq 0 \; ({\rm mod} \; 2)$, then there is a spin structure on $X$ with non-zero mod $2$ Seiberg--Witten invariant.}
\end{itemize}

\end{corollary}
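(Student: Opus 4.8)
The plan is to deduce Corollary \ref{cor:nonzero} directly from Theorems \ref{thm:thm1} and \ref{thm:thm2}: in each case the stated cup-product hypothesis is exactly the condition that forces the relevant ``polarisation'' of $SW_{X,\mathfrak{s}}(1)$ (viewed as a function of the spin structure $\mathfrak{s}$, or as a Segre class) to be nontrivial, and nontriviality of that polarisation in turn produces a spin structure on which $SW_{X,\mathfrak{s}}(1)$ does not vanish. Throughout one should read the integral classes $a,b,c,d$ as reduced mod $2$, so that all the pairings $\langle[X],-\rangle$ and $\langle-,[X]\rangle$ are interpreted in $\mathbb{Z}_2$.

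For part (1), fix $a,b \in H^1(X;\mathbb{Z})$ and a reference spin structure $\mathfrak{s}$, and consider $\phi \colon H^1(X;\mathbb{Z}_2) \to \mathbb{Z}_2$ defined by $\phi(A) = \langle SW_{X, A\otimes\mathfrak{s}}(1), a\smallsmile b\rangle$. Theorem \ref{thm:thm2}(1) says precisely that $\phi(A+B) + \phi(A) + \phi(B) + \phi(0) = \langle [X], A\smallsmile B\smallsmile a\smallsmile b\rangle$ for all $A,B \in H^1(X;\mathbb{Z}_2)$. If $A\smallsmile B\smallsmile a\smallsmile b \neq 0$ for some choice of $a,b,A,B$, the right-hand side equals $1$ for that choice, so at least one of the four values $\phi(A+B), \phi(A), \phi(B), \phi(0)$ is nonzero; any spin structure $\mathfrak{s}'$ realising a nonzero value satisfies $SW_{X,\mathfrak{s}'}(1) \neq 0$. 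Part (2) is the same argument applied to the affine relation $\phi(A) + \phi(0) = \langle [X], A\smallsmile a\smallsmile b\smallsmile c\rangle$ of Theorem \ref{thm:thm2}(2) with $\phi(A) = \langle SW_{X,A\otimes\mathfrak{s}}(1), a\smallsmile b\smallsmile c\rangle$: if the right-hand side is nonzero for some $A$ then $\phi$ is nonconstant, hence takes the value $1$, giving a spin structure with nonzero invariant. (Theorem \ref{thm:thm1}(1),(2) is not strictly needed for this deduction, although one may equivalently phrase the argument through $q^2_{\mathfrak{s}}, q^3_{\mathfrak{s}}$.)

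For part (3), where $b_+(X) = 3$ and $\sigma(X) = 0$, Theorem \ref{thm:thm1}(3) gives $SW_{X,\mathfrak{s}}(1) = s_{2 + \sigma(X)/8}(D_\mathfrak{s}) = s_2(D_\mathfrak{s}) \pmod 2$ for every spin structure $\mathfrak{s}$. By Equation \eqref{equ:seg2}, the mod $2$ reduction of $s_2(D_\mathfrak{s})$ in $H^4(Pic^{\mathfrak{s}}(X);\mathbb{Z}_2)$ has, in the monomial basis $x_{i_1}x_{i_2}x_{i_3}x_{i_4}$, the coefficients $\langle y_{i_1}y_{i_2}y_{i_3}y_{i_4}, [X]\rangle \bmod 2$. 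Hence this class is nonzero if and only if the mod $2$ quadruple cup-product form on $H^1(X;\mathbb{Z})$ is not identically zero, which is exactly the hypothesis that $a\smallsmile b\smallsmile c\smallsmile d \neq 0 \pmod 2$ for some $a,b,c,d$. So $SW_{X,\mathfrak{s}}(1) \neq 0$ for every spin structure (consistent with Theorem \ref{thm:thm2}(3)).

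Since this is a formal consequence of the two main theorems, I do not expect a substantive obstacle. The only points needing care are the bookkeeping of the mod $2$ reductions just mentioned, and, in parts (1)--(2), the elementary remark that a function on the finite group $H^1(X;\mathbb{Z}_2)$ with a nonvanishing first or second finite difference cannot be identically zero.
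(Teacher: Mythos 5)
Your proposal is correct and is essentially the paper's own (implicit) argument: the corollary is stated there as an immediate consequence of Theorems \ref{thm:thm1} and \ref{thm:thm2}, and your deduction — using the finite-difference identities of Theorem \ref{thm:thm2} in cases (1)–(2) and the Segre-class formula of Theorem \ref{thm:thm1}(3) together with Equation (\ref{equ:seg2}) and multilinearity of the quadruple cup product in case (3) — is exactly the intended bookkeeping.
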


One application of non-vanishing Seiberg--Witten invariants is the adjunction inequality:

\begin{theorem}
Let $X$ be a compact, oriented, smooth, spin $4$-manifold and suppose one of the conditions (1)-(3) of Corollary \ref{cor:nonzero} is satisfied. Let $\Sigma \subset X$ be a smooth, compact, oriented surface embedded in $X$ and representing a non-torsion class $a \in H^2(X ; \mathbb{Z})$. Then the genus $g$ of $\Sigma$ satisfies
\[
2g-2  \ge | a^2 |.
\]
\end{theorem}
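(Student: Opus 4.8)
The plan is to derive the adjunction inequality from the standard Seiberg--Witten adjunction argument, using the non-vanishing of $SW_{X,\mathfrak{s}}(1)$ for some spin structure $\mathfrak{s}$ guaranteed by Corollary~\ref{cor:nonzero}. The only subtlety over the usual statement is that here the relevant Seiberg--Witten invariant is a \emph{mod $2$} invariant valued in $H^*(Pic^{\mathfrak{s}}(X);\mathbb{Z}_2)$ rather than the pure integer invariant, but the adjunction inequality is insensitive to this: what matters is the existence of a solution to the Seiberg--Witten equations, which non-vanishing of the mod $2$ invariant certainly provides.

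First I would recall the basic input. By Corollary~\ref{cor:nonzero}, under any of the hypotheses (1)--(3) there is a spin structure $\mathfrak{s}$ on $X$ with $SW_{X,\mathfrak{s}}(1)\neq 0$ in $H^*(Pic^{\mathfrak{s}}(X);\mathbb{Z}_2)$; in particular $\mathfrak{s}$ is a Seiberg--Witten basic class, with $c(\mathfrak{s})=0$ (a spin structure has torsion, in fact trivial, first Chern class) and $d(X,\mathfrak{s})\ge 0$. Next, given the embedded surface $\Sigma\subset X$ of genus $g$ representing a non-torsion class $a\in H^2(X;\mathbb{Z})$, I would split into the cases $\Sigma^2 = a^2 \ge 0$ and $\Sigma^2 < 0$. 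If $\Sigma^2\ge 0$, apply the standard adjunction inequality for Seiberg--Witten basic classes (Ozsv\'ath--Szab\'o, or Kronheimer--Mrowka in the K\"ahler case, and in general the proof via the Bauer--Furuta / finite-dimensional approximation framework carries over to the mod $2$ setting): for a basic class $\mathfrak{s}$ and a surface of nonnegative self-intersection and positive genus,
\[
2g-2 \ge \Sigma^2 + |\langle c(\mathfrak{s}), [\Sigma]\rangle| = \Sigma^2 = |a^2|,
\]
using $c(\mathfrak{s})=0$. If $\Sigma^2<0$, note that reversing the orientation of $\Sigma$ does not change $g$ but would-be changes the sign of the self-intersection only if we also reverse orientation of $X$; instead, apply the inequality to $X$ with the reversed orientation $\overline{X}$, under which $\overline{\Sigma}$ (same underlying surface) has self-intersection $-\Sigma^2 > 0$. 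One must check that $\overline{X}$ still carries a spin structure with non-vanishing mod $2$ Seiberg--Witten invariant: $b_+(\overline{X}) = b_-(X)$, $\sigma(\overline{X}) = -\sigma(X)$, $b_1(\overline{X}) = b_1(X)$, and the cup product conditions in Corollary~\ref{cor:nonzero} are orientation-independent statements about $H^1$ (the top condition $A\smallsmile B \smallsmile a\smallsmile b \neq 0$ only uses $\langle[X],-\rangle$, which up to sign is unchanged). However the value of $b_+$ changes, so this reorientation trick only works when $b_+(\overline X)=b_-(X)$ again falls in the range $1\le b_+(\overline X)\le 3$ and the corresponding cup-product hypothesis holds; I expect the cleanest route is instead to run the adjunction argument directly without assuming $\Sigma^2\ge 0$, since the finite-dimensional-approximation proof of adjunction (perturbing the metric near $\Sigma$ to have large volume / using the line bundle $\mathcal{O}(\Sigma)$) gives $2g-2\ge |\Sigma\cdot\Sigma| = |a^2|$ directly once $\mathfrak{s}$ is known to be monopole class, regardless of the sign.

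Concretely, the key steps in order are: (i) invoke Corollary~\ref{cor:nonzero} to produce a spin structure $\mathfrak{s}$ with $SW_{X,\mathfrak{s}}(1)\neq 0$ mod $2$; (ii) observe $c(\mathfrak{s}) = 0$, so the general adjunction inequality for monopole classes specializes to $2g-2 \ge |a^2|$; (iii) justify that non-vanishing of the mod $2$ invariant is enough to run the adjunction argument — here one uses that the mod $2$ invariant is computed from the Bauer--Furuta invariant, and a nonzero Bauer--Furuta invariant forces the existence of irreducible solutions for generic metrics/perturbations, which is exactly the input to the dimension-counting estimate that yields adjunction. The main obstacle, and the step needing the most care, is (iii): one must confirm that the standard adjunction inequality — usually stated for the integer-valued invariant and for simple-type manifolds — genuinely goes through with only the mod $2$ non-vanishing and with $b_1(X)>0$ (so that the moduli space has positive-dimensional pieces over $Pic^{\mathfrak{s}}(X)$). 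I would handle this by citing the version of the adjunction inequality proved via finite-dimensional approximation of the Seiberg--Witten map, where the relevant statement is about nonvanishing of a stable cohomotopy / equivariant cohomology class, a property which is detected mod $2$ and which is precisely what $SW_{X,\mathfrak{s}}(1)\neq 0$ records.
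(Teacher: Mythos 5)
Your treatment of the case $\Sigma^2 \ge 0$ matches the paper: produce a spin structure $\mathfrak{s}$ with $SW_{X,\mathfrak{s}}(1) \neq 0$ via Corollary \ref{cor:nonzero}, note $c(\mathfrak{s})=0$, and invoke the standard adjunction inequality to get $2g-2 \ge \Sigma^2 = |a^2|$. The gap is in the case $\Sigma^2 < 0$. Your fallback claim --- that the finite-dimensional-approximation proof of adjunction yields $2g-2 \ge |\Sigma \cdot \Sigma|$ directly for a monopole class ``regardless of the sign'' --- is not a statement of any standard theorem and is not justified: for a surface of negative self-intersection the known adjunction-type inequalities give $2g-2 \ge \Sigma^2 + |\langle c(\mathfrak{s}),[\Sigma]\rangle|$, which with $c(\mathfrak{s})=0$ and $\Sigma^2<0$ is strictly weaker than the desired $2g-2 \ge -\Sigma^2$. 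So as written the negative self-intersection case is not proved.

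The missing observation, which is exactly how the paper closes this case, is that each of the hypotheses (1)--(3) of Corollary \ref{cor:nonzero} forces $\sigma(X)=0$: in case (3) this is assumed, and in cases (1),(2) a spin $4$-manifold with $b_+(X) \le 2$ has $\sigma(X)=0$ (Rokhlin divisibility by $16$ together with the $10/8$ inequality, as used elsewhere in the paper). Consequently $b_+(\overline{X}) = b_-(X) = b_+(X)$, and since the cup-product conditions only change by a sign of the fundamental class, $\overline{X}$ again satisfies one of the conditions of Corollary \ref{cor:nonzero}. This removes precisely the obstruction ($b_+$ changing under orientation reversal) that led you to abandon the reorientation trick: one simply chooses the orientation of $X$ for which $\Sigma^2 \ge 0$, applies Corollary \ref{cor:nonzero} for that orientation, and then the non-negative self-intersection adjunction inequality gives $2g-2 \ge |a^2|$. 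Your concerns in step (iii) --- that mod $2$ non-vanishing and $b_1(X)>0$ suffice to run the adjunction argument --- are legitimate but are handled by the cited literature (the paper appeals to Lawson's survey), since non-vanishing of the mod $2$ invariant already guarantees solutions of the Seiberg--Witten equations for every metric, which is the only input the adjunction argument needs; for $b_+(X)=1$ the chamber-independence of the spin invariant (Lemma \ref{lem:wcf1}) covers the wall-crossing issue.
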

\begin{proof}
First of all note that if $X$ satisfies one of the conditions of Corollary \ref{cor:nonzero}, then $\sigma(X) = 0$ and hence $X$ with the opposite orientation also satisfies one of the conditions of Corollary \ref{cor:nonzero}. We choose the orientation on $X$ for which the self-intersection of $\Sigma$ is non-negative (and thus equals $|a^2|$). By Corollary \ref{cor:nonzero}, the Seiberg--Witten invariant of some spin structure on $X$ is non-zero. The adjunction inequality (eg. \cite[Theorem 11]{law}) then gives $2g-2 \ge |a^2|$.
\end{proof}

Our techniques can also be used to compute the mod $2$ families Seiberg--Witten invariants for spin families. Let $B_0$ be a compact, smooth manifold and $\pi : E \to B_0$ a smooth family of $4$-manifolds parametrised by $B_0$. This means that $E$ is a fibre bundle with fibres given by a compact, oriented smooth $4$-manifold $X$, with transition functions given by orientation preserving diffeomorphisms of $X$. Suppose that $\mathfrak{s}_E$ is a spin$^c$-structure on the vertical tangent bundle $T(E/B_0) = Ker( (\pi)_* : TE \to TB_0 )$. One can consider the Seiberg--Witten equations of the family $E$ with respect to the spin$^c$-structure \cite{bk1,bk,liliu,liu,na1,rub1,rub3}. Let $B = Pic^{\mathfrak{s}_E}(E/B_0)$ denote the space of gauge equivalence classes of spin$^c$-connections on the fibres of $E$. This is a torus bundle over $B$ whose fibre over $b \in B_0$ is $Pic^{\mathfrak{s}_E|_{X_b}}(X_b)$, where $X_b = \pi_E^{-1}(b)$ is the fibre of $E$ over $b$. If $b_1(X)>0$, then for technical reasons we also need to assume there exists a section $s : B_0 \to E$ \cite[Example 2.4]{bk}. A chamber for the family $E \to B_0$ is defined to be a homotopy class of non-vanishing section $\phi : B \to H^+$, where $H^+ \to B$ is the pullback to $B$ of the fibre bundle on $B_0$ whose fibre over $b \in B_0$ is $H^+(X_b)$, the space of harmonic self-dual $2$-forms (with respect to some given family of metrics on the fibres of $E$). Then we obtain the mod $2$ families Seiberg--Witten invariant which is a homomorphism
\[
SW^\phi_{E,\mathfrak{s}_E} : H^*_{S^1}(pt ; \mathbb{Z}_2) \to H^*(B ; \mathbb{Z}_2)
\]
of degree $-d(X,\mathfrak{s})$, where $X$ is a fibre of $E$ and $\mathfrak{s}$ is the restriction of $\mathfrak{s}_E$ to $X$.

The following result completely determines the mod $2$ families Seiberg--Witten invariants of spin families for even powers of $x$, under some mild assumptions on the family. The general result, Theorem \ref{thm:spinfam2}, is slightly complicated to state so here we give the result only for $b_1(X)=0$.

\begin{theorem}\label{thm:thm3}
Let $E \to B_0$ be a spin family and suppose that $b_1(X)=0$. Then for any chamber $\phi$ we have
\[
SW^\phi_{E,\mathfrak{s}_E}( x^{2m} ) = w_{b_+(X)-3}(H^+(X)) s_{2(m+1+\sigma(X)/16)}(D_{\mathfrak{s}_E}).
\]
\end{theorem}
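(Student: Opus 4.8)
The plan is to deduce Theorem~\ref{thm:thm3} from the single-manifold computation (Theorem~\ref{thm:thm1}) together with the $Pin(2)$-enhanced invariant and its connected sum formula. The key observation is that the families invariant $SW^\phi_{E,\mathfrak{s}_E}$, being built from the $S^1$-equivariant (or $Pin(2)$-equivariant) Bauer--Furuta invariant of the family, fits into the same localisation framework used to establish the connected sum formula earlier in the paper. First I would set up the families Bauer--Furuta invariant as a stable $S^1$-map over $B_0$ and identify the mod $2$ families Seiberg--Witten classes $SW^\phi_{E,\mathfrak{s}_E}(x^m) \in H^*(B;\mathbb{Z}_2)$ with the appropriate Euler/Segre-type classes extracted from this map, exactly paralleling the $b_1(X)=0$ discussion in the introduction where the relevant power is $m = d(X,\mathfrak{s})/2$. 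Since $b_1(X)=0$, the base of the torus bundle $B=Pic^{\mathfrak{s}_E}(E/B_0)$ degenerates: each fibre $Pic^{\mathfrak{s}}(X_b)$ is a point, so $B \cong B_0$ and the target cohomology is just $H^*(B_0;\mathbb{Z}_2)$; this removes the combinatorial complications coming from $H^1(X)$ and is why this special case is clean.

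The main structural input is the factorisation of the families invariant through the family of Dirac operators and the family of harmonic self-dual forms. The families index $D_{\mathfrak{s}_E} \to B_0$ is a virtual bundle whose Segre classes $s_j(D_{\mathfrak{s}_E})$ appear because, after quotienting the Bauer--Furuta map by $S^1$, the ``Dirac part'' contributes a Thom class whose inverse is the total Segre class; this is the families analogue of the single-manifold formula $SW_{X,\mathfrak{s}}(1) = s_{2+\sigma(X)/8}(D_\mathfrak{s})$ in case (3) of Theorem~\ref{thm:thm1}. The ``reducible locus'' contribution is governed by the bundle $H^+(E/B_0) \to B_0$ of harmonic self-dual forms: a chamber $\phi$ is a nowhere-zero section of (the pullback of) this bundle, and the obstruction to extending the wall-crossing analysis is exactly the Euler-type class of the complementary summand, which is $w_{b_+(X)-3}(H^+(X))$ — the Stiefel--Whitney class in the degree one below the rank $b_+(X)$ would be needed for a section, but the relevant shift by $3$ comes from the three ``extra'' dimensions of the $Pin(2)$-fixed point set / the cokernel dimension, matching the index-theoretic shift $\sigma(X)/16$ versus $\sigma(X)/8$. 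I would verify that choosing a different chamber $\phi$ changes the invariant by a wall-crossing term that vanishes here precisely because $b_1(X)=0$ forces the relevant wall-crossing class to lie in a cohomology group that is zero or because the difference is a multiple of the same Euler class; this is the content of the ``for any chamber $\phi$'' clause and should follow from the general wall-crossing discussion in Section~\ref{sec:b+12} specialised to $b_1=0$.

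The hard part will be keeping the numerology of the index shifts straight and justifying that the reducible contribution is \emph{exactly} $w_{b_+(X)-3}(H^+(X))$ rather than some other characteristic class of $H^+(E/B_0)$ — in particular ruling out lower Stiefel--Whitney classes or mixed terms. I expect this to require a careful application of localisation in $S^1$-equivariant cohomology to the families Bauer--Furuta map over $B_0$: one restricts to the $S^1$-fixed locus (the reducibles), where the normal bundle splits as a sum of a Dirac-type piece and an $H^+$-type piece, computes the equivariant Euler class of each, and then pushes forward to $B_0$; the mod $2$ reduction collapses the $H^+$-Euler class to the top Stiefel--Whitney class of the quotient bundle $H^+(E/B_0)$ modulo the trivial line spanned by $\phi$, which has rank $b_+(X)-1$, and a further reduction by the $3$-dimensional cokernel of the relevant piece of the linearised equations (coming from the $Pin(2)$-symmetry, cf.\ the use of $Pin(2)$ in defining the enhanced invariant) yields the rank $b_+(X)-3$ and hence $w_{b_+(X)-3}$. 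Once this identification is in place, comparing with Theorem~\ref{thm:thm1}(3)--(4) on a single fibre (where $w_{b_+(X)-3}(H^+(X))$ over a point is $1$ if $b_+(X)=3$ and $0$ if $b_+(X)>3$) confirms the formula is consistent, and the general $b_1(X)=0$ families statement follows by naturality of all constructions over $B_0$.
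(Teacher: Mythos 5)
There is a genuine gap at the heart of your proposal. Your central mechanism---localising the family's own Bauer--Furuta map at its reducible ($S^1$-fixed) locus, reading off a Dirac-type Segre class times an $H^+$-type Euler class, and then accounting for the shift from $w_{b_+(X)-1}$ to $w_{b_+(X)-3}$ by ``a further reduction by the $3$-dimensional cokernel coming from the $Pin(2)$-symmetry''---is not a valid argument and is not how the factor $w_{b_+(X)-3}(H^+(X))$ arises. The invariant is built on the free part of the $S^1$-action (the space $Y^{V,U}$ obtained by removing a neighbourhood of $S^U$ and quotienting), so there is nothing to localise to for that circle; in the paper, localisation is only ever applied to a \emph{smash product} of two monopole maps, with respect to the extra circle in $S^1\times S^1$ (resp.\ $G\subset Pin(2)\times Pin(2)$). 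Concretely, the paper proves this theorem (via Theorems \ref{thm:swpingen}, \ref{thm:spinfam1}, \ref{thm:spinfam2}) by smashing the family's $Pin(2)$-monopole map $f$ with $f_{22m(S^2\times S^2)}$ and computing $SW^{Pin(2)}$ of the result in two ways using the product formula (Theorem \ref{thm:product2}): once using that the $22m(S^2\times S^2)$ factor contributes nothing, and once via $22m(S^2\times S^2)\cong m\overline{K3}\# mK3$, where the $f\wedge f_{m\overline{K3}}$ factor vanishes by a degree count and the explicit value $SW^{Pin(2)}_{mK3}(q^{m-1})=u^{3m-3}$ (Lemma \ref{lem:mk3}) supplies the crucial factor of $u^{-3}$; injectivity of multiplication by $u$ (Lemma \ref{lem:inj}) then cancels $u^{22m}$ and yields $SW^{Pin(2)}_{\widehat f}(q^j)=\pi^*\bigl(u^{-3}e_{\mathbb{Z}_2}(H^+)\,s_{2(j+1+\sigma(X)/16),\mathbb{Z}_2}(D)\bigr)$. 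The class $w_{b_+(X)-3}(H^+(X))$ is then just the coefficient of $u^3$ in $e_{\mathbb{Z}_2}(H^+)=\sum_j u^j w_{b_+(X)-j}(H^+(X))$ after setting $u=0$, together with the observation that for $b_1(X)=0$ the bundle $D$ is genuinely quaternionic over $B\cong B_0$, so its equivariant Segre classes are the ordinary ones. The shift by $3$ thus comes from $u^3=0$ in $H^*_{Pin(2)}(pt)$ combined with the $mK3$ computation, not from any cokernel of the linearised equations; without these inputs your argument has no way to rule out $w_{b_+(X)-1}$, $w_{b_+(X)}$ or mixed terms, which is exactly the issue you flag as ``the hard part'' but leave unresolved.

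Two smaller but real misdirections: chamber independence does not come from ``the wall-crossing discussion in Section \ref{sec:b+12} specialised to $b_1=0$''---that section treats single manifolds with $b_+(X)=1,2$, and in families walls exist even when $b_1(X)=0$. In the paper it is Lemma \ref{lem:pullback}: $u^3=0$ plus injectivity of $u$ on $H^*_{\mathbb{Z}_2}(B)$ force $SW^{Pin(2)}_{\widehat f}(\theta)$ to be pulled back from $B$, so $\phi^*$ of it is the same for every chamber. Likewise, the families statement cannot be ``deduced from Theorem \ref{thm:thm1} by naturality'': Theorem \ref{thm:thm1} is itself an output of the same $Pin(2)$/localisation machinery, and the families case requires rerunning that machinery over $B_0$ (with the hypotheses on the $\mathbb{Z}_2$-action on $B$ and on $H^+$), not quoting the single-manifold result fibrewise.
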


In particular, Theorem \ref{thm:thm3} implies that the mod $2$ invariants $SW^\phi_{E,\mathfrak{s}_E}(x^{2m})$ depend only on $b_+(X), \sigma(X)$ and the $K$-theory classes $[H^+] \in KO^0(B)$, $[D_{\mathfrak{s}_E}] \in K^0(B)$. This recovers and generalises the rigidity theorem of Kato--Konno--Nakamura \cite{kkn}, which deals with the case $b_1(X) = 0$, $m=0$, $b_+(X) \ge dim(B)+2$.

In the course of proving Theorem \ref{thm:thm3}, we also obtain some constraints that spin families must satisfy. We state the result here only for the case $b_1(X)=0$.

\begin{theorem}
Let $E \to B_0$ be a spin family and suppose that $b_1(X) = 0$. Then
\[
w_l(H^+(X))s_{2(j+1+\sigma(X)/16)}(D) = 0.
\]
for all $j \ge 0$ and $b_+(X) - 2 \le l \le b_+(X)$.
\end{theorem}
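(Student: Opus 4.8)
The plan is to read this off from the proof of Theorem \ref{thm:thm3} itself (and of its general form, Theorem \ref{thm:spinfam2}); the constraint is exactly what one obtains by computing the families invariant in two compatible ways. I would begin from the families Bauer--Furuta invariant of $E \to B_0$, which for a spin family is a stable, fibrewise $Pin(2)$-equivariant map of Thom spectra over $B_0$,
\[
\mathrm{BF} : \mathrm{Th}\big( \widetilde{\mathbb{R}} \otimes H^+(X) \oplus \underline{\mathbb{H}}^N \big) \longrightarrow \mathrm{Th}\big( D_{\mathfrak{s}_E} \oplus \underline{\mathbb{H}}^N \big), \qquad N \gg 0,
\]
where $\widetilde{\mathbb{R}}$ is the sign representation of $\mathbb{Z}_2 = Pin(2)/S^1$, $\underline{\mathbb{H}}$ is a trivial bundle with the quaternionic $Pin(2)$-action, and $D_{\mathfrak{s}_E}$ carries the residual quaternionic action on the families Dirac index. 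The families Seiberg--Witten invariant $SW^\phi_{E,\mathfrak{s}_E}$ is extracted from $\mathrm{BF}$ by a cohomological pushforward, the chamber $\phi$ entering only to excise the $S^1$-fixed (reducible) locus.

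The main tool is the localisation theorem in $Pin(2)$-equivariant $\mathbb{Z}_2$-cohomology applied to $\mathrm{BF}$. On $S^1$-fixed points the target of $\mathrm{BF}$ collapses onto its zero section, since neither $D_{\mathfrak{s}_E}$ nor $\underline{\mathbb{H}}$ has nonzero $S^1$-fixed vectors, and the fixed-point data of $\mathrm{BF}$ is governed by the $\mathbb{Z}_2$-equivariant Euler class $e_{\mathbb{Z}_2}(\widetilde{\mathbb{R}}\otimes H^+) = \sum_{i=0}^{b_+(X)} w_i(H^+)\,q^{b_+(X)-i}$, where $q \in H^1(B\mathbb{Z}_2;\mathbb{Z}_2)$, together with the $S^1$-equivariant Euler class of the normal summand $D_{\mathfrak{s}_E}$, whose inverse expansion contributes the Segre classes $s_*(D_{\mathfrak{s}_E})$. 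The key point is that in $H^*_{Pin(2)}(pt;\mathbb{Z}_2) = \mathbb{Z}_2[q,v]/(q^3)$, with $|q|=1$ and $|v|=4$, the relation $q^3 = 0$ annihilates all but the three top terms, so that in $H^*_{Pin(2)}(B_0;\mathbb{Z}_2)$ one has
\[
e_{Pin(2)}\big(\widetilde{\mathbb{R}}\otimes H^+\big) = w_{b_+(X)}(H^+) + w_{b_+(X)-1}(H^+)\,q + w_{b_+(X)-2}(H^+)\,q^2 .
\]
Carrying out the localisation then expresses the families invariant as a contribution from the genuinely free part -- which reproduces precisely the term $w_{b_+(X)-3}(H^+)\,s_{2(m+1+\sigma(X)/16)}(D_{\mathfrak{s}_E})$ of Theorem \ref{thm:thm3} -- plus a contribution supported near the reducibles, proportional to $e_{Pin(2)}(\widetilde{\mathbb{R}}\otimes H^+)$ times the Segre classes of $D_{\mathfrak{s}_E}$. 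Since Theorem \ref{thm:thm3} exhibits the invariant with no Stiefel--Whitney class of $H^+$ above $w_{b_+(X)-3}$, this reducible contribution must vanish identically; reading off the coefficient of each of $1, q, q^2$ (which are linearly independent over $H^*(B_0;\mathbb{Z}_2)$ in $H^*_{Pin(2)}(B_0;\mathbb{Z}_2)$) and matching degrees in the $v$-direction -- recalling that for a spin structure $s_j(D_{\mathfrak{s}_E})$ vanishes for odd $j$ and that the surviving Segre class in the weight of $x^{2j}$ is $s_{2(j+1+\sigma(X)/16)}(D_{\mathfrak{s}_E})$ -- gives $w_l(H^+)\,s_{2(j+1+\sigma(X)/16)}(D_{\mathfrak{s}_E}) = 0$ for all $j \ge 0$ and $b_+(X)-2 \le l \le b_+(X)$.

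The step I expect to be the main obstacle is the bookkeeping inside the localisation argument: one has to make the splitting of the families invariant into its free and reducible parts precise, keep track of the Thom isomorphisms and of the normalising powers of $x$ (equivalently of $v = x^2$) on the source and target sides, and confirm that the reducible part genuinely carries the full Euler class $e_{Pin(2)}(\widetilde{\mathbb{R}}\otimes H^+)$, and not some further truncation of it, so that all three values $l = b_+(X)-2,\,b_+(X)-1,\,b_+(X)$ really occur. Conceptually, however, nothing beyond the proof of Theorem \ref{thm:thm3} is required: the constraint simply records that the part of the families Bauer--Furuta invariant localised at the reducibles, which the relation $q^3 = 0$ compresses into the three top Stiefel--Whitney classes of $H^+$, makes no contribution to the invariant computed there.
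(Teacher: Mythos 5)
Your plan identifies the right objects (the truncated equivariant Euler class $e_{\mathbb{Z}_2}(H^+)=\sum_i u^i w_{b_+(X)-i}(H^+(X))$, the relation $u^3=0$, the Segre classes of $D$), but the step you defer as ``bookkeeping'' is in fact the entire content of the theorem, and the paper does not obtain it by localising the Bauer--Furuta map of the single family. In the paper's framework there is no extra symmetry to localise with for one monopole map: the $S^1$-action on $\widetilde{Y}^{V,U}$ is free (the reducible locus $S^U$ is excised and traded for the chamber/lifted Thom class), so there is no ``free part plus reducible part'' decomposition of $SW^{Pin(2)}_{\widehat{f}}$ of the kind you assert. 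The extra circle only appears after smashing with a second monopole map, and that is exactly how the constraint is proved: one smashes $f$ with the monopole map of $22m(S^2\times S^2)\cong m\overline{K3}\#mK3$, computes $SW^{Pin(2)}$ of the product in two ways using the product formula (Theorem \ref{thm:product2}), kills the $X\#m\overline{K3}$ contribution by a degree count, and uses the computation of $SW^{Pin(2)}_{mK3}$ (Lemma \ref{lem:mk3}) to arrive at Equation (\ref{equ:swf1}); combined with Lemma \ref{lem:pullback} and Lemma \ref{lem:inj} this yields $u^3\alpha_j=e_{\mathbb{Z}_2}(H^+)\,s_{2(j+1+\sigma(X)/16),\mathbb{Z}_2}(D)$ in $H^*_{\mathbb{Z}_2}(B)\cong H^*(B)[u]$, and the constraint is precisely the statement that the right-hand side is divisible by $u^3$, read off coefficientwise (Theorems \ref{thm:swpingen} and \ref{thm:spinfam1}). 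None of this machinery appears in your sketch, and a direct fixed-point argument replacing it (in the spirit of \cite{bar}, which however only treats the case $\sigma(X)<0$, where the Segre factor is $s_0(D)=1$) would itself require a substantial proof that you have not given.

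There is also a logical problem with your closing inference. Theorem \ref{thm:thm3} is a non-equivariant identity (the $u=0$ evaluation), and in the paper it is a consequence of the same equivariant identity that encodes the constraint; its proof (via Theorem \ref{thm:spinfam2}) already presupposes the divisibility by $u^3$, so using it as the input that forces your ``reducible contribution'' to vanish is circular in this setting. Even setting circularity aside, a non-equivariant equality can at most kill the $u^0$-coefficient, i.e.\ the single term $w_{b_+(X)}(H^+(X))s_{2(j+1+\sigma(X)/16)}(D)$; to separate the coefficients of $1,u,u^2$ and obtain all three values $b_+(X)-2\le l\le b_+(X)$ you must work where $u$ is not nilpotent, namely in $H^*_{\mathbb{Z}_2}(B)\cong H^*(B)[u]$ with multiplication by $u$ injective, not in $H^*_{Pin(2)}$ where $u^3=0$ — this injectivity is an assumption the paper verifies (trivial monodromy, or $b_1(X)=0$) and then uses essentially, and it is absent from your argument. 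As it stands the proposal has a genuine gap: the claimed splitting of the invariant and the identification of the reducible term with the full truncated Euler class times the correct Segre class are unproven, and the comparison with Theorem \ref{thm:thm3} cannot substitute for the equivariant identity that the paper's connected-sum/localisation argument supplies.
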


\subsection{Outline of the proof of the main results}

We give an outline of the main steps in our computation of the Seiberg--Witten invariants:
\begin{itemize}
\item[(1)]{For spin structures, the Seiberg--Witten equations posess an additional symmetry $j$ known as {\em charge conjugation}. Since $j^2 = -1$, no irreducible solution to the Seiberg--Witten equations can be fixed by $j$. If we could choose a $j$-invariant perturbation for which the Seiberg-Witten moduli space is smooth, then $j$ could be used to pair off solutions, giving a mod $2$ vanishing result.}
\item[(2)]{Unfortunately, such a simple approach does not work as there are no non-zero $j$-invariant perturbations. One of the key ideas in this paper is to instead consider a $j$-invariant {\em family} of perturbations. Such families exist, in fact we can take the parameter space of the family to be the unit sphere $S(H^+(X))$ in $H^+(X)$ with $j$ acting as the antipodal map on $S(H^+(X))$.}
\item[(3)]{In light of point (2), it is possible to keep track of charge conjugation symmetry, but the price to pay is that we must now consider the Seiberg--Witten equations for a family. More precisely, we construct an enhancement of the usual Seiberg--Witten invariant  $SW_{X,\mathfrak{s}}$ which takes the form of a map
\[
SW_{X,\mathfrak{s}}^{Pin(2)} : H^*_{Pin(2)}(pt ; \mathbb{Z}_2) \to H_{\mathbb{Z}_2}^{*-d(X,\mathfrak{s})}( Pic^{\mathfrak{s}}(X) \times S(H^+(X)) ; \mathbb{Z}_2).
\]
Moreover $SW_{X,\mathfrak{s}}^\phi$ can be recovered from $SW_{X,\mathfrak{s}}^{Pin(2)}$ in that we have a commutative square of the form
\[
\xymatrix{
H^*_{Pin(2)}(pt ; \mathbb{Z}_2) \ar[rr]^-{SW^{Pin(2)}_{X,\mathfrak{s}}} \ar[d] & & H^{*-d(X,\mathfrak{s})}_{\mathbb{Z}_2}( Pic^{\mathfrak{s}}(X) \times S(H^+(X)) ; \mathbb{Z}_2) \ar[d]^-{\phi^*} \\
H^*_{S^1}(pt ; \mathbb{Z}_2) \ar[rr]^-{SW^\phi_{X,\mathfrak{s}}} & & H^{*-d(X,\mathfrak{s})}( Pic^{\mathfrak{s}}(X) ; \mathbb{Z}_2) \\
}
\]
}
\item[(4)]{Now remains the task of computing $SW_{X,\mathfrak{s}}^{Pin(2)}$. At first this might seem no easier than computing $SW_{X,\mathfrak{s}}$. However, it turns out that $SW_{X,\mathfrak{s}}^{Pin(2)}$, unlike the ordinary Seiberg--Witten invariants, behaves well under taking connected sums. Exploiting this property of $SW_{X,\mathfrak{s}}^{Pin(2)}$ allows us to compute it, and in turn to compute $SW_{X,\mathfrak{s}}$.}
\item[(5)]{We need to prove a connected sum formula for $SW_{X,\mathfrak{s}}^{Pin(2)}$. This is difficult if one chooses to work directly with moduli spaces, so instead we work throughout this paper with the Bauer--Furuta cohomotopy refinement of the Seiberg--Witten invariants. For spin structures, the Bauer--Furuta invariant has $Pin(2)$-symmetry and our enhanced Seiberg--Witten invariant $SW_{X,\mathfrak{s}}^{Pin(2)}$ can be recovered from the $Pin(2)$ Bauer--Furuta invariant.}
\item[(6)]{The Bauer--Furuta invariant of a connected sum $X \# Y$ is the smash product of the Bauer--Furuta invariants for $X$ and $Y$. Since the Bauer--Furuta invariants of $X$ and $Y$ are both equivariant, their smash product has $S^1 \times S^1$-symmetry, or $Pin(2) \times Pin(2)$ in the spin case. The usual $S^1$ or $Pin(2)$ symmetry group is obtained by restricting to the diagonal subgroup. However, it is beneficial to retain the larger symmetry group. Another key idea of this paper to use localisation in equivariant cohomology with respect to the additional circle group of symmetry. This leads to a product formula for $SW_{X\#Y , \mathfrak{s}_X \# \mathfrak{s}_Y}^{Pin(2)}$ which ultimately allows us to compute $SW_{X,\mathfrak{s}}^{Pin(2)}$ and $SW_{X,\mathfrak{s}}$.}
\item[(7)]{When $b_+(X) \ge 3$ our formula shows that $SW_{X , \mathfrak{s}}$ can be expressed in terms of Segre classes of the families index $D_{\mathfrak{s}}$ and these are easily computed. When $b_+(X) = 1$ or $2$, $SW_{X , \mathfrak{s}}$ is given by a more subtle invariant of $D_{\mathfrak{s}}$ that depends on the class of $D_{\mathfrak{s}}$ in the Quaternionic $K$-theory group $KH( Pic^{\mathfrak{s}}(X) )$. Further work is then required to identify this class in terms of the topology of $X$.}
\end{itemize}

\subsection{Structure of the paper}

In Section \ref{sec:swi} we recall the Bauer--Furuta invariant and how the Seiberg--Witten invariants of a $4$-manifold or a family of $4$-manifolds can be recovered from the Bauer--Furuta invariant. This leads us to consider more generally Seiberg--Witten type invariants associated to any $S^1$-equivariant cohomotopy class. In Section \ref{sec:pin2} we consider the Seiberg--Witten invariants in the case of spin structures. In this case there is an additional symmetry, charge conjugation, which leads us to consider $Pin(2)$-equivariant cohomotopy classes. We will see that in the $Pin(2)$-equivariant case, the mod $2$ Seiberg--Witten invariants admit an enhancement that we call the $Pin(2)$-equivariant Seiberg--Witten invariants. In Section \ref{sec:prod}, we consider the Seiberg--Witten invariants or their $Pin(2)$-equivariant enhancement for the smash product of two cohomotopy classes. Such a cohomotopy class has an additional circle of symmetry and by applying localisation in equivariant cohomology with respect to this extra symmetry, we arrive at a product formula for the Seiberg--Witten invariants or their $Pin(2)$-equivariant enhancement of a smash product. In Section \ref{sec:swspin} we apply the product formula to arrive at a formula for the $Pin(2)$-equivariant Seiberg--Witten invariants of spin $4$-manifolds. From this we obtain the mod $2$ Seiberg--Witten invariants of any spin structure. The cases $b_+(X) = 1,2$ require further analysis and this is carried out in Section \ref{sec:b+12}. Finally, in Section \ref{sec:swfam} we use the same approach to compute the Seiberg--Witten invariants for spin families.

\subsection{Acknowledgements}

We thank Hokuto Konno for comments on a draft of this paper.

\section{Monopole maps and Seiberg--Witten invariants}\label{sec:swi}

In this section we recall how the Seiberg--Witten invariants of a $4$-manifold can be recovered from the Bauer--Furuta cohomotopy refinement. We will use a more general framework that is suitable for contructing the Seiberg--Witten invariants for families of $4$-manifolds. 

We will be concerned with maps of sphere bundles over a base space $B$ which is assumed to be a compact manifold. If $V$ is a complex vector bundle over $B$ and $U$ a real vector bundle, we let $S^{V,U}$ denote the fibrewise compactification of $V \oplus U$, or equivalently the unit sphere bundle of $V \oplus U \oplus \mathbb{R}$. We let $S^1$ act on $V$ by scalar multiplication and trivially on $U$. This determines an $S^1$-action on $S^{V,U}$. We let $s_{V,U} : B \to S^{V,U}$ denote the section at infinity. Consider an $S^1$-equivariant map of sphere bundles 
\[
f : S^{V,U} \to S^{V',U'},
\]
where $V,V'$ are complex vector bundles on $B$ and $U,U'$ are real vector bundles. Assume that $f$ sends $s_{V,U}$ to $s_{V',U'}$ and that the restriction of $f$ to $S^{U}$ is homotopic to the map $S^U \to S^{U'}$ induced by an inclusion $U \to U'$ of vector bundles. We will refer to such a map of sphere bundles as a {\em monopole map}.

Suppose that $X$ is a compact, oriented, smooth $4$-manifold and that $\mathfrak{s}$ is a spin$^c$-structure on $X$. By taking a finite dimensional approximation of the Seiberg--Witten equations as in \cite{bf}, we obtain a monopole map $f : S^{V,U} \to S^{V' , U'}$ over $B = Pic^{\mathfrak{s}}(X)$, the space of gauge equivalence classes of spin$^c$-connections with curvature equal to a fixed $2$-form representing $-2\pi i c(\mathfrak{s})$. This is a torsor over $Pic(X)$, the group of flat unitary line bundles on $X$ and hence is a torus of dimension $b_1(X)$. The Bauer--Furuta invariant of $(X,\mathfrak{s})$ is the (twisted, equivariant) stable cohomotopy class of $f$.

We introduce some notation associated to a monopole map $f : S^{V,U} \to S^{V',U'}$. Let $a,a'$ denote the complex ranks of $V,V'$ and let $b,b'$ denote the real ranks of $U,U'$. Further, set $d = a-a'$ and $b_+ = b'-b$. Let $D$ denote the virtual vector bundle $D = V - V'$ and let $H^+ = U' - U$. Since $f|_U$ is homotopy equivalent to an inclusion, we have an isomorphism $H^+ \cong U'/U$, in particular $H^+$ is a genuine vector bundle. In the case that $f$ is the Bauer--Furuta monopole map for a $4$-manifold $X$ and spin$^c$-structure $\mathfrak{s}$, $D \to Pic^{\mathfrak{s}}$ is the families index of the family of Dirac operators parametrised by $Pic^{\mathfrak{s}}(X)$ and $H^+$ is the trivial bundle with fibre $H^+(X)$, the space of harmonic self-dual $2$-forms on $X$ for some Riemannian metric.

In \cite{bk}, we constructed cohomological invariants associated to a monopole map $f : S^{V,U} \to S^{V',U'}$, which in the case of the Bauer--Furuta monople map for $(X,\mathfrak{s})$ recovers the Seiberg--Witten invariant. More generally, for a family of $4$-manifolds this procedure recovers the families Seiberg--Witten invariants. We recall the construction of these invariants. Since our interest is in the mod $2$ Seiberg--Witten invariants, we will work throughout with $\mathbb{Z}_2$-coefficients. This has the benefit that we do not have to consider orientations.

Since the restriction of $f$ to $S^U$ is homotopic to an inclusion, we can identify $U$ with a subbundle of $U'$ and we can further assume that $f|_{S^U}$ is given by the inclusion map. The cohomological invariants of $f$ depend on a choice of a {\em chamber} for $f$, which by definition is a homotopy class of section $\phi : B \to U' \setminus U$. Equivalently as chamber can be regarded as a homotopy class of non-vanishing section of $H^+ = U'/U$. Such a chamber determines a lift $\tau^\phi_{V',U'} \in H^{2a'+b'}_{S^1}( S^{V',U'} , S^{U} )$ of the Thom class $\tau_{V',U'} \in H_{S^1}^{2a'+b'}( S^{V',U'} , s_{V',U'})$ as follows. Let $N_\phi$ denote an $S^1$-invariant tubular neighbourhood of $\phi(B)$ in $S^{V',U'}$. The Thom class $\tau_{N_\phi}$ of $N_\phi \to \phi(B)$ is valued in $H^{2a'+b'}_{S^1}( N_\phi , N_\phi \setminus \phi(B))$ which by excision is isomorphic to $H^{2a'+b'}_{S^1}( S^{V',U'} , S^{V',U'} \setminus \phi(B))$. Then since $\phi(B)$ is disjoint from $S^{U}$ we can map $\tau_{N_\phi}$ to a class in $H^{2a'+b'}_{S^1}( S^{V',U'} , S^U)$, which is $\tau^\phi_{V',U'}$. 

Pulling back the lifted Thom class by $f$ gives $f^*(\tau^\phi_{V',U'}) \in H^{2a'+b'}_{S^1}(S^{V,U} , S^U)$. Let $N^U$ denote a tubular neighbourhood of $S^U$ in $S^{V,U}$ and let $\widetilde{Y}^{V,U} = S^{V,U} \setminus N^U$ be the complement (alternatively one can construct $\widetilde{Y}^{V,U}$ as the real blowup of $S^{V,U}$ along $S^U$). Then $\widetilde{Y}^{V,U}$ is a manifold with boundary. Furthermore, the $S^1$-action is free and we set $Y^{V,U} = \widetilde{Y}^{V,U}/S^1$. By excision and homotopy invariance we have isomorphisms
\[
H^*_{S^1}( S^{V,U} , S^U) \cong H^*_{S^1}( S^{V,U} , N^U) \cong H^*_{S^1}( \widetilde{Y}^{V,U} , \partial \widetilde{Y}^{V,U}) \cong H^*( Y^{V,U} , \partial Y^{V,U}).
\]
Hence we can regard $f^*( \tau^\phi_{V,U})$ as an element of $H^{2a'+b'}( Y^{V,U} , \partial Y^{V,U} )$.

Let $\pi_{V,U} : Y^{V,U} \to B$ be the projection map. We have a pushforward map
\[
(\pi_{V,U})_* : H^*( Y^{V,U} , \partial Y^{V,U} ) \to H^{*-(2a+b-1)}(B),
\]
which is obtained from the corresponding pushforward map in homology using Poincar\'e--Lefschetz duality. Now we define the {\em Seiberg--Witten invariant of $f$ with respect to the chamber $\phi$} to be the homomorphism
\[
SW^\phi_f : H^*_{S^1}(pt) \to H^{*-2d+b_++1}(B)
\]
given by
\[
SW^\phi_f(\theta) = (\pi_{V,U})_*(  \theta  f^*(\tau^\phi_{V',U'})).
\]

Sometimes it is useful to consider a slightly more general invariant $SW^\phi_f : H^*_{S^1}( \widetilde{Y}^{V,U} ) \to H^{*-2d+b_++1}(B)$ by allowing $\theta$ to be an element of $H^*_{S^1}( \widetilde{Y}^{V,U})$. However, this is not a stable invariant of $f$ since the space $\widetilde{Y}^{V,U}$ depends on $V$ and $U$. As a special case, we can take any element in $H^*_{S^1}(B)$ and pull it back to $H^*_{S^1}(\widetilde{Y}^{V,U})$ and in this case we do get a stable invariant.

Recall that $H^*_{S^1}(pt) \cong \mathbb{Z}_2[x]$, where $deg(x) = 2$. Therefore $SW^\phi_f$ is completely determined by the collection of cohomology classes $SW^\phi_f( x^m) \in H^{2m-2d+b_+ + 1}(B)$, where $m \ge 0$.

In the case that $f$ is the monopole map associated to a $4$-manifold $X$ with spin$^c$-structure $\mathfrak{s}$, we have $d = (c(\mathfrak{s})^2 - \sigma(X))/8$, $b_+ = b_+(X)$ and $B = Pic^{\mathfrak{s}}(X)$. Since $Pic^{\mathfrak{s}}(X)$ is a torsor for $T_X = H^1(X ; \mathbb{R})/H^1(X ; \mathbb{Z})$, we have a canonical isomorphism $H^*(Pic^{\mathfrak{s}}(X)) \cong H^*(T_X)$. So the Seiberg--Witten invariant takes the form $SW^\phi_{X,\mathfrak{s}} : H^*_{S^1}(pt) \to H^{*-2d+b_++1}(T_X)$ and is equivalent to the collection of cohomology classes $SW^\phi_f(x^m) \in H^{2m-2d+b_+ + 1}(T_X)$.

\section{Spin structures and $Pin(2)$-symmetry}\label{sec:pin2}

For a spin-structure $\mathfrak{s}$, the Seiberg--Witten equations have an additional symmetry known as charge conjugation, which we denote by $j$. The corresponding monopole map is $Pin(2)$-equivariant, where $Pin(2) = S^1 \cup j S^1$ with relations $j e^{i\theta} = e^{-i\theta}j$, $j^2= -1$. This motivates us to consider $Pin(2)$-equivariant monopole maps more generally.

Let $B$ be a compact manifold. Assume that $B$ is equipped with an involution $\iota : B \to B$ and let $Pin(2)$ act on $B$, where $S^1 \subset Pin(2)$ acts trivially and $j$ acts as $\iota$. Let $E \to B$ be a complex vector bundle on $B$. Suppose that $E$ is equipped with an antilinear endomorphism $J : E \to E$ covering $\iota$ and satisfying $J^2 = -1$. Then we make $E$ into a $Pin(2)$-equivariant vector bundle over $B$ by letting $S^1 \subset Pin(2)$ act by scalar multiplication and $j$ act by $J$. Let $F \to B$ be a real vector bundle and suppose $F$ is equipped with an involutive endomorphism $J : F \to F$ covering $\iota$. Then we make $F$ into a $Pin(2)$-equivariant vector bundle over $B$ by letting $S^1 \subset Pin(2)$ act trivially and let $j$ act by $J$.

Consider now a $Pin(2)$-equivariant map $f : S^{V,U} \to S^{V',U'}$, where $V,V'$ are complex vector bundles equipped with anti-linear endomorphisms covering $\iota$ and squaring to $-1$ and $U,U'$ are real vector bundles equipped with involutive endomorphisms covering $\iota$. Assume that $f$ sends $s_{V,U}$ to $s_{V',U'}$ and that $f|_{S^U}$ is $Pin(2)$-homotopic to the map $S^U \to S^{U'}$ induced by an inclusion of vector bundles $U \to U'$. We will refer to such a map as a {\em $Pin(2)$-equivariant monopole map}.

In the case that $f$ is the monopole map for a $4$-manifold $X$ and spin-structure $\mathfrak{s}$, recall that $B = Pic^{\mathfrak{s}}(X)$. A spin connection defines an origin in $Pic^{\mathfrak{s}}(X)$ and hence gives an identification $Pic^{\mathfrak{s}}(X) \cong H^1(X ; \mathbb{R})/H^1(X ; \mathbb{Z})$. The involution $\iota : B \to B$ acts on $H^1(X ; \mathbb{R})/H^1(X ; \mathbb{Z})$ as $-1$ (i.e. the inverse map of the group structure). Furthermore, $U,U'$ are trivial vector bundles over $B$ and $j$ is the map $-\iota^*$, that is, a combination of pullback by $\iota$ and multiplication by $-1$.

The construction of cohomological invariants given in Section \ref{sec:swi} can be repeated for $Pin(2)$-monopole maps but now keeping track of the additional symmetry. A chamber in the $Pin(2)$ sense is a $Pin(2)$-equivariant homotopy class $\phi : B \to U' \setminus U$. This determines a lifted Thom class $\tau^\phi_{V',U'} \in H_{Pin(2)}^{2a'+b'}( S^{V',U'} , S^{U})$ which pulls back to $f^*(\tau^\phi_{V',U'}) \in H_{Pin(2)}^{2a'+b'}( S^{V,U} , S^U)$. As before we have isomorphisms
\[
H^*_{Pin(2)}(S^{V,U} , S^U) \cong H^*_{Pin(2)}( \widetilde{Y}^{V,U} , \partial \widetilde{Y}^{V,U}) \cong H^*_{\mathbb{Z}_2}( Y^{V,U} , \partial Y^{V,U}).
\]
Now the projection map $\pi_{V,U} : Y^{V,U} \to B$ is $\mathbb{Z}_2$-equivariant and hence it determines a push-forward map $(\pi_{V,U})_* : H^*_{\mathbb{Z}_2}( Y^{V,U} , \partial Y^{V,U} ) \to H^{*-(2a+b-1)}_{\mathbb{Z}_2}(B)$ in equivariant cohomology. This requires some justification since Poincar\'e--Lefschetz duality does not hold in equivariant cohomology. Consider more generally a compact Lie group $G$ and a $G$-equivariant fibre bundle $\pi : E \to B$, where the fibre $F$ is a compact $n$-manifold with boundary. The transition functions for the fibre bundle are homeomorphisms so they must send the boundary of $F$ to itself. Hence we also have a fibre bundle $\partial E \to B$ whose fibres are the boundaries of the fibres of $E$. Replacing $E$ and $B$ by their Borel models $E_G = E \times_G EG$, $B_G = B \times_G BG$, we have a fibre bundle $\pi : E_G \to B_G$, with fibre $F$. Let $\partial E_G$ denote the Borel model for $\partial E$ and let $E'_G$ be the space obtained by collapsing the boundary of each fibre to a disctint point. This is a fibre bundle over $B_G$ with fibre $F/\partial F$. Consider the Leray--Serre spectral sequence $E_r^{p,q}$ for $E'_G \to B_G$. Since $H^k( F/\partial F) = 0$ for $k > n$, we have $E_r^{p,q} = 0$ for $q > n$ and hence there is a map 
\[
H^m( E'_G ) \to E_2^{m-n , n} = H^{m-n}( B_G ; H^n( F , \partial F) ).
\]
Here $H^n(F , \partial F)$ is to be understood as a local system on $B$. However, since we are working with $\mathbb{Z}_2$-coefficients and $F$ is a compact $n$-manifold, we have $H^n(F , \partial F) \cong \mathbb{Z}_2$, the trivial local system with coefficient group $\mathbb{Z}_2$. So we have a well defined map $H^m( E'_G) \to H^{m-n}(B_G)$. From the definition of $E'_G$, there is a quotient map $E'_G \to E_G/\partial E_G$ and hence a pullback map $H^m( E_G , \partial E_G) \to H^m(E'_G)$. Composing, we get a map
\[
\pi_* : H^m( E_G , \partial E_G) \to H^m(E'_G) \to H^{m-n}( B_G)
\]
or equivalently, a map in equivariant cohomology
\[
\pi_* : H^m_G( E , \partial E) \to H^m_G(E) \to H^{m-n}_G(B).
\]

Thus, to any $Pin(2)$-monopole map $f$ and chamber $\phi$, we may define the {$Pin(2)$-equivariant Seiberg--Witten invariant of $f$ with respect to $\phi$} to be the map
\[
SW^\phi_{Pin(2) , f} : H^*_{Pin(2)}(pt) \to H^{*-2d+b_++1}_{\mathbb{Z}_2}(B)
\]
given by
\[
SW^\phi_{Pin(2), f}(\theta) = (\pi_{V,U})_*( \theta  f^*(\tau^\phi_{V',U'})).
\]

Forgetting the additional symmetry recovers the usual Seiberg--Witten invariant in the sense that we have a commutative diagram
\begin{equation}\label{equ:cd1}
\xymatrix{
H^*_{Pin(2)}(pt) \ar[rr]^-{SW^\phi_{Pin(2),f}} \ar[d] & & H^{*-2d+b_++1}_{\mathbb{Z}_2}(B) \ar[d] \\
H^*_{S^1}(pt) \ar[rr]^-{SW^\phi_f} & & H^{*-2d+b_++1}(B) \\
}
\end{equation}

However there may be some loss of information in passing to the $Pin(2)$-equivariant Seiberg--Witten invariant as the map $H^*_{Pin(2)}(pt) \to H^*_{S^1}(pt)$ is not surjective. In fact, we have $H^*_{Pin(2)}(pt) \cong \mathbb{Z}_2[u,q]/(u^3)$ where $deg(u)=1$, $deg(q) = 4$ (eg. \cite[\textsection 5]{bar}) and the map $H^*_{Pin(2)}(pt) \to H^*_{S^1}(pt)$ sends $u$ to zero and $q$ to $x^2$.

The map $SW_{Pin(2),f}^{\phi}$ is a morphism of $H^*_{\mathbb{Z}_2}(pt)$-modules. Recall that $H^*_{\mathbb{Z}_2}(pt) \cong \mathbb{Z}_2[u]$, where $deg(u) = 1$. So we have, $SW_{Pin(2),f}^{\phi}(u\theta) = uSW_{Pin(2),f}^{\phi}(\theta)$.

As in the $S^1$-equivariant case, it is sometmes convenient to regard the domain of $SW^\phi_{Pin(2),f}$ to be either $H^*_{Pin(2)}( \widetilde{Y}^{V,U})$ or $H^*_{Pin(2)}(B)$ 

In the case that $f$ is the Seiberg--Witten monopole map for a $4$-manifold $X$ with spin-structure $\mathfrak{s}$, we run into an immediate problem. There are no $Pin(2)$-equivariant chambers because the action of $j$ on $Pic^{\mathfrak{s}}(X)$ always has fixed points, while $j$ acts on $H^+(X)$ as $-1$. So it would appear that we can not take advantage of the $Pin(2)$-symmetry in this situation. Fortunately there is a simple way to circumvent this difficulty, which we previously made use of in \cite[\textsection 9.4]{bk1}. The idea is to replace $Pic^{\mathfrak{s}}(X)$ with $B_{X,\mathfrak{s}} = Pic^{\mathfrak{s}}(X) \times S(H^+(X))$, where $S(H^+(X))$ is the unit sphere in $H^+(X)$. We define $\iota : B_{X,\mathfrak{s}} \to B_{X,\mathfrak{s}}$ to be the product of $-1$ on $Pic^{\mathfrak{s}}(X)$ with the antipodal map on $S(H^+(X))$. Then we simply pullback the monopole map from $Pic^{\mathfrak{s}}(X)$ to $B_{X,\mathfrak{s}}$. Then we have a tautological chamber $\phi^{taut} : B_{X,\mathfrak{s}} \to H^+(X) \setminus \{0\}$ which given by the projection $B_{X,\mathfrak{s}} \to S(H^+(X))$, followed by the inclusion $S(H^+(X)) \to H^+(X) \setminus \{0\}$. Hence to any $4$-manifold $X$ with $b_+(X)>0$ and with spin structure $\mathfrak{s}$, we obtain a $Pin(2)$-equivariant Seiberg--Witten invariant
\[
SW^{\phi^{taut}}_{Pin(2),f} : H^*_{Pin(2)}(pt) \to H^*_{\mathbb{Z}_2}( B_{X,\mathfrak{s}}).
\]
To simplify notation we write $SW^{Pin(2)}_{X,\mathfrak{s}}$ for $SW^{\phi^{taut}}_{Pin(2),f}$.

Recall that $H^*_{\mathbb{Z}_2}(pt) \cong \mathbb{Z}_2[u]$, where $deg(u) = 1$.

\begin{proposition}\label{prop:z2cohom}
We have an isomorphism of $\mathbb{Z}_2[u]$-algebras:
\[
H^*_{\mathbb{Z}_2}(B_{X,\mathfrak{s}}) \cong H^*( Pic^{\mathfrak{s}}(X))[u]/( u^{b_+(X)}).
\]
\end{proposition}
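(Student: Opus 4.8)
The plan is to trade the $\mathbb{Z}_2$-equivariant cohomology for the ordinary cohomology of a fibre bundle over a real projective space. The crucial point is that, although $j$ acts with fixed points on $Pic^{\mathfrak{s}}(X)$, it acts \emph{freely} on $B_{X,\mathfrak{s}} = Pic^{\mathfrak{s}}(X)\times S(H^+(X))$, because the antipodal action on $S(H^+(X)) = S^{b_+(X)-1}$ is free. Hence the map from the Borel construction to the orbit space is a homotopy equivalence and $H^*_{\mathbb{Z}_2}(B_{X,\mathfrak{s}}) \cong H^*(B_{X,\mathfrak{s}}/\mathbb{Z}_2 ; \mathbb{Z}_2)$. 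Moreover, since $\mathbb{Z}_2$ acts freely on $S(H^+(X))$, the $\mathbb{Z}_2$-equivariant projection onto the second factor descends to a genuine fibre bundle
\[
Pic^{\mathfrak{s}}(X) \hookrightarrow B_{X,\mathfrak{s}}/\mathbb{Z}_2 \xrightarrow{\ \pi\ } S(H^+(X))/\mathbb{Z}_2 = \mathbb{RP}^{\,b_+(X)-1},
\]
namely the associated bundle $S(H^+(X))\times_{\mathbb{Z}_2} Pic^{\mathfrak{s}}(X)$, with fibre $Pic^{\mathfrak{s}}(X)$ and monodromy the inversion involution.

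Next I would analyse this bundle with the Leray--Serre spectral sequence over $\mathbb{Z}_2$. Inversion acts on $H^*(Pic^{\mathfrak{s}}(X);\mathbb{Z}_2) = {\largewedge}^* H^1(X;\mathbb{Z}_2)$ by multiplication by $\pm1$, hence trivially, so the local system is trivial and $E_2^{p,q} = H^p(\mathbb{RP}^{\,b_+(X)-1};\mathbb{Z}_2)\otimes H^q(Pic^{\mathfrak{s}}(X);\mathbb{Z}_2)$, with $H^*(\mathbb{RP}^{\,b_+(X)-1};\mathbb{Z}_2) = \mathbb{Z}_2[u]/(u^{b_+(X)})$, $\deg u = 1$, and $u$ pulled back from the base. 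The heart of the argument is to show this spectral sequence degenerates at $E_2$. Since $H^*(Pic^{\mathfrak{s}}(X);\mathbb{Z}_2)$ is generated in degree $1$, by multiplicativity it suffices to show that every class in $E_2^{0,1} = H^1(Pic^{\mathfrak{s}}(X);\mathbb{Z}_2)$ is a permanent cycle; higher differentials out of $E_*^{0,1}$ vanish for degree reasons, so the only thing to rule out is $d_2$ on $E_2^{0,1}$. I would do this by a dimension count. The group $\pi_1(B_{X,\mathfrak{s}}/\mathbb{Z}_2)$ is an extension of $\pi_1(\mathbb{RP}^{\,b_+(X)-1})$ by $\pi_1(Pic^{\mathfrak{s}}(X)) = \mathbb{Z}^{b_1(X)}$, with the quotient acting by $-1$; in the abelianization the free part $\mathbb{Z}^{b_1(X)}$ becomes $(\mathbb{Z}/2)^{b_1(X)}$, so $\dim_{\mathbb{Z}_2}H^1(B_{X,\mathfrak{s}}/\mathbb{Z}_2 ;\mathbb{Z}_2) = b_1(X) + \dim_{\mathbb{Z}_2}H^1(\mathbb{RP}^{\,b_+(X)-1};\mathbb{Z}_2) = \dim_{\mathbb{Z}_2}E_2^{0,1} + \dim_{\mathbb{Z}_2}E_2^{1,0}$, which forces $E_\infty^{0,1} = E_2^{0,1}$, i.e. $d_2|_{E_2^{0,1}} = 0$.

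An equivalent route, which may be cleaner to write up, is via the Gysin sequence of the sphere bundle $S(\eta)\to Pic^{\mathfrak{s}}(X)\times_{\mathbb{Z}_2}E\mathbb{Z}_2$, where $\eta = (Pic^{\mathfrak{s}}(X)\times H^+(X))\times_{\mathbb{Z}_2}E\mathbb{Z}_2$. Because $\mathbb{Z}_2$ acts on $H^+(X)\cong\mathbb{R}^{b_+(X)}$ by $-1$, the bundle $\eta$ is pulled back from $\gamma^{\oplus b_+(X)}$ over $B\mathbb{Z}_2 = \mathbb{RP}^\infty$ ($\gamma$ the tautological line bundle), so $w(\eta) = (1+u)^{b_+(X)}$ and the mod $2$ Euler class is $w_{b_+(X)}(\eta) = u^{b_+(X)}$; once one knows (by the same spectral sequence argument applied to $Pic^{\mathfrak{s}}(X)$) that $H^*_{\mathbb{Z}_2}(Pic^{\mathfrak{s}}(X))$ is free as a $\mathbb{Z}_2[u]$-module, hence $u^{b_+(X)}$-torsion free, the Gysin sequence collapses to a ring isomorphism $H^*_{\mathbb{Z}_2}(B_{X,\mathfrak{s}}) \cong H^*_{\mathbb{Z}_2}(Pic^{\mathfrak{s}}(X))/(u^{b_+(X)})$.

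Granting degeneration, the edge homomorphism supplies classes in $H^*_{\mathbb{Z}_2}(B_{X,\mathfrak{s}})$ lifting a monomial basis of $H^*(Pic^{\mathfrak{s}}(X);\mathbb{Z}_2)$ which, together with $u$, exhibit $H^*_{\mathbb{Z}_2}(B_{X,\mathfrak{s}})$ as a free $\mathbb{Z}_2[u]/(u^{b_+(X)})$-module identified with $H^*(Pic^{\mathfrak{s}}(X))[u]/(u^{b_+(X)})$; the class $u$ here is the image of the generator of $H^*_{\mathbb{Z}_2}(pt)$ automatically, since it is pulled back along $\mathbb{RP}^{\,b_+(X)-1}\hookrightarrow\mathbb{RP}^\infty = B\mathbb{Z}_2$. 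The remaining task is to match the product structure, choosing the degree-$1$ lifts so that they generate exterior subalgebras; one controls this using the surjective ring homomorphism $H^*_{\mathbb{Z}_2}(Pic^{\mathfrak{s}}(X))\to H^*_{\mathbb{Z}_2}(B_{X,\mathfrak{s}})$ with kernel $(u^{b_+(X)})$. I expect this final step --- pinning down the $\mathbb{Z}_2[u]$-algebra structure, rather than just the module structure --- to be the only delicate point, since the bundle $\pi$ is not in general fibre-homotopy trivial; by contrast the degeneration of the spectral sequence is routine once the $\pi_1$-calculation is in hand.
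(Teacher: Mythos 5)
Your overall route is the same as the paper's: the involution acts freely because of the antipodal factor, the quotient $B_{X,\mathfrak{s}}/\langle\iota\rangle$ is a $Pic^{\mathfrak{s}}(X)$-bundle over $\mathbb{RP}^{b_+(X)-1}$ with trivial mod $2$ local system, and everything reduces to degeneration of the Leray--Serre spectral sequence plus a Leray--Hirsch-type conclusion. Where you differ is the degeneration step: the paper notes that the fixed point of inversion given by the spin connection defines a section of $B_{X,\mathfrak{s}}/\langle\iota\rangle\to\mathbb{RP}^{b_+(X)-1}$, so the pullback from the base is injective, no differential can hit the bottom row, and in particular $d_2$ vanishes on $E_2^{0,1}$; you instead kill $d_2$ on $E_2^{0,1}$ by a dimension count on $H^1$ via $\pi_1$. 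Your count is correct, but the claim that $\pi_1(B_{X,\mathfrak{s}}/\mathbb{Z}_2)$ is an extension of $\pi_1(\mathbb{RP}^{b_+(X)-1})$ by $\mathbb{Z}^{b_1(X)}$ is not automatic from the fibration when $b_+(X)=3$, since $\pi_2(\mathbb{RP}^2)\cong\mathbb{Z}$ could a priori map onto part of $\pi_1$ of the fibre; it is true here either because of the section just mentioned or because $(Pic^{\mathfrak{s}}(X)\times S^{b_+(X)-1})/\mathbb{Z}_2$ is covered by $H^1(X;\mathbb{R})\times S^{b_+(X)-1}$ with deck group the asserted extension. The section trick is shorter and replaces your $\pi_1$ computation in one line, so you may as well adopt it.

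On the point you flag as delicate, your caution is well founded, but it cannot be discharged: Leray--Hirsch (the paper's tool) and your spectral-sequence or Gysin arguments all produce an isomorphism of $\mathbb{Z}_2[u]$-modules, and the statement is in general false as one of $\mathbb{Z}_2[u]$-algebras. For $b_1(X)=1$, $b_+(X)=2$ (e.g. $X=S^1\times S^3\,\#\,2(S^2\times S^2)$) the quotient $B_{X,\mathfrak{s}}/\langle\iota\rangle$ is the Klein bottle; by Wu's formula $x^2=w_1\smallsmile x$ on $H^1$, so some degree-one class has non-zero square, whereas every degree-one element of $H^*(S^1)[u]/(u^2)$ squares to zero. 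For the same reason the ring identification $H^*_{\mathbb{Z}_2}(Pic^{\mathfrak{s}}(X))\cong H^*(Pic^{\mathfrak{s}}(X))[u]$ required at the end of your Gysin route fails already for $S^1$ with inversion, where $H^*_{\mathbb{Z}_2}(S^1)\cong\mathbb{Z}_2[a,b]/(ab)$. Since only the $\mathbb{Z}_2[u]$-module structure (divisibility by powers of $u$, evaluation at $u=0$ with respect to the Leray--Hirsch basis) is used later in the paper, your proof is complete at that level and matches the paper's in substance; the multiplicative refinement you were hoping to pin down should simply be dropped from the statement.
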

\begin{proof}
Set $n = b_+(X)-1$. Since the antipodal map acts freely on $S(H^+(X)) \cong S^n$, it follows that $\iota$ acts freely and that the quotient $B_{X,\mathfrak{s}}/\langle \iota \rangle$ has the structure of a fibre bundle over $\mathbb{RP}^n$ with fibre $Pic^{\mathfrak{s}}(X)$. Furthermore, the distinguished point of $Pic^{\mathfrak{s}}(X)$ determined by the spin-connection is fixed by $-1$ and hence defines a section $\mathbb{RP}^n \to B_{X,\mathfrak{s}}/\langle \iota \rangle$. Let $E_2^{p,q}$ denote the Leray--Serre spectral sequence for $p : B_{X,\mathfrak{s}}/\langle \iota \rangle \to \mathbb{RP}^n$. The existence of a section implies that $p^* : H^*(\mathbb{RP}^n) \to H^*( B_{X,\mathfrak{s}}/\langle \iota \rangle )$ is injective. Hence there are no differentials into $E_r^{p,0}$ for any $p$ or $r$. This implies that the pullback map $H^1( B_{X,\mathfrak{s}}/\langle \iota \rangle ) \to H^1( Pic^{\mathfrak{s}}(X))$ is surjective. But $Pic^{\mathfrak{s}}(X)$ is a torus so $H^*( Pic^{\mathfrak{s}})$ is generated by $H^1( Pic^{\mathfrak{s}}(X))$, hence the pullback map $H^k( B_{X,\mathfrak{s}}/\langle \iota \rangle ) \to H^k( Pic^{\mathfrak{s}}(X))$ is surjective for all $k$. The result now follows from the Leray--Hirsch theorem, the fact that $H^*(\mathbb{RP}^n) \cong \mathbb{Z}_2[u]/(u^{n+1})$ and the isomorphism $H^*_{\mathbb{Z}_2}( B_{X,\mathfrak{s}} ) \cong H^*( B_{X,\mathfrak{s}}/\langle \iota \rangle )$.
\end{proof}

Let us write $SW_{X,\mathfrak{s}}^{\mathbb{Z},\phi}$ to distinguish the integral Seiberg--Witten invariant from the mod $2$ Seiberg--Witten invariant $SW^\phi_{X,\mathfrak{s}}$.

\begin{lemma}\label{lem:wcf1}
If $b_+(X) = 1$ and $\mathfrak{s}$ is a spin-structure, then $SW_{X,\mathfrak{s}}^{\mathbb{Z},\phi}$ does not depend on the chamber $\phi$.
\end{lemma}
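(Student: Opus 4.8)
The plan is to show that for a spin structure on a 4-manifold with $b_+(X) = 1$, the two chambers give the same integral Seiberg--Witten invariant because the wall-crossing difference vanishes. Recall that when $b_+(X) = 1$ there are exactly two chambers (the two components of $H^+(X) \setminus \{0\} \cong \mathbb{R}^* $), and the difference of the Seiberg--Witten invariants across the wall is governed by the classical wall-crossing formula. First I would recall the wall-crossing formula in the form most convenient here: the difference $SW^{\phi_+}_{X,\mathfrak{s}} - SW^{\phi_-}_{X,\mathfrak{s}}$ is (up to sign) a universal characteristic-class expression in the families index bundle $D_{\mathfrak{s}} \to Pic^{\mathfrak{s}}(X)$, namely the relevant Segre/Chern class of $D_{\mathfrak{s}}$ evaluated in the appropriate degree. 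Concretely, for $b_+(X) = 1$ the wall-crossing term is a nonzero multiple (in $\mathbb{Z}$) of a component of $c(D_{\mathfrak{s}})^{-1}$, i.e.\ of $s_j(D_{\mathfrak{s}})$, in the degree matching $SW^\phi_{X,\mathfrak{s}}$.

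The key point is then a parity/reality input special to spin structures: for a spin structure, charge conjugation makes the Dirac operator \emph{quaternionic}, so the families index $D_{\mathfrak{s}}$ carries a quaternionic structure (an antilinear $J$ with $J^2 = -1$ covering the involution $\iota = -1$ on $Pic^{\mathfrak{s}}(X)$), as is already used in Section~\ref{sec:pin2}. A quaternionic bundle has even complex rank and, more importantly, its odd Chern classes are $2$-torsion; over the torus $Pic^{\mathfrak{s}}(X)$, whose integral cohomology is torsion-free, this forces the odd Chern classes — and hence the odd Segre classes $s_{2j+1}(D_{\mathfrak{s}})$ — to vanish, and similarly pins down $c_{2k}(D_{\mathfrak{s}})$ in terms of the Pontryagin-type classes of the underlying real bundle. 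Combined with the index-theoretic computation already quoted in the excerpt (that $s_{2j}(D_{\mathfrak{s}}) = \tfrac{1}{j!} s_2(D_{\mathfrak{s}})^j$ with $s_2(D_{\mathfrak{s}})$ as in \eqref{equ:seg2}, living in degree $4$), this lets me identify exactly which degree the wall-crossing term would live in. The second ingredient is a dimension/degree count: for $b_+(X) = 1$ the wall-crossing term sits in $H^{2m - d(X,\mathfrak{s})}(Pic^{\mathfrak{s}}(X))$ and, because $\sigma(X) = 0$ for a spin manifold with $b_+(X) = 1$ (so $d(X,\mathfrak{s}) = -2$), the relevant degree is odd — and there the quaternionic vanishing just described kills the class.

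So the skeleton is: (i) invoke the wall-crossing formula to write the chamber difference as a Segre/Chern class of $D_{\mathfrak{s}}$ in a specific degree; (ii) use that $\mathfrak{s}$ spin $\Rightarrow$ $D_{\mathfrak{s}}$ quaternionic $\Rightarrow$ odd Chern/Segre classes vanish over the torsion-free $Pic^{\mathfrak{s}}(X)$; (iii) observe the degree forced by $\sigma(X) = 0$, $b_+(X) = 1$ is exactly one where the wall-crossing class is forced to vanish; conclude $SW^{\mathbb{Z},\phi_+}_{X,\mathfrak{s}} = SW^{\mathbb{Z},\phi_-}_{X,\mathfrak{s}}$. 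The main obstacle I anticipate is pinning down the wall-crossing formula in the correct normalization for the Bauer--Furuta/families formulation used in this paper — in particular getting the precise degree and the precise characteristic class right, including the contribution of $H^1(X;\mathbb{Z})$, rather than just the $b_1(X) = 0$ case — since the whole argument hinges on that class landing in an odd degree where quaternionicity forces vanishing. An alternative, possibly cleaner, route avoiding an explicit wall-crossing formula: run the finite-dimensional-approximation argument directly, noting that the two chambers $\phi_\pm$ differ by the antipodal swap on $H^+(X) \cong \mathbb{R}$, and that charge conjugation $j$ intertwines the monopole map for one chamber with that for the other; then a $j$-equivariant cobordism between the two parametrized moduli spaces (or the $Pin(2)$-equivariant Bauer--Furuta class, which does not see the chamber choice for $b_+ = 1$ in the way the $S^1$ invariant does) yields the equality of integral invariants directly. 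I would try this second approach first and fall back on the explicit wall-crossing computation if the equivariant cobordism is awkward to set up.
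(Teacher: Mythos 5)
Your step (i) matches the paper's starting point: the wall-crossing formula in the Bauer--Furuta/families setting gives $SW^{\mathbb{Z},\phi}_{X,\mathfrak{s}}(x^m) - SW^{\mathbb{Z},-\phi}_{X,\mathfrak{s}}(x^m) = \pm s_{m+1}(D_{\mathfrak{s}})$. But your step (iii), which is supposed to kill this class, rests on a degree miscalculation. With $\mathfrak{s}$ spin and $b_+(X)=1$ one has $c(\mathfrak{s})=0$ and $\sigma(X)=0$, so $d(X,\mathfrak{s})=-2$ and $SW^{\mathbb{Z},\phi}_{X,\mathfrak{s}}(x^m)$ lives in $H^{2m+2}(Pic^{\mathfrak{s}}(X);\mathbb{Z})$ --- an \emph{even} degree, as it must be, since the wall-crossing term $s_{m+1}(D_{\mathfrak{s}})$ is a Segre class of a (virtual) complex bundle and therefore sits in degree $2(m+1)$. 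Consequently the quaternionic/parity input of step (ii) only disposes of the odd Segre classes $s_{2j+1}(D_{\mathfrak{s}})$, i.e.\ of the wall-crossing term for even $m$. For odd $m$ the term is an even Segre class such as $s_2(D_{\mathfrak{s}})$, and quaternionicity does not force these to vanish: by the families index computation in Section \ref{sec:swspin}, $s_2(D_{\mathfrak{s}})$ is the four-fold cup-product expression (\ref{equ:seg2}), which is genuinely nonzero for spin manifolds such as $T^4$. So your argument, as written, proves chamber independence only in half the degrees.

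What is actually needed --- and what the paper uses --- is the vanishing of \emph{all} positive-degree Segre classes of $D_{\mathfrak{s}}$ in the specific situation $b_+(X)=1$, $c(\mathfrak{s})=0$ (a calculation cited to \cite[\textsection 5.3]{bk}); this is a constraint coming from $b_+(X)=1$ on the four-fold cup products of $H^1(X;\mathbb{Z})$ entering (\ref{equ:seg2}), not from the spin structure. Your fallback route via charge conjugation does not close the gap either: since $j$ acts as $-1$ on $H^+(X)$, conjugation symmetry relates the two chambers by $SW^{\mathbb{Z},\phi}(x^m) = \pm\,\iota^* SW^{\mathbb{Z},-\phi}(x^m)$, and after noting that $\iota^*$ is the identity in even degree the sign works out to $(-1)^m$; for odd $m$ this gives $SW^{\mathbb{Z},\phi}=-SW^{\mathbb{Z},-\phi}$, which combined with wall-crossing again leaves you needing $s_{m+1}(D_{\mathfrak{s}})=0$ in even degree. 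So either way the missing ingredient is the $b_+=1$ vanishing of the even Segre classes, which your proposal does not supply.
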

\begin{proof}
Since $X$ is spin and $b_+(X)=1$, we must have $b_-(X)=1$ (by Donaldson's Theorem B in the simply connected case \cite{don}, or the 10/8 inequality more generally \cite{fur}). The wall-crossing formula (eg. \cite{bk}) implies that $SW_{X,\mathfrak{s}}^{\mathbb{Z},\phi}(x^m) - SW_{X,\mathfrak{s}}^{\mathbb{Z},-\phi}(x^m) = \pm s_{m+1}(D)$, where $s_j(D)$ denotes the $j$-th Segre class of the index bundle $D \to Pic^{\mathfrak{s}}(X)$. Since $b_+(X)=1$ and $c(\mathfrak{s}) = 0$, the calculation in \cite[\textsection 5.3]{bk} shows that $s_j(D) = 0$ for all $j > 0$, hence the result follows.
\end{proof}

By Lemma \ref{lem:wcf1}, for a spin structure, $SW_{X,\mathfrak{s}}^{\mathbb{Z},\phi}$ and $SW_{X,\mathfrak{s}}^{\phi}$ do not depend on $\phi$ even when $b_+(X)=1$ and so we will denote these invariants as $SW_{X,\mathfrak{s}}^{\mathbb{Z}}$ and $SW_{X,\mathfrak{s}}$.

\begin{lemma}\label{lem:u=0}
Let $X$ be a compact, oriented, smooth $4$-manifold with $b_+(X) > 0$ and $\mathfrak{s}$ a spin-structure. If $m$ is odd then $SW^{\mathbb{Z}}_{X,\mathfrak{s}}(x^m) = 0$. If $m$ is even then $SW_{X,\mathfrak{s}}(x^m) = SW^{Pin(2)}_{X,\mathfrak{s}}(q^{m/2})|_{u=0}$, where for a class $\alpha \in H^*( Pic^{\mathfrak{s}}(X))[u]/(u^{b_+(X)})$, $\alpha|_{u=0}$ denotes the class in $H^*( Pic^{\mathfrak{s}}(X))$ obtained from $\alpha$ by setting $u=0$.
\end{lemma}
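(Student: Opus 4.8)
The strategy is to exploit the commutative diagram \eqref{equ:cd1}, the description of $H^*_{Pin(2)}(pt)$ and the map to $H^*_{S^1}(pt)$, and Proposition \ref{prop:z2cohom}, together with the fact that, in the present situation, the ordinary $SW$ invariant is obtained from the $Pin(2)$-equivariant one by the $\phi^{taut}$-to-$\phi$ comparison. First I would recall that $H^*_{Pin(2)}(pt) \cong \mathbb{Z}_2[u,q]/(u^3)$ with $\deg u = 1$, $\deg q = 4$, that the forgetful map $H^*_{Pin(2)}(pt) \to H^*_{S^1}(pt) = \mathbb{Z}_2[x]$ sends $q \mapsto x^2$ and $u \mapsto 0$, and hence that every odd power $x^m$ lies outside the image of this map: the image is precisely $\mathbb{Z}_2[x^2]$. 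To handle the odd case I would argue that $SW^{\mathbb{Z}}_{X,\mathfrak{s}}(x^m)$ is forced to vanish because charge conjugation acts on the integral Seiberg--Witten invariant (equivalently, on the Bauer--Furuta class) in a way that identifies $SW^{\mathbb{Z}}_{X,\mathfrak{s}}(x^m)$ with $(-1)^m SW^{\mathbb{Z}}_{X,\mathfrak{s}}(x^m)$; more precisely, the class $x \in H^2_{S^1}(pt)$ is the Euler class of the defining representation, on which $j \in Pin(2)$ acts by conjugation, so $j$ sends $x$ to $-x$, and since $j$ acts trivially on the relevant cohomology of the (integral) target after the identifications set up above, $2SW^{\mathbb{Z}}_{X,\mathfrak{s}}(x^m) = 0$ for $m$ odd, giving $SW^{\mathbb{Z}}_{X,\mathfrak{s}}(x^m) = 0$. (Over $\mathbb{Z}_2$ this argument is vacuous, which is exactly why the even case requires the $Pin(2)$-enhancement.)

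For the even case, write $m = 2k$. The point is that $x^{2k} = x^m$ is the image of $q^{k} \in H^*_{Pin(2)}(pt)$ under the forgetful map. Applying the commutative diagram \eqref{equ:cd1} with chamber $\phi = \phi^{taut}$ on $B = B_{X,\mathfrak{s}} = Pic^{\mathfrak{s}}(X) \times S(H^+(X))$, I get
\[
\phi^* \bigl( SW^{Pin(2)}_{X,\mathfrak{s}}(q^{k}) \bigr) = SW^{\phi^{taut}}_{X,\mathfrak{s}}(x^{2k}),
\]
where $\phi^*$ is the map from the $\mathbb{Z}_2$-equivariant cohomology $H^*_{\mathbb{Z}_2}(B_{X,\mathfrak{s}})$ to the ordinary cohomology $H^*(Pic^{\mathfrak{s}}(X))$ induced by the tautological chamber (i.e. by a section of $B_{X,\mathfrak{s}} \to Pic^{\mathfrak{s}}(X)$ combined with the map $H^*_{\mathbb{Z}_2} \to H^*$ of forgetting the Borel construction — concretely, by composing with the section $Pic^{\mathfrak{s}}(X) \to B_{X,\mathfrak{s}}$ picking out a point of $S(H^+(X))$, and then noting that on the quotient $B_{X,\mathfrak{s}}/\langle\iota\rangle \to \mathbb{RP}^n$ this amounts to restricting to the fibre). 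Under the identification of Proposition \ref{prop:z2cohom}, $H^*_{\mathbb{Z}_2}(B_{X,\mathfrak{s}}) \cong H^*(Pic^{\mathfrak{s}}(X))[u]/(u^{b_+(X)})$, and restricting to a fibre of the $\mathbb{RP}^n$-bundle is exactly the operation of setting $u = 0$, since $u$ is the pullback of the generator of $H^1(\mathbb{RP}^n)$. Hence $\phi^*(\alpha) = \alpha|_{u=0}$, and the right-hand side above is $SW_{X,\mathfrak{s}}(x^m)$ (using Lemma \ref{lem:wcf1} to drop the chamber dependence when $b_+(X)=1$). This gives exactly $SW_{X,\mathfrak{s}}(x^m) = SW^{Pin(2)}_{X,\mathfrak{s}}(q^{m/2})|_{u=0}$.

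The main obstacle is making the identification ``$\phi^{taut,*} = (\cdot)|_{u=0}$'' precise — that is, checking that the forgetful map $H^*_{\mathbb{Z}_2}(B_{X,\mathfrak{s}}) \to H^*(Pic^{\mathfrak{s}}(X))$ appearing on the right of \eqref{equ:cd1}, once one accounts for how $SW^\phi_f$ is reconstructed from $SW^\phi_{Pin(2),f}$ via the chamber $\phi^{taut}$, really is the algebra map killing $u$ under the Leray--Hirsch presentation of Proposition \ref{prop:z2cohom}. This is essentially bookkeeping: the generator $u \in H^1_{\mathbb{Z}_2}(B_{X,\mathfrak{s}})$ is pulled back from $BPin(2)$ (equivalently from $B\mathbb{Z}_2 = \mathbb{RP}^\infty$, restricted to $\mathbb{RP}^n$), and the comparison between the $Pin(2)$- and $S^1$-theories forgets the $j$-symmetry, which on the base corresponds to restricting from the Borel quotient $B_{X,\mathfrak{s}}/\langle\iota\rangle$ to a single copy of $Pic^{\mathfrak{s}}(X)$; the distinguished section furnished by the spin connection in Proposition \ref{prop:z2cohom} is precisely what makes this restriction well-defined and equal to evaluation at $u=0$. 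The odd case is then a short parity argument as indicated.
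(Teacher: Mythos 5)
Your even-$m$ argument is essentially the paper's: the paper adapts diagram (\ref{equ:cd1}) to the base $B_{X,\mathfrak{s}}$ and observes that the resulting comparison map $H^*_{\mathbb{Z}_2}(B_{X,\mathfrak{s}}) \to H^*(Pic^{\mathfrak{s}}(X))$ is exactly $\alpha \mapsto \alpha|_{u=0}$ under the isomorphism of Proposition \ref{prop:z2cohom}, which is precisely your identification of $\phi^{taut,*}$ with evaluation at $u=0$ (including the appeal to Lemma \ref{lem:wcf1} to drop the chamber when $b_+(X)=1$). That half of your proposal is correct and matches the paper.

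The odd-$m$ case is where your write-up has a genuine gap. You assert that $j$ ``acts trivially on the relevant cohomology of the (integral) target,'' so that the only sign is the $(-1)^m$ coming from $x \mapsto -x$. That assertion is false in general: charge conjugation acts on $Pic^{\mathfrak{s}}(X)$ through the inversion map $\iota$, and $\iota^*$ acts on $H^k(Pic^{\mathfrak{s}}(X);\mathbb{Z})$ by $(-1)^k$, which is nontrivial in odd degrees whenever $b_1(X)>0$; moreover there is an additional orientation sign, the paper's identity being $SW^{\mathbb{Z}}_{X,\mathfrak{s}}(x^m) = (-1)^{m+d+b_+(X)+1}\,\iota^* SW^{\mathbb{Z}}_{X,\mathfrak{s}}(x^m)$. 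Since the invariant lives in degree $2m-2d+b_+(X)+1$, the two signs combine to $(-1)^{m+d}$, and one still needs $d=-\sigma(X)/8$ to be even --- i.e. $\sigma(X)\equiv 0 \pmod{16}$, which is where spinness enters beyond the mere existence of $j$ --- to reduce this to $(-1)^m$; finally one uses that $H^*(Pic^{\mathfrak{s}}(X);\mathbb{Z})$ is torsion-free to pass from $2\,SW^{\mathbb{Z}}_{X,\mathfrak{s}}(x^m)=0$ to $SW^{\mathbb{Z}}_{X,\mathfrak{s}}(x^m)=0$. None of these inputs appear in your sketch, and as literally stated your sign claim would fail (for instance whenever $b_+(X)$ is even, so that the degree of the invariant is odd); the discrepancies cancel only after the bookkeeping just described, which is exactly the content of the paper's proof of the odd case.
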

\begin{proof}
Let $\iota : Pic^{\mathfrak{s}}(X) \to Pic^{\mathfrak{s}}(X)$ denote the inversion map. The charge conjugation symmetry of the Seiberg--Witten equations implies that $SW_{X,\mathfrak{s}}^{\mathbb{Z}}( x^m) = (-1)^{\sigma} \iota^* SW_{X,\mathfrak{s}}^{\mathbb{Z}}(x^m)$, where $\sigma = m+d+b_+(X)+1$. However $SW_{X,\mathfrak{s}}^{\mathbb{Z}}(x^m)$ has degree $2m-2d+b_+(X)+1$ so $\iota^*$ acts as $(-1)^{2m-2d+b_+(X)+1}$. So the formula simplifies to  $SW_{X,\mathfrak{s}}^{\mathbb{Z}}( x^m) = (-1)^{m+d} SW_{X,\mathfrak{s}}^{\mathbb{Z}}(x^m)$. Furthermore, $d = -\sigma(X)/8$ is even as $\sigma(X)$ is a multiple of $16$. So $SW_{X,\mathfrak{s}}^{\mathbb{Z}}( x^m) = (-1)^{m} SW_{X,\mathfrak{s}}^{\mathbb{Z}}(x^m)$ and hence $SW_{X,\mathfrak{s}}^{\mathbb{Z}}(x^m) = 0$ if $m$ is odd, as $H^*( Pic^{\mathfrak{s}}(X) ; \mathbb{Z})$ has no torsion.

Now suppose $m$ is even. Let $f : S^{V,U} \to S^{V',U'}$ be the Seiberg--Witten monopole map over $Pic^{\mathfrak{s}}(X)$. Pull this back to $B_{X,\mathfrak{s}} = Pic^{\mathfrak{s}}(X) \times S(H^+(X))$. Adapting the commutative diagram (\ref{equ:cd1}) to this setting, we have a commutative diagram
\[
\xymatrix{
H^*_{Pin(2)}( pt ) \ar[rr]^-{SW^{Pin(2)}_{X,\mathfrak{s}}} \ar[d] & & H^{*-2d+b_++1}_{\mathbb{Z}_2}(B_{X,\mathfrak{s}}) \ar[d] \ar[r]^-{\cong} & H^{*-2d+b_++1}( Pic^{\mathfrak{s}}(X))[u]/(u^{b_+(X)}) \ar[dl]^-{ \; |_{u=0}}\\
H^*_{S^1}( pt ) \ar[rr]^-{SW_{X,\mathfrak{s}}} & & H^{*-2d+b_++1}( Pic^{\mathfrak{s}}(X)) & \\
}
\]
Then since $q^{m/2} \in H^*_{Pin(2)}( B_{X,\mathfrak{s}})$ gets sent to $x^m \in H^*_{S^1}( Pic^{\mathfrak{s}}(X))$, commutativity of the diagram gives $SW_{X,\mathfrak{s}}(x^m) = SW^{Pin(2)}_{X,\mathfrak{s}}(q^{m/2})|_{u=0}$.
\end{proof}

By this lemma, the task of computing the mod $2$ Seiberg--Witten invariants for spin structures is reduced to calculating $SW^{Pin(2)}_{X,\mathfrak{s}}|_{u=0}$. In fact, we will compute the whole invariant $SW^{Pin(2)}_{X,\mathfrak{s}}$, not just its evalution at $u=0$. However, before carrying this out we already obtain a strong vanishing theorem which implies that $SW_{X,\mathfrak{s}}$ for a spin structure is usually zero mod $2$.

\begin{theorem}\label{thm:vanb+}
Let $X$ be a compact, oriented, smooth $4$-manifold with $b_+(X) > 0$ and $\mathfrak{s}$ a spin-structure. If $b_+(X) > 3$, then $SW_{X,\mathfrak{s}}(\theta) = 0$ for all $\theta \in H^*_{S^1}(pt)$.
\end{theorem}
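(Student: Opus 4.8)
The plan is to derive this vanishing result directly from Lemma \ref{lem:u=0} together with Proposition \ref{prop:z2cohom}, using a degree-counting argument on the $Pin(2)$-equivariant invariant. By Lemma \ref{lem:u=0}, it suffices to treat $\theta = x^m$ with $m$ even, and then $SW_{X,\mathfrak{s}}(x^m) = SW^{Pin(2)}_{X,\mathfrak{s}}(q^{m/2})|_{u=0}$. So the whole statement reduces to showing that $SW^{Pin(2)}_{X,\mathfrak{s}}(q^{m/2})$, viewed as an element of $H^*(Pic^{\mathfrak{s}}(X))[u]/(u^{b_+(X)})$ via Proposition \ref{prop:z2cohom}, has vanishing $u^0$-coefficient whenever $b_+(X) > 3$.

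First I would pin down the degree of $SW^{Pin(2)}_{X,\mathfrak{s}}(q^{m/2})$. Since $\deg q = 4$ and the map $SW^{Pin(2)}_{X,\mathfrak{s}}$ has degree $-2d + b_+ + 1$ (with $d = -\sigma(X)/8$, hence $2d \le 0$ for a spin manifold — actually $d \ge 0$ since $\sigma(X) \le 0$ is not automatic, but the relevant point is just the numerical value), the class $SW^{Pin(2)}_{X,\mathfrak{s}}(q^{m/2})$ lives in $H^{2m - 2d + b_+(X) + 1}_{\mathbb{Z}_2}(B_{X,\mathfrak{s}})$. Under the isomorphism of Proposition \ref{prop:z2cohom}, write this class as $\sum_{i=0}^{b_+(X)-1} \alpha_i u^i$ with $\alpha_i \in H^{2m-2d+b_+(X)+1-i}(Pic^{\mathfrak{s}}(X))$. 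The $u^0$-component $\alpha_0$ lies in $H^{2m-2d+b_+(X)+1}(Pic^{\mathfrak{s}}(X))$. Now $Pic^{\mathfrak{s}}(X)$ is a torus of dimension $b_1(X)$, so $H^k(Pic^{\mathfrak{s}}(X)) = 0$ for $k > b_1(X)$. The key observation I would exploit is a parity constraint: $SW_{X,\mathfrak{s}}(x^m)$ is a mod $2$ Seiberg--Witten invariant of degree $2m - d(X,\mathfrak{s})$ in the convention of the introduction, and the ordinary Seiberg--Witten invariant vanishes for dimensional reasons when its target degree exceeds $b_1(X)$ or is negative. So I would combine this with the fact that $d(X,\mathfrak{s}) = (c(\mathfrak{s})^2 - \sigma(X))/4 - b_+(X) - 1 = \sigma(X)/(-4) - b_+(X) - 1$; wait — for a spin structure $c(\mathfrak{s}) = 0$, so $d(X,\mathfrak{s}) = -\sigma(X)/4 - b_+(X) - 1$, and the degree $-d(X,\mathfrak{s}) + 2m = \sigma(X)/4 + b_+(X) + 1 + 2m$ must lie in $[0, b_1(X)]$ for $SW_{X,\mathfrak{s}}(x^m)$ to be possibly nonzero.

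The real content, then, is a constraint forcing vanishing exactly when $b_+(X) > 3$. I expect the mechanism to be the following. The $Pin(2)$-equivariant invariant satisfies $SW^{Pin(2)}_{X,\mathfrak{s}}(u\theta) = u \cdot SW^{Pin(2)}_{X,\mathfrak{s}}(\theta)$ as a module map over $H^*_{\mathbb{Z}_2}(pt) = \mathbb{Z}_2[u]$, and in $H^*_{Pin(2)}(pt) = \mathbb{Z}_2[u,q]/(u^3)$ we have the relation $u^3 = 0$. Pushing forward, this says that the image classes are annihilated by $u^3$ in $H^*(Pic^{\mathfrak{s}}(X))[u]/(u^{b_+(X)})$; but multiplication by $u^3$ is the zero map on this ring precisely when $b_+(X) \le 3$, and is generally nonzero when $b_+(X) \ge 4$. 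I would use the finite-dimensional approximation picture: the class $SW^{Pin(2)}_{X,\mathfrak{s}}(q^{m/2})$ is built from $f^*$ of a lifted Thom class and a pushforward along $Y^{V,U} \to B_{X,\mathfrak{s}}$, and the tautological chamber $\phi^{taut}$ factors through the sphere $S(H^+(X)) = S^{b_+(X)-1}$; the $j$-action (antipodal) on this sphere means the relevant equivariant Thom/Euler class of $H^+ = U'/U$ contributes a factor $u^{b_+(X)}$ which is zero in the truncated ring. Concretely, I would argue that $SW^{Pin(2)}_{X,\mathfrak{s}}(\theta)$ is divisible by $u^{b_+(X)-1}$ — because the chamber-dependent Thom class lift over $B_{X,\mathfrak{s}}$, restricted appropriately, acquires the top Stiefel--Whitney behaviour of the $(b_+(X)-1)$-sphere bundle — forcing $\alpha_0 = \dots = \alpha_{b_+(X)-2} = 0$, so only the $u^{b_+(X)-1}$-coefficient survives; in particular $\alpha_0 = 0$ whenever $b_+(X) - 1 > 0$, and this alone would be too strong, so the correct statement must be subtler: the divisibility is by $u^{b_+(X)-3}$ (matching the $w_{b_+(X)-3}(H^+)$ in Theorem \ref{thm:thm3}), which gives $\alpha_0 = 0$ as soon as $b_+(X) - 3 > 0$, i.e. $b_+(X) > 3$.

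The main obstacle is establishing this precise $u^{b_+(X)-3}$-divisibility of $SW^{Pin(2)}_{X,\mathfrak{s}}(\theta)$ for the tautological chamber. I would approach it as follows. Factor the monopole map's finite-dimensional approximation so that $H^+$ sits inside $U'$, and observe that the lifted Thom class $\tau^{\phi^{taut}}_{V',U'}$ differs from the absolute Thom class by a class whose pushforward involves the equivariant Euler class of $H^+$ over $B_{X,\mathfrak{s}}$. Since $H^+$ is pulled back from $S(H^+(X))$ with the antipodal $j$-action, its $\mathbb{Z}_2$-equivariant (Borel) Euler class over $B_{X,\mathfrak{s}}/\langle\iota\rangle$, which fibres over $\mathbb{RP}^{b_+(X)-1}$, is computed from the tautological line bundle on $\mathbb{RP}^{b_+(X)-1}$: one gets a polynomial in $u$ of degree $b_+(X)$, and the relation $u^{b_+(X)} = 0$ from Proposition \ref{prop:z2cohom} combined with the structure of how the Dirac index $D$ enters forces the surviving part to be divisible by $u^{b_+(X)-3}$ after accounting for the three ``free'' cohomology directions coming from $q$-multiplication (degree reasons: $q \mapsto x^2$ uses up cohomological room). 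Honestly, rather than reconstruct the whole Euler-class computation here, the cleanest route is probably to invoke the product formula of Section \ref{sec:prod} in the degenerate case $Y = S^4$ (or a trivial summand) — writing $X = X \# S^4$ — which by construction manifests the extra $u$-divisibility, and then read off that the $u^0$-coefficient vanishes once $b_+ > 3$; but since Section \ref{sec:prod} is only referenced and not yet proved in the excerpt, the self-contained argument via the Euler class of the antipodal sphere bundle over $\mathbb{RP}^{b_+(X)-1}$, bounded in degree by $b_1(X)$ on the torus factor, is the one I would write out, with the degree bookkeeping $2m - 2d + b_+(X) + 1 - 0 \le b_1(X)$ versus $b_+(X) - 3 \le$ (available $u$-powers) providing the contradiction that kills $\alpha_0$.
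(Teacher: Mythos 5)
You have all the right ingredients on the table, but you stop one line short of the proof and then wander off into a much harder (and partly incorrect) route. Your reduction via Lemma \ref{lem:u=0} and Proposition \ref{prop:z2cohom} is exactly right, and your observation that $SW^{Pin(2)}_{X,\mathfrak{s}}$ is $\mathbb{Z}_2[u]$-linear while $u^3=0$ in $H^*_{Pin(2)}(pt)\cong\mathbb{Z}_2[u,q]/(u^3)$, so that $u^3\,SW^{Pin(2)}_{X,\mathfrak{s}}(q^{m}) = SW^{Pin(2)}_{X,\mathfrak{s}}(u^3 q^{m})=0$, is precisely the paper's argument. The missing step is immediate: $H^*(Pic^{\mathfrak{s}}(X))[u]/(u^{b_+(X)})$ is a free $H^*(Pic^{\mathfrak{s}}(X))$-module on $1,u,\dots,u^{b_+(X)-1}$, so if $\sum_i \alpha_i u^i$ is killed by $u^3$ then $\alpha_i=0$ for all $i\le b_+(X)-4$; i.e.\ the class is divisible by $u^{b_+(X)-3}$, and in particular its $u^0$-coefficient vanishes as soon as $b_+(X)>3$. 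That is the whole proof — no Thom-class, Euler-class or chamber analysis is needed.

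Instead of drawing this conclusion, you treat the $u^{b_+(X)-3}$-divisibility as ``the main obstacle'' and try to extract it from the geometry of the tautological chamber: you first assert divisibility by $u^{b_+(X)-1}$ (which is false — it would kill the K3 invariant and the $b_+=1,2$ cases), then retract it only because it conflicts with Theorem \ref{thm:thm3}, and finally propose an Euler-class computation for $H^+$ over $\mathbb{RP}^{b_+(X)-1}$ or a product formula with $Y=S^4$, neither of which you carry out and neither of which is the actual source of the divisibility. The divisibility by $u^{b_+(X)-3}$ (equivalently, the factor $w_{b_+(X)-3}$ in the families statement) comes purely from the relation $u^3=0$ in $H^*_{Pin(2)}(pt)$ together with the module structure of the target; there is no separate chamber-induced $u$-power to establish. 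So the proposal as written has a genuine gap: the key divisibility claim is never proved, even though the fact you had already stated suffices to prove it in one line.
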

\begin{proof}
Assume that $b_+(X)>3$. By Lemma \ref{lem:u=0}, it suffices to show that \linebreak $SW^{Pin(2)}_{X,\mathfrak{s}}(q^m)|_{u=0} = 0$ for all $m \ge 0$. Recall that $SW^{Pin(2)}_{X , \mathfrak{s}}$ is a map 
\[
SW^{Pin(2)}_{X,\mathfrak{s}} : H^*_{Pin(2)}(pt) \to H^{*-2d+b_+(X)+1}_{\mathbb{Z}_2}( B_{X,\mathfrak{s}}) \cong H^{*-2d+b_+(X)+1}( Pic^{\mathfrak{s}})[u]/(u^{b_+(X)}).
\]
Recall also that $u^3 = 0$ in $H^*_{Pin(2)}(pt)$. Hence 
\[
u^3 SW^{Pin(2)}_{X,\mathfrak{s}}(q^m) = SW^{Pin(2)}_{X,\mathfrak{s}}(u^3 \theta ) = 0.
\]
This means that $SW^{Pin(2)}_{X , \mathfrak{s}}(q^m)$ is divisible by $u^{b_+(X)-3}$ and hence $SW^{Pin(2)}_{X , \mathfrak{s}}(q^m)|_{u=0} = 0$.
\end{proof}

Theorem \ref{thm:vanb+} is a generalisation of the main result of \cite{bau} (see also \cite{li}), which corresponds to the case that $-\sigma(X)/4-b_+(X)-1+b_1(X) = 0$, or equivalently the case that the moduli space of the Seiberg--Witten equations is zero-dimensional.

\section{A product formula for Seiberg--Witten invariants}\label{sec:prod}

Suppose that we have two $S^1$-equivariant monopole maps
\[
f_i : S^{V_i,U_i} \to S^{V'_i,U'_i}, \quad i=1,2
\]
over a common base $B$. Let $f = f_1 \wedge_B f_2 : S^{V,U} \to S^{V',U'}$ be the fibrewise smash product of $f_1$ and $f_2$, where $V = V_1 \oplus V_2$ etc. It is $S^1$-equivariant where $S^1$ acts on both factors. Our goal in this Section is to compute $SW_f$ in terms of $SW_{f_1}$ and $SW_{f_2}$.

Let $\phi : B \to H^+ \setminus \{0\}$ be a chamber for $f$. Since $H^+ = H^+_1 \oplus H^+_2$ we can write $\phi = (\phi_1 , \phi_2)$, where $\phi_1$ and $\phi_2$ do not simultaneously vanish. Perturbing $\phi$ slightly, we may assume that $\phi_1$ and $\phi_2$ meet the zero sections of $H^+_1, H^+_2$ transversally. Let $Z_1,Z_2 \subset B$ be the zero loci of $\phi_1, \phi_2$. So $Z_1, Z_2$ are disjoint and $Z_i$ is Poincar\'e dual to the Euler class $e(H^+_i) \in H^{b^+_i}(B)$, where $b^+_i$ denotes the rank of $H^+_i$.

The key observation is that the map $f$ is $S^1 \times S^1$-equivariant, where the $i$-th copy of $S^1$ acts as scalar multiplication on $V_i$ and $V'_i$. Keeping track of this extra symmetry, we will be able to compute $SW_f^\phi$ using localisation in equivariant cohomology. Carrying out all constructions with respect to this larger group, we get a lifted Thom class
\[
\tau^\phi_{V,U} \in H^*_{S^1 \times S^1}( S^{V',U'} , S^{U} ),
\]
which pulls back to
\[
f^*(\tau^\phi_{V,U}) \in H^*_{S^1 \times S^1}( \widetilde{Y}^{V,U} , \partial \widetilde{Y}^{V,U}) \cong H^*_{S^1}( Y^{V,U} , \partial^{V,U})
\]
where on the right, we have identified the quotient of $S^1 \times S^1$ by the diagonal subgroup $\Delta S^1$ with $S^1$ via
\[
(S^1 \times S^1)/\Delta S^1 \cong S^1, \quad (a,b) \mapsto ab^{-1}.
\]
The quotient map $S^1 \times S^1 \to S^1$ is split surjective with splitting map $S^1 \to S^1 \times S^1$ given by $a \mapsto (a,1)$. Hence we can identify the quotient group with the subgroup $S^1 \times \{1\}$.

The projection map $\pi_{V,U} : Y^{V,U} \to B$ is $S^1$-equivariant, where $S^1$ acts trivially on $B$, hence, as explained in Section \ref{sec:pin2}, defines a push-forward map
\[
(\pi_{V,U})_* : H^*_{S^1}( Y^{V,U} , \partial Y^{V,U}) \to H_{S^1}^{*-(2a+b-1)}(B).
\]
It follows that we can define an enhancement of $SW_f^\phi$ valued in $S^1$-equivariant cohomology:
\begin{align*}
SW_{S^1 \times S^1,f}^{\phi} : H^*_{S^1 \times S^1}(pt) &\to H_{S^1}^{*-2d+b_++1}(B), \\
\quad SW_{S^1\times S^1,f}^\phi(\theta) &= (\pi_{V,U})_*( \theta f^*(\tau^\phi_{V,U})).
\end{align*}
Furthermore, the map is compatible with $SW_f^\phi$ in the sense that we have a commutative diagram
\[
\xymatrix{
H_{S^1 \times S^1}^*(pt) \ar[rr]^-{SW_{S^1 \times S^1, f}^\phi} \ar[d] & & H_{S^1}^{*-2d+b_++1}(B) \ar[d] \\
H_{S^1}^*(pt) \ar[rr]^-{SW_f^\phi} & & H^{*-2d+b_++1}(B)
}
\]
where the vertical maps are the forgetful maps in equivariant cohomology obtained by restricting to the subgroups $\Delta S^1 \subset S^1 \times S^1$ and $\{1 \} \subset S^1$. Moreover, since the map $H_{S^1 \times S^1}^*(pt) \to H_{S^1}^*(pt)$ is surjective, we see that $SW_{S^1 \times S^1,f}^\phi$ completely determines $SW_f^\phi$.

Let us establish notation for various subgroups of $S^1 \times S^1$. Write $S^1_i$ for the subgroup given by the $i$-th copy of $S^1$ and $\Delta S^1$ for the diagonal copy of $S^1$. If we write $S^1$ without any further decoration, it will be understood as the quotient group $(S^1 \times S^1)/\Delta S^1$. We have $H^*_{\Delta S^1}(pt) \cong \mathbb{Z}_2[x]$ and $H^*_{S^1 \times S^1}(pt) \cong \mathbb{Z}_2[x_1,x_2]$, where $x_i$ corresponds to the $i$-th copy of $S^1$. More precisely, $x_i$ is the pullback of the generator of $H^2_{S^1_i}(pt)$. The restriction map $H^*_{S^1 \times S^1}(pt) \to H^*_{\Delta S^1}$ is the map $\mathbb{Z}_2[x_1,x_2] \to \mathbb{Z}_2[x]$ which sends $x_1$ and $x_2$ to $x$. When thinking of $S^1$ as the quotient $(S^1 \times S^1)/\Delta S^1$, we write $H^*_{S^1}(pt) = \mathbb{Z}_2[y]$. Since the quotient map $S^1 \times S^1 \to S^1$ is given by $(a,b) \to ab^{-1}$, it follows that the pullback of $y$ equals $x_1 - x_2$.

Let $(Y^{V,U})^{S^1}$ denote the fixed point set of the $S^1$-action on $Y^{V,U}$ and $\iota : (Y^{V,U})^{S^1} \to Y^{V,U}$ the inclusion. Then $(Y^{V,U})^{S^1}$ is a manifold with boundary and the boundary of $(V^{V,U})^{S^1}$ is the fixed point set of the $S^1$-action on $\partial Y^{V,U}$. It is easily seen that $(Y^{V,U})^{S^1} = F_1 \cup F_2$, where $F_1 = Y^{V_2 , U_1 \oplus U_2}$ and $F_2 = Y^{V_1 , U_1 \oplus U_2}$. Let $\widetilde{F}_i$ denote the preimage of $F_i$ in $\widetilde{Y}^{V,U}$. Then $\widetilde{F}_i$ is the fixed point set of $S^1_i$ acting on $\widetilde{Y}^{V,U}$. The normal bundle of $\widetilde{F}_i$ in $\widetilde{Y}^{V,U}$ is the pullback of $V_i$ to $\widetilde{F}_i$. Turning this around, the normal bundle of $F_i$ is obtained by taking the normal bundle of $\widetilde{F}_i$ and quotienting by the action of $\Delta S^1$. Since $\Delta S^1$ acts on $V_i$ with weight $+1$, we see that the normal bundle of $F_i$ is $N_i = V_i \otimes L$, where $L \to Y^{V,U}$ is the line bundle associated to the circle bundle $\widetilde{Y}^{V,U} \to Y^{V,U}$. Let $c = c_{1,S^1 \times S^1}(L)$ denote the $S^1 \times S^1$-equivariant Chern class of $L$. The image of $c$ in $\Delta S^1$-equivariant cohomology is $x$. If we restrict $L$ to $F_1$, then $S^1_1$ acts trivially, hence $c|_{F_1} = x_2$. Similarly, $c|_{F_2} = x_1$.

The localisation theorem \cite[III (3.8)]{die} says that the pullback 
\[
\iota^* : H^*_{S^1}( Y^{V,U} , \partial Y^{V,U}) \to H^*_{S^1}( (Y^{V,U})^{S^1} , \partial (Y^{V,U})^{S^1})
\]
is an isomorphism after localising with respect to $y$. Similarly, the pushforward map $\iota_* : H^*_{S^1}( (Y^{V,U})^{S^1} , \partial (Y^{V,U})^{S^1} ) \to H^*_{S^1}( Y^{V,U} , \partial Y^{V,U})$ is an isomorphism after localising with respect to $y$. At this point we should remark that since $F_1$ and $F_2$ will typically have different dimensions, the pushforward does not respect degrees, only the degree mod $2$. In any case, since the map is an isomorphism in the localised rings, there exists a class $\mu \in y^{-1}H^*_{S^1}( (Y^{V,U})^{S^1} , \partial (Y^{V,U})^{S^1})$ of mixed degree such that $\iota_*(\mu) = 1$. Since $\iota^* \iota_*(\mu) = e_{S^1}(N)\mu$, where $N$ denotes the normal bundle of $(Y^{V,U})^{S^1}$ in $Y^{V,U}$ and $e_{S^1}(N)$ is the $S^1$-equivariant Euler class, we must have $\mu = e_{S^1}(N)^{-1}$. We will make this more precise below. 

Let $N_i$ denote $N|_{F_i}$. We have already shown that $N_i = V_i \otimes L$. Identify the quotient group $(S^1 \times S^1)/\Delta S^1$ with $S^1_1$. This acts with weight $+1$ on $V_1$. It acts with weight $-1$ on $V_2$ because $(a,1) \sim (1,a^{-1})$ modulo $\Delta S^1$. Hence
\begin{align*}
e_{S^1}(N_1) &= (y+x_2)^{a_1} + (y+x_2)^{a_1-1}c_1(V_1) + \cdots + c_{a_1}(V_1), \\
e_{S^1}(N_2) &= (-y+x_1)^{a_2} + (-y+x_1)^{a_2-1}c_1(V_2) + \cdots + c_{a_2}(V_2).
\end{align*}
Recall that $y = x_1 - x_2$. Hence $y+x_2 = x_1$ and $-y+x_1 = x_2$, so the above expressions can be written as
\begin{align*}
e_{S^1}(N_1) &= x_1^{a_1} + x_1^{a_1-1}c_1(V_1) + \cdots + c_{a_1}(V_1), \\
e_{S^1}(N_2) &= x_2^{a_2} + x_2^{a_2-1}c_1(V_2) + \cdots + c_{a_2}(V_2).
\end{align*}
However, writing the Euler classes this way makes it less clear how to invert them. For this purpose, it is better to write $e_{S^1}(N_1), e_{S^1}(N_2)$ in the form
\begin{align*}
e_{S^1}(N_1) &= y^{a_1} + y^{a_1-1}c_1(V_1 \otimes L) + \cdots, \\
e_{S^1}(N_2) &= (-y)^{a_2} + (-y)^{a_2-1}c_1(V_2 \otimes L) + \cdots.
\end{align*}

We then have
\begin{align*}
e_{S^1}(N_1)^{-1} &= y^{-a_1} + y^{-a_1-1}s_1(V_1 \otimes L) + \cdots, \\
e_{S^1}(N_2)^{-1} &= (-y)^{-a_2} + (-y)^{-a_2-1}s_1(V_2 \otimes L) + \cdots
\end{align*}
where $s_j(V_1 \otimes L), s_j(V_2 \otimes L)$ are the Segre classes of $V_1 \otimes L,V_2 \otimes L$. Since $F_i$ is finite dimensional, these are zero for all large enough $j$ and hence the above expressions for $e_{S^1}(N_i)^{-1}$ have only finitely many terms.

For a complex vector bundle $E$ of rank $r$, we have.
\[
c_j( E \otimes L) = \sum_{l = 0}^{j} c_l(E) c_1(L)^{j-l} \binom{r-l}{j-l}, \quad s_j( E \otimes L) = \sum_{l = 0}^{j} s_l(E) c_1(L)^{j-l} \binom{-r-l}{j-l}
\]
In fact, the same expressions hold even when $E$ is a virtual vector bundle. Applying these expressions to $D_1, D_2$, we find
\begin{align*}
e_{S^1}(N_1)^{-1} &= \sum_{j \ge 0} y^{-a_1-j} \sum_{l=0}^j x_1^{j-l} s_l(D_1) \binom{ -d_1 - l}{j-l} \in H^*(F_1)[y,y^{-1}], \\
e_{S^1}(N_2)^{-1} &= \sum_{j \ge 0} (-y)^{-a_1-j} \sum_{l=0}^j x_1^{j-l} s_l(D_1) \binom{ -d_2 - l}{j-l} \in H^*(F_2)[y,y^{-1}].
\end{align*}

Let $\pi_1 : F_1 \to B$, $\pi_2 : F_2 \to B$ be the projections to $B$. The localisation theorem then gives
\begin{equation}\label{equ:loc1}
SW_{S^1 \times S^1,f}^{\phi}(\theta) = (\pi_1)_*( e_{S^1}(N_1)^{-1} \theta f^*(\tau^\phi_{V,U})) + (\pi_2)_*( e_{S^1}(N_2)^{-1} \theta f^*( \tau^\phi_{V,U} )).
\end{equation}
Let $j_i : S^{U_i} \to S^{V'_i , U'_i}$ denote the restriction of $f_i$ to $S^{U_i}$. By the assumption that $f_1,f_2$ are monopole maps, we can assume that $j_1,j_2$ are given by inclusions $U_i \to U'_i$. Now to compute $(\pi_1)_*( e_{S^1}(N_1)^{-1} \theta f^*(\tau^\phi_{V,U}))$, note that we are restricting to $S^0 \subseteq S^{V_1}$, so $f$ can be replaced by $j_1 \wedge f_2$.

Let $\iota_1 : S^0 \to S^{V_1 , H^+_1}$ be the inclusion map. Then $j_1 \wedge f_2$ is a suspension of $\iota_1 \wedge f_2 : S^{V_2 , U_2} \to S^{V'_1 , H^+_1} \wedge S^{V'_2 , U'_2}$, so they have the same Seiberg--Witten invariants (as shown in \cite[Proposition 3.8]{bk}). It remains to compute the Seiberg--Witten invariants of $\iota_1 \wedge f_2$ (and similarly $f_1 \wedge \iota_2$).

Recall the chamber $\phi = (\phi_1 , \phi_2)$ and recall that $Z_1$ is the zero locus of $\phi_1$. Recall also that $\phi_2$ is non-vanishing on $Z_1$. After a small perturbation, we may assume that $\phi_2|_{Z_1} : Z_1 \to S^{V'_2,U'_2}$ is transverse to $f_2|_{Z_1} : S^{V_2,U_2} \to S^{V'_2 , U'_2}$ (the fact that this can be done for $S^1$-equivariant monopole maps is explained in \cite[Pages 522-523]{bk}. The same argument also works in the $Pin(2)$-equivariant case, because the stabiliser of any point in $(f_2|_{Z_1})^{-1}( \phi_2|_{Z_1})$ is trivial). Let $\widetilde{\mathcal{M}}_2 \to Z_1$ denote the pre-image $(f_2|_{Z_1})^{-1}( \phi_2|_{Z_1})$ and $\mathcal{M}_2 = \widetilde{\mathcal{M}}_2/S^1$ the quotient. Then $(\iota_1 \wedge f_2)^{-1}(\phi(B)) = \widetilde{\mathcal{M}}_2$. This is a smooth manifold, however it is not cut out transversally. The technique of obstruction bundles (eg. \cite[Section 3]{fm}) can be used to overcome this difficulty. The obstruction bundle on $\widetilde{\mathcal{M}}_2$ is $V'_1$. This descends to the bundle $V'_1 \otimes L$ on $\mathcal{M}_2$. Hence the Seiberg--Witten invariants of $f_2|_{Z_1}$ and $\iota_1 \wedge f_2$ are related by:
\[
SW^\phi_{\iota_1 \wedge f_2}( \theta ) = (j_1)_* SW^{\phi_2}_{f_2|_{Z_1}}( e(V'_1 \otimes L) \theta )
\]
where $j_1 : Z_1 \to B$ is the inclusion map. To apply this to the localisation formula, we need the $S^1 \times S^1$-equivariant extension of this formula,
\[
SW^\phi_{S^1 \times S^1 , \iota_1 \wedge f_2}( \theta ) = (j_1)_* SW^{\phi_2}_{S^1 \times S^1 , f_2|_{Z_1}}( e_{S^1}(V'_1 \otimes L) \theta ).
\]
Some care is required in interpreting the right hand side of this equation. First of all, we can identify $S^1 \times S^1$ with the product $S^1_1 \times \Delta S^1$ via the isomorphism $(a,b) \mapsto (ab^{-1} , b)$. Next, the argument $e_{S^1}(V'_1 \otimes L) \theta$ should be thought of as an element of
\[
y^{-1} H^*_{S^1 \times S^1}( \widetilde{Y}^{V_2,U_2}|_{Z_1}) \cong y^{-1} H^*_{S^1_1 \times \Delta S^1}(\widetilde{Y}^{V_2,U_2}|_{Z_1}) \cong \mathbb{Z}_2[y,y^{-1}] \otimes_{\mathbb{Z}_2} H^*_{\Delta S^1}(\widetilde{Y}^{V_2,U_2}|_{Z_1}).
\]
Let $\psi_2 : y^{-1} H^*_{S^1 \times S^1}(\widetilde{Y}^{V_2,U_2}|_{Z_1}) \cong \mathbb{Z}_2[y,y^{-1}] \otimes_{\mathbb{Z}_2} H^*_{\Delta S^1}(\widetilde{Y}^{V_2,U_2}|_{Z_1})$ denote this isomorphism. Note in particular that $\psi_2(x_1) = y+x$, $\psi_2(x_2) = x$, where $x$ is the generator of $H^2_{\Delta S^1}(pt)$ pulled back to $H^2_{\Delta S^1}(\widetilde{Y}^{V_2,U_2}|_{Z_1})$. From this, it follows that we have a commutative diagram
\[
\xymatrix{
y^{-1} H^*_{S^1 \times S^1}(\widetilde{Y}^{V_2,U_2}|_{Z_1}) \ar[d] \ar[rr]^-{SW^{\phi_2}_{S^1 \times S^1 , f_2|_{Z_1}}} \ar@/_1.8pc/ @<-10ex>[dd]_-{\psi_2} & & y^{-1}H^*_{S^1}(Z_1) \ar[dd]^-{\cong} \\
y^{-1} H^*_{S^1_1 \times \Delta S^1}(\widetilde{Y}^{V_2,U_2}|_{Z_1}) \ar[d] & & \\
\mathbb{Z}_2[y,y^{-1}] \otimes_{\mathbb{Z}_2} H^*_{\Delta S^1}(\widetilde{Y}^{V_2,U_2}|_{Z_1}) \ar[rr]^-{id \otimes SW^\phi_{f_2|_{Z_1}}} & & \mathbb{Z}_2[y,y^{-1}] \otimes_{\mathbb{Z}_2} H^*(Z_1)
}
\]

One similarly defines $\psi_1 : y^{-1} H^*_{S^1 \times S^1}( \widetilde{Y}^{V_1,U_1}|_{Z_2}) \cong \mathbb{Z}_2[y,y^{-1}] \otimes_{\mathbb{Z}_2} H^*_{\Delta S^1}(\widetilde{Y}^{V_1,U_1}|_{Z_2})$ where $\psi_1(x_1) = x$, $\psi(x_2) = -y+x$. Exchanging the roles of $f_1$ and $f_2$ gives a similar formula relating the Seiberg--Witten invariants of $f_1|_{Z_2}$ and $f_1 \wedge \iota_2$. Substituting into (\ref{equ:loc1}) and noting that $e_{S^1}(V_i \otimes L)^{-1} e_{S^1}(V'_i \otimes L) = e_{S^1}(D_i \otimes L)^{-1}$ gives:
\begin{theorem}\label{thm:product1}
For all $\theta \in H^*_{S^1 \times S^1}(B)$, we have
\begin{align*}
SW^{\phi}_{S^1_1 \times S^1_2 , f_1 \wedge f_2}(\theta) &= (j_1)_*\left( id \otimes SW^{\phi_2}_{f_2|_{Z_2}}\right)( \psi_2(e_{S^1}(D_1 \otimes L)^{-1} \theta) ) \\
& \quad \quad + (j_2)_* \left(id \otimes SW^{\phi_1}_{f_1|_{Z_1}}\right)( \psi_1( e_{S^1}(D_2 \otimes L)^{-1} \theta) ).
\end{align*}
\end{theorem}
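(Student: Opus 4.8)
The plan is to exploit the extra circle of symmetry carried by the smash product $f = f_1\wedge_B f_2$ and to evaluate the enhanced invariant $SW^\phi_{S^1\times S^1,f}$ by localising to the fixed-point set of the quotient circle $S^1 = (S^1\times S^1)/\Delta S^1$ acting on $Y^{V,U}$. First I would record the fixed-point data established above: $(Y^{V,U})^{S^1} = F_1\sqcup F_2$ with $F_1 = Y^{V_2,U_1\oplus U_2}$ and $F_2 = Y^{V_1,U_1\oplus U_2}$, the normal bundle of $F_i$ being $N_i\cong V_i\otimes L$ with $L\to Y^{V,U}$ the line bundle of the circle bundle $\widetilde{Y}^{V,U}\to Y^{V,U}$; the equivariant Euler classes $e_{S^1}(N_i)$ and their inverses are then the explicit finite Segre-class expressions displayed above. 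Applying the localisation theorem of \cite[III~(3.8)]{die} to the pushforward $(\pi_{V,U})_*$ constructed in Section~\ref{sec:pin2} then gives the two-term identity (\ref{equ:loc1}):
\[
SW^\phi_{S^1\times S^1,f}(\theta) = (\pi_1)_*\big(e_{S^1}(N_1)^{-1}\,\theta\, f^*(\tau^\phi_{V,U})\big) + (\pi_2)_*\big(e_{S^1}(N_2)^{-1}\,\theta\, f^*(\tau^\phi_{V,U})\big).
\]

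The next step is to identify each term with an invariant of $f_1$ or $f_2$. For the $F_1$-term, restricting $f$ to $F_1$ amounts to replacing $f_1$ by its restriction $j_1$ to $S^{U_1}$, which by the monopole-map hypothesis is (up to homotopy) induced by an inclusion of vector bundles; hence $f|_{F_1}$ may be replaced by $j_1\wedge f_2$, a suspension of $\iota_1\wedge f_2$, and the two have equal Seiberg--Witten invariants by the stability statement \cite[Proposition~3.8]{bk}. Writing $\phi = (\phi_1,\phi_2)$ with $\phi_1$ transverse to the zero section and $\phi_2$ nowhere vanishing along $Z_1 = \phi_1^{-1}(0)$, the zero set $(\iota_1\wedge f_2)^{-1}(\phi(B))$ is the smooth but not transversally cut-out manifold $\widetilde{\mathcal{M}}_2 = (f_2|_{Z_1})^{-1}(\phi_2|_{Z_1})$, and the obstruction-bundle technique \cite[Section~3]{fm} applies with obstruction bundle $V'_1$, descending to $V'_1\otimes L$ on $\mathcal{M}_2 = \widetilde{\mathcal{M}}_2/S^1$; this yields $SW^\phi_{\iota_1\wedge f_2}(\theta) = (j_1)_*\,SW^{\phi_2}_{f_2|_{Z_1}}(e(V'_1\otimes L)\,\theta)$ together with its $S^1\times S^1$-equivariant refinement. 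The $F_2$-term is treated the same way with the roles of $f_1$ and $f_2$ interchanged, giving the contribution over $Z_2 = \phi_2^{-1}(0)$.

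Finally I would assemble the pieces. Identifying $S^1\times S^1\cong S^1_1\times\Delta S^1$ via $(a,b)\mapsto(ab^{-1},b)$ and passing through the isomorphism $\psi_2$ (with $\psi_2(x_1) = y+x$, $\psi_2(x_2) = x$), the commutative diagram above rewrites the $S^1\times S^1$-equivariant invariant of $f_2|_{Z_1}$ as $id\otimes SW^{\phi_2}_{f_2|_{Z_1}}$, and similarly $\psi_1$ handles $f_1|_{Z_2}$. Substituting these into (\ref{equ:loc1}) and using the cancellation
\[
e_{S^1}(N_i)^{-1}\,e_{S^1}(V'_i\otimes L) = e_{S^1}(V_i\otimes L)^{-1}\,e_{S^1}(V'_i\otimes L) = e_{S^1}(D_i\otimes L)^{-1},
\]
where $D_i = V_i - V'_i$, produces exactly the stated formula.

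The step I expect to be the main obstacle is the obstruction-bundle computation of the two fixed-point contributions: one must verify that $(\iota_1\wedge f_2)^{-1}(\phi(B))$ really is $\widetilde{\mathcal{M}}_2$, that the cokernel of the linearised equations assembles to the bundle $V'_1$ so that the correction factor is precisely $e(V'_1\otimes L)$ after descending to $\mathcal{M}_2$, and above all that the entire argument remains valid $S^1\times S^1$-equivariantly. The last point is what makes it work: every point of $(f_2|_{Z_1})^{-1}(\phi_2|_{Z_1})$ has trivial stabiliser, so the equivariant transversality and obstruction-bundle arguments of \cite{bk,fm} apply verbatim. A secondary, bookkeeping-heavy subtlety is keeping straight the several circle subgroups of $S^1\times S^1$ and the isomorphisms $\psi_1,\psi_2$, together with the fact that the pushforward for a fibre bundle with boundary must be built from the Leray--Serre spectral sequence as in Section~\ref{sec:pin2}, since Poincar\'e--Lefschetz duality fails in equivariant cohomology.
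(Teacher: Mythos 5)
Your proposal is correct and follows essentially the same route as the paper: localisation for the quotient circle $(S^1\times S^1)/\Delta S^1$ with fixed-point components $F_1,F_2$ and normal bundles $V_i\otimes L$, reduction of each contribution to $\iota_1\wedge f_2$ (resp.\ $f_1\wedge\iota_2$) via suspension invariance, the obstruction-bundle correction $e(V'_1\otimes L)$, and the identifications $\psi_1,\psi_2$ together with the cancellation $e_{S^1}(V_i\otimes L)^{-1}e_{S^1}(V'_i\otimes L)=e_{S^1}(D_i\otimes L)^{-1}$. The only discrepancy is notational: your restrictions $f_2|_{Z_1}$, $f_1|_{Z_2}$ match the paper's in-text derivation, while the theorem statement's subscripts $f_2|_{Z_2}$, $f_1|_{Z_1}$ appear to be a typo there.
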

Some explanation of how to use the formula is required. Here 
\begin{align*}
e_{S^1}(D_1 \otimes L)^{-1} &= y^{-d_1} + y^{-d_1-1}s_1(D_1 \otimes L) + \cdots, \\
e_{S^1}(D_2 \otimes L)^{-1} &= (-y)^{-d_2} + (-y)^{-d_2-1}s_1(D_2 \otimes L) + \cdots,
\end{align*}
where $s_j( D_i \otimes L)$ is the $j$-th Segre class of $D_i \otimes L$ and $L|_{\mathcal{M}_i}$ is the line bundle corresponding to $\widetilde{\mathcal{M}}_i \to \mathcal{M}_i$. Furthermore, since $\psi_i( c_1(L)|_{F_i} ) = \psi_i(x_i) = x$, the Segre classes can be expanded as
\[
\psi_i ( s_j( D_i \otimes L) ) = \sum_{l = 0}^{j} s_l(D_i) x^{j-l} \binom{-d_i-l}{j-l}.
\]

We now consider adapting Theorem \ref{thm:product1} to the case of a smash product of $Pin(2)$-equivariant monopole maps $f_1, f_2$ over a common base $B$. We assume that the two involutions on $B$ corresponding to $f_1$ to $f_2$ commute. In this case the smash product $f = f_1 \wedge_B f_2$ has $Pin(2) \times Pin(2)$-symmetry. Since we are ultimately interested in the diagonal copy of $Pin(2)$, but want to retain the extra circle symmetry we consider the index $2$ subgroup $G \subset Pin(2) \times Pin(2)$ generated by $S^1 \times S^1$ and $(j,j)$. The diagonal circle $\Delta S^1 \subset S^1 \times S^1$ is a normal subgroup of $G$ and $G/\Delta S^1 \cong O(2)$. Carrying out the construction of the Seiberg--Witten invariant of $f$, but with respect to the larger group $G$ gives a map
\[
SW_{G,f}^{\phi} : H^*_{G}(pt) \to H_{O(2)}^{*-2d+b_++1}(B)
\]
compatible with the $Pin(2)$-equivariant Seiberg--Witten invariant in the sense that we have a commutative square
\[
\xymatrix{
H_{G}^*(pt) \ar[rr]^-{SW_{G, f}^\phi} \ar[d] & & H_{O(2)}^{*-2d+b_++1}(B) \ar[d] \\
H_{Pin(2)}^*(pt) \ar[rr]^-{SW_{Pin(2),f}^\phi} & & H_{\mathbb{Z}_2}^{*-2d+b_++1}(B)
}
\]

Let $p_i : G \to Pin(2)$ be the inclusion $G \to Pin(2) \times Pin(2)$, followed by projection to the $i$-th factor and let $p : G \to O(2)$ be the quotient map $G \to G/\Delta S^1 \cong O(2)$. For $i=1,2$, set $q_i = p_i^*(q) \in H^4_{G}(pt)$. Recall that $H^*_{O(2)}(pt) \cong \mathbb{Z}_2[u,y]$ where $deg(u) = 1$, $deg(y) = 2$. Abusing notation, we also write $y \in H^2_{G}(pt)$ for the class $p^*(y)$.

\begin{proposition}\label{prop:q12}
We have $H^*_G(pt) \cong \mathbb{Z}_2[u,y,q_1]/(u^3)$. Furthermore we have $q_2 + q_1 = y^2 + yu^2$.
\end{proposition}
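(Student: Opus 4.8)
The plan is to compute the ring $H^*_G(pt)$ first, and then to determine the single unknown coefficient in the asserted relation by restricting to well-chosen subgroups.

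\textbf{Computing $H^*_G(pt)$.} I would use the Borel fibration $B\Delta S^1 \to BG \to BO(2)$ associated to $1 \to \Delta S^1 \to G \to O(2) \to 1$. Since $j$ acts on $\Delta S^1 \cong S^1$ by inversion, which is trivial on $H^*(-;\mathbb{Z}_2)$, the Serre spectral sequence has untwisted coefficients, $E_2 = \mathbb{Z}_2[u,y]\otimes \mathbb{Z}_2[x]$ with $\deg x = 2$. The only possibly nonzero differential on the generator $x$ is the transgression $d_3(x) \in H^3(BO(2);\mathbb{Z}_2) = \langle u^3, uy\rangle$, and I would pin it down by naturality under two subgroups: the diagonal $\Delta Pin(2) \cong Pin(2) \subset G$, whose quotient by $\Delta S^1$ is a reflection $\mathbb{Z}_2 \subset O(2)$; and $K := p^{-1}(V)$, for $V \cong \mathbb{Z}_2\times\mathbb{Z}_2$ the Klein four-group in $O(2)$ containing the image of $(j,j)$, for which $K \cong Pin(2)\times\mathbb{Z}_2$ and the restricted fibration is a product. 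In both cases the transgression downstairs is known ($d_3(x) = u^3$ for the standard $Pin(2)$-fibration, and for the product fibration of $K$), and comparing images of $d_3(x)$ in the $H^3$ of the respective base spaces -- where the relevant classes are honestly nonzero -- forces first the $u^3$-coefficient and then the $uy$-coefficient, giving $d_3(x) = u^3$ exactly. A short derivation computation then yields $E_4 = \mathbb{Z}_2[u,y,x^2]/(u^3)$; there are no higher differentials, since $u,y$ come from the base and $x^2$ is a permanent cycle (it is the fibre restriction of $q_1$, using $q|_{S^1} = x^2$). Thus $E_\infty = \mathbb{Z}_2[u,y,x^2]/(u^3)$. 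Since $E_\infty^{3,0} = 0$ one gets $u^3 = 0$ in $H^*_G(pt)$, so the ring map $\mathbb{Z}_2[u,y,q_1]/(u^3) \to H^*_G(pt)$ is defined, and it is an isomorphism because the monomials $u^a y^b q_1^c$ ($a\le 2$) have images forming a basis of $\operatorname{gr}H^*_G(pt) = E_\infty$ and the Serre filtration is finite in each degree.

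\textbf{The relation.} Both $q_1,q_2$ restrict to $x^2$ on the fibre, so $q_1 + q_2$ lies in the first filtration level, which by the shape of $E_\infty$ (nonzero only in bidegrees $(p,q)$ with $q \equiv 0 \bmod 4$) equals the bottom one, $\langle y^2, u^2 y\rangle$; write $q_1 + q_2 = \beta y^2 + \gamma u^2 y$. Restricting to $S^1\times S^1 = p^{-1}(SO(2))$, where $q_i \mapsto x_i^2$, $y \mapsto x_1+x_2$, $u \mapsto 0$, gives $\beta = 1$. For $\gamma$ I would restrict to $K \cong Pin(2)\times\mathbb{Z}_2$: there $q_2|_K = q$ (the second projection is the $Pin(2)$-factor), $u|_K = u$, $y|_K = ut + t^2$ with $t$ the degree-one generator of the $\mathbb{Z}_2$-factor, and $q_1|_K = m^*(q)$ for $m : Pin(2)\times\mathbb{Z}_2 \to Pin(2)$ multiplication by the central element $-1$. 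Writing $m^*(q) = q + B u^2 t^2 + C u t^3 + D t^4$: restricting $m$ to $\{e\}\times\mathbb{Z}_2 \hookrightarrow S^1 \subset Pin(2)$ (using $q|_{S^1} = x^2$ and $x|_{\mathbb{Z}_2} = t^2$) gives $D = 1$, while $Sq^1 q = Sq^2 q = 0$ -- which hold by the Wu formula since $q = w_4$ of the real $4$-plane bundle underlying the standard complex $Pin(2)$-representation, whose Stiefel--Whitney classes vanish outside degrees $0$ and $4$ -- together with $Sq^i \circ m^* = m^* \circ Sq^i$ force $C = 0$ and then $B = 0$. Hence $m^*(q) = q + t^4$, so $q_1|_K + q_2|_K = t^4$, while $\beta y^2|_K + \gamma u^2 y|_K = (\beta+\gamma)u^2 t^2 + \beta t^4$; comparing coefficients gives $\gamma = \beta = 1$, i.e.\ $q_2 + q_1 = y^2 + yu^2$.

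I expect the second stage, isolating $\gamma$, to be the main obstacle. The class $u^2 y$ survives restriction only to subgroups that are both disconnected and carry a nonzero $w_2$, and on such subgroups $q_1$ and $q_2$ are hard to separate; the computation of $m^*(q)$ -- powered by $Sq^1 q = Sq^2 q = 0$ -- is what makes this go through. A more computational alternative would be to restrict further to $Q_8 \times \mathbb{Z}_2 \subset Pin(2)\times\mathbb{Z}_2$ and argue inside $H^*(BQ_8;\mathbb{Z}_2)$, but the Wu-formula route is cleaner.
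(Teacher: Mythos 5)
Your argument is correct, but it reaches both halves of the proposition by a genuinely different route than the paper. For the ring structure, the paper quotients $G$ by one circle factor, i.e.\ uses an extension $1 \to S^1 \to G \to Pin(2) \to 1$: since $p^*(y)$ restricts to the generator of the fibre $H^2_{S^1}(pt)$, the Lyndon--Hochschild--Serre spectral sequence degenerates at $E_2$ and $H^*_G(pt) \cong H^*_{Pin(2)}(pt)[y]$ with no differentials to evaluate; you instead use $1 \to \Delta S^1 \to G \to O(2) \to 1$, where the extension is non-split and the transgression must be computed, and your subgroup comparisons (the diagonal $Pin(2)$ sitting over a reflection, and $K = p^{-1}(V) \cong Pin(2)\times\mathbb{Z}_2$ over the Klein four-group, where indeed $u\mapsto u$ and $y \mapsto ut+t^2$) do correctly force $d_3(x) = u^3$, hence $E_\infty \cong \mathbb{Z}_2[u,y,x^2]/(u^3)$. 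For the relation, after the common step of restricting to $S^1\times S^1$ (which fixes every coefficient except that of $yu^2$), the paper argues geometrically: $G$ acts freely on $S^3\times S^3$, the quotient $M$ is $S^2\times S^2$ modulo the simultaneous antipodal involution, the representations $\mathbb{H}_i$ descend to bundles pulled back from $\mathbb{RP}^2$, so $q_1,q_2 \mapsto 0$ in $H^4(M)$, while the self-intersection of the diagonal section forces $y^2 = yu^2 \neq 0$ there, ruling out $q_1+q_2=y^2$. You instead restrict to $K$ and compute $m^*(q) = q + t^4$ for the twisted projection $(g,\epsilon)\mapsto (-1)^\epsilon g$, using the restriction to $\{\pm 1\}\subset S^1$ to get the $t^4$-term and $Sq^1 q = Sq^2 q = 0$ (Wu formula for the underlying rank-$4$ real bundle, whose total Stiefel--Whitney class is $1+w_4$ since $c_1=0$ for the $SU(2)$-representation) to kill the $ut^3$- and $u^2t^2$-terms; comparing with $y^2|_K + \gamma\, u^2y|_K = (1+\gamma)u^2t^2 + t^4$ then gives $\gamma = 1$. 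Both determinations are sound: the paper's buys a self-contained geometric computation with no Steenrod input, while yours stays entirely inside subgroup restrictions plus standard facts about $H^*(BPin(2);\mathbb{Z}_2)$, and your single subgroup $K$ in fact suffices both to pin down the transgression and to isolate the $yu^2$-coefficient.

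One small correction to your write-up: $E_\infty^{3,0}$ is not zero --- it is spanned by the image of $uy$, and indeed $H^3_G(pt)\neq 0$. What you need, and what your computation $d_3(x)=u^3$ already provides, is only that $u^3$ lies in the image of a differential and therefore pulls back to zero in $H^3_G(pt)$, so that the ring map from $\mathbb{Z}_2[u,y,q_1]/(u^3)$ is defined; the rest of your filtration argument is unaffected.
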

\begin{proof}
We have a short exact sequence $1 \to S^1_1 \to G \buildrel p_1 \over \longrightarrow Pin(2) \to 1$, where $S^1_1$ denotes the subgroup $S^1 \times \{1\} \subset S^1 \times S^1 \subset G$. The Lyndon--Hochschild--Serre spectral sequence for $H^*_G(pt)$ has $E_2^{*,*} = H^*_{Pin(2)}( H^*_{S^1_1}(pt) ) \cong H^*_{Pin(2)}[y']$, where $deg(y') = 2$ is the generator of $H^2_{S^1_1}$. The composition $S^1_1 \to G \buildrel p_1 \over \longrightarrow O(2)$ is easily seen to be the inclusion of the circle subgroup of $O(2)$. This implies that $p^*(y) \in H^2_G(pt)$ restricts to $y'$. This implies that the spectral sequence degenerates and $H^*_G(pt) \cong H^*_{Pin(2)}[y] \cong \mathbb{Z}_2[u,y,q_1]/(u^3)$.

It remains to prove the relation $q_1 + q_2 = y^2 + yu^2$. Since $H^4_{G}(pt)$ is spanned by $q_1,y^2,yu^2$, we have that $q_2$ is a linear combination of $q_1, y^2, yu^2$. Consider the subgroup $S^1 \times S^1 \subset G$ which has cohomology $\mathbb{Z}_2[x_1,x_2]$. The restriction map $H^*_{G}(pt) \to H^*_{S^1 \times S^1}$ sends $q_i$ to $x_i^2$, $y$ to $x_1+x_2$ and $u$ to zero. This shows that $q_2$ must be either $q_1 + y^2$ or $q_1 + y^2 + yu^2$. 

Next, we note that $Pin(2)$ acts freely on $S^3 \cong SU(2)$ via the inclusion $Pin(2) \to SU(2)$ and the left action of $SU(2)$ on itself. The quotient space is $\mathbb{RP}^2$, because the quotient of $S^3$ by the $S^1$-subgroup of $Pin(2)$ is $S^3/S^1 \cong S^2$ and the remaining action of $Pin(2)/S^1 \cong \mathbb{Z}_2$ acts on $S^2$ as the antipodal map.

We also have that $G$ acts freely on $S^3 \times S^3$ via the inclusion $G \to Pin(2) \times Pin(2)$ and the obvious product action of $Pin(2) \times Pin(2)$ on $S^3 \times S^3$. Let $M = (S^3 \times S^3)/G$ be the quotient. Clearly $M$ is the quotient of $S^2 \times S^2$ by the involution which acts as the antipodal map on both factors. Projecting to either factor of $S^2$ gives two fibrations
\[
S^2 \to M \buildrel \pi_i \over \longrightarrow \mathbb{RP}^2, \quad i = 1,2.
\]
Both fibrations admit a section $s_i : \mathbb{RP}^2 \to M$ which the image under the quotient map $(S^2 \times S^2) \to M$ of the diagonal copy of $S^2$. From this it follows easily that the Leray--Serre spectral sequences for both fibrations degenerate at $E_2$. Then $H^*_G( S^3 \times S^3) \cong H^*(M)$ is a $6$-dimensional space over $\mathbb{Z}_2$ with basis $1,u,u^2,c,cu,cu^2$, where $c \in H^2(M)$ restricts non-trivially to the fibres of $\pi_1 : M \to \mathbb{RP}^2$. The diagonal $S^2 \to S^2 \times S^2$ has normal bundle $TS^2$ and taking the quotient by the antipodal map on both factors, it follows that the normal bundle of the sections $s_1,s_2$ are both equal to $T\mathbb{RP}^2$. Since $w_2( T\mathbb{RP}^2) = u^2$, it follows that the mod $2$ self-intersection of $s_i$ is odd, but this can only happen if $c^2 \neq 0$, so $c^2 = cu^2$.

Let $\mathbb{H}_1$ be the standard representation of the $i$-th copy of $SU(2)$ in the product $SU(2) \times SU(2)$. By restriction, this defines a representation of $G$ and we have that $q_i = w_4( \mathbb{H}_i)$. By taking the pullback of $\mathbb{H}_i$ to $S^3 \times S^3$ and quotienting by $G$, we have that $\mathbb{H}_i$ descends to a rank $4$ vector bundle $\widetilde{\mathbb{H}}_i \to M$. In fact, $\widetilde{\mathbb{H}}_i$ is the pullback under $\pi_i : M \to \mathbb{RP}^2$ of a rank $4$ vector bundle on $\mathbb{RP}^2$, because the $G$-action on $\mathbb{H}_i$ factors through $p_i : G \to Pin(2)$. Now since $\mathbb{RP}^2$ is $2$-dimensional, we must have that $w_4(\widetilde{\mathbb{H}}_i) = 0$. So the pullback of $q_1,q_2$ to $H^4_{G}(S^3 \times S^3) \cong H^4(M)$ are both zero.

Let $R$ denote the standard $2$-dimensional real representation of $O(2)$. We have that $y = w_2(R)$. By taking the pullback of $R$ to $S^3 \times S^3$ and quotienting by $G$, we obtain a vector bundle $\widetilde{R} \to M$ on $M$. The restriction of $\widetilde{R}$ to any fibre of $\pi_2$ is isomorphic to $\mathcal{O}(1) \to S^2$. In particular, this means that $y = w_2(\widetilde{R})$ equals either $c$ or $c+u^2$. In either case, we then have $y^2 = cu^2 = yu^2$. So $y^2 + yu^2 = 0$, but $y^2 \neq 0$ in $H^*(M)$. Then since $q_1 + q_2 = 0$ in $H^*(M)$, we see that we must have $q_1 + q_2 = y^2 + yu^2$.

\end{proof}

The above proposition implies in particular that $H^*_G(pt) \to H^*_{Pin(2)}(pt)$ is surjective, hence $SW_{Pin(2),f}^{\phi}$ is completely determined by $SW_{G,f}^{\phi}$.

Recall that we have homomorphims $p_i : G \to Pin(2)$ for $i=1,2$ and also $p : G \to O(2)$.

\begin{lemma}\label{lem:tensor}
Let $M$ be a space on which $Pin(2)$ acts. Regard $M$ as a $G$-space via $p_i : G \to Pin(2)$. Then we have an isomorphism
\[
p^* \otimes p_i^* : H^*_{O(2)}(pt) \otimes_{H^*_{\mathbb{Z}_2}(pt)} H^*_{Pin(2)}(M) \cong H^*_G(M).
\]
\end{lemma}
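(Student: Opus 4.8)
The plan is to verify this by a standard spectral sequence/Leray--Hirsch argument, exploiting the fact (established in Proposition \ref{prop:q12}) that $H^*_G(pt)$ is free as a module over $H^*_{Pin(2)}(pt)$ via $p_i^*$. Concretely, fix $i$ (say $i=1$; the case $i=2$ is identical by symmetry) and recall the short exact sequence $1 \to S^1_1 \to G \xrightarrow{p_1} Pin(2) \to 1$ used in the proof of Proposition \ref{prop:q12}. Since $M$ is a $G$-space through $p_1$, the subgroup $S^1_1$ acts trivially on $M$, so the Borel construction gives a fibration $BS^1_1 \to M_G \to M_{Pin(2)}$, or more precisely $M_G \simeq M_{Pin(2)} \times_{B Pin(2)} (\text{something})$; in any case the Leray--Hochschild--Serre spectral sequence for this fibration has $E_2^{p,q} = H^p_{Pin(2)}(M ; H^q_{S^1_1}(pt)) = H^p_{Pin(2)}(M) \otimes_{\mathbb{Z}_2} \mathbb{Z}_2[y']$ with $\deg(y') = 2$.

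The key step is to show this spectral sequence degenerates at $E_2$. This follows exactly as in Proposition \ref{prop:q12}: the class $p^*(y) \in H^2_G(pt) \subset H^2_G(M)$ restricts to the generator $y'$ of $H^2_{S^1_1}(pt)$ on the fibre (because the composite $S^1_1 \hookrightarrow G \xrightarrow{p} O(2)$ is the inclusion of the circle subgroup, on which $y = w_2(R)$ restricts to the Euler class of the standard circle representation), so $y'$ is a permanent cycle; since $E_2$ is generated as an algebra over $E_2^{*,0} = H^*_{Pin(2)}(M)$ by $y'$, all differentials vanish. Hence $E_\infty = E_2$, and $H^*_G(M)$ is a free $H^*_{Pin(2)}(M)$-module on the powers of $p^*(y)$. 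Equivalently, the classes $\{1, p^*(y), p^*(y)^2, \dots\}$ satisfy the hypotheses of Leray--Hirsch, giving
\[
H^*_G(M) \cong H^*_{Pin(2)}(M) \otimes_{\mathbb{Z}_2} \mathbb{Z}_2[y].
\]
To match the stated form, note $\mathbb{Z}_2[y] \cong H^*_{O(2)}(pt) \otimes_{H^*_{\mathbb{Z}_2}(pt)} \mathbb{Z}_2$ is not quite right; rather one uses $H^*_{O(2)}(pt) \cong \mathbb{Z}_2[u,y]$ and $H^*_{\mathbb{Z}_2}(pt) \cong \mathbb{Z}_2[u]$, so $H^*_{O(2)}(pt) \otimes_{H^*_{\mathbb{Z}_2}(pt)} H^*_{Pin(2)}(M) \cong \mathbb{Z}_2[y] \otimes_{\mathbb{Z}_2} H^*_{Pin(2)}(M)$, where the $u$-action on the left factor is identified with the $u$-action on $H^*_{Pin(2)}(M)$ coming from $p_1^*(u \in H^1_{Pin(2)}(pt))$, which agrees with $p^*(u \in H^1_{O(2)}(pt))$ since $p_1$ and $p$ induce the same map $H^1_{\mathbb{Z}_2}(pt) \to H^1_G(pt)$ (both send the generator to the class $u \in H^1_G(pt)$ of the quotient $G \to \mathbb{Z}_2$). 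The map $p^* \otimes p_1^*$ is then visibly the isomorphism just constructed.

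The main obstacle is bookkeeping rather than anything deep: one must (a) confirm that $S^1_1$ really does act trivially on $M$ when $M$ is pulled back along $p_1$ (immediate, since $S^1_1 = \ker p_1$), (b) be careful that the $H^*_{\mathbb{Z}_2}(pt) = \mathbb{Z}_2[u]$-module structure used in the tensor product on the left-hand side is the one for which both $p^*$ and $p_i^*$ are $\mathbb{Z}_2[u]$-linear — this is where one checks $p^*(u) = p_i^*(u) = u$ in $H^1_G(pt)$, using that the three homomorphisms $G \to \mathbb{Z}_2$ obtained by composing $p$, $p_1$, $p_2$ with the respective projections to $\mathbb{Z}_2$ all coincide (they detect the non-identity component $G \setminus (S^1 \times S^1)$) — and (c) note that the degeneration argument is insensitive to $M$ since it only uses the existence of the global class $p^*(y)$ restricting correctly to the fibre. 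Given these points, Leray--Hirsch finishes the proof with no further work.
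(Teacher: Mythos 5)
Your proof is correct and follows essentially the same route as the paper's: the Borel fibration $BS^1 \to M_G \to M_{Pin(2)}$, degeneration of its Leray--Serre spectral sequence forced by the global class $p^*(y)$ restricting to the generator of $H^2$ of the fibre, and Leray--Hirsch, together with the (welcome) explicit check that $p^*(u) = p_i^*(u)$ so that the two $\mathbb{Z}_2[u]$-module structures in the tensor product agree. One cosmetic slip, which in fact mirrors a typo in the paper's own proof of Proposition \ref{prop:q12}: the kernel of $p_1$ is the second circle factor $\{1\} \times S^1$, not $S^1_1 = S^1 \times \{1\}$ (which is $\ker p_2$); since the whole setup is symmetric in the two factors, this relabelling has no effect on the argument.
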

\begin{proof}
By symmetry, it suffices to prove this for $i=2$. Since $S_1^1$ acts trivially on $M$, we get a fibration $BS^1_1 \to M_G \to M_{Pin(2)}$ and a Leray--Serre spectral sequence for $H^*_G(M)$ which has $E_2 = H_{Pin(2)}^*(M)[y]$, where $deg(y) = 2$.

The composition $S^1_1 \to G \to O(2)$ is the inclusion $S^1_1 \to O(2)$. Next, we observe that $H^*_{O(2)}(pt) \cong \mathbb{Z}_2[u,y]$, where $deg(u) = 1$, $deg(y)=2$. Moreover the pullback of $y$ to $H^2_{S^1}(pt)$ is a generator. Since $y \in H^2_{O(2)}(pt)$ can be pulled back to a class in $H^2_G(M)$ whose restriction to the fibre is a generator of $H^2( BS^1)$, it follows that the spectral sequence degenerates at $E_2$ and we have an isomorphism
\[
H^*_{O(2)}(pt) \otimes_{H^*_{\mathbb{Z}_2}(pt)} H^*_{Pin(2)}(M) \cong H^*_{Pin(2)}(M)[y] \cong H^*_G(M)
\]
and that this isomorphism is realised by the map $p^* \otimes p_i^*$.
\end{proof}

Since $H^*_{\mathbb{Z}_2}(pt) \cong \mathbb{Z}_2[u]$ and $H^*_{O(2)}(pt) \cong \mathbb{Z}_2[u,y]$, where $deg(y) = 2$, Lemma \ref{lem:tensor} yields an isomorphism
\[
\psi_i : y^{-1}H^*_G(B) \cong \mathbb{Z}_2[y,y^{-1}] \otimes_{\mathbb{Z}_2} H^*_{Pin(2)}(B) \cong H^*_{Pin(2)}(B)[y,y^{-1}].
\]
Furthermore, $\psi_i$ is a morphism of $H^*_{\mathbb{Z}_2}(pt)$-modules. We have $\psi_i( q_i ) = q$. Furthermore, Proposition \ref{prop:q12} implies that $\psi_1(q_2) = q + y^2 + yu^2$ and $\psi_2(q_1) = q + y^2 + yu^2$. To simplify notation, we define
\[
\mu = y^2 + yu^2
\]
so that $\psi_1(q_2) = \psi_2(q_1) = q+\mu$. Since $y^4 = \mu^2$, there is essentially no difference between localising with respect to $y$ or with respect to $\mu$.

Repeating the localisation argument in $G$-equivariant cohomology gives:
\begin{theorem}\label{thm:product2}
For all $\theta \in H^*_{G}(B)$, we have
\begin{align*}
SW^{\phi}_{G , f_1 \wedge f_2}(\theta) &= (j_1)_*\left( id \otimes SW^{\phi_2}_{Pin(2) , f_2|_{Z_2}}\right)( \psi_2(e_{G}(D_1)^{-1}\theta) ) \\
& \quad \quad + (j_2)_* \left(id \otimes SW^{\phi_1}_{Pin(2) , f_1|_{Z_1}}\right)( \psi_1( e_{G}(D_2)^{-1} \theta) ).
\end{align*}
\end{theorem}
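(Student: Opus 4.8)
The plan is to repeat, essentially verbatim, the localisation computation that established Theorem~\ref{thm:product1}, but carried out in $G$-equivariant cohomology instead of $S^1\times S^1$-equivariant cohomology. The only genuinely new ingredients are the structural results established just above: the description of $H^*_G(pt)$ together with the relation $q_1+q_2=y^2+yu^2$ of Proposition~\ref{prop:q12}, and the tensor decomposition of Lemma~\ref{lem:tensor}. These play the roles that the transparent identities $x_i\mapsto x$ and $y\mapsto x_1-x_2$ played in the $S^1\times S^1$ case.

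First I would set up the $G$-equivariant invariant. Since the two involutions of $B$ attached to $f_1$ and $f_2$ commute, $f=f_1\wedge_B f_2$ is $Pin(2)\times Pin(2)$-equivariant, hence $G$-equivariant, with $(j,j)$ acting as the composite involution. The diagonal circle $\Delta S^1\subset G$ acts freely on $\widetilde Y^{V,U}$, so, exactly as in Section~\ref{sec:pin2}, the chamber $\phi$ determines a lifted Thom class in $H^*_G(S^{V',U'},S^U)$ whose pullback by $f$ lies in $H^*_{O(2)}(Y^{V,U},\partial Y^{V,U})$, and the pushforward $(\pi_{V,U})_*$ lands in $H^*_{O(2)}(B)$; this produces the map $SW^\phi_{G,f}$ together with its compatibility square with $SW^\phi_{Pin(2),f}$ recorded above.

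Next comes the localisation. Inside $G/\Delta S^1\cong O(2)$ sit the circle $SO(2)$ (the image of $S^1\times S^1$) and the reflection coset (the image of $(j,j)$). Localising $H^*_{O(2)}(Y^{V,U},\partial Y^{V,U})$ at $y$ --- equivalently, by Proposition~\ref{prop:q12}, at $\mu=y^2+yu^2$, since $y^4=\mu^2$ --- the restriction to the $SO(2)$-fixed set $F_1\sqcup F_2$ (with $F_i=Y^{V_{3-i},U_1\oplus U_2}$) becomes an isomorphism, by the localisation theorem of \cite{die}. Each $F_i$ now carries a residual $\mathbb Z_2$-symmetry coming from $(j,j)$ together with the $Pin(2)$-symmetry inherited from $f_{3-i}$, and Lemma~\ref{lem:tensor} converts the localised $G$-equivariant cohomology of the relevant fixed-point spaces into $H^*_{Pin(2)}(-)[y,y^{-1}]$ via the isomorphisms $\psi_i$, with $\psi_i(q_i)=q$ and $\psi_1(q_2)=\psi_2(q_1)=q+\mu$. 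The normal bundle of $F_i$ in $Y^{V,U}$ is $V_i\otimes L$, exactly as in the $S^1\times S^1$ computation; after pushing $f$ down to $F_i$ the map reduces to a suspension of $\iota_i\wedge f_{3-i}$, the obstruction bundle is $V'_i$ descending to $V'_i\otimes L$, and dividing the normal Euler class by the obstruction Euler class leaves $e_G(D_i\otimes L)^{-1}=e_G(D_i)^{-1}$ with the Segre-class expansion stated after the theorem. The transversality of $\phi_{3-i}|_{Z_{3-i}}$ against $f_{3-i}|_{Z_{3-i}}$ required by the obstruction-bundle argument of \cite{fm} is available in the $Pin(2)$-equivariant setting precisely because, as noted earlier in this section, the relevant preimages consist of points with trivial stabiliser. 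Substituting the two fixed-point contributions into the localisation formula (the analogue of (\ref{equ:loc1})), rewriting each via the $G$-equivariant extension of the $\iota_i\wedge f_{3-i}$ formula, and applying $\psi_i$ produces the stated identity.

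The step I expect to be the main obstacle is purely bookkeeping: correctly identifying the residual symmetry on each fixed component $F_i$ as a genuine copy of $Pin(2)$ --- so that the expression $SW^{\phi_{3-i}}_{Pin(2),f_{3-i}|_{Z_{3-i}}}$ on the right-hand side is even meaningful --- and then carrying the two isomorphisms $\psi_1,\psi_2$ and the substitution $\psi_1(q_2)=\psi_2(q_1)=q+\mu$ consistently through the computation; this is exactly where Proposition~\ref{prop:q12} and Lemma~\ref{lem:tensor} do the real work. By contrast, the geometric and analytic content --- finite-dimensional approximation, transversality, obstruction bundles, and the identification of $j_i\wedge f_{3-i}$ with a suspension of $\iota_i\wedge f_{3-i}$ --- is identical to that already used for Theorem~\ref{thm:product1} and in \cite{bk}.
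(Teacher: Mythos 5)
Your proposal is correct and follows essentially the same route as the paper, which itself proves Theorem \ref{thm:product2} by "repeating the localisation argument in $G$-equivariant cohomology," with Proposition \ref{prop:q12} and Lemma \ref{lem:tensor} supplying exactly the structural inputs (the isomorphisms $\psi_i$ and the relation $\psi_1(q_2)=\psi_2(q_1)=q+\mu$) that you identify. Your treatment of the fixed sets, normal and obstruction bundles, and the absorption of the twist by $L$ into the $G$-equivariant Euler classes matches the paper's intended argument and its remarks following the theorem.
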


The Euler classes on the right hand side of the formula should be understood as follows. First, $V_i, V'_i$ are $Pin(2)$-equivariant bundles over $B$. Then $V_i, V'_i$ can be regarded as a $G$-equivariant vector bundles via the homomorphism $p_i : G \to Pin(2)$. Then $e_{G}(V_i), e_{G}(V'_i)$ are the images of $e_{Pin(2)}( V_i ), e_{Pin(2)}(V'_i)$ under the map $p_i : H^*_{Pin(2)}(B) \to H^*_G(B)$ induced by $p_i$. We have that $e_{G}(V_1)$ is invertible in $y^{-1}H^*_G(\widetilde{Y}^{V_1,U_1}|_{Z_2})$ and so $e_{G}(D_1)^{-1} =  e_{G}(V_1)^{-1} e_{G}(V_1)$ is defined. Similarly $e_{G}(D_2)^{-1}$ is defined.

We will mainly be interested in applying the product formula in situations where $B$ satisfies the following assumptions:
\begin{itemize}
\item[(1)]{$B$ is a fibre bundle $B \to B_0$ such that $j : B \to B$ covers an involution $j_0 : B_0 \to B_0$.}
\item[(2)]{$j_0$ does not act freely.}
\item[(3)]{The bundles $V_i, V'_i$ are pullbacks from $B_0$ and $j_0$ lifts to an antilinear endomorphism on $V_i, V'_i$ squaring to $-1$.}
\item[(4)]{The map $u : H^*_{\mathbb{Z}_2}(B_0) \to H^{*+1}_{\mathbb{Z}_2}(B_0)$ is injective.}
\end{itemize}

For instance, in the case of the $Pin(2)$-monopole map of a $4$-manifold $X$ with spin-structure $\mathfrak{s}$, we have $B = B_{X,\mathfrak{s}} = Pic^{\mathfrak{s}}(X) \times S(H^+(X))$. Then the above assumptions are satisfied if we take $B_0 = Pic^{\mathfrak{s}}(X)$ and $B \to B_0$ the projection.

Note that condition (4) actually implies condition (2), for if the action of $j_0$ were free, then $H^*_{\mathbb{Z}_2}(B_0)$ would be zero in degrees above $dim(B_0)$.

\begin{lemma}\label{lem:degen}
Let $j_0 : B_0 \to B_0$ be an involution satisfying condition (4) above. Then $j_0$ acts trivially on $H^*(B_0)$ and the Leray--Serre spectral sequence for the Borel fibration $(B_0)_{\mathbb{Z}_2} \to B\mathbb{Z}_2$ degenerates at $E_2$, giving an isomorphism $H^*_{\mathbb{Z}_2}(B_0) \cong H^*(B_0)[u]$.
\end{lemma}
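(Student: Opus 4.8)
The plan is to extract all three assertions from the Smith--Gysin exact sequence of the double cover underlying the Borel construction, the only use of condition (4) occurring in a single step. Write $E\mathbb{Z}_2$ for a contractible free $\mathbb{Z}_2$-space, so that $(B_0)_{\mathbb{Z}_2} = (B_0 \times E\mathbb{Z}_2)/\mathbb{Z}_2$ for the diagonal action, and recall that the quotient map $p : B_0 \times E\mathbb{Z}_2 \to (B_0)_{\mathbb{Z}_2}$ is a double cover classified by $u \in H^1_{\mathbb{Z}_2}(B_0)$; equivalently $B_0 \times E\mathbb{Z}_2$ is the unit sphere bundle of a real line bundle $\ell \to (B_0)_{\mathbb{Z}_2}$ with $w_1(\ell) = u$. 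First I would write down the associated Gysin sequence (all coefficients $\mathbb{Z}_2$)
\[
\cdots \longrightarrow H^{n-1}_{\mathbb{Z}_2}(B_0) \xrightarrow{\ \cup u\ } H^n_{\mathbb{Z}_2}(B_0) \xrightarrow{\ \iota^*\ } H^n(B_0) \xrightarrow{\ \delta\ } H^n_{\mathbb{Z}_2}(B_0) \xrightarrow{\ \cup u\ } \cdots,
\]
where, identifying $H^*(B_0 \times E\mathbb{Z}_2) \cong H^*(B_0)$, the map $p^*$ becomes the restriction $\iota^*$ to a fibre of $(B_0)_{\mathbb{Z}_2} \to B\mathbb{Z}_2$. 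By hypothesis $\cup u$ is injective in every degree, so the connecting map $\delta$ vanishes; hence $\iota^*$ is onto and the sequence breaks up into short exact sequences
\[
0 \longrightarrow H^{n-1}_{\mathbb{Z}_2}(B_0) \xrightarrow{\ \cup u\ } H^n_{\mathbb{Z}_2}(B_0) \xrightarrow{\ \iota^*\ } H^n(B_0) \longrightarrow 0 \qquad (n \ge 0).
\]

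Since $B_0$ is a compact manifold, $H^*(B_0;\mathbb{Z}_2)$ is finite-dimensional; I would fix a homogeneous basis of $H^*(B_0)$ and lift it along $\iota^*$ to classes $e_1,\dots,e_k \in H^*_{\mathbb{Z}_2}(B_0)$. An induction on $n$ using the short exact sequences above (equivalently, the Leray--Hirsch theorem for $B_0 \hookrightarrow (B_0)_{\mathbb{Z}_2} \to B\mathbb{Z}_2$) shows that every $H^n_{\mathbb{Z}_2}(B_0)$ decomposes as $\bigoplus_{a \ge 0} u^a\,\langle e_i : \deg e_i = n-a\rangle$, so that $\{e_i\}$ is a free basis of $H^*_{\mathbb{Z}_2}(B_0)$ over $H^*_{\mathbb{Z}_2}(pt) = \mathbb{Z}_2[u]$. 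This gives the isomorphism $H^*_{\mathbb{Z}_2}(B_0) \cong H^*(B_0)[u]$ of $\mathbb{Z}_2[u]$-modules (and, reading off the short exact sequence, $\iota^*$ induces a ring isomorphism $H^*_{\mathbb{Z}_2}(B_0)/(u) \cong H^*(B_0)$).

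For the triviality of the $j_0$-action, I would use that the image of the fibre-restriction map $\iota^*$ always lies in the invariants $H^*(B_0)^{\mathbb{Z}_2}$ (monodromy around $B\mathbb{Z}_2$ fixes the restriction of any class from the total space); since $\iota^*$ is onto $H^*(B_0)$, this forces $H^*(B_0)^{\mathbb{Z}_2} = H^*(B_0)$, i.e.\ $j_0$ acts trivially on $H^*(B_0;\mathbb{Z}_2)$. Consequently the $E_2$-page of the Leray--Serre spectral sequence of $(B_0)_{\mathbb{Z}_2} \to B\mathbb{Z}_2$ is $E_2^{p,q} = H^p(B\mathbb{Z}_2;\mathbb{Z}_2) \otimes H^q(B_0;\mathbb{Z}_2)$ with untwisted coefficients. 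The short exact sequences give $\dim_{\mathbb{Z}_2} H^n_{\mathbb{Z}_2}(B_0) = \sum_{k=0}^n \dim_{\mathbb{Z}_2} H^k(B_0)$, which coincides with $\sum_{p+q=n}\dim_{\mathbb{Z}_2} E_2^{p,q}$ because $\dim H^p(B\mathbb{Z}_2;\mathbb{Z}_2) = 1$ for every $p \ge 0$; since $E_\infty$ is a subquotient of $E_2$, equality in every total degree forces $E_\infty = E_2$, i.e.\ the spectral sequence degenerates at $E_2$.

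The only real content is the first paragraph --- recognising condition (4) as precisely the vanishing of the connecting homomorphism in the Smith--Gysin sequence --- after which everything is formal. The points to handle with a little care are the identification of $p^*$ with the fibre-restriction map (and of the transfer term in the Gysin sequence) and the legitimacy of Leray--Hirsch over the infinite-dimensional base $B\mathbb{Z}_2 = \mathbb{RP}^\infty$, which is unproblematic since $\mathbb{RP}^\infty$ is a CW complex and $H^*(B_0;\mathbb{Z}_2)$ is finite-dimensional. It is worth emphasising that the triviality of the $j_0$-action is not needed as input: it is forced by the surjectivity of the fibre restriction.
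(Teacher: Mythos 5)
Your proof is correct, but it takes a genuinely different route from the paper. The paper works entirely inside the Leray--Serre spectral sequence of the Borel fibration: it identifies $E_2^{p,q}=H^p(\mathbb{Z}_2;H^q(B_0))$, uses the classification of $\mathbb{Z}_2$-representations over $\mathbb{Z}_2$ (trivial plus regular summands) to see that multiplication by $u$ on $H^*(\mathbb{Z}_2;A)$ is injective in degree $0$ exactly when the action on $A$ is trivial, and then argues row by row that condition (4) forces both triviality of the $j_0$-action on each $H^q(B_0)$ and the vanishing of all differentials. You instead start from the Smith--Gysin sequence of the free double cover $B_0\times E\mathbb{Z}_2\to (B_0)_{\mathbb{Z}_2}$: condition (4) kills the transfer term, producing the short exact sequences $0\to H^{n-1}_{\mathbb{Z}_2}(B_0)\xrightarrow{\cup u}H^n_{\mathbb{Z}_2}(B_0)\xrightarrow{\iota^*}H^n(B_0)\to 0$; surjectivity of the fibre restriction plus the standard monodromy-invariance of its image then gives triviality of the action, a Leray--Hirsch-type induction gives freeness over $\mathbb{Z}_2[u]$, and degeneration follows from the resulting dimension count against the untwisted $E_2$-page. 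The two arguments are of course cousins (the Gysin sequence encodes the same two-row information the paper extracts from the $E_2$-term), but yours isolates the role of condition (4) as precisely the vanishing of the transfer, avoids any analysis of $\mathbb{Z}_2[\mathbb{Z}_2]$-module structure and of differentials into individual rows, and derives triviality of the action as a formal consequence of surjectivity of $\iota^*$ rather than as part of the induction; the price is that the degeneration step leans on finite-dimensionality of $H^*(B_0;\mathbb{Z}_2)$ in each degree (harmless here, since $B_0$ is a compact manifold), which the paper's differential-by-differential argument does not need. Your closing remarks about the identification of $p^*$ with fibre restriction and about Leray--Hirsch over $\mathbb{RP}^\infty$ address the only points where care is required, so the argument is complete.
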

\begin{proof}
Suppose that $A$ is a finite dimensional representation of $\mathbb{Z}_2$ over $\mathbb{Z}_2$. Any such $A$ is a direct sum of copies of the trivial representation $\mathbb{Z}_2$ and the regular representation $R = \mathbb{Z}_2^2$. Since $H^*( \mathbb{Z}_2 ; \mathbb{Z}_2) \cong \mathbb{Z}_2[u]$, $H^0( \mathbb{Z}_2 ; R) \cong R^{\mathbb{Z}_2} \cong \mathbb{Z}_2$ and $H^p( \mathbb{Z}_2 ; R) = 0$ for $p>0$, it follows that $u : H^p( \mathbb{Z}_2 ; A) \to H^{p+1}( \mathbb{Z}_2 ; A)$ is surjective for all $p$ and an isomorphism for $p > 0$. It also follows that $u : H^0( \mathbb{Z}_2 ; A) \to H^1(\mathbb{Z}_2 ; A)$ is injective if and only if $A^{\mathbb{Z}_2} = A$.

Now consider the Leray--Serre spectral sequence for the Borel fibration $(B_0)_{\mathbb{Z}_2} \to B\mathbb{Z}_2$. This has $E^{p,q}_2 = H^p( \mathbb{Z}_2 ; H^q(B_0) )$. Injectivity of $u : H^*_{\mathbb{Z}_2}(B_0) \to H^{*+1}_{\mathbb{Z}_2}(B_0)$ implies that the map $H^*_{\mathbb{Z}_2}(pt) \to H^*_{\mathbb{Z}_2}(B_0)$ is an injection. Hence for all $r$ there are no differentials mapping into the $q=0$ row of $E_r$.

Consider the map $u : E_\infty^{0,1} \to E_\infty^{1,1}$. If this map is not injective, then $u : H^1_{\mathbb{Z}_2}(B_0) \to H^2_{\mathbb{Z}_2}(B_0)$ will also not be injective. So $u : E_\infty^{0,1} \to E_\infty^{1,1}$ is injective. But $E_\infty^{0,1} \cong H^0( \mathbb{Z}_2 ; H^1(B_0))$, $E_\infty^{1,1} \cong H^1( \mathbb{Z}_2 ; H^(B_0))$. For this map to be injective, the action of $\mathbb{Z}_2$ on $H^1(B_0)$ must be trivial. This means $E_2^{p,1} \cong H^1(B_0)$ for all $p$. Injectivity of $u : H^*_{\mathbb{Z}_2}(B_0) \to H^{*+1}_{\mathbb{Z}_2}(B_0)$ then implies that there can be no differentials mapping into the $q=1$ row of $E_r$ for any $r$.

Continuing row by row in the same manner, we see that the action of $\mathbb{Z}_2$ on $H^q(B_0)$ is trivial for all $q$ and that there are no differentials in the spectral sequence. This gives the result.
\end{proof}

\begin{lemma}\label{lem:q}
Let $j_0 : B_0 \to B_0$ be an involution and suppose that conditions (2) and (4) in the above list are satisfied. Then there is a natural map $H^*_{\mathbb{Z}_2}(B_0) \to H^*_{Pin(2)}(B_0)$ which makes $H^*_{Pin(2)}(B_0)$ into a $H^*_{\mathbb{Z}_2}(B_0)$ module, and with respect to this module structure we have an isomorphism $H^*_{Pin(2)}(B_0) \cong H^*_{\mathbb{Z}_2}(B_0)[q]/(u^3)$.
\end{lemma}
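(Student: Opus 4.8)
The plan is to obtain the isomorphism from the quotient homomorphism $Pin(2) \to Pin(2)/S^1 \cong \mathbb{Z}_2$ and to analyse the resulting spectral sequence by comparison with the already-known case $B_0 = pt$, where $H^*_{Pin(2)}(pt) \cong \mathbb{Z}_2[u,q]/(u^3)$. First I would construct the natural map: since $S^1 \subset Pin(2)$ acts trivially on $B_0$, the homomorphism $Pin(2) \to \mathbb{Z}_2$ induces a map of Borel constructions $p \colon (B_0)_{Pin(2)} \to (B_0)_{\mathbb{Z}_2}$ which is a fibre bundle with fibre $BS^1$ (equivalently, one may use the Lyndon--Hochschild--Serre spectral sequence of $1 \to S^1 \to Pin(2) \to \mathbb{Z}_2 \to 1$ acting on $B_0$, as in the proof of Proposition \ref{prop:q12}). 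The natural map of the statement is $p^*$; it is a ring homomorphism and supplies the $H^*_{\mathbb{Z}_2}(B_0)$-module structure. I also pull back $u \in H^1_{Pin(2)}(pt)$ and $q \in H^4_{Pin(2)}(pt)$ to $H^*_{Pin(2)}(B_0)$, noting $p^*(u) = u$ and $u^3 = 0$ there. Only hypothesis (4) will be used directly (recall it implies (2)).

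Next I would analyse the Leray--Serre spectral sequence $E_r^{p,q}$ of $BS^1 \to (B_0)_{Pin(2)} \to (B_0)_{\mathbb{Z}_2}$ with $\mathbb{Z}_2$-coefficients. The monodromy on $H^*(BS^1 ; \mathbb{Z}_2) = \mathbb{Z}_2[v]$, $\deg v = 2$, is trivial: the part coming from $\pi_1(B_0)$ is trivial because $S^1$ acts trivially on $B_0$, and the part coming from $\pi_1(B\mathbb{Z}_2)$ is induced by inversion on $S^1$, which fixes $v$ mod $2$. Hence $E_2 \cong H^*_{\mathbb{Z}_2}(B_0)[v]$. The map $B_0 \to pt$ induces a morphism to the corresponding spectral sequence for $BS^1 \to BPin(2) \to B\mathbb{Z}_2$, whose $E_2$ is $\mathbb{Z}_2[u,v]$; since its abutment is $\mathbb{Z}_2[u,q]/(u^3)$, comparison of Poincar\'e series forces that spectral sequence to have a single nonzero differential $d_3(v) = u^3$, to satisfy $E_4 = E_\infty$, and to identify $v^2$ with $q$. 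By naturality, in the spectral sequence for $B_0$ one has $d_2 = 0$ (parity) and $d_3(v) = u^3 \in H^3_{\mathbb{Z}_2}(B_0)$.

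Then I would compute $E_\infty$ using hypothesis (4). Since $d_3$ is a derivation vanishing on $E_3^{*,0} = H^*_{\mathbb{Z}_2}(B_0)$, one gets $d_3(v^m a) = m\, v^{m-1} u^3 a$, i.e.\ $v^{m-1} u^3 a$ for $m$ odd and $0$ for $m$ even. By (4), multiplication by $u$, hence by $u^3$, is injective on $H^*_{\mathbb{Z}_2}(B_0)$, so $d_3$ is injective on the rows with $m$ odd and has image $v^m u^3 H^*_{\mathbb{Z}_2}(B_0)$ on the rows with $m$ even; therefore $E_4 \cong (H^*_{\mathbb{Z}_2}(B_0)/u^3 H^*_{\mathbb{Z}_2}(B_0))[v^2]$, $\deg v^2 = 4$. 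A differential $d_r$ with $r \ge 4$ on this page can be nonzero only if $r \equiv 1 \pmod 4$; but $E_r$ ($r \ge 4$) is generated as a ring by $E_r^{*,0}$ and $v^2$, and $d_r$ annihilates $v^2$ (pulled back from $pt$, where $d_r = 0$) and $E_r^{*,0}$ (for degree reasons), so the Leibniz rule gives $d_r = 0$ and $E_\infty = E_4$.

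Finally I would assemble: the classes $p^*(a)$, $a \in H^*_{\mathbb{Z}_2}(B_0)$, together with $q$, define a ring homomorphism $\Phi \colon H^*_{\mathbb{Z}_2}(B_0)[q]/(u^3) \to H^*_{Pin(2)}(B_0)$ (well defined since $p^*(u^3) = 0$). Filtering the source by $H^*_{\mathbb{Z}_2}(B_0)$-degree and the target by the spectral sequence filtration, $\Phi$ is filtration preserving and induces on associated graded the isomorphism $(H^*_{\mathbb{Z}_2}(B_0)/u^3)[q] \to E_\infty$ (with $q \leftrightarrow v^2$); since $B_0$ is compact both filtrations are finite in each total degree, so $\Phi$ is an isomorphism, which is the assertion as $H^*_{\mathbb{Z}_2}(B_0)[q]/(u^3) = (H^*_{\mathbb{Z}_2}(B_0)/u^3 H^*_{\mathbb{Z}_2}(B_0))[q]$ as $H^*_{\mathbb{Z}_2}(B_0)$-algebras. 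The main obstacle I anticipate is controlling all the differentials — pinning down $d_3(v) = u^3$ and, above all, showing that the higher differentials $d_r$ ($r \ge 4$) vanish — which is handled by combining the Leibniz rule with naturality under $B_0 \to pt$ and the injectivity of multiplication by $u$ provided by hypothesis (4).
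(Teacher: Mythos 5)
Your proposal is correct and follows essentially the same route as the paper: the Leray--Serre spectral sequence of the fibration $BS^1 \to (B_0)_{Pin(2)} \to (B_0)_{\mathbb{Z}_2}$, the key differential $d_3(v)=u^3$, and injectivity of multiplication by $u$ (hypothesis (4)) to get $E_4 \cong (H^*_{\mathbb{Z}_2}(B_0)/u^3)[v^2] = E_\infty$ and then resolve the extension. The only deviation is that you pin down $d_3(v)$ by naturality under $B_0 \to pt$ rather than the paper's restriction to a fixed point of $j_0$ (followed by its elimination of lower-order terms), a harmless and in fact slightly cleaner variant; your treatment of the higher differentials and the multiplicative extension just makes explicit what the paper leaves implicit.
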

\begin{proof}
First of all, Lemma \ref{lem:degen} gives an isomorphism $H^*_{\mathbb{Z}_2}(B_0) \cong H^*(B_0)[u]$. Next since $S^1 \subset Pin(2)$ acts trivially on $B_0$, we see that the Borel model $(B_0)_{Pin(2)}$ is given by
\[
(B_0)_{Pin(2)} = (B_0 \times EPin(2))/Pin(2) \cong (B_0 \times EPin(2)/S^1)/\mathbb{Z}_2.
\]
Then since $EPin(2)/S^1 \cong BS^1$, we get a fibration $BS^1 \to (B_0)_{Pin(2)} \to (B_0)_{\mathbb{Z}_2}$. In particular, this makes $H^*_{Pin(2)}(B_0)$ into a $H^*_{\mathbb{Z}_2}(B_0)$-module. Furthermore the associated Leray--Serre spectral sequence has $E_2 \cong H^*_{\mathbb{Z}_2}(B_0)[x] \cong H^*(B_0)[u,x]$, where $deg(x) = 2$. By condition (2), there exists a fixed point $b \in B_0$. Since $x$ has even degree, the spectral sequence has no differentials for even $r$ and so $E_2 \cong E_3$. Restricting the fibration $(B_0)_{Pin(2)} \to (B_0)_{\mathbb{Z}_2}$ to $\{ b \} \subseteq B_0$, we see that $d_3(x) = u^3 + \cdots $, where $\cdots $ denotes terms involving lower powers of $u$. But we also know that $u^3 = 0$ and since there are no other differentials mapping to $E_r^{3,0}$, the only way this can happen is if there are no lower powers of $u$ in $d_3(x)$. So $d_3(x) = u^3$. It follows that $E_5 \cong H^*(B_0)[u,q]/(u^3) \cong H^*_{\mathbb{Z}_2}(B_0)[q]/(u^3)$, where $q = x^2$. There can be no further differentials since $q$ can be identified with the pullback of $q \in H^4_{Pin(2)}(pt)$.
\end{proof}

Suppose $B_0$ satisfies conditions (2) and (4). Let $E \to B_0$ be any complex vector bundle on $B_0$ and suppose there is an antilinear lift of $j_0$ to $E$ squaring to $-1$. This makes $E$ into a $Pin(2)$-equivariant vector bundle. Suppose $E$ has complex rank $a$. The fibre of $E$ over a fixed point of $j_0$ has a quaternionic structure, so $a$ is even. Using Lemma \ref{lem:q}, it follows that $e_{Pin(2)}(E)$ can be uniquely expressed in the form
\[
e_{Pin(2)}(E) = \alpha_0(E) q^{a/2} + \alpha_1(E) q^{a/2-1} + \cdots + \alpha_{a/2}(E),
\]
where $\alpha_j(E) \in H^{4j}_{\mathbb{Z}_2}(B_0)/(u^3)$. Consider the restriction to $S^1 \subset Pin(2)$. Since $S^1$ acts trivially on $B_0$, we have $H^*_{S^1}(B_0) \cong H^*(B_0)[x]$, where $deg(x) = 2$. Under the forgetful map $H^*_{Pin(2)}(B_0) \to H^*_{S^1}(B_0)$, $u$ is sent to zero and $q$ is sent to $x^2$. On the other hand, we have
\[
e_{S^1}(E) = x^a + c_1(E)x^{a-1} + \cdots + c_{a}(E).
\]
This implies that the image of $\alpha_j(E)$ under the map $H^*_{\mathbb{Z}_2}(B_0)/(u^3) \to H^*(B_0)$ is $c_{2j}(E)$. It also implies that the odd Chern classes of $E$ are zero mod $2$. Because of this, we will denote $\alpha_j(E)$ by $c_{2j , \mathbb{Z}_2}(E)$ and refer to them as the $\mathbb{Z}_2$-equivariant Chern classes of $E$. Note that this terminology is somewhat innacurate. Since the lift of $j_0$ to $E$ squares to $-1$, we do not have a $\mathbb{Z}_2$-action on $E$ and so we do not have $\mathbb{Z}_2$-equivariant Chern classes in the usual sense. Note also that we have only defined the classes $c_{2j,\mathbb{Z}_2}(E)$ in cohomology with $\mathbb{Z}_2$-coefficients.
 
Having defined the classes $c_{2j,\mathbb{Z}_2}(E)$, one can also define equivariant Segre classes $s_{2j,\mathbb{Z}_2}(E) \in H^*_{\mathbb{Z}_2}(B_0)/(u^3)$, characterised by the relation $c_{\mathbb{Z}_2}(E) s_{\mathbb{Z}_2}(E) = 1$, where $c_{\mathbb{Z}_2}(E) = 1 + c_{2,\mathbb{Z}_2}(E) + \cdots $ and $s_{\mathbb{Z}_2}(E) = 1 + s_{2,\mathbb{Z}_2}(E) + \cdots $ are the $\mathbb{Z}_2$-equivariant total Chern and total Segre classes. These classes are stable and so we can also define the $\mathbb{Z}_2$-equivariant Chern and Segre classes of a virtual bundle $E_1 - E_2$, where $E_1,E_2$ are complex vector bundles on $B_0$ both admitting an antilinear lift of $j_0$ squaring to $-1$.

Suppose now that assumptions (1)-(4) hold. Then by the above discussion, $e_{Pin(2)}(V_i)$ has the form
\[
e_{Pin(2)}( V_i ) = \sum_{j=0}^{a_i/2} q^{a_i/2 - j} c_{2j , \mathbb{Z}_2}( V_i )
\]
and similarly for $e_{Pin(2)}(V'_i)$. Since $p_1^*(q) = q_1$ and $\psi_2( q_1 ) = q + \mu$, we have
\begin{align*}
\psi_2( e_{G}(V_1) ) &= \sum_{j=0}^{a_1/2} (q+\mu)^{a/2-j} c_{2j , \mathbb{Z}_2}(V_1) \\
&= \sum_{j=0}^{a_1/2} \sum_{l=0}^{a_1/2-j} \binom{a_1/2 - j}{l} \mu^{a_1/2 - j - l} q^l c_{2j, \mathbb{Z}_2}(V) \\
&= \sum_{k=0}^{a_1/2} \mu^{a_1/2 - k } \sum_{j=0}^{k} q^{k-j} c_{2j , \mathbb{Z}_2 }(V) \binom{ a/2 - j}{k-j}.
\end{align*}

Similar expressions hold for $e_{Pin(2)}(V'_i)$. From this it follows that 
\[
\psi_2( e_{G}(D_1) )^{-1} = \sum_{k \ge 0} \mu^{-d_1/2 - k} \sum_{j=0}^k q^{k-j}  s_{2j , \mathbb{Z}_2}(D_1) \binom{ -d_1/2 - j}{k-j}
\]
and similarly
\[
\psi_1( e_{G}(D_2) )^{-1} = \sum_{k \ge 0} \mu^{-d_2/2 - k}  \sum_{j=0}^k q^{k-j}  s_{2j , \mathbb{Z}_2}(D_2) \binom{ -d_2/2 - j}{k-j}.
\]
Although these expressions appear to be infinite sums, they should really be regarded as elements of $H^*_{Pin(2)}(F_i)[y,y^{-1}]$. But $Pin(2)$ acts freely on $F_i$, so only finitely many terms are non-zero when pulled back to $F_i$.

\section{Seiberg--Witten invariants for spin structures}\label{sec:swspin}

In this section we will use the product formula to compute $SW^{Pin(2)}_{X,\mathfrak{s}}$ for any compact, oriented, smooth $4$-manifold and any spin-structure $\mathfrak{s}$. The connected sum formula for the Bauer--Furuta invariant \cite{b2} says that the Seiberg--Witten monopole map $f_{X \# Y}$ for a connected sum $X \# Y$ is the external smash product of the monopole maps $f_X, f_Y$ for $X$ and $Y$. In other words $f_{X\#Y}$ is obtained by pulling back $f_X$ and $f_Y$ to the product $Pic^{\mathfrak{s}_X}(X) \times Pic^{\mathfrak{s}_Y}(Y)$ and then taking the fibrewise smash product.

To simplify notation we will often omit mention of the spin structures $\mathfrak{s}_X, \mathfrak{s}_Y$. Write $\widehat{f}_X$ for the pullback of $f_X$ to $B_{X} = Pic^{\mathfrak{s}_X}(X) \times S(H^+(X))$ and similarly define $\widehat{f}_Y$ and $\widehat{f}_{X \# Y}$. Let $\phi = (\phi_1 , \phi_2) : B_{X \# Y } \to S( H^+(X) \oplus H^+(Y) )$ be the tautological chamber for $X \# Y$. The zero loci $Z_1, Z_2 \subset B_{X \# Y}$ are $Z_1 = Pic^{\mathfrak{s}_X}(X) \times B_Y$ and $Z_2 = B_X \times Pic^{\mathfrak{s}_Y}(Y)$, moreover $\phi_1|_{Z_2}$ is the tautological chamber for $X$ (pulled back under the projection $Z_2 \to B_X)$ and $\phi_2|_{Z_1}$ is the tautological chamber for $Y$ (pulled back under $Z_1 \to B_Y$).

Let $j_i : Z_i \to B_{X \# Y}$ be the inclusions. Recall by Proposition \ref{prop:z2cohom} that we have isomorphisms 
\begin{align*}
H^*_{\mathbb{Z}_2}(B_{X}) &\cong H^*( Pic^{\mathfrak{s}_X}(X))[u]/( u^{b_+(X)}) \\
H^*_{\mathbb{Z}_2}(B_{Y}) &\cong H^*( Pic^{\mathfrak{s}_Y}(X))[u]/( u^{b_+(Y)}) \\
H^*_{\mathbb{Z}_2}(B_{X \# Y}) &\cong H^*( Pic^{\mathfrak{s}_X}(X) \times Pic^{\mathfrak{s}_Y}(Y))[u]/( u^{b_+(X)+b_+(Y)}).
\end{align*}
By a similar argument, we have isomorphisms
\begin{align*}
H^*_{\mathbb{Z}_2}(Z_1) &\cong H^*( Pic^{\mathfrak{s}_X}(X) \times Pic^{\mathfrak{s}_Y}(Y) )[u]/( u^{b_+(Y)}) \\
H^*_{\mathbb{Z}_2}(Z_2) &\cong H^*( Pic^{\mathfrak{s}_X}(X) \times Pic^{\mathfrak{s}_Y}(Y) )[u]/( u^{b_+(X)}).
\end{align*}

It is straightforward to see that the push-forward map
\[
(j_1)_* : H^*( Pic^{\mathfrak{s}_X}(X) \times Pic^{\mathfrak{s}_Y}(Y) )[u]/( u^{b_+(Y)})  \to H^*( Pic^{\mathfrak{s}_X}(X) \times Pic^{\mathfrak{s}_Y}(Y) )[u]/( u^{b_+(X)+b_+(Y)}) 
\]
is multiplication by $u^{b_+(X)}$ and similarly $(j_2)_*$ is multiplication by $u^{b_+(Y)}$. Therefore, Theorem \ref{thm:product2} takes the form:

\begin{align*}
SW^{G}_{X \# Y}(\theta) &=  u^{b_+(X)} \left( id \otimes SW^{Pin(2)}_Y \right)( \psi_2(e_{G}(D_X)^{-1}\theta) ) \\
& \quad \quad + u^{b_+(Y)} \left(id \otimes SW^{Pin(2)}_X \right)( \psi_1( e_{G}(D_Y)^{-1} \theta) ).
\end{align*}

\begin{lemma}\label{lem:j}
Let $X$ be a compact, oriented, smooth $4$-manifold with $b_1(X) = 0$ and let $\mathfrak{s}$ be a spin structure on $X$. Then $SW^{Pin(2)}_{X,\mathfrak{s}}(q^j) = 0$ unless $j = -\sigma(X)/16 - 1$. In particular, $SW^{Pin(2)}_{X,\mathfrak{s}} = 0$ unless $\sigma(X) < 0$.
\end{lemma}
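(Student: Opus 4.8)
The plan is to reduce Lemma \ref{lem:j} to elementary grading bookkeeping in $H^*_{\mathbb{Z}_2}(B_{X,\mathfrak{s}})$, using two inputs that are already available: the degree shift of $SW^{Pin(2)}_{X,\mathfrak{s}}$ coming from the families index, and the relation $u^3 = 0$ in $H^*_{Pin(2)}(pt)$.

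First I would specialise Proposition \ref{prop:z2cohom} to the case $b_1(X)=0$. Then $Pic^{\mathfrak{s}}(X)$ is a point, $B_{X,\mathfrak{s}} = S(H^+(X)) \cong S^{b_+(X)-1}$, and $H^*_{\mathbb{Z}_2}(B_{X,\mathfrak{s}}) \cong \mathbb{Z}_2[u]/(u^{b_+(X)})$, which is one-dimensional in each of the degrees $0,1,\dots,b_+(X)-1$ and zero in all other degrees. Since $\mathfrak{s}$ is spin, $c(\mathfrak{s})=0$, hence $d = -\sigma(X)/8$ and $-2d = \sigma(X)/4$. Thus $SW^{Pin(2)}_{X,\mathfrak{s}}(q^j)$ lies in $H^{4j + \sigma(X)/4 + b_+(X)+1}_{\mathbb{Z}_2}(B_{X,\mathfrak{s}})$. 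A necessary condition for $SW^{Pin(2)}_{X,\mathfrak{s}}(q^j) \ne 0$ is therefore $0 \le 4j + \sigma(X)/4 + b_+(X)+1 \le b_+(X)-1$, and the upper inequality already forces $j \le -\sigma(X)/16 - 1$ (recall $\sigma(X)$ is divisible by $16$). This handles the cases with $j$ too large.

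For the reverse inequality I would invoke the $Pin(2)$-symmetry. Because $SW^{Pin(2)}_{X,\mathfrak{s}}$ is a homomorphism of $H^*_{\mathbb{Z}_2}(pt) = \mathbb{Z}_2[u]$-modules and $u^3 = 0$ in $H^*_{Pin(2)}(pt)$, we get $u^3 \, SW^{Pin(2)}_{X,\mathfrak{s}}(q^j) = SW^{Pin(2)}_{X,\mathfrak{s}}(u^3 q^j) = 0$. In $\mathbb{Z}_2[u]/(u^{b_+(X)})$ a nonzero homogeneous element $u^k$ is annihilated by $u^3$ only when $k \ge b_+(X)-3$; hence if $SW^{Pin(2)}_{X,\mathfrak{s}}(q^j) \ne 0$ then $4j + \sigma(X)/4 + b_+(X)+1 \ge b_+(X)-3$, i.e. $j \ge -\sigma(X)/16 - 1$. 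Combined with the previous step this gives $SW^{Pin(2)}_{X,\mathfrak{s}}(q^j) = 0$ unless $j = -\sigma(X)/16 - 1$.

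Finally, for the ``in particular'' clause, recall that $H^*_{Pin(2)}(pt) \cong \mathbb{Z}_2[u,q]/(u^3)$ and that $SW^{Pin(2)}_{X,\mathfrak{s}}$ is $\mathbb{Z}_2[u]$-linear, so $SW^{Pin(2)}_{X,\mathfrak{s}}(u^i q^j) = u^i \, SW^{Pin(2)}_{X,\mathfrak{s}}(q^j)$ and the invariant is determined by the classes $SW^{Pin(2)}_{X,\mathfrak{s}}(q^j)$ for $j \ge 0$. Hence if $SW^{Pin(2)}_{X,\mathfrak{s}} \ne 0$ there is some $j \ge 0$ with $SW^{Pin(2)}_{X,\mathfrak{s}}(q^j) \ne 0$, forcing $-\sigma(X)/16 - 1 = j \ge 0$, i.e. $\sigma(X) \le -16 < 0$. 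I do not expect any genuine obstacle here; the only point requiring care is correctly tracking the grading shift $-2d + b_+(X)+1$ of the invariant together with the precise form of $H^*_{\mathbb{Z}_2}(B_{X,\mathfrak{s}})$, both of which are recorded above.
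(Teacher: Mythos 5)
Your proposal is correct and follows essentially the same argument as the paper: the degree of $SW^{Pin(2)}_{X,\mathfrak{s}}(q^j)$ in $H^*_{\mathbb{Z}_2}(B_{X,\mathfrak{s}})\cong\mathbb{Z}_2[u]/(u^{b_+(X)})$ is $4j+\sigma(X)/4+b_+(X)+1$, and the two constraints (lying in degrees at most $b_+(X)-1$, and being killed by $u^3$ since $u^3=0$ in $H^*_{Pin(2)}(pt)$) pin down $j=-\sigma(X)/16-1$, with the ``in particular'' clause following from $j\ge 0$. Your bookkeeping of the grading shift and the module structure matches the paper's proof of Lemma \ref{lem:j}.
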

\begin{proof}
Observe that $SW^{Pin(2)}_{X,\mathfrak{s}}(q^j ) \in H^*( \mathbb{RP}^{b_+(X)-1} ) \cong \mathbb{Z}_2[u]/(u^{b_+(X)})$ has degree $4j + \sigma(X)/4 + b_+(X)+1$. Furthermore, since $u^3 = 0$ in $H^*_{Pin(2)}(pt)$, we have $u^3 SW^{Pin(2)}_{X,\mathfrak{s}}(q^j) = SW^{Pin(2)}_{X,\mathfrak{s}}(u^3 q^j) = 0$. Hence $SW^{Pin(2)}_{X,\mathfrak{s}}(q^j)$ is zero unless $b_+(X)-3 \le 4j+\sigma(X)/4 + b_+(X)+1 \le b_+(X)-1$. The only value of $j$ satisfying this is $j=-\sigma(X)/16-1$.
\end{proof}

For $m \ge 1$, let $mK3$ denote the connected sum of $m$ copies of the $K3$ surface. This has a unique spin structure $\mathfrak{s}$ and so we will write $SW^{Pin(2)}_{mK3}$ for $SW^{Pin(2)}_{mK3,\mathfrak{s}}$.

\begin{lemma}\label{lem:mk3}
For any $m \ge 1$, we have
\[
SW^{Pin(2)}_{mK3}( q^j) = \begin{cases} u^{3m-3} & j = m-1, \\ 0 & \text{otherwise}. \end{cases}
\]
\end{lemma}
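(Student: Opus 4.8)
The plan is to argue by induction on $m$ using the product formula (the specialisation of Theorem \ref{thm:product2} displayed just above, applied to $X = (m-1)K3$ and $Y = K3$, so that $X \# Y = mK3$). For the base case $m=1$, recall that $K3$ has $b_+ = 3$, $\sigma = -16$, $b_1 = 0$, so $Pic^{\mathfrak{s}}(K3)$ is a point and $B_{K3} = S(H^+(K3)) \simeq S^2$ with the antipodal involution, giving $H^*_{\mathbb{Z}_2}(B_{K3}) \cong \mathbb{Z}_2[u]/(u^3)$. By Lemma \ref{lem:j}, $SW^{Pin(2)}_{K3}(q^j) = 0$ unless $j = -\sigma(K3)/16 - 1 = 0$, and $SW^{Pin(2)}_{K3}(1) = SW^{Pin(2)}_{K3}(q^0)$ lies in degree $\sigma(K3)/4 + b_+(K3)+1 = -4 + 4 = 0$, so it is either $0$ or $1 = u^0$. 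To see it is $1$: forgetting the $Pin(2)$-symmetry and using Lemma \ref{lem:u=0} together with the commutative square in point (3) of the outline (equivalently diagram (\ref{equ:cd1}) adapted to $B_{K3}$), $SW^{Pin(2)}_{K3}(1)|_{u=0} = SW_{K3,\mathfrak{s}}(1)$, which is the ordinary mod $2$ Seiberg--Witten invariant of the $K3$ surface; this is $1$ (the $b_1=0$, $\sigma = -16$ case computed by Morgan--Szab\'o, cited after Theorem \ref{thm:thm1}). Hence $SW^{Pin(2)}_{K3}(q^j) = u^0$ for $j=0$ and $0$ otherwise, which is the claimed formula for $m=1$.

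For the inductive step, assume the formula holds for $(m-1)K3$. Apply the product formula with $X = (m-1)K3$, $Y = K3$. We evaluate both sides at $\theta = q_2^j$ (the class $p_2^*(q) = q_Y$), so that $\psi_2(q_2) = q$ on the $K3$-side; here $D_X$ is the index bundle of $(m-1)K3$ and $D_Y$ that of $K3$, both of which have trivial base (since $b_1 = 0$), so $e_G(D_X)^{-1}$ and $e_G(D_Y)^{-1}$ are elements of $\mathbb{Z}_2[u, y, q_i]/(u^3)$ localised at $y$, with $\psi_i$ turning $q_j$ ($j \neq i$) into $q + \mu$, $\mu = y^2 + yu^2$. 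The key numerical inputs are the virtual ranks: $d_{K3} = -\sigma(K3)/8 = 2$ and $d_{(m-1)K3} = 2(m-1)$, so $e_G(D_K3)^{-1} = (-y)^{-2} + \cdots$ (a finite sum once pulled back to the point, since $Pin(2)$ acts freely on $F_i$) and likewise for $(m-1)K3$. The term $(j_1)_* = u^{b_+(X)} = u^{3(m-1)}$ and $(j_2)_* = u^{b_+(Y)} = u^3$. Since $u^3 = 0$, the second term $u^3(\cdots)$ vanishes identically, so the product formula collapses to
\[
SW^G_{mK3}(\theta) = u^{3(m-1)}\bigl( SW^{Pin(2)}_{K3}\bigr)\bigl( \psi_2( e_G(D_X)^{-1} \theta ) \bigr),
\]
where I suppress the $\mathrm{id}\otimes$ since the bases are points. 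Now one feeds in the inductive hypothesis via $SW^{Pin(2)}_{K3}$ applied to each $q$-power appearing in $\psi_2(e_G(D_X)^{-1} q_2^j)$; only the $q^0$-coefficient survives, and one must check that after multiplying by $u^{3(m-1)}$ and using $u^3 = 0$, only the contribution with total $q$-degree $j = m-1$ is non-zero, yielding $u^{3m-3}$. Finally, one must also check the reverse evaluation (at $\theta = q_1^j$, i.e. the $(m-1)K3$-side) gives a consistent answer, and then pass from $SW^G$ back to $SW^{Pin(2)}$ using the commutative square relating $H^*_G(pt) \to H^*_{Pin(2)}(pt)$ (which is surjective by Proposition \ref{prop:q12}, so $SW^{Pin(2)}$ is determined).

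The main obstacle I anticipate is the bookkeeping in the previous paragraph: tracking exactly which power of $q$ and which power of $u$ survive after expanding $\psi_2(e_G(D_X)^{-1} q_2^j)$, substituting the inductive formula for $SW^{Pin(2)}_{K3}$ (which is a delta function in the $q$-degree), and multiplying by $u^{3(m-1)}$ modulo $u^3 = 0$. Concretely, $\psi_2(e_G(D_X)^{-1}) = \sum_{k\ge 0} \mu^{-d_X/2 - k}\sum_{l=0}^k q^{k-l} s_{2l,\mathbb{Z}_2}(D_X)\binom{-d_X/2 - l}{k-l}$ with $d_X/2 = m-1$; since $(m-1)K3$ has $b_1 = 0$ the equivariant Segre classes $s_{2l,\mathbb{Z}_2}(D_X) \in \mathbb{Z}_2[u]/(u^3)$, and one needs to know (or reprove, e.g.\ via the families index theorem and the fact that $D_{K3}$ over a point is just $-2$ copies of $\mathbb{H}$, cf.\ the discussion of $\alpha_j$ in Section \ref{sec:prod}) that these are concentrated in the right degrees so that the $q^0$-coefficient of $\psi_2(e_G(D_X)^{-1} q_2^j)$ vanishes for $j \neq m-1$ and equals $u^{\text{something}}$ that, combined with $u^{3(m-1)}$, gives exactly $u^{3m-3}$ (forcing the surviving Segre class to be $1 \in H^0$). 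The degree count from Lemma \ref{lem:j}-style reasoning applied to $mK3$ (which has $\sigma = -16m$, so the unique non-zero $j$ is $m-1$, and $SW^{Pin(2)}_{mK3}(q^{m-1})$ lives in degree $4(m-1) + \sigma/4 + b_+ + 1 = 4(m-1) - 4m + 3m + 1 = 3m - 3$) already pins down both the value of $j$ and the degree $3m-3$ of the answer, so the only real content is showing the coefficient is $u^{3m-3}$ rather than $0$; this non-vanishing is what the product formula and the inductive hypothesis deliver, and verifying it cleanly is the crux.
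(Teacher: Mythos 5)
Your base case and the overall inductive strategy via the connected-sum product formula match the paper, but the inductive step contains a genuine error. You discard the second term of Theorem \ref{thm:product2} on the grounds that ``$u^3=0$''. That relation holds in the coefficient ring $H^*_{Pin(2)}(pt)\cong\mathbb{Z}_2[u,q]/(u^3)$ (and in $H^*_G(pt)$), but the pushforwards $(j_1)_*,(j_2)_*$ are multiplication by $u^{b_+(X)},u^{b_+(Y)}$ in the \emph{target} ring $H^*_{\mathbb{Z}_2}(B_{mK3})\cong\mathbb{Z}_2[u]/(u^{3m})$, where $u^3\neq 0$ for $m\ge 2$. (Indeed the answer $u^{3m-3}$ you are trying to produce is itself a multiple of $u^3$, so if multiplying by $u^3$ killed contributions the lemma would be false.) Worse, the term you drop is exactly the one that carries the answer: with your decomposition $mK3=(m-1)K3\#K3$ and $\theta=q_2^j$, one has $e_G(D_{K3})^{-1}q_2^j=q_2^{j-1}$ and $\psi_1(q_2^{j-1})=(q+\mu)^{j-1}$, so the second term is $u^3\,SW^{Pin(2)}_{(m-1)K3}\bigl((q+\mu)^{j-1}\bigr)$, which for $j=m-1$ equals $u^3\cdot u^{3(m-1)-3}=u^{3m-3}$ by the inductive hypothesis. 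Your surviving term, by contrast, is $u^{3(m-1)}SW^{Pin(2)}_{K3}\bigl((q+\mu)^{-(m-1)}q^j\bigr)$, and since every monomial there has $q$-power at least $j\ge 1$ while $SW^{Pin(2)}_{K3}(q^l)=0$ for $l\ge 1$, it vanishes: your collapsed formula would yield $SW^{Pin(2)}_{mK3}(q^{m-1})=0$, contradicting the statement. A symptom of the problem is that in your collapsed formula the inductive hypothesis about $(m-1)K3$ is never actually used.

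The fix is the paper's arrangement: write $(m+1)K3=K3\# mK3$ (i.e.\ put the single $K3$ as the first factor and the inductively known $mK3$ as the second) and test against $\theta=q_1^{m}$. Then the term involving $SW^{Pin(2)}_{K3}$ has argument $(\mu+q)^{-m}q^{m}$, which is a multiple of $q$ and hence is killed by $SW^{Pin(2)}_{K3}(q^l)=0$ for $l\ge1$ --- a legitimate vanishing, unlike the $u^3$ one --- while the surviving term $u^3\,SW^{Pin(2)}_{mK3}\bigl((\mu+q)^{m-1}\bigr)$ is evaluated by expanding $(\mu+q)^{m-1}$ and applying the inductive hypothesis, giving $u^{3m}$; restricting along the surjection $H^*_G(pt)\to H^*_{Pin(2)}(pt)$ (sending $q_1\mapsto q$) completes the step. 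So the vanishing of one of the two terms must come from the $q$-divisibility of its argument, not from powers of $u$, and the bookkeeping you flagged as the crux is only clean once the decomposition and test class are chosen this way.
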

\begin{proof}
By Lemma \ref{lem:j}, $SW^{Pin(2)}_{mK3}(q^j)$ is zero unless $j=m-1$. So it remains to compute $SW^{Pin(2)}_{mK3}(q^{m-1})$ for each $m \ge 1$. We will prove the result by induction. In the case $m=1$, $SW^{Pin(2)}_{K3}(1)$ has degree zero and by Lemma \ref{lem:u=0}, $SW^{Pin(2)}_{K3}(1) = SW_{K3}(1)$ is the ordinary mod $2$ Seiberg--Witten invariant of $K3$, which is $1$.

Now assume $SW^{Pin(2)}_{mK3}(q^{m-1}) = 1$ for some $m \ge 1$. Writing $(m+1)K3 = K3 \# mK3$, we can apply Theorem \ref{thm:product2}. We have
\[
\psi_1 (e_G(D_1)^{-1}) = (\mu + q)^{-1}, \quad \psi_2 (e_G(D_2)^{-1} ) = (\mu + q)^{-m}.
\]
Then, taking $\theta = q_1^{m}$, we find
\[
SW^{G}_{K3 \# mK3}( q_1^{m}) = u^3 SW_{mK3}^{Pin(2)}( (\mu+q)^{m-1} ) + u^{3m} SW_{K3}^{Pin(2)}(  (\mu+q)^{-m} q^m ).
\]
Since $(\mu+q)^{-m}q^m$ is a multiple of $q$, the second term drops out. Expanding $(\mu+q)^{m-1}$ and using the inductive hypothesis then gives
\[
SW^{G}_{K3 \# mK3}( q_1^{m}) = u^{3m}.
\]
Lastly, the forgetful map $H^*_{G}(pt) \to H^*_{Pin(2)}(pt)$ sends $q_1$ to $q$ and hence \linebreak $SW^{Pin(2)}_{(m+1)K3}(q^m) = u^{3m}$, which completes the inductive step.
\end{proof}

\begin{theorem}\label{thm:swpin1}
Let $X$ be a compact, oriented, smooth $4$-manifold with $b_+(X) > 0$ and let $\mathfrak{s}$ be a spin-structure on $X$. If $b_+(X) \ge 3$, then
\[
SW^{Pin(2)}_{X,\mathfrak{s}}( q^j ) = u^{b_+(X)-3} s_{2(j+1+\sigma(X)/16) , \mathbb{Z}_2}(D)
\]
where we set $s_{l,\mathbb{Z}_2}(D) = 0$ if $l < 0$.

If $b_+(X) < 3$, then $s_{2k , \mathbb{Z}_2}(D)$ is divisible by $u^{3-b_+(X)}$ for all $k>0$ and
\[
SW^{Pin(2)}_{X,\mathfrak{s}}( q^j ) = u^{b_+(X)-3} s_{2(j+1) , \mathbb{Z}_2}(D).
\]
\end{theorem}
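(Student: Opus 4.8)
The plan is to prove Theorem \ref{thm:swpin1} by induction on $b_+(X)$, using the connected sum formula with copies of $K3$ as the base case input, much as in Lemma \ref{lem:mk3}. The idea is that the invariant $SW^{Pin(2)}_{X,\mathfrak{s}}$ does not vanish under connected sum (unlike the ordinary Seiberg--Witten invariant), so by stabilising $X$ with copies of $K3$ we can push $b_+$ up into a range where things are computable, and then use the product formula to extract $SW^{Pin(2)}_{X,\mathfrak{s}}$ from $SW^{Pin(2)}_{X \# mK3}$.

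First I would set up the product formula from Section \ref{sec:swspin} applied to $Y = X \# K3$. Writing $D_X$, $D_{K3}$ for the respective Dirac index bundles (with $D_{K3}$ a trivial virtual bundle of virtual rank $d_{K3} = -\sigma(K3)/8 = 2$ over a point, since $b_1(K3)=0$), the formula reads
\[
SW^{G}_{X \# K3}(\theta) = u^{b_+(X)} \left( id \otimes SW^{Pin(2)}_{K3}\right)(\psi_2(e_G(D_X)^{-1}\theta)) + u^{3} \left(id \otimes SW^{Pin(2)}_X\right)(\psi_1(e_G(D_{K3})^{-1}\theta)).
\]
Taking $\theta = q_1^{j}$ and using $\psi_1(e_G(D_{K3})^{-1}) = (\mu+q)^{-1}$, $\psi_2(q_1) = q+\mu$, together with $SW^{Pin(2)}_{K3}(q^k) = \delta_{k,0}$ (Lemma \ref{lem:mk3} with $m=1$) and $SW^{Pin(2)}_{K3}(u^l) = 0$ for $l>0$, the first term isolates the coefficient of $q^0$ in $\psi_2(e_G(D_X)^{-1} q_1^{j})$ after multiplying by $u^{b_+(X)}$, while the second term produces $u^3 \cdot SW^{Pin(2)}_X$ applied to $(\mu+q)^{-1}q^j$, which is a multiple of $q$ since $j \geq 1$ — so (for $j\geq 1$) the second term vanishes and we get a recursion expressing $SW^{Pin(2)}_{X\#K3}(q^j)$ purely in terms of the $\mathbb{Z}_2$-equivariant Segre classes of $D_X$. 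Using the formula $\psi_2(e_G(D_X)^{-1}) = \sum_{k\ge 0}\mu^{-d_X/2-k}\sum_{i=0}^{k}q^{k-i}s_{2i,\mathbb{Z}_2}(D_X)\binom{-d_X/2-i}{k-i}$ from the end of Section \ref{sec:prod}, and the identification $\mu = y^2+yu^2$ with $\mu^2 = y^4$, one extracts the $q$-coefficient and finds $SW^{Pin(2)}_{X\#K3}(q^j) = u^{b_+(X)} s_{2(j+1+\sigma(X\#K3)/16), \mathbb{Z}_2}(D_X)$, noting $\sigma(X\#K3)=\sigma(X)-16$ and $D_{X\#K3} = D_X + D_{K3}$ with $D_{K3}$ trivial so $s_{*,\mathbb{Z}_2}(D_{X\#K3}) = s_{*,\mathbb{Z}_2}(D_X)$. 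Since the forgetful map sends $q_1 \mapsto q$, this computes $SW^{Pin(2)}_{X\#K3}(q^j)$.

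Now I would run the induction. If $b_+(X) \ge 3$, we can instead compute $SW^{Pin(2)}_{X}$ directly from the recursion with the roles interchanged: write $X\#K3$ in place of $X$ on the left and solve for the term $u^3 SW^{Pin(2)}_X(\cdots)$. More carefully, since $u^{b_+(X)}$ and $u^3$ both appear, and when $b_+(X)\ge 3$ we have $u^{b_+(X)} = u^{b_+(X)-3}\cdot u^3$, both terms are divisible by $u^3$; the nontrivial content is that after dividing, the left side $SW^{Pin(2)}_{X\#K3}(q^j)$ is divisible by $u^3$ in $H^*(\mathbb{RP}^{b_+(X)+2})$, and matching coefficients gives $SW^{Pin(2)}_X(q^j) = u^{b_+(X)-3}s_{2(j+1+\sigma(X)/16),\mathbb{Z}_2}(D)$ once one knows the base of the induction — for instance $b_+(X)=3$, where $SW^{Pin(2)}_X(q^j)\in H^*(\mathbb{RP}^2)$ and the formula reduces to $SW^{Pin(2)}_X(q^j) = s_{2(j+1+\sigma(X)/16),\mathbb{Z}_2}(D)$, which can be checked by a further connected sum reduction to a manifold with $b_+\le 2$ and known invariant, or directly via Lemma \ref{lem:u=0} and the $S^1$-equivariant Segre class computation. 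For $b_+(X)<3$, the same recursion, now with $b_+(X\#K3) = b_+(X)+3 \ge 3$ so the previous case applies to the left side, forces $u^{b_+(X)} s_{2k,\mathbb{Z}_2}(D) $ to be the $u^{b_+(X)-3+3}$-divisible part of a class known to be $u^{b_+(X)-3}\cdot(\text{something})$ only if $s_{2k,\mathbb{Z}_2}(D)$ is itself divisible by $u^{3-b_+(X)}$ for $k>0$; this divisibility statement, together with the formula $SW^{Pin(2)}_X(q^j) = u^{b_+(X)-3}s_{2(j+1),\mathbb{Z}_2}(D)$ (note $\sigma(X)=0$ is forced when $b_+(X)<3$ and $X$ spin with a nontrivial invariant, by the $10/8$ inequality, so the shift $\sigma(X)/16$ drops out), emerges from comparing the two expressions for $SW^{Pin(2)}_{X\#K3}$.

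The main obstacle I anticipate is bookkeeping the localisation/$\psi_i$-substitution carefully: keeping straight the three groups $S^1$, $\mathbb{Z}_2$, $Pin(2)$, $O(2)$, $G$, the relations $\psi_1(q_2) = \psi_2(q_1) = q+\mu$, $\mu = y^2 + yu^2$, $\mu^2 = y^4$, and ensuring that the apparently-infinite Segre expansions genuinely truncate after pulling back to the free-action loci $F_i$ so that the pushforwards $(j_i)_*$ (which are multiplication by $u^{b_+(X)}$, $u^{b_+(Y)}$ by the computation in Section \ref{sec:swspin}) are applied to honest finite classes. A secondary subtlety is justifying that $s_{2k,\mathbb{Z}_2}(D)$ makes sense at all when $b_+(X)<3$ — i.e. that $D$ over $Pic^{\mathfrak{s}}(X)$ admits the antilinear lift of the inversion involution squaring to $-1$ required to define the $\mathbb{Z}_2$-equivariant Chern/Segre classes — but this is exactly the charge-conjugation structure already used throughout, so it is available; the content of the $b_+(X)<3$ half of the theorem is then precisely the divisibility $u^{3-b_+(X)} \mid s_{2k,\mathbb{Z}_2}(D)$, which is the genuinely new constraint extracted from the connected sum formula.
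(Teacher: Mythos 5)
Your overall strategy (stabilise by copies of $K3$, apply the product formula of Theorem \ref{thm:product2}, feed in Lemma \ref{lem:mk3}) is in the right spirit, but the step that is supposed to make your recursion close up is not valid. In the product formula for $X\# K3$ the second term is
\[
u^3\bigl(id\otimes SW^{Pin(2)}_X\bigr)\bigl((\mu+q)^{-1}q^j\bigr)\;=\;u^3\sum_{k\ge 0}\mu^{-1-k}\,SW^{Pin(2)}_X(q^{j+k}),
\]
and you discard it on the grounds that its argument is a multiple of $q$. Being a multiple of $q$ only helps when the factor sitting in that slot has $SW^{Pin(2)}(q^k)=0$ for all $k\ge 1$; that holds for $K3$ by Lemma \ref{lem:j} (which is why the analogous step inside the proof of Lemma \ref{lem:mk3} is legitimate), but it is false for a general spin $X$ --- indeed Lemma \ref{lem:mk3} itself gives $SW^{Pin(2)}_{2K3}(q)=u^3\neq 0$, and the theorem you are trying to prove asserts $SW^{Pin(2)}_X(q^k)=u^{b_+-3}s_{2(k+1+\sigma(X)/16),\mathbb{Z}_2}(D)$, which is nonzero for several $k$ whenever the Segre classes are. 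So your identity does not express $SW^{Pin(2)}_{X\#K3}(q^j)$ purely in terms of Segre classes of $D_X$; it is a single relation between the two unknowns $SW^{Pin(2)}_{X\#K3}$ and $SW^{Pin(2)}_X$, and the ``recursion'' never closes.

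The second gap is the induction scheme itself. To ``solve for'' the term $u^3SW^{Pin(2)}_X(\cdots)$ you would need independent knowledge of the left-hand side $SW^{Pin(2)}_{X\#K3}$, but connected sum with $K3$ raises $b_+$ by $3$, so induction on $b_+$ runs the wrong way, and neither proposed base case is available: for $b_+(X)=3$ the formula cannot be ``checked directly via Lemma \ref{lem:u=0}'', since that lemma only identifies the $u=0$ part of $SW^{Pin(2)}_{X,\mathfrak{s}}$ with the ordinary mod $2$ invariant, which is itself the unknown being computed; and a ``connected sum reduction to a manifold with $b_+\le 2$'' does not exist (connected sums only increase $b_+$, and the $b_+\le 2$ cases are handled afterwards, in Section \ref{sec:b+12}, using the $b_+\ge 3$ result). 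What the paper's proof supplies, and what your argument lacks, is a mechanism that kills one of the two localisation terms outright: it computes the invariant of a single large stabilisation $M=X\#22m(S^2\times S^2)$, $16m\ge 4b_1(X)-\sigma(X)$, in two ways --- as $X\#22m(S^2\times S^2)$, where the second factor has identically vanishing $Pin(2)$-invariant by Lemma \ref{lem:j} (signature zero), giving $SW^{Pin(2)}_M(q^j)=u^{22m}SW^{Pin(2)}_{X,\mathfrak{s}}(q^j)$; and as $(X\#m\overline{K3})\#mK3$, where $SW^{Pin(2)}_{X\#m\overline{K3}}$ vanishes by a degree count precisely because $m$ is large, leaving only the $mK3$-term, which Lemma \ref{lem:mk3} and the Segre expansion evaluate. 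Equating the two answers and cancelling $u^{22m}$ produces both the formula and, for $b_+(X)<3$, the divisibility of $s_{2k,\mathbb{Z}_2}(D)$ by $u^{3-b_+(X)}$. Without an analogue of the $\overline{K3}$ trick (or some other way of making one term of the product formula vanish), the single-$K3$ argument you outline cannot determine $SW^{Pin(2)}_{X,\mathfrak{s}}$.
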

\begin{proof}
Choose an $m > 0$ such that $16m \ge 4b_1(X) - \sigma(X)$. Let $M = X \# 22m(S^2 \times S^2)$ and let $\mathfrak{s}_M = (\mathfrak{s} , \mathfrak{s}_0)$, where $\mathfrak{s}_0$ is the unique spin-structure on $22m(S^2 \times S^2)$. We will compute $SW^{Pin(2)}_{M,\mathfrak{s}_M}$ in two different ways. First note that $SW^{Pin(2)}_{M , \mathfrak{s}_M}$ takes values in $H^*_{\mathbb{Z}_2}( Pic^{\mathfrak{s}}(X) \times S^{b_+(X)+22m-1} ) \cong H^*( Pic^{\mathfrak{s}}(X) )[u]/( u^{b_+(X)+22m})$, by Proposition \ref{prop:z2cohom}.

We write $M$ as the connected sum $M = X \# 22(S^2 \times S^2)$ and apply Theorem \ref{thm:product2}. By Lemma \ref{lem:j}, $SW^{Pin(2)}_{22(S^2 \times S^2) , \mathfrak{s}_0}(q^j) = 0$ for all $j$ and hence
\[
SW^{G}_{X \# 22(S^2 \times S^2) , \mathfrak{s} \# \mathfrak{s}_0}(q_1^j) = u^{22m} SW_{X , \mathfrak{s}}( q^j ).
\]
Restricting to $Pin(2) \subset G$, this reduces to
\begin{equation}\label{equ:swm1}
SW^{Pin(2)}_{M , \mathfrak{s}_M}(q^j) = u^{22m} SW_{X , \mathfrak{s}}(q^j).
\end{equation}
Next, we recall that $22(S^2 \times S^2)$ is diffeomorphic to $K3 \# \overline{K3}$ \cite{gom}. Hence we can write $M = (X \# m\overline{K3}) \# mK3 = M_1 \# mK3$, where $M_1 = X \# m\overline{K3}$ and apply the product formula to this decomposition of $M$. Write $\mathfrak{s}_1$ for the spin-structure on $M_1$ which is the connected sum of $\mathfrak{s}$ with the unique spin-structure on $\overline{K3}$ and write $\mathfrak{s}_2$ for the unique spin-structure on $mK3$. 

We claim that $SW_{M_1,\mathfrak{s}_1}^{Pin(2)}(q^j) = 0$ for all $j \ge 0$. In fact, $SW_{M_1,\mathfrak{s}_1}^{Pin(2)}(q^j) \in H^*_{\mathbb{Z}_2}( Pic^{\mathfrak{s}}(X) \times S^{19m+b_+(X)-1} )$ has degree 
\begin{align*}
4j +(\sigma(X)+16m)/4 &  + 19m+b_+(X)+1 \\
& \quad \quad \ge \sigma(X)/4 + 4m+19m+b_+(X)+1 \\ & \quad \quad \ge b_1(X)+19m+b_+(X)+1,
\end{align*}
by the assumption that $16m \ge 4b_1(X) - \sigma(X)$. But $H^*( Pic^{\mathfrak{s}}(X) \times S^{19m+b_+(X)-1})$ is non-zero only in degree at most $b_1(X)+19m+b_+(X)-1$, which proves the claim. Hence Theorem \ref{thm:product2} gives:
\[
SW^{G}_{M_1 \# mK3 , \mathfrak{s}_1 \# \mathfrak{s}_2}(q_2^j) = u^{19m+b_+(X)} SW_{mK3}( \psi_2( e_G(D_{M_1})^{-1} q_2^j ) ).
\]
Next, using $D_{M_1} = D_X - \mathbb{C}^{2m}$, we have that
\begin{align*}
\psi_2( e_G(D_{M_1})^{-1} q_2^j ) &= \sum_{k \ge 0} \mu^{m-d_X/2-k} \sum_{l=0}^{k} q^l s_{2(k-l),\mathbb{Z}_2}(D_{M_1}) \binom{m-d_X/2 - k + l}{l} q^j \\
&= \sum_{k \ge 0} \mu^{m-d_X/2-k} \sum_{l=0}^{k} q^l s_{2(k-l),\mathbb{Z}_2}(D_{X}) \binom{m-d_X/2 - k + l}{l} q^j \\
&= \sum_{k \ge 0} \mu^{m-d_X/2-k} \sum_{l=0}^{k} q^{j+l} s_{2(k-l),\mathbb{Z}_2}(D_X) \binom{m-d_X/2 - k + l}{l}
\end{align*}
where $d_X = -\sigma(X)/8$. Hence by Lemma \ref{lem:mk3}, and assuming $m$ is chosen large enough that $j \le m-1$, we have
\begin{align*}
& SW^{G}_{M_1 \# mK3 , \mathfrak{s}_1 \# \mathfrak{s}_2}(q_2^j) \\
& \quad = u^{22m+b_+(X)-3} \! \! \! \! \sum_{k \ge m-j-1 } \! \! \! \! \mu^{-d_X/2-k+m}  s_{2(k+j-m+1),\mathbb{Z}_2}(D_X) \binom{2m-d_X/2 - k-j-1}{m-j-1}.
\end{align*}

This is a Laurent polynomial in $\mu$. Note that negative powers of $\mu$ can be written in terms of negative powers of $y$. In fact, from $\mu^2 = y^4$ one sees that $\mu^{-2r} = y^{-4r}$ and $\mu^{-2r-1} = y^{-4r-2} + y^{-4r-3}u^2$. Similarly positive powers of $\mu$ can be written in terms of positive powers of $y$. Since the left hand side of this equation is a polynomial in $y$, the terms with negative powers of $\mu$ must actually be zero. In any case, upon restricting to $Pin(2) \subset G$, only the $\mu^0$-coefficient survives, giving
\begin{equation}\label{equ:swm2}
SW^{Pin(2)}_{M,\mathfrak{s}_M}(q^j) = u^{22m+b_+(X)-3}s_{2(-d_X/2+j+1),\mathbb{Z}_2}(D_X), \\
\end{equation}
where we set $s_l(D_X) = 0$ if $l < 0$. Equating (\ref{equ:swm1}) and (\ref{equ:swm2}) gives
\[
u^{22m} SW_{X,\mathfrak{s}}(q^j) = u^{22m+b_+(X)-3}s_{2(-d_X/2+j+1),\mathbb{Z}_2}(D_X).
\]
This is an equality in $H^*_{\mathbb{Z}_2}( Pic^{\mathfrak{s}}(X) \times S^{22m+b_+(X)-1}) \cong H^*( Pic^{\mathfrak{s}}(X) )[u]/( u^{22m+b_+(X)})$. The left hand side is divisible by $u^{22m}$, so the right hand side must also be. Hence if $b_+(X) < 3$, then $s_{2(-d_X/2+j+1),\mathbb{Z}_2}(D_X)$ is divisble by $u^{3-b_+(X)}$. Cancelling a common factor of $u^{22m}$ gives
\[
SW_{X , \mathfrak{s}}(q^j) = u^{b_+(X)-3} s_{2(-d_X/2+j+1) , \mathbb{Z}_2}(D_X)
\]
which holds in  $H^*_{\mathbb{Z}_2}( Pic^{\mathfrak{s}}(X) \times S^{b_+(X)-1}) \cong H^*( Pic^{\mathfrak{s}}(X) )[u]/( u^{b_+(X)})$. Finally, note that if $b_+(X) < 3$ then $\sigma(X) = 0$ by the $10/8$-inequality, so in this case $SW_{X , \mathfrak{s}}(q^j) = u^{b_+(X)-3} s_{2(j+1) , \mathbb{Z}_2}(D_X)$.
\end{proof}

\begin{remark}
Note that in the above proof, we do not really need to know that $K3 \# \overline{K3} \cong 22(S^2 \times S^2)$, since the only property of $K3 \# \overline{K3}$ we use is that it has the same Betti numbers and signature as $22(S^2 \times S^2)$.
\end{remark}

\begin{theorem}\label{thm:b+3}
Let $X$ be a compact, oriented, smooth $4$-manifold with $b_+(X) = 3$ and let $\mathfrak{s}$ be a spin structure with corresponding index bundle $D \to Pic^{\mathfrak{s}}(X)$. Then $s_{2j}(D) = 0 \; ({\rm mod} \; 2)$ for all $j > 1+\sigma(X)/16$.
\end{theorem}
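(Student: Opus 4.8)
The plan is to deduce this vanishing statement for the \emph{ordinary} (non-equivariant) mod $2$ Segre classes $s_{2j}(D)$ from the $Pin(2)$-equivariant computation in Theorem \ref{thm:swpin1}. The starting point is that, since $b_+(X) = 3$, Theorem \ref{thm:swpin1} gives $SW^{Pin(2)}_{X,\mathfrak{s}}(q^j) = u^{0} s_{2(j+1+\sigma(X)/16),\mathbb{Z}_2}(D) = s_{2(j+1+\sigma(X)/16),\mathbb{Z}_2}(D)$, an element of $H^*(Pic^{\mathfrak{s}}(X))[u]/(u^3)$. The key constraint is that $u^3 = 0$ in $H^*_{Pin(2)}(pt)$, so for every $j$ we have $u^3 SW^{Pin(2)}_{X,\mathfrak{s}}(q^j) = SW^{Pin(2)}_{X,\mathfrak{s}}(u^3 q^j) = 0$. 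This is automatic here because we are already working modulo $u^3$, so instead the real input must come from a \emph{degree} count: the class $s_{2k,\mathbb{Z}_2}(D)$ lives in $H^{4k}_{\mathbb{Z}_2}(Pic^{\mathfrak{s}}(X))/(u^3)$, whereas the target $SW^{Pin(2)}_{X,\mathfrak{s}}(q^j)$ has a definite total degree, namely $4j + \sigma(X)/4 + b_+(X) + 1 = 4j + \sigma(X)/4 + 4$.

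First I would unwind the definition of $c_{2k,\mathbb{Z}_2}(D)$ and hence $s_{2k,\mathbb{Z}_2}(D)$: by the discussion preceding Theorem \ref{thm:swpin1}, under the forgetful map $H^*_{\mathbb{Z}_2}(Pic^{\mathfrak{s}}(X))/(u^3) \to H^*(Pic^{\mathfrak{s}}(X))$ (setting $u = 0$) the class $s_{2k,\mathbb{Z}_2}(D)$ maps to the ordinary mod $2$ Segre class $s_{2k}(D) \in H^{4k}(Pic^{\mathfrak{s}}(X);\mathbb{Z}_2)$. So it suffices to show $s_{2(j+1+\sigma(X)/16),\mathbb{Z}_2}(D)|_{u=0} = 0$ whenever $2(j+1+\sigma(X)/16) > 2 + 2\sigma(X)/8$, i.e. reindexing with $2k = 2(j+1+\sigma(X)/16)$, whenever $k > 1 + \sigma(X)/16$. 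Next I would combine Theorem \ref{thm:swpin1} with Lemma \ref{lem:u=0}: for $m$ even, $SW_{X,\mathfrak{s}}(x^m) = SW^{Pin(2)}_{X,\mathfrak{s}}(q^{m/2})|_{u=0} = s_{2(m/2+1+\sigma(X)/16)}(D)$, and for $m$ odd $SW_{X,\mathfrak{s}}(x^m)$ vanishes. The geometric input is that $SW_{X,\mathfrak{s}}(x^m) \in H^{2m - d(X,\mathfrak{s})}(Pic^{\mathfrak{s}}(X);\mathbb{Z}_2)$, and since $\dim Pic^{\mathfrak{s}}(X) = b_1(X)$, this group vanishes as soon as $2m - d(X,\mathfrak{s}) > b_1(X)$. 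Substituting $d(X,\mathfrak{s}) = (c(\mathfrak{s})^2 - \sigma(X))/4 - b_+(X) - 1 = -\sigma(X)/4 - 4$ for a spin structure with $b_+(X) = 3$, the constraint $2m - d(X,\mathfrak{s}) > b_1(X)$ becomes $2m + \sigma(X)/4 + 4 > b_1(X)$, which holds for all sufficiently large $m$; but this only kills \emph{large} Segre classes, not the sharp range we want.

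To get the sharp statement I would instead use the stabilised argument already present in the proof of Theorem \ref{thm:swpin1}: for $M = X \# 22m(S^2\times S^2)$ one has the identity (cf.\ equations \eqref{equ:swm1} and \eqref{equ:swm2} in the proof)
\[
u^{22m} SW_{X,\mathfrak{s}}(q^j) = u^{22m + b_+(X) - 3} s_{2(-d_X/2 + j + 1),\mathbb{Z}_2}(D_X)
\]
in $H^*(Pic^{\mathfrak{s}}(X))[u]/(u^{22m + b_+(X)})$. With $b_+(X) = 3$ this reads $u^{22m}SW_{X,\mathfrak{s}}(q^j) = u^{22m}s_{2(j+1+\sigma(X)/16),\mathbb{Z}_2}(D)$, and since $SW_{X,\mathfrak{s}}(q^j)$, being a class on $Pic^{\mathfrak{s}}(X) \times S^{b_+(X)-1} = Pic^{\mathfrak{s}}(X) \times S^2$, is itself an element of $H^*(Pic^{\mathfrak{s}}(X))[u]/(u^3)$ of total degree $4j + \sigma(X)/4 + 4$, it must vanish once that degree exceeds $b_1(X) + 2$. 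Translating back: $SW_{X,\mathfrak{s}}(q^j) = s_{2(j+1+\sigma(X)/16),\mathbb{Z}_2}(D)$ (as classes mod $u^3$), and this class, pushed to $u = 0$, is $s_{2(j+1+\sigma(X)/16)}(D)$; hence it is enough to show that for $2k = 2(j+1+\sigma(X)/16) > 2 + 2\sigma(X)/8$ the $\mathbb{Z}_2$-equivariant class vanishes in every slice, which follows because $SW_{X,\mathfrak{s}}(q^j)$ has degree $4j + \sigma(X)/4 + 4 = 4k - \sigma(X)/4$ and this exceeds $\dim(Pic^{\mathfrak{s}}(X) \times S^2) = b_1(X) + 2$ exactly when $k > 1 + \sigma(X)/16 + (b_1(X)+2 + \sigma(X)/4 - \sigma(X)/4)/4$; I would need to be careful to get a clean inequality here rather than an approximate one.

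\textbf{The main obstacle.} The routine part is the bookkeeping of degrees; the subtle point is getting the sharp bound $j > 1 + \sigma(X)/16$ rather than merely ``$j$ large''. The resolution is that for $b_+(X)=3$ and a \emph{spin} structure one has $\sigma(X) \in \{0,-16\}$, so the statement is really only non-vacuous when $\sigma(X) = 0$ (where it asserts $s_{2j}(D) = 0$ for $j > 1$) and when $\sigma(X) = -16$ (where it asserts $s_{2j}(D) = 0$ for $j > -1$, i.e. for all $j \ge 0$, which is consistent with $SW^{Pin(2)}_{K3}$ being concentrated in degree $0$). I expect the cleanest route is: invoke Theorem \ref{thm:swpin1} to identify $SW^{Pin(2)}_{X,\mathfrak{s}}(q^{j})$ with $s_{2(j+1+\sigma(X)/16),\mathbb{Z}_2}(D)$, observe $s_{2k}(D) = s_{2k,\mathbb{Z}_2}(D)|_{u=0}$, and then use that $s_{2k}(D) \in H^{4k}(Pic^{\mathfrak{s}}(X);\mathbb{Z}_2)$ while also $s_{2(j+1+\sigma(X)/16),\mathbb{Z}_2}(D)$ is constrained to lie in degrees $\le b_1(X) + b_+(X) - 1 = b_1(X)+2$ as a class on $Pic^{\mathfrak{s}}(X)\times S^2$ — wait, that is not quite a bound on $H^{4k}(Pic^{\mathfrak{s}}(X))$ alone. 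The correct and self-contained argument is the degree count inside $H^*(Pic^{\mathfrak{s}}(X))[u]/(u^3)$: the element $SW^{Pin(2)}_{X,\mathfrak{s}}(q^j)$ has total degree $4j + \sigma(X)/4 + 4$, its $u^0$-component $s_{2(j+1+\sigma(X)/16)}(D)$ therefore lives in $H^{4(j+1+\sigma(X)/16)}(Pic^{\mathfrak{s}}(X);\mathbb{Z}_2)$, which is zero once $4(j+1+\sigma(X)/16) > b_1(X)$; since for a spin $4$-manifold with $b_+(X) = 3$ one has $b_1(X) \le 4$ in the $\sigma = 0$ case (where $X$ is homeomorphic to a torus or similar — actually one should not assume this), the honest statement is simply that $s_{2k}(D) = 0$ for $4k > b_1(X)$, and one sharpens to $k > 1 + \sigma(X)/16$ only by additionally using that $s_2(D)$ is given by the explicit quadrilinear formula \eqref{equ:seg2}, whence $s_{2j}(D) = \tfrac{1}{j!}s_2(D)^{?}$... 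I would therefore structure the proof as: (i) by the families index theorem $s_{2j}(D) = \frac{1}{j!}s_2(D)$-type divided-power formula (as stated before \eqref{equ:seg2}), so $s_{2j}(D)$ is a polynomial of degree $j$ in the classes $x_i \in H^1(Pic^{\mathfrak{s}}(X))$, hence lies in $H^{4j}$, which vanishes for $4j > b_1(X)$; (ii) but more precisely $s_{2j}(D)$ is a \emph{degree $4j$} class that is a $j$-th divided power of a degree-$4$ class, so it vanishes as soon as $j$ exceeds the ``length'' $1 + \sigma(X)/16$ dictated by comparison with $SW^{Pin(2)}_{X,\mathfrak{s}}$ via Theorem \ref{thm:swpin1} together with the fact that $SW^{Pin(2)}_{X,\mathfrak{s}}(q^j) = 0$ once its degree $4j + \sigma(X)/4 + 4$ forces it out of $H^*(Pic^{\mathfrak{s}}(X)\times S^2)$. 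I anticipate that the cleanest writeup simply reads off the result from Theorem \ref{thm:swpin1} plus the degree bound, and that verifying the exact arithmetic of the inequality is the only place requiring care.
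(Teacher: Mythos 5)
Your proposal has a genuine gap: it never supplies the key geometric input, namely that $SW_{X,\mathfrak{s}}(x^{2k}) = 0 \; ({\rm mod}\; 2)$ for all $k > 0$ when $b_+(X)=3$. Once one knows this, Theorem \ref{thm:swpin1} together with Lemma \ref{lem:u=0} immediately gives $0 = SW_{X,\mathfrak{s}}(x^{2k}) = SW^{Pin(2)}_{X,\mathfrak{s}}(q^k)|_{u=0} = s_{2(k+1+\sigma(X)/16)}(D)$ for $k>0$, which is exactly the sharp range $j > 1+\sigma(X)/16$. The paper obtains this vanishing from the Steenrod square formula of \cite[Theorem 4.10]{bk}: since $Pic^{\mathfrak{s}}(X)$ is a torus, $Sq^2$ vanishes on it; since $\mathfrak{s}$ is spin, $\sigma(X)/8$ is even and $s_1(D)=0$; and $H^+(X)$ is a trivial bundle, so $w_2(H^+(X))=0$. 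The formula then collapses to $m\, SW_{X,\mathfrak{s}}(x^{m+1}) = 0$ for all $m \ge 0$, and taking $m$ odd kills all $SW_{X,\mathfrak{s}}(x^{2k})$, $k>0$. Nothing in your argument plays this role.

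Your attempted substitute is a degree count in $H^*(Pic^{\mathfrak{s}}(X))[u]/(u^3)$, but as you yourself half-acknowledge, this only yields $s_{2j}(D)=0$ when $4j > b_1(X)$ (the degrees of $SW^{Pin(2)}_{X,\mathfrak{s}}(q^j)$ and of $s_{2(j+1+\sigma(X)/16),\mathbb{Z}_2}(D)$ agree, so no contradiction can be extracted from degrees alone). This is strictly weaker than the claimed range whenever $b_1(X)$ is large, and $b_1(X)$ can indeed be arbitrarily large for spin $4$-manifolds with $b_+(X)=3$ (e.g.\ connected sums with copies of $S^1 \times S^3$ preserve $b_+$, $\sigma$ and spinness while increasing $b_1$). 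The final paragraph, which appeals again to ``$SW^{Pin(2)}_{X,\mathfrak{s}}(q^j)=0$ once its degree forces it out of $H^*(Pic^{\mathfrak{s}}(X)\times S^2)$'' and to the divided-power structure of the Segre classes, is circular: it is the same degree bound, and the divided-power formula $s_{2j}(D)=\frac{1}{j!}s_2(D)^j$ carries no mod $2$ vanishing beyond it. Note that the theorem is genuinely nontrivial topological information --- via Equation (\ref{equ:s2}) it implies, for $\sigma(X)=-16$, that all $4$-fold cup products on $H^1(X;\mathbb{Z})$ are even (Theorem \ref{thm:cup}, the Furuta--Kametani constraint) --- so it cannot be a consequence of dimension bookkeeping on $Pic^{\mathfrak{s}}(X)$ alone.
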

\begin{proof}

We will make use of the mod $2$ Seiberg--Witten invariants $SW_{X,\mathfrak{s}}(x^m) \in H^*( Pic^{\mathfrak{s}}(X) \; \mathbb{Z}_2)$. According to \cite[Theorem 4.10]{bk}, the second Steenrod square $Sq^2( SW_{X,\mathfrak{s}}(x^m))$ is given by
\[
Sq^2( SW_{X,\mathfrak{s}}(x^m)) = (-\sigma(X)/8 + m)SW_{X,\mathfrak{s}}(x^{m+1}) + (s_1(D) + w_2(H^+(X))) SW_{X,\mathfrak{s}}(x^m).
\]
This formula can be greatly simplified. First, since $Pic^{\mathfrak{s}}(X)$ is a torus, the Steenrod squares are trivial and the left hand side is zero. Second since $X$ is spin, $\sigma(X)/8$ is even and $s_1(D) = 0$. Third, $H^+(X) \to Pic^{\mathfrak{s}}(X)$ is a trivial bundle, so $w_2(H^+(X)) = 0$. So we are left with $m SW_{X , \mathfrak{s} }( x^{m+1} ) = 0$ for all $m \ge 0$. Taking $m=2k-1$, we see that $SW_{X,\mathfrak{s}}(x^{2k}) = 0$ for all $k > 0$.

Now since $b_+(X) = 3$, Lemma \ref{lem:u=0} and Theorem \ref{thm:swpin1} give
\[
0 = SW_{X,\mathfrak{s}}(x^{2k}) = SW^{Pin(2)}_{X,\mathfrak{s}}( q^k )|_{u=0} = s_{2(k+1+\sigma(X)/16)}(D)
\]
for all $k > 0$. Hence $s_{2j}(D) = 0 \; ({\rm mod} \; 2)$ for all $j > 1+\sigma(X)/16$.

\end{proof}

\subsection{Case $b_+(X) \ge 3$}\label{sec:b+3}

Combined with Lemma \ref{lem:u=0}, Theorems \ref{thm:swpin1} and \ref{thm:b+3} yield a complete calculation of the mod $2$ Seiberg--Witten invariant for spin-structures with $b_+(X) \ge 3$:

\begin{theorem}\label{thm:swspin1}
Let $X$ be a compact, oriented, smooth $4$-manifold with $b_+(X) \ge 3$ and let $\mathfrak{s}$ be a spin-structure on $X$. If $b_+(X) \neq 3$, then $SW_{X,\mathfrak{s}}(x^j) = 0$ for all $j \ge 0$. If $b_+(X) = 3$, then $SW_{X,\mathfrak{s}}(x^j) = 0$ for all $j > 0$ and
\[
SW_{X,\mathfrak{s}}( 1 ) = s_{2(1+\sigma(X)/16)}(D).
\]
\end{theorem}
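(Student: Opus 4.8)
The plan is to combine the $Pin(2)$-equivariant computation of Theorem \ref{thm:swpin1} with the vanishing result of Theorem \ref{thm:b+3}, passing between the enhanced invariant and the ordinary mod $2$ invariant via Lemma \ref{lem:u=0}. The case $b_+(X) > 3$ is already handled by Theorem \ref{thm:vanb+}, so the content is really in the case $b_+(X) = 3$, together with bookkeeping for odd powers of $x$.

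First I would dispose of odd powers. By Lemma \ref{lem:u=0}, $SW^{\mathbb{Z}}_{X,\mathfrak{s}}(x^m) = 0$ for odd $m$, hence also $SW_{X,\mathfrak{s}}(x^m) = 0$ mod $2$ for odd $m$. So it remains to analyse $SW_{X,\mathfrak{s}}(x^{2k})$ for $k \ge 0$. For the case $b_+(X) > 3$, Theorem \ref{thm:vanb+} gives $SW_{X,\mathfrak{s}}(\theta) = 0$ for all $\theta \in H^*_{S^1}(pt)$, so there is nothing more to do. Now assume $b_+(X) = 3$. For $k > 0$, the argument inside the proof of Theorem \ref{thm:b+3} already shows $SW_{X,\mathfrak{s}}(x^{2k}) = 0$: it uses the Steenrod square formula from \cite{bk}, the triviality of Steenrod operations on the torus $Pic^{\mathfrak{s}}(X)$, the vanishing of $s_1(D)$ and $w_2(H^+(X))$ for a spin structure, to reduce to $m\,SW_{X,\mathfrak{s}}(x^{m+1}) = 0$, and then takes $m = 2k-1$. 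So I would simply invoke that. For $k = 0$, Lemma \ref{lem:u=0} gives $SW_{X,\mathfrak{s}}(1) = SW^{Pin(2)}_{X,\mathfrak{s}}(q^0)|_{u=0}$, and Theorem \ref{thm:swpin1} (the $b_+(X) \ge 3$ branch, with $j = 0$) gives $SW^{Pin(2)}_{X,\mathfrak{s}}(q^0) = u^{b_+(X)-3} s_{2(1+\sigma(X)/16),\mathbb{Z}_2}(D) = u^0 s_{2(1+\sigma(X)/16),\mathbb{Z}_2}(D)$ since $b_+(X) = 3$. Setting $u = 0$ sends the $\mathbb{Z}_2$-equivariant Segre class $s_{2l,\mathbb{Z}_2}(D) \in H^*_{\mathbb{Z}_2}(Pic^{\mathfrak{s}}(X))/(u^3)$ to the ordinary Segre class $s_{2l}(D) \in H^*(Pic^{\mathfrak{s}}(X) ; \mathbb{Z}_2)$, by the defining property established in Section \ref{sec:prod}. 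This yields $SW_{X,\mathfrak{s}}(1) = s_{2(1+\sigma(X)/16)}(D)$.

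There is no serious obstacle here: the theorem is essentially an assembly of pieces already proved. The only point requiring a little care is making sure the $b_+(X) = 3$ specialisation of Theorem \ref{thm:swpin1} is being read correctly — namely that the exponent $b_+(X)-3$ is $0$, so that the class $SW^{Pin(2)}_{X,\mathfrak{s}}(1)$ genuinely carries the full $\mathbb{Z}_2$-equivariant Segre class and its restriction to $u = 0$ is nonzero in general — and that one is allowed to invoke the intermediate computation $SW_{X,\mathfrak{s}}(x^{2k}) = 0$ for $k > 0$ from the proof of Theorem \ref{thm:b+3} rather than re-deriving it. One should also note explicitly that for $b_+(X) = 3$ and $X$ spin, $\sigma(X) = 0$ or $-16$, so $1 + \sigma(X)/16 \in \{1, 0\}$ and the right-hand side is either $s_2(D)$ or $s_0(D) = 1$, consistent with cases (3) of Theorem \ref{thm:thm1} and with the known computations of Morgan–Szabó and Ruberman–Strle recalled after the statement of Theorem \ref{thm:thm1}.
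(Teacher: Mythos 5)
Your assembly is exactly how the paper obtains Theorem \ref{thm:swspin1}: the paper states it as the direct combination of Lemma \ref{lem:u=0}, Theorem \ref{thm:swpin1} and Theorem \ref{thm:b+3} (with the $b_+(X)>3$ case equally covered by Theorem \ref{thm:vanb+} or the $u^{b_+(X)-3}$ factor), and your handling of odd powers, the $k>0$ vanishing, and the $u=0$ specialisation of the equivariant Segre class matches the intended argument. The proposal is correct and takes essentially the same route as the paper.
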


\begin{remark}
If $b_+(X) = 3$ and $X$ is spin, then $\sigma(X) = 0$ or $-16$ by the $10/8$-inequality. In the case $\sigma(X) = -16$, we get $SW_{X,\mathfrak{s}}(1) = s_0(D) = 1$ and in the case $\sigma(X)=0$, we get $SW_{X,\mathfrak{s}}(1) = s_2(D) = c_2(D) \in H^4( Pin^{\mathfrak{s}}(X) ; \mathbb{Z}_2)$. When $\sigma(X)=0$, our result is a generalisation of a result of Morgan--Szab\'o \cite{ms}, who proved the $b_1(X) = 0$ case. When $\sigma(X) = -16$, our result is a generalisation of a result of a result of Ruberman--Strle \cite{rs}, who proved the $b_1(X) = 4$ case.
\end{remark}

Theorems \ref{thm:swspin1} and \ref{thm:swpin1} give $SW_{X,\mathfrak{s}}$ and $SW^{Pin(2)}_{X,\mathfrak{s}}$ in terms of Segre classes of the index bundle $D \to Pic^{\mathfrak{s}}(X)$. These can be computed using the families index theorem, as we will now show.

Let $T_X = H^1(X ; \mathbb{R})/H^1(X ; \mathbb{Z})$ be the moduli space of flat unitary line bundles on $X$. Over $X \times T_X$ we have a universal line bundle with connection $L_X \to X \times T_X$ with the property that its restriction to $X \times {p}$ is the flat line bundle corresponding to $p \in T_X$. Let $\Omega \in H^2(X \times T_X ; \mathbb{Z})$ be the first Chern class of $L_X$. We have that $\Omega = \sum_i x_i \! \smallsmile \! y_i $, where $\{ y_i \}$ is a basis of $H^1(X ; \mathbb{Z})$ and $\{ x_i \}$ is the corresponding dual basis of $H^1( T_X ; \mathbb{Z}) \cong Hom( H^1(X ; \mathbb{Z}) , \mathbb{Z})$. 

The spin connection gives an identification $Pic^{\mathfrak{s}}(X) \cong T_X$. Then the families index theorem gives:
\[
Ch(D) = \int_{X} e^{\Omega } \wedge \left( 1 - \frac{\sigma(X)}{8} vol_X \right),
\]
where $\int_{X}$ means integration over the fibres of $X \times T_X \to T_X$ and $vol_X$ is a $4$-form on $X$ such that $\int_X vol_X = 1$. Since each term in $\Omega$ has degree $1$ in $X$, we have that $\Omega^5 = 0$ and that $\int_X \Omega^n \wedge vol_X = 0$ for any $n > 0$. It follows that
\[
Ch(D) = -\frac{\sigma(X)}{8} + \frac{1}{24} \int_X \Omega^4.
\]

For any subset $I \subset \{1,\dots , b_1(X)\}$ of size $4$, let $c_I = \langle y_{i_1}y_{i_2} y_{i_3} y_{i_4} , [X] \rangle \in \mathbb{Z}$ where $I = \{ i_1 , i_2 , i_3 , i_4 \}$ ordered so that $i_1 < i_2 < i_3 < i_4$. Also set $x_I = x_{i_1}x_{i_2}x_{i_3}x_{i_4}$. Then we have
\[
\frac{1}{24} \int_X \Omega^4 = \sum_{|I|=4} c_I x_I \in H^4(T_X ; \mathbb{Z}).
\]
Let $s = (1/24) \int_X \Omega^4$ and $d = -\sigma(X)/8$. Then $Ch(D) = d + s$. If we write $Ch(D) = \sum_{i \ge 0} Ch_i(D)$, where $Ch_i(D)$ has degree $2i$, then $Ch_0(D) = d$, $Ch_2(D) = s$ and all other terms are zero. Since $Ch_1(D) = c_1(D)$ and $Ch_2(D) = (c_1(D)^2 - 2c_2(D))/2 = -c_2(D)$, we see that $c_1(D) = 0$ and $s = Ch_2(D) = s_2(D)$ is the second Segre class of $D$.

Using the splitting principle, one can express the total Segre class of a virtual bundle $V$ in terms of the Chern character as:
\[
s(V) = exp\left( \sum_{n \ge 1} (-1)^n (n-1)! \; Ch_n(V) \right).
\]
Therefore, in the case $V = D$, we have $s(D) = e^{s_2(D)}$. Thus $s_j(D) = 0$ for odd $j$ and
\[
s_{2j}(D) = \frac{1}{j!} s_2(D)^j,
\]
where, as shown above, $s_2(D)$ is given by
\begin{equation}\label{equ:s2}
s_2(D) = \sum_{|I|=4} c_I x_I \in H^4(T_X ; \mathbb{Z}).
\end{equation}
Choose an arbitrary ordering of subsets of $\{1 , \dots , b_1(X)\}$ of size $4$. Then it follows that $s_{2j}(D)$ can be written as
\[
s_{2j}(D) = \sum_{I_1 < \cdots < I_j } c_{I_1} \cdots c_{I_j} x_{I_1} \cdots x_{I_j}.
\]

Using the above formula, Theorem \ref{thm:swspin1} gives a complete description of the mod $2$ Seiberg--Witten invariant of any spin structure, depending only on $b_+(X), \sigma(X)$ and the $4$-fold cup products $\langle y_1 \,y_2\, y_3\,  y_4 , [X] \rangle$, $y_1,y_2,y_3,y_4 \in H^1(X ; \mathbb{Z})$.

We note here a consequence of Theorem \ref{thm:b+3} that is of independent interest:

\begin{theorem}\label{thm:cup}
Let $X$ be a compact, oriented, smooth spin $4$-manifold with $b_+(X) = 3$ and $\sigma(X) = -16$. Then $\langle y_1 \,y_2\, y_3\,  y_4 , [X] \rangle$ is even for any $y_1,y_2,y_3,y_4 \in H^1(X ; \mathbb{Z})$.
\end{theorem}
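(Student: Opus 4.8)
\emph{Proof proposal.} The plan is to obtain this as an immediate consequence of Theorem~\ref{thm:b+3} together with the index computation recorded in Equation~\eqref{equ:s2}. Since $\sigma(X) = -16$ we have $1 + \sigma(X)/16 = 0$, so Theorem~\ref{thm:b+3} asserts that $s_{2j}(D) \equiv 0 \pmod 2$ for every $j > 0$. Taking $j = 1$ gives $s_2(D) \equiv 0 \pmod 2$ in $H^4(Pic^{\mathfrak{s}}(X);\mathbb{Z}_2) \cong H^4(T_X;\mathbb{Z}_2)$, under the identification $Pic^{\mathfrak{s}}(X) \cong T_X$ furnished by the spin connection.

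Next I would read off the vanishing of the cup-product numbers on a basis. By Equation~\eqref{equ:s2} we have $s_2(D) = \sum_{|I| = 4} c_I\, x_I$, where $\{y_i\}$ is a basis of $H^1(X;\mathbb{Z})$, $\{x_i\}$ is the dual basis of $H^1(T_X;\mathbb{Z})$, and $c_I = \langle y_{i_1} y_{i_2} y_{i_3} y_{i_4}, [X]\rangle$ for $I = \{i_1 < i_2 < i_3 < i_4\}$. Since $T_X$ is a torus, $H^*(T_X;\mathbb{Z}_2)$ is the exterior algebra on $H^1(T_X;\mathbb{Z}_2)$, so the monomials $x_I$ with $|I| = 4$ form a $\mathbb{Z}_2$-basis of $H^4(T_X;\mathbb{Z}_2)$. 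Hence $s_2(D) \equiv 0 \pmod 2$ forces $c_I \equiv 0 \pmod 2$ for every $4$-element subset $I$; that is, the assertion of Theorem~\ref{thm:cup} holds when $y_1,\dots,y_4$ are distinct basis elements.

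Finally I would pass from basis elements to arbitrary classes by multilinearity. Writing $y_k = \sum_i a_{ki}\, y_i$ with $a_{ki} \in \mathbb{Z}$ and expanding, $\langle y_1 y_2 y_3 y_4, [X]\rangle$ becomes a $\mathbb{Z}$-linear combination of the numbers $\langle y_{i_1} y_{i_2} y_{i_3} y_{i_4}, [X]\rangle$ over all $4$-tuples of indices. When the four indices are distinct, this number equals $\pm c_I$, hence is even. When two of the indices coincide, say $y_{i_1} = y_{i_2}$, graded commutativity gives $2(y_{i_1} \smallsmile y_{i_1}) = 0$ in $H^2(X;\mathbb{Z})$, so $y_{i_1} \smallsmile y_{i_2} \smallsmile y_{i_3} \smallsmile y_{i_4}$ is a torsion class in $H^4(X;\mathbb{Z}) \cong \mathbb{Z}$ and therefore vanishes. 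Thus every term in the expansion is even, and so is $\langle y_1 y_2 y_3 y_4, [X]\rangle$.

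The heavy lifting is done by Theorem~\ref{thm:b+3}, so I do not anticipate a serious obstacle; the only point requiring a little care is the last reduction, where one must observe that the ``repeated-index'' contributions vanish over $\mathbb{Z}$ (not merely mod $2$), which is exactly why the torsion-freeness of $H^4(X;\mathbb{Z})$ enters.
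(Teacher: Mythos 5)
Your proposal is correct and follows exactly the paper's route: Theorem~\ref{thm:b+3} with $\sigma(X)=-16$ gives $s_2(D)\equiv 0 \pmod 2$, and Equation~(\ref{equ:s2}) then forces all $4$-fold cup products to be even. The paper leaves the passage from basis elements to arbitrary classes implicit; your multilinearity argument, including the observation that repeated-index terms vanish because $H^4(X;\mathbb{Z})\cong\mathbb{Z}$ is torsion-free, correctly fills in that routine step.
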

\begin{proof}
Theorem \ref{thm:b+3} implies that $s_2(D) = 0 \; ({\rm mod} \; 2)$, which by Equation (\ref{equ:s2}) implies that all $4$-fold cup products $\langle y_1 \,y_2\, y_3\,  y_4 , [X] \rangle$ are even.
\end{proof}

This result actually follows from a theorem of Furuta--Kametani \cite[Theorem 5]{fk}, proved by different means. See also \cite[Theorem 4]{na2} for a related result.

\begin{corollary}\label{cor:notsmooth}
Let $M_{E_8}$ denote the compact, simply-connected topological $4$-manifold with intersection form the negative definite $E_8$ lattice. Then $T^4 \# 2M_{E_8} \# n(S^1 \times S^3)$ does not admit a smooth structure for any $n \ge 0$.
\end{corollary}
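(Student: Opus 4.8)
The plan is to derive a contradiction from Theorem \ref{thm:cup}: I will show that if $X := T^4 \# 2M_{E_8} \# n(S^1 \times S^3)$ admitted a smooth structure, it would be a closed, oriented, smooth spin $4$-manifold with $b_+(X) = 3$, $\sigma(X) = -16$, yet carrying an \emph{odd} $4$-fold cup product, which Theorem \ref{thm:cup} forbids.

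First I would record the topological invariants of $X$. The intersection form is unaffected by connected summing with $S^1 \times S^3$ (which has vanishing second Betti number) and is additive under the remaining connected sum, so it equals $3H \oplus E_8 \oplus E_8$, where $H$ denotes the hyperbolic form. This form is even, and each connected summand ($T^4$, $M_{E_8}$, $S^1 \times S^3$) is spin; hence $w_2 = 0$ for the underlying topological manifold $X$, so any smooth structure on $X$ is spin as well. From the form we read off $b_+(X) = 3$ and $b_-(X) = 3 + 8 + 8 = 19$, so $\sigma(X) = -16$. Thus a smooth structure on $X$ would put it squarely under the hypotheses of Theorem \ref{thm:cup}.

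Next I would exhibit the offending cup product. Let $p : X \to T^4$ be the degree-one map collapsing the complement of an open ball in $2M_{E_8} \# n(S^1 \times S^3)$ to a point, and let $a,b,c,d \in H^1(T^4 ; \mathbb{Z})$ be the standard basis, for which $\langle a \smallsmile b \smallsmile c \smallsmile d , [T^4] \rangle = \pm 1$. By naturality of cup products and $p_*[X] = [T^4]$,
\[
\langle p^*a \smallsmile p^*b \smallsmile p^*c \smallsmile p^*d , [X] \rangle = \langle a \smallsmile b \smallsmile c \smallsmile d , [T^4] \rangle = \pm 1 ,
\]
which is odd. (Equivalently, in the language of Equation (\ref{equ:s2}) and Theorem \ref{thm:b+3}: with $\sigma(X) = -16$ one has $1 + \sigma(X)/16 = 0$, so $s_2(D) \equiv 0 \bmod 2$; but the coefficient $c_I$ for $I$ corresponding to $\{a,b,c,d\}$ equals $\pm 1$, and the monomials $x_I$ are linearly independent in $H^4( \mathrm{Pic}^{\mathfrak s}(X) ; \mathbb{Z}_2)$, so $s_2(D) \neq 0 \bmod 2$ — a contradiction.)

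Applying Theorem \ref{thm:cup} to the hypothetical smooth spin structure on $X$ forces all $4$-fold cup products to be even, contradicting the computation above; hence $X$ admits no smooth structure for any $n \ge 0$. I expect the only delicate points to be purely bookkeeping: verifying that $X$ is spin as a topological manifold (so that smoothability would entail a spin smooth structure) and that the degree-one collapse map genuinely transports the generators of $H^1(T^4)$ to classes with the stated odd pairing in $H^*(X)$. There is no analysis to do here, since all of it has been absorbed into the proofs of Theorems \ref{thm:b+3} and \ref{thm:cup}.
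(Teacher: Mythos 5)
Your proposal is correct and follows essentially the same route as the paper: observe that a smooth structure on $X$ would be spin (even intersection form, no $2$-torsion) with $b_+(X)=3$ and $\sigma(X)=-16$, exhibit the odd $4$-fold cup product coming from the $H^1$ of the $T^4$ summand, and contradict Theorem \ref{thm:cup}. The extra details you supply (the degree-one collapse map and the reformulation via $s_2(D)$) are just expansions of the paper's one-line assertions and are fine.
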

\begin{proof}

Suppose that $X = T^4 \# 2M_{E_8} \# n(S^1 \times S^3)$ admits a smooth structure. Since $H^2(X ; \mathbb{Z})$ has no $2$-torsion, the map $\mathfrak{s} \to c(\mathfrak{s})$ sending a spin$^c$-structure to its characteristic is a bijection. But the intersection form on $H^2(X ; \mathbb{Z})$ is even, so $X$ is spin. We also have that $\langle y_1 \,y_2\, y_3\,  y_4 , [X] \rangle = \pm 1$ for a basis $y_1, y_2, y_3 , y_4$ of $H^1( T^4 ; \mathbb{Z}) \subseteq H^1(X ; \mathbb{Z})$. But this contradicts Theorem \ref{thm:cup}.
\end{proof}

\subsection{Case $b_+(X) =1$ or $2$}\label{sec:b+12}

We now address the case that $b_+(X) =1$ or $2$. By the same argument as in the proof of Theorem \ref{thm:b+3}, we have that $SW_{X , \mathfrak{s}}(x^m) = 0 \; ({\rm mod} \; 2)$ for any $m>0$, so we are reduced to the case $m=0$. Since $X$ is spin and $b_+(X) < 3$, we have that $\sigma(X) = 0$ and it follows that $SW_{X , \mathfrak{s}}(1) \in H^{b_1(X)+1}( Pic^{\mathfrak{s}}(X))$. Furthermore, Theorem \ref{thm:swpin1} implies that $s_{2 , \mathbb{Z}_2}(D)$ is divisible by $u^{3-b_+(X)}$ and that
\begin{equation}\label{equ:swb+12}
SW_{X , \mathfrak{s}}(1) = u^{b_+(X)-3} s_{2 , \mathbb{Z}_2}(D) |_{u=0}
\end{equation}
where $a \mapsto a|_{u=0}$ denotes the map $H^*_{\mathbb{Z}_2}( Pic^{\mathfrak{s}}(X) )[u]/(u^3) \to H^*( Pic^{\mathfrak{s}}(X) )$ given by setting $u$ to zero. In order to evaluate $SW_{X , \mathfrak{s}}(1)$ we therefore need to understand the class $s_{2, \mathbb{Z}_2}(D)$ in more detail. For this purpose it will be convenient to make use of Real and Quaternionic $K$-theory as in \cite{at,dup}. 

A {\em Real structure} on a topological space $X$ is a continuous involution $j : X \to X$. A {\em Real vector bundle} on $X$ is a complex vector bundle $E \to X$ together with a lift of $j$ to an antilinear involution on $E$. Similarly a {\em Quaternionic vector bundle} on $X$ is a complex vector bundle $E \to X$ together with a lift of $j$ to an antilinear map $j : E \to E$ such that $j^2 = -1$. To a Real space one can define the Real and Quaternionic $K$-theories $KR^*(X), KH^*(X)$ with the property that $KR^0(X), KH^0(X)$ are the Grothendieck groups of Real and Quaternionic bundles. One also has canonical isomorphisms $KH^j(X) \cong KR^{j+4}(X)$.

Let $V$ be an $n$-dimensional real vector space and $\Lambda \subset V$ a latttice. Let $T^n = V/\Lambda$ be the corresponding $n$-dimensional torus. Let $T^n_-$ denote $T^n$ equipped with the Real structure $j : T^n \to T^n$ given by $j(x) = -x$. Recall that we have defined the equivariant Segre and Chern classes $s_{2j,\mathbb{Z}}(D), c_{2j ,\mathbb{Z}}(D) \in H^*( T^n_-)[u]/(u^3)$ associated to any Quaternionic virtual vector bundle on $T^n_-$. Since these classes depend only on the class of $D$ in $KH(T^n_-)$, the total equivariant Segre class defines a map
\[
s_{\mathbb{Z}_2} : KH(T^n_-) \to H^*(T^n_-)[u]/(u^3)
\]
satisfying $s_{\mathbb{Z}_2}(E \oplus F) = s_{\mathbb{Z}_2}(E)s_{\mathbb{Z}_2}(F)$. In particular, it follows that the second equivariant Segre class is an additive homomorphism
\[
s_{2 , \mathbb{Z}_2} : KH( T^n_-) \to H^4( T^n_-)[u]/(u^3).
\]
Recall that $s_{2, \mathbb{Z}_2}|_{u=0} = s_2$ is the ordinary Segre class. Then we can write
\[
s_{2,\mathbb{Z}_2}(E) = s_2(E) + u t_2(E) + u^2 r_2(E)
\]
for some $t_2(E) \in H^3(T^n)$, $r_2(E) \in H^2(T^n)$. So we have homomorphisms
\[
t_2 : KH(T^n_-) \to H^3(T^n), \quad r_2 : KH(T^n_-) \to H^2(T^n).
\]
It then follows from Equation (\ref{equ:swb+12}) that the Seiberg--Witten invariants in the case $b_+(X) = 1,2$ are given by:
\begin{equation}\label{equ:sw12}
SW_{X,\mathfrak{s}}(1) = \begin{cases} r_2(D) & \text{if } b_+(X) = 1, \\ t_2(D) & \text{if } b_+(X) =2, \end{cases}
\end{equation}
where $D \in KH( Pic^{\mathfrak{s}}(X) )$ is the families index of the Dirac operator equipped with its Quaternionic structure.

Since a degree $k$ cohomology class $a \in H^k(T^n)$ of an $n$-torus is determined by its restriction to $k$-dimensional subtori, to compute $r_2(D), t_2(D)$, it suffices to compute their restriction to $2$- or $3$-dimensional subtori of $Pic^{\mathfrak{s}}(X)$. Now one can easily show that
\[
KH( T^2_-) \cong \mathbb{Z} \oplus \mathbb{Z}_2
\]
where the $\mathbb{Z}$-summand is generated by the trivial Quaternionic bundle $\mathbb{H}$. Since $s_{2,\mathbb{Z}_2}(\mathbb{H}) = 0$, it follows that the homomorphism $r_2 : KH(T^2_-) \to H^2(T^2) \cong \mathbb{Z}_2$ is either identically zero, or is the projection to the $\mathbb{Z}_2$ summand. We claim that $r_2$ is not identically zero. To see this, it suffices to find a compact, oriented, spin $4$-manifold $(X , \mathfrak{s})$ with $b_+(X) = 1$, $b_1(X) = 2$ and $SW_{X,\mathfrak{s}}(1) \neq 0 \; ({\rm mod} \; 2)$. For this we can take $X$ to be a hyperelliptic surface of the form $T/\mathbb{Z}_6$ as in Example \ref{ex:kod0}. We will show that $X$ has a non-trivial mod $2$ Seiberg--Witten invariant for some spin structure without appealing to Theorem \ref{thm:thm1}. In fact, since $X$ is a K\"ahler manifold with $c_1(X) = 0$, the canonical spin$^c$-structure $\mathfrak{s}$ comes from a spin structure and $SW_{X,\mathfrak{s}}(1) \neq 0 \; ({\rm mod} \; 2)$ (for both chambers).

In the case $n=3$, we have
\begin{equation}\label{equ:decomp}
KH( T^3_-) \cong \mathbb{Z} \oplus \mathbb{Z}_2^3 \oplus \mathbb{Z}_2.
\end{equation}
The subgroup of $KH(T^3_-)$ given by the first and second summand in (\ref{equ:decomp}) is also the subgroup generated by pullbacks of projections $T^3_- \to T^2_-$ to $2$-dimensional quotients of $T^3$. Since $t_2 : KH(T^3_-) \to H^3(T^3) \cong \mathbb{Z}_2$ takes values in $H^3(T^3)$ and since $t_2$ commutes with pullbacks, it is clear that $t_2$ must vanish on the first two summands of (\ref{equ:decomp}). Therefore, $t_2$ is either identically zero or is given by projection to the third summand. To show that $t_2$ is not identically zero, it suffices to find a compact, oriented, spin $4$-manifold $(X , \mathfrak{s})$ with $b_+(X) = 2$, $b_1(X) = 3$ and $SW_{X , \mathfrak{s}}(1) \neq 0 \; ({\rm mod} \; 2)$. As in Example \ref{ex:kod0}, we can take $X$ to be a primary Kodaira surface diffeomorphic to the product $S^1 \times Y$ where $Y$ is the total space of a degree $1$ circle bundle $Y \to C$ over an elliptic curve. Recall that $X$ has a nowhere vanishing holomorphic $2$-form $\Omega$. Then $\omega = Re(\Omega)$ is a symplectic form on $X$ and the projection $X \to C$ is a Lagrangian fibration with respect to $\omega$. This implies that first Chern class of a compatible almost complex structure $J$ is trivial and hence the canonical spin$^c$-structure $\mathfrak{s}$ associated to $J$ comes from a spin structure. Then since $X$ is symplectic, we have $SW_{X , \mathfrak{s}}(1) \neq 0 \; ({\rm mod} \; 2)$. This proves that $t_2$ is not identically zero.

The above calculations completely characterise the homomorphisms $r_2,t_2$ for any torus $T^n_-$ and thus allow us to compute $SW_{X,\mathfrak{s}}(1)$ in terms of the class $D \in KH( Pic^{\mathfrak{s}}(X) )$. Our next task is to describe this class.

Let $A^n = V^*/\Lambda^*$ denote the dual torus of $T^n$. Equip $A^n$ with the trivial involution. Let $L \to A^n \times T^n$ denote the Poincar\'e line bundle. This can be explicitly constructed, as follows \cite{fk}. First consider the trivial line bundle $\widetilde{L} = V^* \times V \times \mathbb{C}$ on $V^* \times V$. Let $\Lambda^* \times \Lambda$ act on $\widetilde{L}$ according to
\[
( \mu , \lambda ) \cdot ( w , v , z ) = (w + \mu , v + \lambda , e^{2\pi i \langle \mu , v \rangle } z).
\]
The Poincar\'e line bundle $L$ may be defined as the quotient line bundle $L = \widetilde{L}/(\Lambda^* \times \Lambda) \to A^n \times T^n$. Then $L$ is a Real line bundle with Real structure given by
\[
j( w , v , z ) = (w , -v , \overline{z}).
\]

Fix an orientation on $V$. This induces orientations on $T^n$ and $A^n$. Since $A^n$ is a Lie group, it has a translation invariant trivialisation $\mathcal{F}_{A^n} \cong A^n \times SO(n)$ of the oriented frame bundle. This defines a distinguished spin structure whose prinicpal $Spin(n)$-bundle is the product $A^n \times Spin(n)$. We equip $A^n$ with this spin structure. Next, let $p_2 : A^n \times T^n_- \to T^n_-$ be the projection to $T^n_-$ and $p_1 : A^n \times T^n_- \to A^n$ the projection to $A^n$. Define the {\em Real Fourier--Mukai transform}
\[
FM : KO^j( A^n ) \to KR^{j-b_1(X)}( T^n_- )
\]
by
\[
FM( E ) = (p_2)_*( L \otimes (p_1)^*(E) ).
\]

Let $X$ be a compact, oriented, smooth $4$-manifold. Let $V$ be the vector space $V = H^1(X ; \mathbb{R})$ and $\Lambda$ the lattice $\Lambda = H^1(X ; \mathbb{Z})$. Recall that the Jacobian torus of $X$ is $T_X = V/\Lambda$. The {\em Albanese torus} of $X$ is by definition the dual torus:
\[
A_X = V^*/\Lambda^* = Hom( H^1(X ; \mathbb{R}) , \mathbb{R})/ Hom( H^1(X ; \mathbb{Z}) , \mathbb{Z} ).
\]
Define the Albanese map $a : X \to A_X$ as follows. Choose a basepoint $x_0 \in X$ and choose a $b_1(X)$-dimensional subspace $\widetilde{V} \subset \Omega^1_{cl}(X)$ of closed $1$-forms such that the projection to cohomology $\widetilde{V} \to V$ is an isomorphism. Thus every isomorphism class in $H^1(X , \mathbb{R})$ has a unique representative in $\widetilde{V}$. For example, we could fix a choice of Riemannian metric $g$ and take $\widetilde{V}$ to be the space of $g$-harmonic $1$-forms. For $x \in X$ we define $a(x)$ as follows. Choose a path $\gamma$ from $x_0$ to $x$. Then we obtain an element of $Hom( H^1(X ; \mathbb{R}) , \mathbb{R})$ given by $[\lambda] \mapsto \int_{x_0}^{x} \lambda$, where $\lambda$ is the unique representative of $[\lambda]$ in $\widetilde{V}$. If we choose a different path $\gamma'$ from $x_0$ to $x$, then the difference $\lambda \mapsto \int_{\gamma} \lambda - \int_{\gamma'} \lambda$ is the $\mathbb{R}$-linear extension of an element of $Hom( H^1(X ; \mathbb{Z}) , \mathbb{Z})$. Hence, we get a well-defined element $a(x) \in A_X$. More concretely, if $\lambda_1, \dots , \lambda_b$ is a basis for $H^1(X ; \mathbb{Z})$, then we obtain an isomorphism $A_X \cong \mathbb{R}^b/\mathbb{Z}^b$ under which an element $(l_1 , \dots , l_b) \in \mathbb{R}^b/\mathbb{Z}^b$ corresponds to the image in $A_X$ of the homomorphism $H^1(X ; \mathbb{R}) \to \mathbb{R}$ which sends $\lambda_i$ to $l_i$. Under this identification the Albanese map $a : X \to A_X \cong \mathbb{R}^b/\mathbb{Z}^b$ is given by $a(x) = ( \int_{\gamma} \lambda_1 , \dots , \int_{\gamma} \lambda_b)$, where $\gamma$ is any path from $x_0$ to $x$. The Albanese map depends on the choice of basepoint $x_0$ and also on the choice of subspace $\widetilde{V}$. Different choices can be smoothly interpolated, hence $a$ is well defined up to homotopy. We abuse notation and speak of ``the'' Albanese map.

A spin structure $\mathfrak{s}$ on $X$ determines a $KO$-orientation. As above, we give $A_X$ the distinguished spin structure as a Lie group. Hence both $X$ and $A_X$ are $KO$-oriented. We define the {\em Albanese invariant} of $(X , \mathfrak{s})$ to be
\[
A(X,\mathfrak{s}) = a_*(1) \in KO^{b_1(X) - 4}(A_X) \cong KSp^{b_1(X)}(A_X)
\]
where $a_* : KO(X) \to KO^{b_1(X)-4}(A_X)$ denotes the push-forward map in $KO$-theory with respect to the given spin structures. Given the spin structure $\mathfrak{s}$ on $X$ we obtain a pushforward map 
\[
\pi_* : KR(X \times T_X) \to KR^{-4}(T_X) = KH(T_X)
\]
where $\pi : X \times T_X \to T_X$ is the projection to $T_X$. The pullback $L_X = (a \times id_{T_X})^*(L)$ of the Poincar\'e line bundle $L$ under the Albanese map is the universal line bundle on $X \times T_X$ which parametrises flat line bundles on $X$, as described in Section \ref{sec:b+3}. By definition of the index bundle $D \in KH(T_X)$, we have
\[
D = \pi_*( (a \times id)^*(L) ).
\]

\begin{proposition}\label{prop:dfm}
We have
\[
D = FM( A(X,\mathfrak{s}) ).
\]
\end{proposition}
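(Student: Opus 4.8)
The plan is to obtain the identity $D = FM(A(X,\mathfrak{s}))$ as a purely formal consequence of three standard properties of the Gysin (pushforward) maps in $KR$-theory: compatibility with transverse base change, the projection formula, and functoriality under composition. Introduce the four projections $q_1 : X\times T_X \to X$, $\pi : X\times T_X \to T_X$, $p_1 : A_X\times T_X \to A_X$, $p_2 : A_X\times T_X \to T_X$, where $T_X$ carries the Real structure $x\mapsto -x$ and $X$, $A_X$ carry the trivial involution, so that all four maps, as well as $a\times id$ and $a$, are equivariant. The square with top edge $a\times id : X\times T_X \to A_X\times T_X$, left edge $q_1$, right edge $p_1$ and bottom edge $a : X\to A_X$ is Cartesian, via $X\times T_X \cong X\times_{A_X}(A_X\times T_X)$, $(x,t)\mapsto (x,(a(x),t))$; moreover $p_2\circ (a\times id) = \pi$. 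The spin structure $\mathfrak{s}$ on $X$ and the distinguished Lie-group spin structure on $A_X$ furnish $KO$/$KR$-orientations, hence Gysin maps $a_* : KO(X)\to KO^{b_1(X)-4}(A_X)$, $(a\times id)_* : KR(X\times T_X)\to KR^{b_1(X)-4}(A_X\times T_X)$, $(p_2)_* : KR(A_X\times T_X)\to KR^{-b_1(X)}(T_X)$, and $\pi_* : KR(X\times T_X)\to KR^{-4}(T_X)=KH(T_X)$.

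The three steps then run as follows. First, since $p_1$ is a submersion, the Cartesian square above is transverse, so base change gives
\[
p_1^*(a_*(1)) = (a\times id)_*(q_1^*(1)) = (a\times id)_*(1),
\]
whence $FM(A(X,\mathfrak{s})) = (p_2)_*\bigl(L\otimes p_1^*(a_*(1))\bigr) = (p_2)_*\bigl(L\otimes (a\times id)_*(1)\bigr)$. Second, the projection formula for the Gysin map $(a\times id)_*$, applied with the class $L$ pulled back from $A_X\times T_X$, yields
\[
L\otimes (a\times id)_*(1) = (a\times id)_*\bigl((a\times id)^*(L)\bigr),
\]
so $FM(A(X,\mathfrak{s})) = (p_2)_*(a\times id)_*\bigl((a\times id)^*(L)\bigr)$. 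Third, functoriality of Gysin maps together with $p_2\circ(a\times id)=\pi$ gives $(p_2)_*(a\times id)_* = \pi_*$, and therefore
\[
FM(A(X,\mathfrak{s})) = \pi_*\bigl((a\times id)^*(L)\bigr) = D,
\]
the last equality being the definition of $D$ recalled just before the statement (using $L_X = (a\times id)^*(L)$).

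The step I expect to be the main obstacle is not any of the three displayed manipulations but the bookkeeping of $KO$/$KR$-orientations underlying them: one must verify that the orientation of $a\times id$ (the orientation of $a$ coming from $\mathfrak{s}$, crossed with the canonical orientation of the identity of $T_X$) composed with the orientation of $p_2$ (the Lie-group spin orientation of the fibre $A_X$, crossed with that of $T_X$) equals precisely the orientation of $\pi$ that is used to define $D$ as a Quaternionic bundle, namely the one induced by $\mathfrak{s}$; and likewise that the base change in the first step is performed with matching orientations. Concretely this amounts to checking that the (stable) normal-bundle data of $a\times id$ together with that of the fibre $A_X$ of $p_2$ assemble to the normal-bundle data of the fibre $X$ of $\pi$, compatibly with the chosen trivialisations over $A_X$ and $T_X$ — a routine but careful spin-structure computation. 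The transversality used in the first step, and the validity of the projection formula and of functoriality in the Real/Quaternionic setting, are standard.
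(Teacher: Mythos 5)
Your proposal is correct and matches the paper's own proof, which is exactly the same chain of identities (functoriality $\pi_* = (p_2)_*(a\times id)_*$, projection formula, and base change $(a\times id)_*(1) = p_1^*(a_*(1))$), merely written in the opposite direction starting from $D$. Your extra remarks about matching the $KO$/$KR$-orientations are a reasonable point of care but do not change the argument, which the paper records without further comment.
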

\begin{proof}
Observe that $\pi = p_2 \circ (a \times id)$. Therefore
\begin{align*}
D &= \pi_*( (a \times id)^*(L) ) \\
&= (p_2)_* ( (a \times id)_*(  (a \times id)^*(L) ) ) \\
&= (p_2)_*( L \otimes (a \times id)_*(1) ) \\
&= (p_2)_*( L \otimes (p_1)^*( a_*(1) ) ) \\
&= (p_2)_*( L \otimes (p_1)^*(A(X , \mathfrak{s})) ).
\end{align*}
\end{proof}

Fix a basis $\lambda_1 , \dots , \lambda_{b_1(X)}$ of $\Lambda = H^1(X ; \mathbb{Z})$. For any subset $I \subset \{1,\dots , b_1(X) \}$ let $T(I) \subset T_X = V/\Lambda$ be the subtorus corresponding to the subspace $V(I)$ of $V = H^1(X ; \mathbb{R})$ spanned by $\{ \lambda_i \}_{i \in I}$. Let $\Lambda(I) = \Lambda \cap V(I)$ be the sublattice of $\Lambda$ spanned by $\{ \lambda_i \}_{i \in I}$, so $T(I) = V(I)/\Lambda(I)$. Corresponding to the inclusion $\iota_I : T(I) \to T_X$ there is a dual projection $\rho_I : A_X \to A(I)$, where $A(I) = V(I)^*/\Lambda(I)^*$. This is the map of tori induced by the linear map $V^* \to V(I)^*$ which is adjoint to the inclusion $V(I) \to V$. Let $FM_I : KO^j( A(I) ) \to KR^{j-|I|}( T(I) )$ denote the Real Fourier--Mukai transform for the pair $( A(I) , V(I) )$. Define the projected Albanese map $a_I : X \to A(I)$ to be the composition $a_I = \rho_I \circ a$.

\begin{proposition}\label{prop:ftres}
We have a commutative diagram
\[
\xymatrix{
KO^j( A^n) \ar[r]^-{FM} \ar[d]^-{(\rho_I)_*} & KR^{j-b_1(X)}( T^n_- ) \ar[d]^-{\iota_I^*} \\
KO^{j-(b_1(X)-|I|)}(A(I)) \ar[r]^-{FM_I} & KR^{j-b_1(X)}( T(I) ).
}
\]
Moreover, we have $\iota_I^*( D) = FM_I( (a_I)_*(1) )$.
\end{proposition}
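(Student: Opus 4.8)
The plan is to deduce both assertions from the explicit model of the Poincar\'e bundle together with the base change and projection formulas in $KR$-theory. First I would introduce the intermediate space $A^n \times T(I)$ with its two projections $q_1 : A^n \times T(I) \to A^n$ and $q_2 : A^n \times T(I) \to T(I)$, and the two maps
\[
\alpha = \mathrm{id}_{A^n} \times \iota_I : A^n \times T(I) \to A^n \times T^n_-, \qquad \beta = \rho_I \times \mathrm{id}_{T(I)} : A^n \times T(I) \to A(I) \times T(I).
\]
Writing $L_I$ for the Poincar\'e bundle of the pair $(A(I), T(I))$, the essential geometric input is the identification of Real line bundles $\alpha^*(L) \cong \beta^*(L_I)$, which is immediate from the model $\widetilde L = V^* \times V \times \mathbb{C}$: for $\mu \in V^*$ and $v \in V(I)$ one has $\langle \mu , v\rangle = \langle \rho_I(\mu) , v\rangle$ because $\rho_I : V^* \to V(I)^*$ is the restriction map, and the $\Lambda^* \times \Lambda$-action together with the Real structure $j(w,v,z) = (w, -v, \overline z)$ pull back compatibly on both sides.

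Next I would observe that $\alpha$ and $\beta$ each fit into a transverse (hence base-change) square of compact manifolds: $A^n \times T(I) = (A^n \times T^n_-) \times_{T^n_-} T(I)$ realises $\alpha$ over the submanifold $\iota_I$ and the submersion $p_2$, while $A^n \times T(I) = (A(I) \times T(I)) \times_{A(I)} A^n$ realises $\beta$ over $p_1^I$ and the submersion $\rho_I$. Base change along the first square gives $\iota_I^* (p_2)_* = (q_2)_* \alpha^*$, and since $p_1 \circ \alpha = q_1$,
\[
\iota_I^*\, FM(E) = (q_2)_*\bigl( \alpha^*(L) \otimes q_1^*(E) \bigr) = (q_2)_*\bigl( \beta^*(L_I) \otimes q_1^*(E) \bigr).
\]
Base change along the second square gives $(p_1^I)^* (\rho_I)_* = \beta_* q_1^*$, and then the projection formula together with $p_2^I \circ \beta = q_2$ yields
\[
FM_I\bigl( (\rho_I)_*(E) \bigr) = (p_2^I)_*\bigl( L_I \otimes \beta_* q_1^*(E) \bigr) = (p_2^I)_* \beta_*\bigl( \beta^*(L_I) \otimes q_1^*(E) \bigr) = (q_2)_*\bigl( \beta^*(L_I) \otimes q_1^*(E) \bigr).
\]
Comparing the two displays proves commutativity of the diagram. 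The second claim then follows by combining this with Proposition \ref{prop:dfm} and functoriality of the $KO$-theoretic pushforward for $a_I = \rho_I \circ a$:
\[
\iota_I^*(D) = \iota_I^*\, FM\bigl( a_*(1) \bigr) = FM_I\bigl( (\rho_I)_* a_*(1) \bigr) = FM_I\bigl( (\rho_I \circ a)_*(1) \bigr) = FM_I\bigl( (a_I)_*(1) \bigr).
\]

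The main obstacle is bookkeeping rather than conceptual: one must ensure that base change and the projection formula are available in $KR$-theory in exactly these forms, i.e.\ that all the torus bundles appearing carry compatible $KR$-orientations. This is arranged by noting that every torus and subtorus ($T^n_-$, $T(I)$, $A^n$, $A(I)$) and every projection among the relevant product spaces is equipped with its translation-invariant framing, so the corresponding relative tangent bundles are trivial as (Real, resp.\ real) bundles; the Gysin maps are therefore defined and functorial, which is what makes $(\rho_I)_* a_* = (a_I)_*$ legitimate with respect to the distinguished spin structures used to define $a_*$. A secondary point to watch is that $\alpha^*(L) \cong \beta^*(L_I)$ must be an isomorphism of \emph{Real} bundles, not merely of complex line bundles, but this is visible directly from the explicit Real structures above.
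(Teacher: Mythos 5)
Your proof is correct. The mechanism is the same as the paper's — base change and the projection formula in $KR$/$KO$-theory applied to the explicit cocycle model of the Poincar\'e bundle, followed by $(\rho_I)_*a_*=(a_I)_*$ and Proposition \ref{prop:dfm} — but your bookkeeping is organised differently. The paper splits $T_X \cong T(I)\times T(J)$ and $A_X \cong A(I)\times A(J)$ using the complementary index set $J$, factors the Poincar\'e bundle as an external product $L \cong (\pi^{1234}_{13})^*(L(I)) \otimes (\pi^{1234}_{24})^*(L(J))$, and chains push--pull identities through the triple product, using that $L(J)$ is trivial on the basepoint slice. You instead work over the single correspondence space $A^n \times T(I)$ with the two Cartesian squares for $\alpha=\mathrm{id}\times\iota_I$ and $\beta=\rho_I\times\mathrm{id}$, and your one identification $\alpha^*(L)\cong\beta^*(L_I)$ (verified from $\langle\mu,v\rangle=\langle\rho_I(\mu),v\rangle$ for $v\in V(I)$, with matching Real structures) packages both the external factorisation and the triviality of $L(J)$ over the basepoint that the paper uses. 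Your version avoids choosing the complement $J$ altogether and makes the orientation/base-change hypotheses explicit (translation-invariant framings on all tori and projections), which is exactly what legitimises the Gysin maps and the final functoriality step; the paper's version makes the $\mathbb{Z}_2$-summand bookkeeping in the later propositions slightly more transparent because the $T(I)\times T(J)$ splitting is already in place. Both are complete proofs.
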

\begin{proof}
Let $J = \{1, \dots , b_1(X) \} \setminus I$ be the complementary subset of $\{1,\dots , b_1(X)\}$. Then $T_X \cong T(I) \times T(J)$ and dually $A_X \cong A(I) \times A(J)$. In the calculations that follow we make use of various projection maps whose domain and codomain is the product of some subset of $\{ T(I) , T(J) , A(I) , A(J) \}$. To keep notation simple we set $A_1 = A(I)$, $A_2 = A(J)$, $A_3 = T(I)$, $A_4 = T(J)$ and if $R = \{ i_1 < i_2 < \cdots < i_k\}$ we set $A_R = A_{i_1} \times \cdots \times A_{i_K}$. If $S \subseteq R$ then we write $\pi^R_S : A_R \to A_S$ for the projection and $\iota^S_R : A_S \to A_R$ for the inclusion. Let $L \to A_{1234}$, $L(I) \to A_{13}$, $L(J) \to A_{24}$ be the Poincar\'e line bundles. Then $L \cong (\pi^{1234}_{13})^*(L(I)) \otimes (\pi^{1234}_{24})^*(L(J))$. Let $a \in KO^*(A_X)$. Then
\begin{align*}
\iota_I^* FM(x) &= \iota_I^* (\pi_{34}^{1234})_* ( L \otimes (\pi^{1234}_{12})^*(a) ) \\
&= (\pi^{123}_{3})_* ( \iota^{123}_{1234} )^* ( L \otimes (\pi^{1234}_{12})^*(a) ) \\
&= (\pi^{123}_{3})_* \left( (\pi^{123}_{13})^*(L(I)) \otimes (\pi^{123}_{2})^* ( \iota^2_{24})^*(L(J)) \otimes (\pi^{123}_{12})^*(a) \right) \\
&= (\pi^{123}_{3})_* \left( (\pi^{123}_{13})^*(L(I)) \otimes (\pi^{123}_{12})^*(a) \right) \\
&= (\pi^{13}_{3})_* (\pi^{123}_{13})_* \left( (\pi^{123}_{13})^*(L(I)) \otimes (\pi^{123}_{12})^*(a) \right) \\
&= (\pi^{13}_{3})_* \left( L(I) \otimes (\pi^{13}_1)^* ( \pi^{12}_{1} )_*(a) \right) \\
&= FM_I( (\pi^{12}_{1})_*(a) ) \\
&= FM_I( (\rho_I)_*(a) ).
\end{align*}
Where we used that $(\iota^2_{24})^*(L(J)) \cong 1$. This proves commutativity of the diagram. Then since $D = FM( a_*(1))$, it follows that
\[
\iota_I^*(D) = \iota_I^* FM( a_*(1) ) = FM_I( (\rho_I)_* a_* (1) ) = FM_I( (a_I)_*(1)).
\]
\end{proof}

When $|I|=2$, we can regard $r_2( \iota_I^*(D) ) \in H^2( T(I) ) \cong \mathbb{Z}_2$ as an element of $\mathbb{Z}_2$. Similarly, when $|I|=3$, we can regard $t_2( \iota_I^*(D) ) \in H^3( T(I) ) \cong \mathbb{Z}_2$ as an element of $\mathbb{Z}_2$. We then have:

\begin{proposition}\label{prop:pullback}
If $b_+(X) = 1$ and $|I|=2$, then $r_2( \iota_I^*(D) ) = \tau^*( (a_I)_*(1) )$, where $\tau^* : KO^{-2}(A(I)) \to KO^{-2}(pt) \cong \mathbb{Z}_2$ is the map induced by the inclusion of a point $\tau : pt \to A(I)$. Similarly, if $b_+(X)=2$ and $|I|=3$, then $t_2( \iota_I^*(D) ) = \tau^*( (a_I)^*(1) )$, where $\tau^* : KO^{-1}(A(I)) \to KO^{-1}(pt) \cong \mathbb{Z}_2$ is the map induced by the inclusion of a point $\tau : pt \to A(I)$.
\end{proposition}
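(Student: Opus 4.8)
The plan is to reduce the statement to a computation of $r_2$ and $t_2$ on the class $(\iota_I^*D) = FM_I((a_I)_*(1))$ in the Real (equivalently Quaternionic) $K$-theory of a low-dimensional torus, and to match this against the characterisation of $r_2, t_2$ obtained above. Concretely, since $r_2 : KH(T^2_-) \to H^2(T^2_-) \cong \mathbb{Z}_2$ was shown to be either zero or the projection onto the $\mathbb{Z}_2$-summand of $KH(T^2_-) \cong \mathbb{Z} \oplus \mathbb{Z}_2$ (and is nonzero, by the hyperelliptic surface example), and similarly $t_2 : KH(T^3_-) \to H^3(T^3_-) \cong \mathbb{Z}_2$ is the projection onto the last $\mathbb{Z}_2$-summand of $KH(T^3_-) \cong \mathbb{Z} \oplus \mathbb{Z}_2^3 \oplus \mathbb{Z}_2$, what we must prove is that the Real Fourier--Mukai transform $FM_I$ carries the relevant generator of $KO^{-2}(A(I))$ (resp. $KO^{-1}(A(I))$) detected by $\tau^*$ to the distinguished $\mathbb{Z}_2$-summand of $KH(T(I))$, and kills everything else. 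In other words, the content is the identity $r_2 \circ FM_I = \tau^*$ on $KO^{-2}(A^2)$ and $t_2 \circ FM_I = \tau^*$ on $KO^{-1}(A^3)$, after which plugging in $E = (a_I)_*(1)$ and invoking Proposition \ref{prop:ftres} gives the result.

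First I would recall the structure of $KO^*(A^n)$ for $n=2,3$: since $A^n$ is a torus with the trivial involution, $KO^*(A^n)$ splits (via the splitting coming from a basepoint) as $KO^*(pt)$ plus a reduced part, and the reduced part is built from the reduced $KO$-theory of the wedge of subtori $T(R)/\partial$. The map $\tau^* : KO^{-2}(A^2) \to KO^{-2}(pt) \cong \mathbb{Z}_2$ (resp. $KO^{-1}(A^3) \to KO^{-1}(pt) \cong \mathbb{Z}_2$) is exactly the projection onto the $KO^*(pt)$-summand under this splitting, so $\tau^*(E)$ records the "constant part" of $E$. I would then show that $FM_I$ is compatible with restriction to subtori in the sense already used (Proposition \ref{prop:ftres} for nested $I$), so that $r_2 \circ FM_I$ and $t_2 \circ FM_I$, being homomorphisms to $\mathbb{Z}_2$ that vanish on all pullbacks from proper quotients (by the degree/naturality argument that was used to pin down $r_2$ and $t_2$), must factor through the same "constant part" projection $\tau^*$ — up to the single remaining ambiguity of whether the composite is zero or nonzero.

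To settle that ambiguity, i.e. to show $r_2(FM_I(\text{generator})) \neq 0$ and $t_2(FM_I(\text{generator})) \neq 0$ rather than $0$, I would feed in a geometric example where both sides are independently computable. For $b_+=1$, take $X$ the hyperelliptic surface $T/\mathbb{Z}_6$ of Example \ref{ex:kod0} with its canonical (spin) structure coming from $c_1=0$; then $SW_{X,\mathfrak{s}}(1) \neq 0$ by Kähler/symplectic non-vanishing, so by \eqref{equ:sw12} $r_2(D) \neq 0$, while the Albanese invariant $A(X,\mathfrak{s}) = a_*(1)$ is computed to have nontrivial image under $\tau^*$ because the Albanese map $X \to A_X$ has odd degree onto a $2$-torus summand (it is, up to the fibration structure, a degree-one map on $\pi_1$). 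For $b_+=2$, take the primary Kodaira surface $S^1 \times Y$ of Example \ref{ex:kod0}: again $SW_{X,\mathfrak{s}}(1) \neq 0$ by the symplectic condition, so $t_2(D) \neq 0$, and the Albanese map is similarly seen to carry $1 \in KO(X)$ to a generator detected by $\tau^*$. Combining: $r_2 \circ FM_I$ and $t_2 \circ FM_I$ agree with $\tau^*$ on a generator and on all pullbacks from quotients, hence everywhere, which is the claim.

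The main obstacle I anticipate is the bookkeeping in the second step: proving cleanly that $r_2 \circ FM_I$ and $t_2 \circ FM_I$ \emph{factor} through $\tau^*$, i.e. that they vanish on the entire reduced part of $KO^{-2}(A^2)$ (resp. $KO^{-1}(A^3)$) except possibly a $\mathbb{Z}_2$ detected by the basepoint. This needs a precise description of $KO^{-2}(A^2)$ and $KO^{-1}(A^3)$ as groups, together with the naturality of $FM$ under the projections $\rho_I$ and under further collapse maps, so that one can argue summand-by-summand using that $H^2(T^2_-)$ and $H^3(T^3_-)$ are $1$-dimensional and any class pulled back from a lower-dimensional torus quotient lands in a lower cohomological degree there and hence is zero. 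The Real $K$-theory computations $KR^*(T^n_-)$ and $KH(T^n_-)$ of \cite{at,dup} together with the Fourier--Mukai isomorphism are what make this tractable, but assembling the exact identifications of generators on both the $KO$ side and the $KH$ side is where the real work lies.
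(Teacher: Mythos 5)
Your overall scheme---identify $\tau^*$ with the ``constant part'' projection, show $r_2\circ FM_I$ and $t_2\circ FM_I$ factor through it, and fix the remaining $0/1$ ambiguity with the hyperelliptic and Kodaira surface examples---shares its endgame with the paper (the paper also settles the ambiguity exactly this way, using those same examples), but the factoring step as you propose it has a genuine gap. The reduced group $\widetilde{KO}^{-2}(A^2)$ is not built only from classes pulled back from proper quotient tori: after splitting off the basepoint, the circle-factor contributions vanish ($KO^{-3}(pt)=0$) and what remains is the top-cell summand $\widetilde{KO}^{-2}(S^2)\cong KO^{-4}(pt)\cong\mathbb{Z}$; similarly $KO^{-1}(A^3)$ contains a top-cell $\mathbb{Z}$ coming from $\widetilde{KO}^{-1}(S^3)$. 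These top-cell classes are not pullbacks from any proper quotient of $A(I)$, so your ``vanishes on pullbacks from quotients'' argument says nothing about them, and you give no argument that $r_2$ (resp.\ $t_2$) kills their image under $FM_I$. Note also that the naturality you cite, Proposition \ref{prop:ftres}, goes the wrong way for this purpose: it intertwines restriction to subtori of $T(I)$ with pushforward along quotients of $A_X$, not pullback along quotients of $A(I)$, so even the vanishing on the circle-factor summands of $KO^{-1}(A^3)$ would need a separate base-change statement for $FM$ that you have not formulated. As it stands, the asserted identity $r_2\circ FM_I=\tau^*$ on all of $KO^{-2}(A(I))$ is exactly the part of your outline carrying the real content, and it is unproved (and not obviously true on the top-cell summand).

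The paper avoids this entirely by one extra observation that your proposal is missing: since $X$ is spin with $b_+(X)\le 2$ we have $\sigma(X)=0$, so the virtual rank of $D$, hence of $\iota_I^*(D)$, is zero; via Proposition \ref{prop:dfm} and the structure of $FM_I$ this forces the pushforward of $(a_I)_*(1)$ to a point to vanish, i.e.\ $(a_I)_*(1)$ lies in the kernel of $KO^{-2}(A(I))\to KO^{-4}(pt)$, which is precisely the torsion $\mathbb{Z}_2$ summand (away from the top cell). Consequently one only needs to know the composite $\mathbb{Z}_2\hookrightarrow KO^{-2}(A(I))\xrightarrow{FM_I}KH(T(I))\xrightarrow{r_2}\mathbb{Z}_2$ on that single summand, where it is either zero or an isomorphism, and the hyperelliptic surface example rules out zero; no control of $FM_I$ on the top cell is ever needed. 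If you insert this rank-zero step, your argument collapses to the paper's proof; without it, you must actually compute $FM_I$ on the top-cell generator (and, in the $|I|=3$ case, justify the behaviour on the circle summands), which is substantial unaddressed work. Your side computation that the Albanese invariant of the example has nontrivial constant part is, in your own scheme, redundant once the composite is known to be nonzero, and the justification offered (``odd degree on a $2$-torus summand'') is not by itself a proof of nonvanishing of the $KO$-theoretic pushforward.
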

\begin{proof}
We give the proof for the case $b_+(X)=1$ and $|I|=2$. The case $b_+(X)=2$ and $|I|=3$ is similar. Recall that for $|I|=2$, $KH(T(I) ) \cong \mathbb{Z} \oplus \mathbb{Z}_2$ and that $r_2$ is the projection to the second factor. Since the rank of $D$ as a virtual bundle is zero (by the families index theorem and the fact that $\sigma(X)=0$), it follows that the rank of $D_I = \iota_I^*(D)$ is also zero, and hence $D_I$ lies in the $\mathbb{Z}_2$ summand of $KH(T(I))$. In particular, the image of $D_I$ under the restriction to a point $KH(T(I)) \to KH(pt) \cong \mathbb{Z}$ is zero. By Proposition \ref{prop:dfm}, it follows that $\rho_*( (a_I)_*(1) ) = 0$, where $\rho_* : KO^{-2}(A(I)) \to KO^{-4}(pt) \cong \mathbb{Z}$ is the pushforward map to a point. But $KO^{-2}(A(I)) \cong \mathbb{Z} \oplus \mathbb{Z}_2$, where the second summand is the kernel of the pushforward map. This means that $(a_I)_*(1)$ lies in the $\mathbb{Z}_2$ summand of $KO^{-2}(A(I))$.

Next, we observe that the map $\tau^* : KO^{-2}(A(I)) \to KO^{-2}(pt) \cong \mathbb{Z}_2$ is given by projection to the second factor of $KO^{-2}(A(I))$. The composition 
\[
\mathbb{Z}_2 \to KO^{-2}(A(I)) \buildrel FM_I \over \longrightarrow KH(T(I)) \buildrel r_2 \over \longrightarrow \mathbb{Z}_2
\]
is either identically zero, or an isomorphism, where the first map is inclusion of the $\mathbb{Z}_2$ summand. To show that it is not identically zero, it suffices to give one example of a compact, oriented, smooth spin $4$-manifold $(X,\mathfrak{s})$ for which $SW_{X,\mathfrak{s}}(1) \neq 0 \; ({\rm mod} \; 2)$. For this we can take $X$ to be the hyperelliptic surface given in Example \ref{ex:kod0}. This shows that $r_2 \circ FM_I = \tau^*$ on the subgroup $\mathbb{Z}_2 \subset KO^{-2}(A(I))$ and hence $r_2( D_I ) = \tau^*( (a_I)_*(1))$.
\end{proof}

Let $\mathfrak{s}$ be a spin structure on $X$. As in the introduction, we define maps $q_{\mathfrak{s}}^k : H^1(X ; \mathbb{Z})^k \to \mathbb{Z}_2$, for $k=2,3$ as follows. Let $a_1, \dots , a_k \in H^1(X ; \mathbb{Z})$. Each class $a_i$ is equivalent to specifying a homotopy class of smooth map $f_i : X \to S^1$ for which $a_i = f_i^*(dt)$, where $dt$ is the generator of $H^1(S^1 ; \mathbb{Z})$. Hence we get a map $f = (f_1 ,\dots , f_k) : X \to T^k$ from $X$ to the $k$-dimensional torus. The level set $C = f^{-1}(t)$ of a regular value of $f$ is a normally framed submanifold of $X$ of dimension $4-k$. The normal framing defines a map $\mathcal{F}_C \to \mathcal{F}_X|_C$ from the frame bundle of $C$ to the restriction of the frame bundle of $X$ to $C$. The spin structure $\mathfrak{s}$ on $X$ is equivalent to specifying a double cover $\widetilde{\mathcal{F}}_X \to \mathcal{F}_X$ which on each fibre of $\mathcal{F}_X$ is isomorphic to the unique non-trivial double covering $Spin(4) \to SO(4)$. The pullback of the covering $\widetilde{\mathcal{F}}_X \to \mathcal{F}_X$ to $\mathcal{F}_C$ defines a spin structure on $C$ which we denote by $\mathfrak{s}|_C$. Then we define $q^k_{\mathfrak{s}}(a_1 , \dots , a_k)$ to the be mod $2$ index of $\mathfrak{s}|_{C}$. More precisely, the spin structure $\mathfrak{s}|_C$ defines a $KO$-orientation on $C$ and hence a push-forward map $p_* : KO(C) \to KO^{k-4}(pt)$. Now since $k-4 \in \{-1,-2\}$, we have that $KO^{k-4}(pt) \cong \mathbb{Z}_2$ and $p_*(1)$ is the mod $2$ index of the Dirac operator on $C$ \cite{as}. The following proposition shows that $q^k_{\mathfrak{s}}$ does not depend on the choice of $C$ and hence is well-defined.
\begin{proposition}\label{prop:embedded}
The mod $2$ index of $\mathfrak{s}|_C$ does not depend on the choice of functions $f_1, \dots , f_k$ representing $a_1, \dots, a_k$ or on the choice of regular value $t$. Hence $q_{\mathfrak{s}}^k$ is well-defined for $k=2,3$.
\end{proposition}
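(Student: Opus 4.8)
The plan is to show that the pair $(C,\mathfrak{s}|_C)$ determines a well-defined class in the spin bordism group $\Omega^{spin}_{4-k}$, so that its mod $2$ index $p_*(1) \in KO^{k-4}(pt) \cong \mathbb{Z}_2$ is independent of all the choices made in its construction. The only input needed is cobordism invariance of the mod $2$ index, i.e.\ the fact that the mod $2$ analytic index descends to a homomorphism $\Omega^{spin}_n \to KO^{-n}(pt)$ (equivalently, that the index vanishes on a spin boundary), cf.\ \cite{as}. Note there is no ambiguity in the normal framing itself: for a fixed $f$ and regular value $t$, the framing of the normal bundle of $C = f^{-1}(t)$ in $X$ is canonically $f^*$ of the standard framing of $T_t T^k$, hence so is the induced spin structure $\mathfrak{s}|_C$.

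First I would treat the dependence on the regular value $t$ for a fixed $f \colon X \to T^k$. Given regular values $t_0, t_1$, choose a smoothly embedded path $\gamma \colon [0,1] \to T^k$ with $\gamma(0) = t_0$, $\gamma(1) = t_1$ (possible since $k \ge 2$), and, after a small perturbation of $\gamma$ rel endpoints, assume $f$ is transverse to $\gamma([0,1])$. Then $W = f^{-1}(\gamma([0,1])) \subset X$ is a compact manifold of dimension $5-k$ with $\partial W = f^{-1}(t_0) \sqcup f^{-1}(t_1)$, and $df$ identifies its normal bundle in $X$ with the pullback of the (trivial) normal bundle of the arc $\gamma([0,1])$. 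Choosing a trivialization of the latter restricting to the standard framings at $t_0$ and $t_1$, and restricting $\mathfrak{s}$ through it, exhibits $W$ as a spin bordism between $(C_0,\mathfrak{s}|_{C_0})$ and $(C_1,\mathfrak{s}|_{C_1})$; hence their mod $2$ indices agree.

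Next I would treat the dependence on the representatives $f_i$ of the classes $a_i$. If $f, f' \colon X \to T^k$ are homotopic, pick a common regular value $t$ of $f$ and $f'$ and a homotopy $F \colon X \times [0,1] \to T^k$ from $f$ to $f'$; since $F$ is already transverse to $t$ near $X \times \{0,1\}$, perturb it rel boundary so that it is transverse to $t$ everywhere. Then $W = F^{-1}(t) \subset X \times [0,1]$ is a compact manifold of dimension $5-k$ with boundary $f^{-1}(t) \sqcup f'^{-1}(t)$, normally framed via $dF$. Giving $X \times [0,1]$ the spin structure obtained from $\mathfrak{s}$ (the extra $[0,1]$-direction contributing a trivial summand $TX \oplus \mathbb{R}$ along $X \times \{i\}$, harmless after stabilization) and restricting through the normal framing produces a spin structure on $W$ restricting to $\mathfrak{s}|_{f^{-1}(t)}$ and $\mathfrak{s}|_{f'^{-1}(t)}$ on the two ends. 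Again $W$ is a spin bordism, so the mod $2$ indices coincide. Combining the two steps — first vary $t$, then vary the $f_i$ — shows $q^k_{\mathfrak{s}}$ is well defined, using $KO^{k-4}(pt) \cong \mathbb{Z}_2$ for $k = 2,3$.

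The main obstacle I anticipate is purely bookkeeping: carefully setting up the transversality arguments that yield the two bordisms, and verifying that the spin structures (equivalently $KO$-orientations) induced on $W$ via the normal framings restrict, across $\partial W$, to exactly the spin structures $\mathfrak{s}|_{C_i}$ defined in the statement. Once this compatibility is in place, cobordism invariance of the mod $2$ index finishes the proof immediately.
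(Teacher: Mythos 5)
Your proof is correct and follows essentially the same route as the paper: both arguments reduce the claim to the fact that the normally framed submanifold $C$ is well defined up to framed cobordism, upgrade a framed cobordism to a spin cobordism of $(C,\mathfrak{s}|_C)$ via the normal framing, and conclude by spin-cobordism invariance of the mod $2$ index. The only (harmless) difference is that you construct the framed cobordisms directly by transversality in two steps (first varying the regular value $t$, then varying $f$ within its homotopy class), whereas the paper obtains them in one stroke from the Pontryagin--Thom construction applied to the composite $X \to T^k \to S^k$.
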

\begin{proof}
The map $f : X \to T^k$ may be composed with the map $(S^1 \times \cdots \times S^1) \to (S^1 \wedge \cdots \wedge S^1) \cong S^k$, giving a map $g : X \to S^k$. The homotopy class of this map only depends on the cohomology classes $a_1, \dots , a_k$. The Pontryagin--Thom construction implies that the homotopy class of map $f : X \to S^k$ is equivalent a framed cobordism class of submanifold of $X$ codimension $k$. The level set $C$ of a regular value of $f$ represents this cobordism class. Any two representatives $C,C'$ are framed cobordant. If $\phi : Y \to [0,1] \times X$ is a framed cobordism from $C$ to $C'$, then using the normal framing we get an induced spin structure $\mathfrak{s}|_Y$. Hence $C$ and $C'$ are spin cobordant. This implies that the mod $2$ indices of $\mathfrak{s}|_C$ and $\mathfrak{s}|_{C'}$ are the same, since the mod $2$ index is a spin cobordism invariant \cite[Chapter III, \textsection 16]{lami}. 
\end{proof}

\begin{proof}[Proof of cases (1) and (2) of Theorem \ref{thm:thm1}] Suppose that $b_+(X) = 1$. Then by Equation (\ref{equ:sw12}), $SW_{X,\mathfrak{s}}(1) = r_2(D)$. To compute $r_2(D) \in H^2(T_X)$, it suffices to compute $\iota_I^* r_2(D) = r_2( \iota_I^*(D) )$ for every subset $I \subseteq \{1,\dots , b_1(X)\}$ of size $2$. By Proposition \ref{prop:ftres}, $\iota_I^*(D) = FM_I( (a_I)_*(1) )$, where $a_I = \rho_I \circ a : X \to A(I)$ is the projection of the Albanese map, $(a_I)_*(1) \in KO^{-2}( A(I) )$ and $FM_I : KO^{-2}( A(I) ) \to KR^{-4}( T(I) ) = KH( T(I) )$ is the Fourier--Mukai transform. By Proposition \ref{prop:pullback}, we have
\[
r_2( \iota_I^* D ) = r_2( FM_I( (a_I)_*(1) ) ) = \tau^*( (a_I)_*(1) ),
\]
where $\tau : pt \to A(I)$ is the inclusion of a point. Let $I = \{ i_1 < i_2 \}$ and set $a = \lambda_{i_1}, b = \lambda_{i_2}$. Let $A, B \subset X$ be closed, oriented embedded submanifolds of $X$ Poincar\'e dual to $a,b$. Moreover, we choose $A$, $B$ so that they intersect transversally in an embedded surface $C = A \cap B$ which is Poincar\'e dual to $a \smallsmile b$. Now choose closed differential forms $\alpha, \beta$ which represent $a,b$ in de-Rham cohomology. We can choose $\alpha$ and $\beta$ to be supported in neighbourhoods of $A$ and $B$. Choose a basepoint $x_0 \in X$ away from the supports of $\alpha, \beta$. The projected Albanese map $a_I : X \to \mathbb{R}^2/\mathbb{Z}^2$ is given by $a_I(x) = ( \int_{\gamma} \alpha , \int_{\gamma} \beta )$, where $\gamma$ is a path from $x_0$ to $x$. Each time $\gamma$ passes through $A$, $\int_{\gamma} \alpha$ winds once around the circle and similarly each time $\gamma$ passes through $B$, $\int_{\gamma} \beta$ winds once around the circle. Furthermore, we can choose $\alpha, \beta$ such that $\int_{\gamma} \alpha \in (1/2) + \mathbb{Z}$ if and only if $x \in A$ and $\int_{\gamma} \beta \in (1/2) + \mathbb{Z}$ if and only if $x \in B$. This is easily achieved by using the explicit construction of Poincar\'e dual cohomology classes \cite[Chapter I, \textsection 6]{botu}. We also choose $\alpha, \beta$ so that $(1/2 , 1/2)$ is a regular value of $a_I$. Note further that $(a_I)^{-1}( 1/2 , 1/2 ) = A \cap B = C$. Now consider the commutative diagram
\[
\xymatrix{
X \ar[r]^-{a_I} & A(I) \\
C \ar[u]^{i} \ar[r]^{p} & pt \ar[u]^{\tau}
}
\]
where $\tau$ is the inclusion map sending $pt$ to $(1/2 , 1/2 ) \in A(I)$. By commutativity, we have that
\[
r_2( \iota_I^*(D) ) = \tau^*( (a_I)_*(1) ) = p_*( i^*(1) ) = p_*(1).
\]
The spin structure $\mathfrak{s}$ restricts to a spin structure $\mathfrak{s}|_{C}$ and this is used to define the push-forward map $p_* : KO(C) \to KO^{-2}(pt) \cong \mathbb{Z}_2$. Now we observe that $p_*(1) \in KO^{-2}(pt) \cong \mathbb{Z}_2$ is precisely the mod $2$ index of $\mathfrak{s}|_C$. So we have proven that for any subset $I = \{i_1 < i_2\}$ of size $2$,
\begin{equation}\label{equ:L12}
\langle SW_{X,\mathfrak{s}}(1) , \lambda_{i_1} \smallsmile \lambda_{i_2} \rangle = r_2( \iota_I^*(D) ) = q^2_{\mathfrak{s}}( \lambda_{i_1} ,  \lambda_{i_2} ).
\end{equation}
Since the choice of basis $\lambda_1, \dots , \lambda_n$ of $H^1(X ; \mathbb{Z})$ was arbitrary, we have in fact shown that (\ref{equ:L12}) holds for any pair $\lambda_{i_1}, \lambda_{i_2} \in H^1(X ; \mathbb{Z})$ which generate a primitive sublattice of $\Lambda = H^1(X ; \mathbb{Z})$ (a sublattice $\Lambda' \subseteq \Lambda$ is primitive if $\Lambda/\Lambda'$ is torsion-free).

We will show that $(\ref{equ:L12})$ holds for any pair of elements $a,b \in H^1(X ; \mathbb{Z})$ in place of $\lambda_{i_1}, \lambda_{i_2}$. Let $f_a,f_b : X \to S^1$ be maps to $S^1$ which represent $a,b$. If $a,b$ are linearly dependent, then $a \smallsmile b = 0$, so $\langle SW_{X,\mathfrak{s}}(1) , a \smallsmile b \rangle = 0$. On the other hand, if $a$ and $b$ are linearly dependent, then the map $f = (f_a , f_b) : X \to T^2$ factors through a $1$-dimensional torus and hence the pre-image $f^{-1}(t)$ of a generic $t \in T^2$ is empty, so $q_{\mathfrak{s}}^2( a , b ) = 0$. This proves the result in the case that $a$ and $b$ are linearly dependent. Now suppose $a,b$ are linearly dependent. Let $\Lambda' = \{ \lambda \in \Lambda \; | \; k\lambda = n a + m b \text{ for some } k,n,m \in \mathbb{Z}, k > 0 \}$. Then $\Lambda'$ is a primitive sublattice of $\Lambda$. Let $\lambda_1, \lambda_2$ be a basis for $\Lambda'$. Then there exists an element $A = \left[ \begin{matrix} \alpha & \beta \\ \gamma & \delta \end{matrix} \right] \in SL(2,\mathbb{Z})$ and non-zero integers $d_1,d_2$ such that
\begin{equation}\label{equ:abA}
\left[ \begin{matrix} a \\ b \end{matrix} \right] = \left[ \begin{matrix} \alpha & \beta \\ \gamma & \delta \end{matrix} \right] \left[ \begin{matrix} d_1 \lambda_1 \\ d_2 \lambda_2 \end{matrix} \right].
\end{equation}
We claim this implies that $q^2_{\mathfrak{s}}( a , b ) = q^2_{\mathfrak{s}}( d_1 \lambda_1 , d_2 \lambda_2)$. To see this, note that the matrix $A$ defines an orientation preserving diffeomorphism $A : T^2 \to T^2$. Then $(f_a , f_b) = A( f_{\lambda_1}^{d_1} , f_{\lambda_2}^{d_2} )$, where $f_{\lambda_1} , f_{\lambda_2} : X \to S^1$ are maps representing $\lambda_1, \lambda_2$. Let $t \in T^2$ be a regular value of $g = (f_{\lambda_1} , f_{\lambda_2})$. Then $At$ is a regular value of $f$ and we have an equality $f^{-1}(At) = g^{-1}(t)$. The normal framings on $C = f^{-1}(At)$ induced by $f$ and $g$ are different, but they are related by a constant change of basis given by the matrix $A$. Since $A \in SL(2,\mathbb{Z}) \subset SL(2 , \mathbb{R})$ and $SL(2,\mathbb{R})$ is connected, the two normal framings are isotopic. This proves the claim that $q^2_{\mathfrak{s}}(a,b) = q^2_{\mathfrak{s}}(d_1\lambda_1 , d_2\lambda_2)$. Next, we claim that $q^2_{\mathfrak{s}}( d_1 \lambda_1 , d_2 \lambda_2) = d_1 d_2 q^2_{\mathfrak{s}}( \lambda_1 , \lambda_2)$. To see this, let $A_1, A_2 \subset X$ be submanifolds Poincar\'e dual to $\lambda_1, \lambda_2$ intersecting transversally in $C = A_1 \cap A_2$. Since the normal bundles of $A_1, A_2$ are trivial, we can take $|d_1|$ parallel copies of $A_1$ representing $|d_1| \lambda_1$ and $|d_2|$ parallel copies of $A_2$ representing $|d_2| \lambda_2$ and intersecting in $|d_1 d_2|$ disjoint copies of $C$. Hence $q^2_{\mathfrak{s}}( |d_1| \lambda_1 , |d_2| \lambda_2 ) = |d_1 d_2| q^2_{\mathfrak{s}}( \lambda_1 , \lambda_2)$. One can also see that $q^2_{\mathfrak{s}}( d_1 \lambda_1 , d_2 \lambda_2)$ is insensitive to the signs of $d_1$ and $d_2$ and so we have proven the claim that $q^2_{\mathfrak{s}}( d_1 \lambda_1 , d_2 \lambda_2) = d_1 d_2 q^2_{\mathfrak{s}}( \lambda_1 , \lambda_2)$. Now from (\ref{equ:abA}), we have that $a \smallsmile b = d_1 d_2 \lambda_1 \smallsmile \lambda_2$ and hence
\begin{align*}
\langle SW_{X , \mathfrak{s}}(1) , a \smallsmile b \rangle &= \langle d_1 d_2 SW_{X , \mathfrak{s}}(1) , \lambda_1 \smallsmile \lambda_2 \rangle \\
&= d_1 d_2 q^2_{\mathfrak{s}}( \lambda_1 , \lambda_2) \\
&= q^2_{\mathfrak{s}}( d_1\lambda_1 , d_2 \lambda_2 ) \\
&= q^2_{\mathfrak{s}}( a , b)
\end{align*}
where the second equality follows since $\lambda_1, \lambda_2$ generate a primitive sublattice of $\Lambda$. So we have proven the result for arbitrary $a,b \in H^1(X ; \mathbb{Z})$. 

The case $b_+(X) = 2$ is proven in an entirely similar manner.

\end{proof}

\begin{proof}[Proof of Theorem \ref{thm:almosttop}]
For a compact, oriented, smooth $4$-manfold $X$ with spin structure $\mathfrak{s}$, let us write $q_{X,\mathfrak{s}}^k$ instead of $q_{\mathfrak{s}}^k$ in order to emphasize the dependence on $X$. Set $X' = X \# S^2 \times S^2$. Since $S^2 \times S^2$ is simply-connected it has a unique spin structure $\mathfrak{s}_{S^2 \times S^2}$ and there is a bijection $S_{X} \to S_{X'}$ between spin structures on $X$ and $X'$ which sends a spin structure $\mathfrak{s}$ on $X$ to the spin structure $\mathfrak{s}' = \mathfrak{s} \# \mathfrak{s}_{S^2 \times S^2}$. We also have an isomorphism $H^1(X ; \mathbb{Z}) \cong H^1(X' ; \mathbb{Z})$ which may be described concretely as follows. Regard $X'$ as the result of removing open balls $B_1 \subset X$, $B_2 \subset S^2 \times S^2$ and identifying their boundaries. Let $a \in H^1(X ; \mathbb{Z})$. Choose a Poinca\'re dual submanifold $A \subset X$ supported away from the ball $B_1$. Then $A$ can be also be regarded as a submanifold of $X'$ and it has a Poincar\'e dual class in $H^1(X' ; \mathbb{Z})$. This gives the isomorphism $H^1(X ; \mathbb{Z}) \cong H^1(X' ; \mathbb{Z})$. We claim that under the isomorphisms $S_X \cong S_{X'}$ and $H^1(X ; \mathbb{Z}) \cong H^1(X')$, we have an equality $q_{X , \mathfrak{s}}^k = q_{X' , \mathfrak{s}'}^k$ for $k=2,3$. To see this, let $a_1, \dots , a_k \in H^1(X ; \mathbb{Z})$. Let $A_1, \dots , A_k$ be Poincar\'e dual submanifolds which intersect transversally in $C$. We can choose $A_1, \dots , A_k$ supported away from the neck. Then $C$ is supported away from the neck and the index of $\mathfrak{s}|_C$ agrees with the index of $\mathfrak{s}'|_C$.

Now let $X_1, X_2$ be as in the statement of Theorem \ref{thm:almosttop}. By a theorem of Wall in the simply-connected case \cite{wa} and Gompf in general \cite{gom2}, there exists a $k \ge 0$ and a diffeomorphism $f : X_1 \# k(S^2 \times S^2) \to X_2 \# k(S^2 \times S^2)$. Set $W_i = X_i \# k(S^2 \times S^2)$ for $i=1,2$. From what we have just shown there exists bijections $\varphi_i : S_{X_i} \to S_{W_i}$ and bijections $\psi_i : H^1( X_i ; \mathbb{Z} ) \to H^1(W_i ; \mathbb{Z} )$ such that $\psi_i( q^k_{X_i , \mathfrak{s} } ) = q^k_{W_i , \varphi_i(\mathfrak{s})}$ for any $\mathfrak{s} \in S_{X_i}$. The diffeomorphism $f : W_1 \to W_2$ induces bijections $\varphi_f : S_{W_1} \to S_{W_2}$ and $\psi_f : H^1(W_1 ; \mathbb{Z}) \to H^1(W_2 ; \mathbb{Z})$ such that $\psi_f( q^k_{W_1 , \mathfrak{s}} ) = q^k_{W_2 , \varphi_f(\mathfrak{s})}$ for any $\mathfrak{s} \in S_{W_1}$. Combining all of these isomorphisms yields isomorphisms $\varphi : S_{X_1} \to S_{X_2}$ and $\psi : H^1(X_1 ; \mathbb{Z}) \to H^1(X_2 ; \mathbb{Z})$ such that $\psi( q^k_{X_1 , \mathfrak{s}}) = q^k_{X_2 , \varphi(\mathfrak{s})}$ for any $\mathfrak{s} \in S_{X_1}$. Now the result follows from Theorem \ref{thm:thm1}.
\end{proof}

\begin{proof}[Proof of Theorem \ref{thm:thm2}] 
The case that $b_+(X) = 3$ is immediate since $SW_{X,\mathfrak{s}}(1)$ can be expressed in terms of the second Segre class of the index bundle $D$, which by Equation (\ref{equ:seg2}) does not depend on the choice of spin structure.

Assume now that $b_+(X) = 2$. Then for any $a,b,c \in H^1(X ; \mathbb{Z})$ and any $A \in H^1(X ; \mathbb{Z}_2)$, Theorem \ref{thm:thm1} gives
\[
\langle SW_{X,A \otimes \mathfrak{s}}(1) , a \smallsmile b \smallsmile c \rangle + \langle SW_{X,\mathfrak{s}}(1) , a \smallsmile b \smallsmile c \rangle = q^3_{A \otimes \mathfrak{s}}(a , b , c) + q^3_{\mathfrak{s}}(a , b , c).
\]
Let $D \subset X$ be Poincar\'e dual to $a \smallsmile b \smallsmile c$. Then $D$ is a union of finitely many embedded circles in $X$. Recall that the circle $S^1$ has two spin structures $\mathfrak{s}_0, \mathfrak{s}_1$ with the property that $\mathfrak{s}_0$ extends to a spin structure on the disc $D^2$ while $\mathfrak{s}_1$ does not. Furthermore the mod $2$ index of $\mathfrak{s}_0$ is $0$ and the mod $2$ index of $\mathfrak{s}_1$ is $1$. Now write $D = D_1 \cup \cdots \cup D_m$, where $D_1, \dots , D_m$ are the connected components of $D$. Then $q_{A \otimes \mathfrak{s}}(a , b , c) + q_{\mathfrak{s}}(a , b , c)$ is equal to the mod $2$ count of the number of components $D_i$ for which $\mathfrak{s}|_{D_i}$ and $(A \otimes \mathfrak{s})|_{D_i}$ are different spin structures. Equivalently, this is the number of $i$ mod $2$ for which $A|_{D_i}$ is non-trivial, which in turn equals $\langle [X] , A \smallsmile a \smallsmile b \smallsmile c \rangle$.

Lastly, suppose that $b_+(X) = 1$. Then for any $a,b \in H^1(X ; \mathbb{Z})$ and any $A,B \in H^1(X ; \mathbb{Z}_2)$, Theorem \ref{thm:thm1} gives
\begin{align*}
& \langle SW_{X , A \otimes B \otimes \mathfrak{s}}(1) , a \smallsmile b \rangle + \langle SW_{X , A \otimes \mathfrak{s}}(1) , a \smallsmile b \rangle + \langle SW_{X , B \otimes \mathfrak{s}}(1) , a \smallsmile b \rangle + \langle SW_{X , \mathfrak{s}}(1) , a \smallsmile b \rangle \\
& \quad \quad \quad  =  q^2_{X , A \otimes B \otimes \mathfrak{s}}(a , b) + q^2_{A \otimes \mathfrak{s}}(a , b) + q^2_{B \otimes \mathfrak{s}}(a , b) + q^2_{\mathfrak{s}}(a , b).
\end{align*}

Now let $C$ be Poincar\'e dual to $a \smallsmile b$. Then according to \cite[Theorem 2]{at2}, the mod $2$ index on a compact oriented surface is a quadratic function on the set of spin structures, whose assoiciated bilinear form is the intersection form on $H^1(C ; \mathbb{Z}_2)$. From this it follows that
\begin{align*}
q^2_{X , A \otimes B \otimes \mathfrak{s}}(a , b) + q^2_{A \otimes \mathfrak{s}}(a , b) + q^2_{B \otimes \mathfrak{s}}(a , b) + q^2_{\mathfrak{s}}(a , b) &= \langle [C] , A|_C \smallsmile B|_C \rangle \\
&= \langle [X] , A \smallsmile B \smallsmile a \smallsmile b \rangle.
\end{align*}

\end{proof}

\section{Seiberg--Witten invariants of spin families}\label{sec:swfam}

By adapting the arguments of Section \ref{sec:swspin}, we will obtain a general formula for the $Pin(2)$-equivariant Seiberg--Witten invariants of spin families.

Let $B$ be a compact manifold with $\mathbb{Z}_2$-action defined by an involution $j : B \to B$ and $f : S^{V,U} \to S^{V',U'}$ a $Pin(2)$-monopole map. Since $f$ might not admit a $Pin(2)$-equivariant chamber, we replace $B$ by $\widehat{B} = S(H^+)$, the unit sphere bundle of $H^+$ over $B$ and let $\pi : \widehat{B} \to B$. Consider the pullback $\widehat{f}$ of $f$ to $\widehat{B}$. Then $\widehat{f}$ admits a tautological chamber $\phi^{taut} : \widehat{B} \to \pi^*(H^+)$ which is simply given by the inclusion $\widehat{B} = S(H^+) \to \pi^*(H^+)$. Using $\phi^{taut}$ we get a $Pin(2)$-equivariant Seiberg--Witten invariant
\[
SW_{\widehat{f}}^{Pin(2)} : H^*_{Pin(2)}(pt) \to H^{*-2d+b_+ + 1}_{\mathbb{Z}_2}(\widehat{B})
\]
where we have written $SW_{\widehat{f}}^{Pin(2)}$ in place of $SW_{\widehat{f}}^{Pin(2),\phi^{taut}}$ to simplify notation. If $\phi : B \to S(H^+)$ is a $Pin(2)$-equivariant chamber for $f$, then we clearly have the relation
\[
SW_f^{Pin(2),\phi}(\theta) = \phi^*( SW^{Pin(2)}_{\widehat{f}}(\theta))
\]
and hence it suffices to compute $SW^{Pin(2)}_{\widehat{f}}$.

We will make the following assumptions about the $\mathbb{Z}_2$-action on $B$ which are satisfied for the Seiberg--Witten monopole map of a spin family, provided that the monodromy of the family acts trivially on $H^1(X ; \mathbb{Z})$. Namely,
\begin{itemize}
\item[(1)]{$j$ does not act freely on $B$.}
\item[(2)]{The map $u : H^*_{\mathbb{Z}_2}(B) \to H^{*+1}_{\mathbb{Z}_2}(B)$ is injective.}
\item[(3)]{Over each fixed point $b \in B$ of $j : B \to B$, the involution $j : H^+_b \to H^+_b$ acts as $-1$. Hence $j$ acts freely on $S(H^+)$.}
\end{itemize}

One motivation for assumption (2) is given by the following lemma:

\begin{lemma}\label{lem:inj}
Let $E_1,E_2 \to B$ be $\mathbb{Z}_2$-vector bundles over $B$, $\pi_i : S(E_i) \to B$ the unit sphere bundles. Suppose that $\iota : E_1 \to E_2$ is a $\mathbb{Z}_2$-equivariant inclusion and that $E_2/\iota E_1 \cong \mathbb{R}_{-1}^k$, where $\mathbb{R}_{-1}$ denotes the trivial real line bundle where $\mathbb{Z}_2$ acts as multiplication by $-1$. If $u : H^*_{\mathbb{Z}_2}(B) \to H^{*+1}_{\mathbb{Z}_2}(B)$ is injective, then $\iota_* : H^*_{\mathbb{Z}_2}( S(E_1) ) \to H^{*+k}_{\mathbb{Z}_2}( (S(E_2) )$ is also injective.
\end{lemma}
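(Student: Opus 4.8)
The plan is to compare the Gysin sequences of the two sphere bundles $S(E_1)$ and $S(E_2)$ and to exploit the fact that the quotient $E_2/\iota E_1 \cong \mathbb{R}_{-1}^k$ is a \emph{trivial} (though non-trivially acted upon) bundle, so that the inclusion $S(E_1)\hookrightarrow S(E_2)$ factors into $k$ successive ``one-dimensional'' steps. First I would reduce to the case $k=1$: if $E_2 = E_1' \oplus \mathbb{R}_{-1}$ with $E_1 \subset E_1'$ and $E_1'/E_1 \cong \mathbb{R}_{-1}^{k-1}$, then the pushforward $\iota_\ast$ factors as the composite of the pushforward $H^\ast_{\mathbb{Z}_2}(S(E_1)) \to H^{\ast+k-1}_{\mathbb{Z}_2}(S(E_1'))$ with the pushforward $H^\ast_{\mathbb{Z}_2}(S(E_1')) \to H^{\ast+1}_{\mathbb{Z}_2}(S(E_1' \oplus \mathbb{R}_{-1}))$, so by induction it suffices to treat each single stabilisation by $\mathbb{R}_{-1}$. (One must be slightly careful that the condition (2), injectivity of $u$ on $H^\ast_{\mathbb{Z}_2}(B)$, should at each inductive step be promoted to the corresponding injectivity statement on the intermediate sphere bundle; I would instead phrase the induction so that the hypothesis is applied only to the base $B$ and the factorisation of pushforwards is used directly.)

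For the key case $E_2 = E_1 \oplus \mathbb{R}_{-1}$, the natural tool is the equivariant Gysin sequence of the inclusion $S(E_1) \hookrightarrow S(E_1 \oplus \mathbb{R}_{-1})$, whose ``normal bundle'' is the restriction of $\mathbb{R}_{-1}$ and whose equivariant Euler class is $u \in H^1_{\mathbb{Z}_2}(S(E_1 \oplus \mathbb{R}_{-1}))$ (pulled back from $B\mathbb{Z}_2$). Concretely, $S(E_1 \oplus \mathbb{R}_{-1})$ deformation retracts onto the union of the upper and lower hemisphere bundles glued along $S(E_1)$, and the Mayer--Vietoris / Gysin sequence reads
\[
\cdots \to H^{\ast}_{\mathbb{Z}_2}(S(E_1)) \xrightarrow{\ \iota_\ast\ } H^{\ast+1}_{\mathbb{Z}_2}(S(E_1 \oplus \mathbb{R}_{-1})) \xrightarrow{\ \cup\, ?\ } H^{\ast+1}_{\mathbb{Z}_2}(S(E_1 \oplus \mathbb{R}_{-1})) \to \cdots
\]
and the relevant exactness shows that $\ker(\iota_\ast)$ is the image of multiplication by the equivariant Euler class of $\mathbb{R}_{-1}$, namely multiplication by $u$, on $H^{\ast-1}_{\mathbb{Z}_2}(S(E_1))$. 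Thus $\iota_\ast$ is injective if and only if multiplication by $u$ on $H^\ast_{\mathbb{Z}_2}(S(E_1))$ has no kernel in the relevant range — i.e.\ $u$ acts injectively on $H^\ast_{\mathbb{Z}_2}(S(E_1))$.

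It therefore remains to deduce injectivity of $u$ on $H^\ast_{\mathbb{Z}_2}(S(E_1))$ from injectivity of $u$ on $H^\ast_{\mathbb{Z}_2}(B)$. For this I would use the Gysin sequence of the sphere bundle $\pi_1 : S(E_1) \to B$ itself: $\pi_1^\ast : H^\ast_{\mathbb{Z}_2}(B) \to H^\ast_{\mathbb{Z}_2}(S(E_1))$ is a map of $H^\ast_{\mathbb{Z}_2}(pt) = \mathbb{Z}_2[u]$-modules fitting into $\cdots \to H^{\ast - r}_{\mathbb{Z}_2}(B) \xrightarrow{\cup e} H^\ast_{\mathbb{Z}_2}(B) \xrightarrow{\pi_1^\ast} H^\ast_{\mathbb{Z}_2}(S(E_1)) \to H^{\ast - r + 1}_{\mathbb{Z}_2}(B) \to \cdots$, where $r$ is the rank of $E_1$ and $e$ is its equivariant Euler class. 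A short diagram chase: given $\xi \in H^\ast_{\mathbb{Z}_2}(S(E_1))$ with $u\xi = 0$, push $\xi$ forward (or use the connecting map) to get $\eta \in H^{\ast-r+1}_{\mathbb{Z}_2}(B)$, observe $u\eta$ maps to $u\xi = 0$ hence $u\eta$ is in the image of $\cup e$, and then lift back using injectivity of $u$ on $H^\ast_{\mathbb{Z}_2}(B)$ to conclude $\xi \in \mathrm{im}\,\pi_1^\ast$, say $\xi = \pi_1^\ast(\zeta)$; then $0 = u\xi = \pi_1^\ast(u\zeta)$, so $u\zeta \in \mathrm{im}(\cup e)$, and again injectivity of $u$ on $H^\ast_{\mathbb{Z}_2}(B)$ together with a mild argument (using that $e$ is not a zero-divisor modulo the $u$-torsion, or simply reapplying the Gysin exactness) forces $\zeta$ itself into $\mathrm{im}(\cup e)$, whence $\xi = 0$. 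The main obstacle I anticipate is precisely keeping this last diagram chase honest: the Euler class $e$ of $E_1$ need not be a non-zero-divisor in $H^\ast_{\mathbb{Z}_2}(B)$, so one cannot naively ``divide by $e$'', and the argument has to be organised so that only the hypothesis on $u$ (which is exactly what is available) is used. I expect the cleanest route is to prove, by downward induction on cohomological degree or by a spectral-sequence comparison, that $\pi_1^\ast$ is injective and that $H^\ast_{\mathbb{Z}_2}(S(E_1))$ is a free $\mathbb{Z}_2[u]$-module-after-localisation statement fails, so instead one argues directly that multiplication by $u$ is injective using the long exact sequence and the hypothesis at the level of $B$.
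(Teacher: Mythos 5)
Your reduction to the case of a single summand $\mathbb{R}_{-1}$ is harmless, but the key step for that case contains a genuine error. You claim that exactness identifies $\ker(\iota_*)$ with the image of multiplication by $u$ on $H^{*}_{\mathbb{Z}_2}(S(E_1))$, so that $\iota_*$ is injective if and only if $u$ acts injectively on $H^*_{\mathbb{Z}_2}(S(E_1))$. This is false, and the target of your final step is false as well: take $B=pt$ with trivial involution, $E_1=\mathbb{R}_{-1}^2$, $E_2=\mathbb{R}_{-1}^3$. Then $H^*_{\mathbb{Z}_2}(S(E_1))\cong H^*(\mathbb{RP}^1)\cong\mathbb{Z}_2[u]/(u^2)$, on which $u$ is certainly not injective, while $\iota_*:H^*(\mathbb{RP}^1)\to H^{*+1}(\mathbb{RP}^2)$ sends $1\mapsto u$, $u\mapsto u^2$ and is injective; so neither your claimed identification of $\ker(\iota_*)$ nor your intermediate goal (injectivity of $u$ on $H^*_{\mathbb{Z}_2}(S(E_1))$, which you hoped to deduce from the hypothesis on $B$) can be correct. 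What the long exact sequence of the pair $(S(E_2),S(E_2)\setminus S(E_1))$ actually gives, via the Thom isomorphism of the normal bundle, is $\ker(\iota_*)=\mathrm{im}\bigl(\partial: H^{*}_{\mathbb{Z}_2}(S(\mathbb{R}_{-1}))\to H^{*}_{\mathbb{Z}_2}(S(E_1))\bigr)$, where $\mathbb{Z}_2$ acts \emph{freely} on $S(\mathbb{R}_{-1})$ so that $H^*_{\mathbb{Z}_2}(S(\mathbb{R}_{-1}))\cong H^*(B)$ non-equivariantly; this is not multiplication by $u$, and controlling $\partial$ requires new input. (Also note that your hemisphere decomposition of $S(E_1\oplus\mathbb{R}_{-1})$ is not $\mathbb{Z}_2$-invariant piecewise, since the involution swaps the two hemispheres, so the Mayer--Vietoris heuristic needs care.) The closing diagram chase, which you yourself flag as incomplete, is therefore aimed at a statement that fails in general, and the strategy does not recover the lemma.

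The paper's proof avoids all of this by never leaving the base: compare the two Gysin sequences of $S(E_1)\to B$ and $S(E_2)\to B$ directly. Since $e_{\mathbb{Z}_2}(E_2)=u^k\,e_{\mathbb{Z}_2}(E_1)$ and $\iota_*\circ\pi_1^*=\pi_2^*\circ(u^k\cdot)$, one obtains a commutative ladder with exact rows whose vertical maps are $\mathrm{id}$ on the two $H^*_{\mathbb{Z}_2}(B)$ terms at the ends, multiplication by $u^k$ on the middle $H^*_{\mathbb{Z}_2}(B)$ term, and $\iota_*$ on the sphere-bundle term; a short four-lemma-style chase using only the injectivity of $u^k$ on $H^*_{\mathbb{Z}_2}(B)$ then yields injectivity of $\iota_*$, with no induction on $k$ and no assertion about $u$-torsion in $H^*_{\mathbb{Z}_2}(S(E_1))$, which, as the example above shows, genuinely can be present.
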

\begin{proof}
The Gysin sequences for $S(E_1)$ and $S(E_2)$ are related through the following commutative diagram whose rows are exact:
\[
\xymatrix{
\cdots \ar[r] & H^{*-r_1}_{\mathbb{Z}_2}(B) \ar[r]^-{ e(E_1) } \ar@{=}[d] & H^*_{\mathbb{Z}_2}(B) \ar[r]^-{\pi_1^*} \ar[d]^{u^k} & H^*_{\mathbb{Z}_2}( S(E_1) ) \ar[r]^-{(\pi_1)_*} \ar[d]^{\iota_*} & H^{*+1-r_1}_{\mathbb{Z}_2}(B) \ar[r] \ar@{=}[d] & \cdots \\
\cdots \ar[r] & H^{*-r_1}_{\mathbb{Z}_2}(B) \ar[r]^-{e(E_2)} & H^{*+k}_{\mathbb{Z}_2}(B) \ar[r]^-{\pi_2^*} & H^{*+k}_{\mathbb{Z}_2}( S(E_2)) \ar[r]^-{(\pi_2)_*} &
H^{*+1-r_1}_{\mathbb{Z}_2}(B) \ar[r] & \cdots
}
\]
where $r_1,r_2=r_1+k$ are the ranks of $E_1, E_2$. The result follows by injectivity of $u^k : H^*_{\mathbb{Z}_2}(B) \to H^{*+k}_{\mathbb{Z}_2}(B)$ and a diagram chase.
\end{proof}

Now we are ready to repeat the argument of Theorem \ref{thm:swpin1}. Given a $Pin(2)$-equivariant monopole map $f : S^{V,U} \to S^{V',U'}$ over a base $B$, we compute $SW^{Pin(2)}_{\widehat{g}}$ in two different ways, where $g =f \wedge f_{22m(S^2 \times S^2)}$ and $f_{22m(S^2 \times S^2)}$ is the Seiberg--Witten $Pin(2)$-monopole map for $22m(S^2 \times S^2)$, for some sufficiently large $m$. First of all, let $\mathbb{R}_{-1} \to B$ denote the trivial real line bundle where $\mathbb{Z}_2$ acts as multiplication by $-1$. Consider the inclusions
\[
\iota_1 : S(H^+) \to S(H^+ \oplus \mathbb{R}^{22m}_{-1}), \quad \iota_2 : S(\mathbb{R}^{3m}_{-1}) \to S(H^+ \oplus \mathbb{R}^{22m}_{-1})
\]
induced by the inclusions $H^+ \to H^+ \oplus \mathbb{R}^{22m}_{-1}$ and $\mathbb{R}^{3m}_{-1} \to (H^+ \oplus \mathbb{R}^{19m}_{-1}) \oplus \mathbb{R}^{3m}_{-1} \cong H^+ \oplus \mathbb{R}^{22m}_{-1}$ of $\mathbb{Z}_2$-vector bundles. Theorem \ref{thm:product2} gives
\begin{equation}\label{equ:ghat1}
SW^{Pin(2)}_{\widehat{g}}(\theta) = (\iota_1)_*( SW^{Pin(2)}_{\widehat{f}}(\theta)).
\end{equation}
Now let $f_{mK3}$, $f_{m\overline{K3}}$ be the $Pin(2)$ Seiberg--Witten monopole maps for $mK3$ and $m\overline{K3}$. We can assume that $f_{22m(S^2 \times S^2)} = f_{m\overline{K3}} \wedge f_{mK3}$. We write $g = f \wedge f_{22m(S^2 \times S^2)}$ in the form $g = (f \wedge f_{m\overline{K3}}) \wedge f_{mK3}$ and apply Theorem \ref{thm:product2}. First of all, note that for any $\theta$, $SW^{Pin(2)}_{\widehat{f \wedge f}_{m\overline{K3}}}(\theta) \in H^*_{\mathbb{Z}_2}( S(H^+ \oplus \mathbb{R}^{19m}_{-1}))$ has degree
\begin{align*}
deg(\theta) + \frac{\sigma(X)}{4} + 4m + 19m + b_+ + 1 & \ge  \frac{\sigma(X)}{4} + 4m + 19m + b_+ + 1 \\
& > dim(B) + 19m + b_+ - 1
\end{align*}
prodvided $m$ satisfies $m \ge dim(B)/4 - \sigma(X)/16$. Our assumption on $j : H^+ \to H^+$ ensures that $\mathbb{Z}_2$ acts freely on $S(H^+ \oplus \mathbb{R}^{19m}_{-1})$ and hence the quotient $S(H^+ \oplus \mathbb{R}^{19m}_{-1})/\mathbb{Z}_2$ is a manifold of dimension $dim(B) + 19m + b_+ - 1$. Hence the degree of $SW^{Pin(2)}_{\widehat{f \wedge f}_{m\overline{K3}}}(\theta)$ is greater than the highest degree in which $H^*_{\mathbb{Z}_2}( S(H^+ \oplus \mathbb{R}^{19m}_{-1}))$ is non-zero. So $SW^{Pin(2)}_{\widehat{f \wedge f}_{m\overline{K3}}}(\theta) = 0$ for all $\theta$. Together with Theorem \ref{thm:product2}, this implies that
\[
SW^{G}_{\widehat{g}}(q_2^j) = (\iota_2)_*( SW^{Pin(2)}_{mK3}(  \psi_2( e_G(D_1)^{-1} q_2^j )))
\]
where $D_1 = D - \mathbb{C}^{2m}$ and $D = V - V'$. Expanding the Euler class $e_G(D_1)$, collecting $\mu^0$-terms and simplifying, exactly as in the proof of Theorem \ref{thm:swpin1}, we obtain
\begin{equation}\label{equ:ghat2}
SW^{Pin(2)}_{\widehat{g}}(q_2^j) = (\iota_2)_* ( u^{3m-3} s_{2(j+1-d/2) , \mathbb{Z}_2}(D) ).
\end{equation}
Equating (\ref{equ:ghat1}) and (\ref{equ:ghat2}) yields
\begin{equation}\label{equ:swf1}
(\iota_1)_*( SW^{Pin(2)}_{\widehat{f}}(q^j)) =  (\iota_2)_* ( u^{3m-3} s_{2(j+1-d/2), \mathbb{Z}_2}(D) ).
\end{equation}

By Lemma \ref{lem:inj}, and the assumption that $u : H^*_{\mathbb{Z}_2}(B) \to H^{*+1}_{\mathbb{Z}_2}(B)$ is injective, it follows that $(\iota_1)_*$ is injective and thus Equation (\ref{equ:swf1}) completely determines $SW^{Pin(2)}_{\widehat{f}}$.

\begin{lemma}\label{lem:pullback}
If $u : H^*_{\mathbb{Z}_2}(B) \to H^{*+1}_{\mathbb{Z}_2}(B)$ is injective then for any $\theta \in H^*_{Pin(2)}(pt)$, $SW^{Pin(2)}_{\widehat{f}}(\theta)$ is in the image of the pullback $\pi^* : H^*_{\mathbb{Z}_2}(B) \to H^*_{\mathbb{Z}_2}( S(H^+) )$, where $\pi : S(H^+) \to B$ is the projection. In particular, the invariants $SW_{f}^{Pin(2),\phi}(\theta)$ do not depend on the choice of a chamber.
\end{lemma}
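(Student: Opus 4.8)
The plan is to deduce this from Equation~(\ref{equ:swf1}) by pushing both sides down to the base $B$. Since $SW^{Pin(2)}_{\widehat f}$ is a homomorphism of $H^*_{\mathbb{Z}_2}(pt)\cong\mathbb{Z}_2[u]$-modules, $H^*_{Pin(2)}(pt)\cong\mathbb{Z}_2[u,q]/(u^3)$ is free as a $\mathbb{Z}_2[u]$-module on the monomials $q^j$, and $\mathrm{im}(\pi^*)\subseteq H^*_{\mathbb{Z}_2}(S(H^+))$ is a $\mathbb{Z}_2[u]$-submodule (because $\pi^*$ is a ring map and $u$ restricts to $S(H^+)$ through $B$), it suffices to prove that $SW^{Pin(2)}_{\widehat f}(q^j)$ lies in the image of $\pi^*$ for every $j\ge 0$.

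Write $P:S(H^+\oplus\mathbb{R}^{22m}_{-1})\to B$ and $p_2:S(\mathbb{R}^{3m}_{-1})\to B$ for the bundle projections, so that $P\circ\iota_1=\pi$ and $P\circ\iota_2=p_2$. Applying the equivariant pushforward $P_*$ of Section~\ref{sec:pin2} to Equation~(\ref{equ:swf1}) and using that pushforwards compose (all fibres in sight are closed manifolds) gives
\[
\pi_*\bigl(SW^{Pin(2)}_{\widehat f}(q^j)\bigr)=(p_2)_*\bigl(u^{3m-3}\,s_{2(j+1-d/2),\mathbb{Z}_2}(D)\bigr).
\]
The argument of $(p_2)_*$ on the right is pulled back from $B$ along $p_2$ (both $u$ and the $\mathbb{Z}_2$-equivariant Segre classes of $D$ are classes on $B$), so by the projection formula it equals that class cupped with $(p_2)_*(1)$. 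But $p_2$ is a sphere bundle whose fibre $S^{3m-1}$ has positive dimension, so $(p_2)_*(1)\in H^{1-3m}_{\mathbb{Z}_2}(B)=0$. Hence $\pi_*\bigl(SW^{Pin(2)}_{\widehat f}(q^j)\bigr)=0$, and the Gysin exact sequence of the sphere bundle $\pi$ (the one appearing in the proof of Lemma~\ref{lem:inj}) identifies $\ker(\pi_*)$ with the image of $\pi^*$. Therefore $SW^{Pin(2)}_{\widehat f}(q^j)\in\mathrm{im}(\pi^*)$, which establishes the first assertion.

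For the last sentence: a $Pin(2)$-equivariant chamber is a non-vanishing section of $H^+$, hence (after normalising) a section $\phi:B\to S(H^+)$ of $\pi$, and $SW^{Pin(2),\phi}_f(\theta)=\phi^*\bigl(SW^{Pin(2)}_{\widehat f}(\theta)\bigr)$. Writing $SW^{Pin(2)}_{\widehat f}(\theta)=\pi^*(\alpha)$ gives $SW^{Pin(2),\phi}_f(\theta)=\phi^*\pi^*(\alpha)=(\pi\circ\phi)^*(\alpha)=\alpha$, which does not involve $\phi$.

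I expect the only real care needed to be bookkeeping: verifying that the equivariant pushforward along the closed sphere bundle $S(H^+\oplus\mathbb{R}^{22m}_{-1})\to B$ composes with $(\iota_1)_*$ and $(\iota_2)_*$ as expected, and checking that the right-hand side of~(\ref{equ:swf1}) is indeed a $p_2$-pullback once the $(u^3)$-conventions in the $\mathbb{Z}_2$-equivariant Segre classes are unwound (any residual ambiguity is itself a pullback, so it does not affect the vanishing argument). None of this is a substantial obstacle.
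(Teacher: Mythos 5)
Your argument is correct, but it takes a different route from the paper. You deduce $\pi_*\bigl(SW^{Pin(2)}_{\widehat f}(q^j)\bigr)=0$ by pushing Equation~(\ref{equ:swf1}) down to $B$, using functoriality of the equivariant pushforwards, the projection formula, and the fact that $(p_2)_*(1)$ lands in negative degree; then Gysin exactness gives membership in $\mathrm{im}(\pi^*)$, and the $\mathbb{Z}_2[u]$-module structure extends this from the $q^j$ to all $\theta$. The paper instead argues directly and more economically: since $u^3=0$ in $H^*_{Pin(2)}(pt)$ and $SW^{Pin(2)}_{\widehat f}$ is a $\mathbb{Z}_2[u]$-module map, $u^3\,SW^{Pin(2)}_{\widehat f}(\theta)=0$, hence $u^3\pi_*\bigl(SW^{Pin(2)}_{\widehat f}(\theta)\bigr)=0$, and injectivity of $u$ on $H^*_{\mathbb{Z}_2}(B)$ forces $\pi_*\bigl(SW^{Pin(2)}_{\widehat f}(\theta)\bigr)=0$; the Gysin sequence then finishes as in your proof. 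What the paper's route buys is that it works for arbitrary $\theta$ at once, uses none of the connected-sum/localisation machinery behind (\ref{equ:swf1}), and genuinely uses the stated hypothesis (notice your proof never invokes injectivity of $u$ for the first assertion -- it instead leans on (\ref{equ:swf1}), whose derivation needs the section's standing assumptions, in particular the freeness of the $\mathbb{Z}_2$-action on the enlarged sphere bundles, plus the product formula); what your route buys is a cross-check consistent with the way (\ref{equ:swf1}) is subsequently exploited, at the cost of the pushforward bookkeeping you flag, which is indeed routine. Two small corrections: $\mathbb{Z}_2[u,q]/(u^3)$ is not \emph{free} as a $\mathbb{Z}_2[u]$-module on the $q^j$ (each $q^j$ generates a copy of $\mathbb{Z}_2[u]/(u^3)$); generation by the $q^j$ is all you need and is true. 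Also, the residual $(u^3)$-ambiguity in $s_{2k,\mathbb{Z}_2}(D)$ contributes multiples of $u^{3m}$, which actually pull back to zero on $S(\mathbb{R}^{3m}_{-1})$, so it is not merely harmless for the vanishing step but invisible. Your treatment of chamber-independence coincides with the paper's.
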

\begin{proof}
To simplify notation, set $\alpha = SW^{Pin(2)}_{\widehat{f}}(\theta) \in H^*_{\mathbb{Z}_2}( S(H^+) )$. Since $u^3 = 0$ in $H^*_{Pin(2)}(pt)$, we have $u^3 \alpha = SW^{Pin(2)}_{\widehat{f}}(u^3 \theta) = 0$. Consider the Gysin sequence
\[
\cdots \to H^*_{\mathbb{Z}_2}( B ) \buildrel \pi^* \over \longrightarrow H^*_{\mathbb{Z}_2}(S(H^+)) \buildrel \pi_* \over \longrightarrow H^{*-(b_+-1)}_{\mathbb{Z}_2}(B) \to \cdots
\]
Now $u^3 \alpha = 0$ implies $u^4 \pi_*(\alpha) = 0$. But $u : H^*_{\mathbb{Z}_2}(B) \to H^{*+1}_{\mathbb{Z}_2}(B)$ is injective, so $\pi_*(\alpha) = 0$. Hence by exactness of the Gysin sequence, $\alpha = \pi^*(\beta)$ for some $\beta \in H^*_{\mathbb{Z}_2}(B)$. Now suppose that $\phi : B \to S(H^+)$ is a chamber. Then $SW^{Pin(2),\phi}_{f}( \theta ) = \phi^*( \alpha ) = \phi^* \pi^*(\beta) = \beta$, which does not depend on the choice of $\phi$ (note that in general, $\beta$ is only unique up to multiples of $e_{\mathbb{Z}_2}(H^+)$, but if a chamber exists then $e_{\mathbb{Z}_2}(H^+) = 0$).
\end{proof}

Let $E_1 \to E_2$ be an inclusion of $\mathbb{Z}_2$-vector bundles on $B$ and $\iota : S(E_1) \to S(E_2)$ the induced map of sphere bundles. Then for any $\beta \in H^*_{\mathbb{Z}_2}(B)$, we have
\begin{equation}\label{equ:pf}
\iota_*( \pi_1^*(\beta) ) = \pi_2^*( e_{\mathbb{Z}_2}(E_2/E_1) \beta),
\end{equation}
where $\pi_1,\pi_2$ are the projections $\pi_i : S(E_i) \to B$. By Lemma \ref{lem:pullback}, we have that $SW^{Pin(2)}_{\widehat{f}}(q^j) = \pi^*( \alpha_j)$ for some $\alpha_j \in H^*_{\mathbb{Z}_2}(B)/\langle e_{\mathbb{Z}_2}(H^+) \rangle$, where $\pi$ is the projection $\pi : S(H^+) \to B$. Applying (\ref{equ:pf}) to Equation (\ref{equ:swf1}) gives:
\[
u^{22m} \alpha_j = e_{\mathbb{Z}_2}(H^+) u^{22m-3} s_{2(j+1 - d/2),\mathbb{Z}_2}(D),
\]
which holds as an equality in $H^*_{\mathbb{Z}_2}(B)/ (u^{22m} e_{\mathbb{Z}_2}(H^+) )$. Cancelling a common factor of $u^{22m-3}$ gives
\[
u^3 \alpha_j = e_{\mathbb{Z}_2}(H^+) s_{2(j+1-d/2),\mathbb{Z}_2}(D)
\]
in $H^*_{\mathbb{Z}_2}(B)/( u^3 e_{\mathbb{Z}_2}(H^+) )$. Since $u : H^*_{\mathbb{Z}_2}(B) \to H^*_{\mathbb{Z}_2}(B)$ is injective, the right hand side must be a multiple of $u^3$. Let $u^{-3} : u^3 H^*_{\mathbb{Z}_2}(B) \to H^{*-3}_{\mathbb{Z}_2}(B)$ denote the inverse of $u^3$ on its image. Then
\[
\alpha_j = u^{-3} e_{\mathbb{Z}_2}(H^+) s_{2(j+1-d/2),\mathbb{Z}_2}(D).
\]
Pulling back to $H^*_{\mathbb{Z}_2}( S(H^+) )$, we obtain
\[
SW^{Pin(2)}_{\widehat{f}}( q^j ) = \pi^*( u^{-3} e_{\mathbb{Z}_2}(H^+) s_{2(j+1-d/2),\mathbb{Z}_2}(D) ).
\]

We have proven the following:

\begin{theorem}\label{thm:swpingen}
Suppose that $u : H^*_{\mathbb{Z}_2}(B) \to H^{*+1}_{\mathbb{Z}_2}(B)$ is injective and suppose that $j : H^+ \to H^+$ acts as $-1$ over the fixed points of $j : B \to B$. Then for each $j \ge 0$, $e_{\mathbb{Z}_2}(H^+) s_{2(j+1 -d/2),\mathbb{Z}_2}(D)$ is a multiple of $u^3$ and
\[
SW^{Pin(2)}_{\widehat{f}}( q^j ) = \pi^*( u^{-3} e_{\mathbb{Z}_2}(H^+) s_{2(j+1-d/2),\mathbb{Z}_2}(D) ).
\]
\end{theorem}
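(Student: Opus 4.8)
The plan is to adapt the two-way computation from the proof of Theorem~\ref{thm:swpin1} to the level of an arbitrary $Pin(2)$-equivariant monopole map, retaining the extra circle of symmetry so that the product formula Theorem~\ref{thm:product2} applies. Since $f$ need not carry a $Pin(2)$-equivariant chamber, I would first pass to the pullback $\widehat{f}$ over $\widehat{B}=S(H^+)$ equipped with its tautological chamber, so that $SW^{Pin(2)}_{\widehat{f}}$ is defined, and fix an integer $m$ to be taken large, with $m\ge \dim(B)/4-\sigma(X)/16$. Setting $g=f\wedge f_{22m(S^2\times S^2)}$, the whole argument consists of computing $SW^{Pin(2)}_{\widehat{g}}$ in two different ways and comparing.

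For the first computation I would apply Theorem~\ref{thm:product2} to the decomposition $g=f\wedge f_{22m(S^2\times S^2)}$. Since $22m(S^2\times S^2)$ is spin with $b_+=22m$ large and $\sigma=0$, Lemma~\ref{lem:j} gives $SW^{Pin(2)}_{22m(S^2\times S^2)}=0$, so only one term of the product formula survives and one obtains $SW^{Pin(2)}_{\widehat{g}}(\theta)=(\iota_1)_*\,SW^{Pin(2)}_{\widehat{f}}(\theta)$, where $\iota_1\colon S(H^+)\hookrightarrow S(H^+\oplus\mathbb{R}^{22m}_{-1})$ is the evident inclusion of sphere bundles. For the second computation, I would use that $22m(S^2\times S^2)$ has the same Betti numbers and signature as $m\overline{K3}\#mK3$, hence the same Bauer--Furuta invariant, so that $f_{22m(S^2\times S^2)}\simeq f_{m\overline{K3}}\wedge f_{mK3}$, and rebracket $g=(f\wedge f_{m\overline{K3}})\wedge f_{mK3}$. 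A degree count shows $SW^{Pin(2)}_{\widehat{f\wedge f_{m\overline{K3}}}}$ vanishes for $m$ large; here the hypothesis that $j\colon H^+\to H^+$ acts as $-1$ over the fixed points of $j\colon B\to B$ is used to guarantee that $\mathbb{Z}_2$ acts freely on $S(H^+\oplus\mathbb{R}^{19m}_{-1})$, so that the $\mathbb{Z}_2$-equivariant cohomology of the relevant sphere bundle vanishes above its manifold dimension. Then Theorem~\ref{thm:product2} together with the explicit value of $SW^{Pin(2)}_{mK3}$ from Lemma~\ref{lem:mk3}, after expanding $e_G(D-\mathbb{C}^{2m})^{-1}$, collecting the $\mu^0$-coefficient and simplifying exactly as in the proof of Theorem~\ref{thm:swpin1}, yields $SW^{Pin(2)}_{\widehat{g}}(q^j)=(\iota_2)_*\big(u^{3m-3}s_{2(j+1-d/2),\mathbb{Z}_2}(D)\big)$, with $\iota_2\colon S(\mathbb{R}^{3m}_{-1})\hookrightarrow S(H^+\oplus\mathbb{R}^{22m}_{-1})$.

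Equating the two expressions gives the relation $(\iota_1)_*\,SW^{Pin(2)}_{\widehat{f}}(q^j)=(\iota_2)_*\big(u^{3m-3}s_{2(j+1-d/2),\mathbb{Z}_2}(D)\big)$. Since $u\colon H^*_{\mathbb{Z}_2}(B)\to H^{*+1}_{\mathbb{Z}_2}(B)$ is injective, Lemma~\ref{lem:inj} shows $(\iota_1)_*$ is injective, so this determines $SW^{Pin(2)}_{\widehat{f}}(q^j)$. To make it explicit I would invoke Lemma~\ref{lem:pullback} to write $SW^{Pin(2)}_{\widehat{f}}(q^j)=\pi^*(\alpha_j)$ for some $\alpha_j\in H^*_{\mathbb{Z}_2}(B)$, unique modulo $e_{\mathbb{Z}_2}(H^+)$, then apply the pushforward identity $\iota_*(\pi_1^*\beta)=\pi_2^*(e_{\mathbb{Z}_2}(E_2/E_1)\beta)$ to both sides. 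Cancelling the common factor $u^{22m-3}$ leaves $u^3\alpha_j=e_{\mathbb{Z}_2}(H^+)\,s_{2(j+1-d/2),\mathbb{Z}_2}(D)$ modulo $u^3 e_{\mathbb{Z}_2}(H^+)$; injectivity of $u$ forces the right-hand side to be a multiple of $u^3$, and dividing gives $\alpha_j=u^{-3}e_{\mathbb{Z}_2}(H^+)\,s_{2(j+1-d/2),\mathbb{Z}_2}(D)$. Pulling back along $\pi$ then produces the claimed formula.

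The step I expect to require the most care is the vanishing of $SW^{Pin(2)}_{\widehat{f\wedge f_{m\overline{K3}}}}$ in the second computation: one must choose $m$ large enough relative to $\dim(B)$ and $\sigma(X)$, and one must use the freeness of the $\mathbb{Z}_2$-action on the relevant sphere bundle (the content of the third hypothesis) to conclude that the equivariant cohomology vanishes in the degree where the invariant would live. A secondary nuisance is the bookkeeping between $G$- and $Pin(2)$-equivariant Euler and Segre classes and the substitutions $\psi_i(q_i)=q$, $\psi_i(q_{3-i})=q+\mu$, so that after collecting the $\mu^0$-term one genuinely lands on the clean expression $u^{3m-3}s_{2(j+1-d/2),\mathbb{Z}_2}(D)$; this is routine given the identities assembled at the end of Section~\ref{sec:prod}.
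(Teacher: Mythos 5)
Your proposal is correct and follows essentially the same route as the paper: the two-way computation of $SW^{Pin(2)}_{\widehat{g}}$ for $g=f\wedge f_{22m(S^2\times S^2)}$ via Theorem \ref{thm:product2}, the degree/freeness vanishing of the $f\wedge f_{m\overline{K3}}$ factor, the input of Lemma \ref{lem:mk3}, and then Lemmas \ref{lem:inj} and \ref{lem:pullback} with the pushforward identity to solve for $\alpha_j$ and divide by $u^3$. The only slip is the assertion that equal Betti numbers and signature imply equal Bauer--Furuta invariants; what is actually needed is either the diffeomorphism $22m(S^2\times S^2)\cong mK3\#m\overline{K3}$ or, as the paper's remark indicates, simply running the whole argument with $g=f\wedge f_{m\overline{K3}}\wedge f_{mK3}$ from the start, using only that this second factor has signature zero so that Lemma \ref{lem:j} gives the required vanishing.
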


Now suppose that $\pi_E : E \to B_0$ is a smooth spin family of $4$-manifolds. This means that $E$ is a fibre bundle with fibres given by a compact, oriented smooth $4$-manifold $X$, with transition functions given by orientation preserving diffeomorphisms of $X$ and in addition we are given a spin-structure $\mathfrak{s}_E$ on the vertical tangent bundle $T(E/B_0) = Ker( (\pi_E)_* : TE \to TB_0 )$. If $b_1(X) > 0$ then we need to assume also that there exists a section $s : B_0 \to E$. In this case we get a families Seiberg--Witten monopole map $f : S^{V,U} \to S^{V',U'}$ over $B$, where $B = Pic^{\mathfrak{s}_E}(E/B_0)$ is the moduli space of gauge equivalence classes of spin$^c$-connections on the fibres of $E$. See \cite[Example 2.4]{bk} for details of the construction, including an explanation of why the section $s$ is needed. Thus $B \to B_0$ is a fibre bundle whose fibre over $b \in B_0$ is $Pic^{\mathfrak{s}_E|_{X_b}}(X_b)$, where $X_b = \pi_E^{-1}(b)$ is the fibre of $E$ over $b$. This is a torus bundle over $B_0$. Moreover, since the family has a spin structure, there is a section $s : B_0 \to B$ given by the spin connection. Thus $B$ is completely determined by the degree $1$ monodromy representation $\pi_1(B_0) \to Aut( H^1(X ; \mathbb{Z}) )$.

The involution $j : B \to B$ acts as the identity on $B_0$ and as $-1$ on the fibres of $B \to B_0$. Assuming that the monodromy of the family $E \to B_0$ acts trivially on $H^1(X ; \mathbb{Z})$, then $B \cong B_0 \times Pic^{\mathfrak{s}_E}(X)$ and it follows easily that $H^*_{\mathbb{Z}_2}(B) \cong H^*(B)[u]$. In particular, $u : H^*_{\mathbb{Z}_2}(B) \to H^{*+1}_{\mathbb{Z}_2}(B)$ is injective. To each spin$^c$-connection, we have a corresponding Dirac operator. Thus $B$ is the parameter space for a family of Dirac operators. The virtual bundle $D = V - V'$ is the families index of this family. The bundle $H^+ \to B$ is the pullback to $B$ of the bundle $H^+(X) \to B_0$ whose fibre over a point $b \in B_0$ is the space $H^+(X_b)$ of self-dual harmonic $2$-forms on $X_b$ (with respect to a given family of metrics). The involution $j : H^+(X_b) \to H^+(X_b)$ acts as a combination of $j : B \to B$ on the base, together with multiplication by $-1$ on the fibres. Thus $j$ acts as multiplication by $-1$ over the fixed points of $j : B \to B$. Our assumptions (1)-(3) are satisfied and hence Theorem \ref{thm:swpingen} applies. To compute the equivariant Euler class $e_{\mathbb{Z}_2}(H^+)$, it is best to think of $H^+$ as being the tensor product $H^+(X) \otimes \mathbb{R}_{-1}$. Then the splitting principle gives
\begin{equation}\label{equ:eh+}
e_{\mathbb{Z}_2}(H^+) = \sum_{j=0}^{b_+(X)}u^j  w_{b_+(X)-j}( H^+(X) ) \in H^*(B)[u].
\end{equation}
Since $e_{\mathbb{Z}_2}(H^+)$ is a monic polynomial in $u$, multiplication $e_{\mathbb{Z}_2}(H^+) : H^*(B)[u] \to H^{*+b_+(X)}(B)[u]$ is injective. Hence, by the Gysin sequence for $S(H^+) \to B$, we have an isomorphism
\[
H^*_{\mathbb{Z}_2}( S(H^+) ) \cong H^*(B)[u]/(e_{\mathbb{Z}_2}(H^+)).
\]
Write $SW^{Pin(2)}_{E,\mathfrak{s}_E}$ for the $Pin(2)$-equivariant Seiberg--Witten invariants of $\widehat{f}$. This is a map
\[
SW^{Pin(2)}_{E,\mathfrak{s}_E} : H^*_{Pin(2)}(pt) \to H^{*-2d+b_+(X)+1}_{\mathbb{Z}_2}( S(H^+) ).
\]

Applying Theorem \ref{thm:swpingen} gives

\begin{theorem}\label{thm:spinfam1}
Let $E \to B_0$ be a spin family. If $b_1(X) > 0$, assume there exists a section $s : B_0 \to E$ and that the monodromy of the family acts trivially on $H^1(X ; \mathbb{Z})$. Then for any $k \ge 1+\sigma(X)/16$, we have
\begin{equation}\label{equ:vanish}
(w_{b_+}(H^+(X)) + u w_{b_+-1}(H^+(X)) + u^2 w_{b_+-2}(X) ) s_{2k,\mathbb{Z}_2}(D) = 0 \; ({\rm mod} \; u^3)
\end{equation}
In particular, if $\sigma(X) < 0$, then $w_l(H^+(X)) = 0$ for $b_+(X) \le l \le b_+(X)-2$. Furthermore, we have
\[
SW^{Pin(2)}_{E,\mathfrak{s}_E}(q^j) = \sum_{l=0}^{b_+(X)} u^{l-3} w_{b_+(X)-l}(H^+(X)) s_{2(j+1+\sigma(X)/16) , \mathbb{Z}_2}(D).
\]
\end{theorem}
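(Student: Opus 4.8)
The plan is to deduce Theorem \ref{thm:spinfam1} as a direct specialization of Theorem \ref{thm:swpingen}; the main task is to check that the hypotheses of that theorem hold for the Seiberg--Witten monopole map of a spin family and then to substitute the explicit values of $e_{\mathbb{Z}_2}(H^+)$ and $d$. First I would verify the two hypotheses of Theorem \ref{thm:swpingen}. When $b_1(X) = 0$ the base $B = Pic^{\mathfrak{s}_E}(E/B_0)$ equals $B_0$ and both hypotheses are automatic; when $b_1(X) > 0$, the existence of a section and the assumption that the monodromy acts trivially on $H^1(X;\mathbb{Z})$ give $B \cong B_0 \times Pic^{\mathfrak{s}_E}(X)$, so the K\"unneth theorem yields $H^*_{\mathbb{Z}_2}(B) \cong H^*(B)[u]$ together with a split injection $H^*(B_0) \hookrightarrow H^*(B)$; in particular $u : H^*_{\mathbb{Z}_2}(B) \to H^{*+1}_{\mathbb{Z}_2}(B)$ is injective. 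Moreover $H^+ \cong H^+(X) \otimes \mathbb{R}_{-1}$ as a $\mathbb{Z}_2$-bundle, so over the (nonempty) $j$-fixed locus of $B$ the involution $j : H^+ \to H^+$ acts as $-1$. Hence Theorem \ref{thm:swpingen} applies to $f$.

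Next I would record that, since $\mathfrak{s}_E$ is spin, $c(\mathfrak{s}) = 0$ and so $d = (c(\mathfrak{s})^2 - \sigma(X))/8 = -\sigma(X)/8$, whence $d/2 = -\sigma(X)/16 \in \mathbb{Z}$ and the index shift $j + 1 - d/2$ of Theorem \ref{thm:swpingen} equals $j + 1 + \sigma(X)/16$. Feeding the splitting-principle formula $e_{\mathbb{Z}_2}(H^+) = \sum_{l=0}^{b_+(X)} u^l w_{b_+(X)-l}(H^+(X))$ of Equation (\ref{equ:eh+}) into the two conclusions of Theorem \ref{thm:swpingen} then gives the theorem directly: writing $k = j+1+\sigma(X)/16$ and reducing $e_{\mathbb{Z}_2}(H^+)$ modulo $u^3$ (which retains only the $u^0,u^1,u^2$ terms $w_{b_+(X)}(H^+(X))$, $w_{b_+(X)-1}(H^+(X))$, $w_{b_+(X)-2}(H^+(X))$), the assertion that $e_{\mathbb{Z}_2}(H^+)\, s_{2k,\mathbb{Z}_2}(D)$ is a multiple of $u^3$ becomes exactly Equation (\ref{equ:vanish}), and expanding $u^{-3} e_{\mathbb{Z}_2}(H^+)$ turns $\pi^*(u^{-3} e_{\mathbb{Z}_2}(H^+)\, s_{2k,\mathbb{Z}_2}(D))$ into the displayed sum $\sum_{l=0}^{b_+(X)} u^{l-3} w_{b_+(X)-l}(H^+(X))\, s_{2k,\mathbb{Z}_2}(D)$. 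For the ``in particular'' clause: if $\sigma(X) < 0$ then $0 \ge 1 + \sigma(X)/16$, so $k = 0$ is admissible, and since $s_{0,\mathbb{Z}_2}(D) = 1$ Equation (\ref{equ:vanish}) reduces to $w_{b_+(X)}(H^+(X)) + u\, w_{b_+(X)-1}(H^+(X)) + u^2 w_{b_+(X)-2}(H^+(X)) \equiv 0 \pmod{u^3}$ in $H^*(B)[u]$; comparing coefficients of $u^0, u^1, u^2$ and using $H^*(B_0) \hookrightarrow H^*(B)$ forces $w_l(H^+(X)) = 0$ in $H^*(B_0)$ for $b_+(X) - 2 \le l \le b_+(X)$.

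Since the computation itself is short once Theorem \ref{thm:swpingen} is available, the points needing the most care are bookkeeping rather than a genuine obstacle: confirming that it is precisely the monodromy-triviality hypothesis that makes $u$ injective (injectivity fails for general families, where $H^*_{\mathbb{Z}_2}(B)$ can vanish in high degrees); keeping track of the two separate indeterminacies present -- the equivariant Segre classes $s_{2k,\mathbb{Z}_2}(D)$ are only defined modulo $u^3$, while the class $\alpha_j$ with $SW^{Pin(2)}_{\widehat f}(q^j) = \pi^*(\alpha_j)$ is only defined modulo $e_{\mathbb{Z}_2}(H^+)$ -- and verifying that $u^{-3} e_{\mathbb{Z}_2}(H^+)\, s_{2k,\mathbb{Z}_2}(D)$ nevertheless pulls back to a well-defined element of $H^*_{\mathbb{Z}_2}(S(H^+))$; and using that $e_{\mathbb{Z}_2}(H^+)$ is monic in $u$, so that $H^*_{\mathbb{Z}_2}(S(H^+)) \cong H^*(B)[u]/(e_{\mathbb{Z}_2}(H^+))$ via the Gysin sequence, which is the ring in which the final formula for $SW^{Pin(2)}_{E,\mathfrak{s}_E}(q^j)$ is to be read.
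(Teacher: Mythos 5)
Your proposal is correct and follows essentially the same route as the paper: the paper verifies hypotheses (1)--(3) of Theorem \ref{thm:swpingen} for the spin family in the discussion immediately preceding the theorem (trivial monodromy giving $H^*_{\mathbb{Z}_2}(B)\cong H^*(B)[u]$, and $j$ acting as $-1$ on $H^+$ over the fixed locus), and then, exactly as you do, substitutes $d=-\sigma(X)/8$ and the splitting-principle expansion of $e_{\mathbb{Z}_2}(H^+)$ from Equation (\ref{equ:eh+}), obtaining the vanishing statement mod $u^3$ and, by taking the case $k=0$ (available when $\sigma(X)<0$) with $s_{0,\mathbb{Z}_2}(D)=1$, the vanishing of $w_l(H^+(X))$ for $b_+(X)-2\le l\le b_+(X)$.
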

\begin{proof}
According to Theorem \ref{thm:swpingen}, $e_{\mathbb{Z}_2}(H^+(X))s_{2(j+1+\sigma(X)/16), \mathbb{Z}_2}(D)$ is a multiple of $u^3$ for any $j \ge 0$. Using Equation (\ref{equ:eh+}), this gives
\[
(w_{b_+}(H^+(X)) + u w_{b_+-1}(H^+(X)) + u^2 w_{b_+-2}(X) ) s_{2(j+1+\sigma(X)/16),\mathbb{Z}_2}(D) = 0 \; ({\rm mod} \; u^3)
\]
for all $j \ge 0$. In particular, if $\sigma(X) < 0$, then taking $j = -1 -\sigma(X)/16$, gives $w_l(H^+(X)) = 0$ for $b_+(X) - 2 \le l \le b_+(X)$. Futhermore, Theorem \ref{thm:swpingen} and Equation (\ref{equ:eh+}) give
\[
SW^{Pin(2)}_{E,\mathfrak{s}_E}(q^j) = \sum_{l=0}^{b_+(X)} u^{l-3} w_{b_+(X)-l}(H^+(X)) s_{2(j+1+\sigma(X)/16),\mathbb{Z}_2}(D).
\]
\end{proof}

\begin{remark}
The condition that $w_l( H^+(X) ) = 0$ for $b_+(X)-2 \le l \le b_+(X)$ when $\sigma(X) < 0$ was also shown in \cite[Theorem 1.2]{bar}.
\end{remark}

The vanishing condition (\ref{equ:vanish}) is somewhat difficult to use because of the presence of the $\mathbb{Z}_2$-equivariant Segre classes which could possibly have $u$ and $u^2$-terms. However, by looking at the lowest order term in $u$ in (\ref{equ:vanish}), we obtain:

\begin{corollary}
Let $E \to B_0$ be a spin family. If $b_1(X) > 0$, assume there exists a section $s : B_0 \to E$ and that the monodromy of the family acts trivially on $H^1(X ; \mathbb{Z})$. Let $l \ge 0$ be the largest non-negative integer for which $w_l(H^+(X)) \neq 0$. If $l \ge b_+(X)-2$, then 
\[
w_l(H^+(X))s_{2(j+1+\sigma(X)/16)}(D) = 0.
\]
for all $j \ge 0$.
\end{corollary}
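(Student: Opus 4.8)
The plan is to extract the desired identity from the vanishing condition \eqref{equ:vanish} in Theorem \ref{thm:spinfam1} by isolating the lowest-order term in $u$. Recall that \eqref{equ:vanish} asserts, for each $j \ge 0$,
\[
(w_{b_+(X)}(H^+(X)) + u\, w_{b_+(X)-1}(H^+(X)) + u^2 w_{b_+(X)-2}(H^+(X)))\, s_{2(j+1+\sigma(X)/16),\mathbb{Z}_2}(D) = 0 \pmod{u^3},
\]
where $s_{2k,\mathbb{Z}_2}(D) \in H^*_{\mathbb{Z}_2}(B)/(u^3) \cong (H^*(B)[u])/(u^3)$. Writing the $\mathbb{Z}_2$-equivariant Segre class in terms of its $u$-expansion,
\[
s_{2k,\mathbb{Z}_2}(D) = s_{2k}(D) + u\, t_{2k}(D) + u^2 r_{2k}(D)
\]
for suitable classes $s_{2k}(D) \in H^{4k}(B)$, $t_{2k}(D) \in H^{4k-1}(B)$, $r_{2k}(D) \in H^{4k-2}(B)$ (with $s_{2k}(D)$ the ordinary Segre class, as noted after the definition of the equivariant Chern classes), one multiplies out the two polynomials in $u$ and reduces mod $u^3$. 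The $u^0$-coefficient of the product is exactly $w_{b_+(X)}(H^+(X))\, s_{2k}(D)$, the $u^1$-coefficient is $w_{b_+(X)}(H^+(X))\, t_{2k}(D) + w_{b_+(X)-1}(H^+(X))\, s_{2k}(D)$, and so on.

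Next I would use the hypothesis. Let $l \ge 0$ be the largest integer with $w_l(H^+(X)) \neq 0$, and assume $l \ge b_+(X) - 2$. There are three cases according to whether $l = b_+(X)$, $l = b_+(X)-1$, or $l = b_+(X)-2$. If $l = b_+(X)$, the $u^0$-coefficient of \eqref{equ:vanish} already reads $w_{b_+(X)}(H^+(X))\, s_{2k}(D) = 0$, which is the claim with $k = j+1+\sigma(X)/16$. If $l = b_+(X)-1$, then $w_{b_+(X)}(H^+(X)) = 0$, so the $u^0$-term vanishes identically and the first nontrivial constraint is the $u^1$-coefficient: $w_{b_+(X)-1}(H^+(X))\, s_{2k}(D) = 0$, again the claim. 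If $l = b_+(X)-2$, then $w_{b_+(X)}(H^+(X)) = w_{b_+(X)-1}(H^+(X)) = 0$, the $u^0$ and $u^1$ coefficients vanish identically, and the $u^2$-coefficient of \eqref{equ:vanish} gives $w_{b_+(X)-2}(H^+(X))\, s_{2k}(D) = 0$. In all three cases we obtain $w_l(H^+(X))\, s_{2(j+1+\sigma(X)/16)}(D) = 0$ for every $j \ge 0$.

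I do not anticipate a serious obstacle here; the only point requiring a little care is bookkeeping the reduction mod $u^3$ correctly and confirming that in each case all the lower-order $u$-coefficients genuinely vanish \emph{identically} (rather than merely being set to zero by the relation) so that the first surviving coefficient is the one claimed. This follows immediately from the definition of $l$ as the top nonzero Stiefel--Whitney class of $H^+(X)$ together with the fact that $H^+(X)$ has rank $b_+(X)$, so $w_m(H^+(X)) = 0$ for all $m > b_+(X)$ automatically and for $l < m \le b_+(X)$ by choice of $l$. Hence the corollary is a direct consequence of Theorem \ref{thm:spinfam1}.
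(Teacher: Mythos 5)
Your proof is correct and follows essentially the same route as the paper: extract the lowest-order nonvanishing $u$-coefficient of the relation (\ref{equ:vanish}), which under the hypothesis $w_k(H^+(X))=0$ for $k>l$ is exactly $u^{b_+(X)-l}\,w_l(H^+(X))\,s_{2(j+1+\sigma(X)/16)}(D)$, and note that $b_+(X)-l\le 2$ forces this coefficient to vanish. Your explicit three-case bookkeeping is just a spelled-out version of the paper's one-line argument.
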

\begin{proof}
If $w_k(H^+(X)) = 0$ for $k > l$ and $l \ge b_+(X)-2$, then the left hand side of (\ref{equ:vanish}) has the form $u^{b_+(X)-l} w_l(H^+(X))s_{2(j+1+\sigma(X)/16)}(D) + \cdots $ where $\cdots$ denotes terms of higher order in $u$.
\end{proof}

Consider a spin family of $4$-manifolds $E \to B_0$ with $Pin(2)$-equivariant monopole map $f$. Restricting to the circle subgroup $S^1 \subset Pin(2)$, and choosing a chamber $\phi : B \to S(H^+)$, we may consider the $S^1$-equivariant Seiberg--Witten invariants of $f$ (taken as usual with $\mathbb{Z}_2$-coefficients)
\[
SW^\phi_{E , \mathfrak{s}_E} : H^*_{S^1}(pt) \to H^{*-2d+b_+(X)+1}(B).
\]
The cohomology classes $SW_{E , \mathfrak{s}_E}(x^m) \in H^{2m-2d+b_+(X)+1}(B)$ are the (mod $2$) {\em families Seiberg--Witten invariants} of the spin$^c$-family $(E , \mathfrak{s}_E)$, as defined for instance in \cite{bk}. Using Theorem \ref{thm:spinfam1}, we obtain 

\begin{theorem}\label{thm:spinfam2}
Let $E \to B_0$ be a spin family. If $b_1(X) > 0$, assume there exists a section $s : B_0 \to E$ and that the monodromy of the family acts trivially on $H^1(X ; \mathbb{Z})$. Then for any chamber $\phi$, we have
\[
SW^\phi_{E,\mathfrak{s}_E}( x^{2j} ) = \sum_{l=1}^{3} \left. \left( u^{l-3} w_{b_+(X)-l}(H^+(X)) s_{2(j+1+\sigma(X)/16),\mathbb{Z}_2}(D) \right) \right|_{u=0}.
\]
In particular if $\sigma(X) < 0$ or if $b_1(X)=0$, then 
\[
SW^\phi_{E,\mathfrak{s}_E}( x^{2j} ) = w_{b_+(X)-3}(H^+(X)) s_{2(j+1+\sigma(X)/16)}(D).
\]
\end{theorem}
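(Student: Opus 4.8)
The plan is to derive this from Theorem \ref{thm:spinfam1} by applying the comparison between the $Pin(2)$-equivariant and $S^1$-equivariant invariants, exactly as Lemma \ref{lem:u=0} is used in the closed case. First I would record the relevant commutative square: forgetting the $j$-symmetry down to $S^1 \subset Pin(2)$, and using the tautological chamber $\phi^{taut}$ on $\widehat{f}$ together with a given chamber $\phi$ on $f$, we obtain a diagram relating $SW^{Pin(2)}_{E,\mathfrak{s}_E}$ to $SW^\phi_{E,\mathfrak{s}_E}$. The key algebraic input is that under $H^*_{Pin(2)}(pt) \to H^*_{S^1}(pt)$ the class $q^j$ maps to $x^{2j}$, while $u$ maps to $0$; and under the identification $H^*_{\mathbb{Z}_2}(S(H^+)) \cong H^*(B)[u]/(e_{\mathbb{Z}_2}(H^+))$ restricted along $\phi$, the effect of passing to the $S^1$-theory is precisely to set $u = 0$ (note $e_{\mathbb{Z}_2}(H^+)|_{u=0} = w_{b_+(X)}(H^+(X))$, which vanishes whenever a chamber exists since then $e_{S^1}(H^+) = 0$; more simply, by Lemma \ref{lem:pullback} the answer is pulled back from $B$ and $\phi^*\pi^* = \mathrm{id}$). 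So the first main step is: $SW^\phi_{E,\mathfrak{s}_E}(x^{2j}) = SW^{Pin(2)}_{E,\mathfrak{s}_E}(q^j)\big|_{u=0}$, interpreted via $\phi^*$.

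Next I would substitute the formula from Theorem \ref{thm:spinfam1},
\[
SW^{Pin(2)}_{E,\mathfrak{s}_E}(q^j) = \sum_{l=0}^{b_+(X)} u^{l-3} w_{b_+(X)-l}(H^+(X)) s_{2(j+1+\sigma(X)/16),\mathbb{Z}_2}(D),
\]
and carefully extract the $u^0$-coefficient. The terms with $l = 0,1,2$ are the only candidates to survive $|_{u=0}$, because $w_{b_+(X)}(H^+(X)) = w_{b_+(X)-1}(H^+(X)) = w_{b_+(X)-2}(H^+(X)) = 0$ whenever $\sigma(X) < 0$ by the vanishing part of Theorem \ref{thm:spinfam1} — but in general (e.g. $\sigma(X)=0$) these classes may be nonzero, and moreover $s_{2k,\mathbb{Z}_2}(D)$ itself may carry $u$ and $u^2$ terms, so $u^{l-3}(\cdots)|_{u=0}$ for $l=1,2$ picks up the $u^{2}$- and $u^{1}$-components of $s_{2k,\mathbb{Z}_2}(D)$ against the appropriate Stiefel--Whitney class. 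This is exactly the sum $\sum_{l=1}^{3}(u^{l-3} w_{b_+(X)-l}(H^+(X)) s_{2k,\mathbb{Z}_2}(D))|_{u=0}$ displayed in the theorem (reindexing $l \mapsto l$, with $l=3$ contributing $w_{b_+(X)-3}(H^+(X)) s_{2k}(D)$ since $s_{2k,\mathbb{Z}_2}(D)|_{u=0} = s_{2k}(D)$). I would spell out this bookkeeping in one short paragraph.

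Finally, for the ``in particular'' clause: if $\sigma(X) < 0$ then $w_{b_+(X)-1}(H^+(X)) = w_{b_+(X)-2}(H^+(X)) = 0$ by Theorem \ref{thm:spinfam1}, killing the $l=1,2$ terms and leaving only $w_{b_+(X)-3}(H^+(X))s_{2(j+1+\sigma(X)/16)}(D)$; and if $b_1(X) = 0$ then $B \cong B_0$ (a point-wise trivial torus bundle, since $\mathrm{Pic}^{\mathfrak{s}}(X)$ is a point), so $D$ reduces to the ordinary families index over $B_0$ and, as in the proof of Theorem \ref{thm:swpin1}, one has $\sigma(X) = 0$ is not forced here but the $\mathbb{Z}_2$-equivariant Segre classes $s_{2k,\mathbb{Z}_2}(D)$ have no $u$ or $u^2$ part — this needs the observation that over $B_0$ with trivial $j$-action on the base the Chern/Segre classes of $D$ live in $H^*(B_0) \subset H^*_{\mathbb{Z}_2}(B_0) = H^*(B_0)[u]$, so only the $l=3$ term survives. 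I expect the main obstacle to be precisely this last point: justifying cleanly that in the $b_1(X)=0$ case (and more generally) the potential $u$- and $u^2$-contributions from $s_{2k,\mathbb{Z}_2}(D)$ drop out, which amounts to understanding the image of $c_{2k,\mathbb{Z}_2}(D)$ in $H^*_{\mathbb{Z}_2}(B)/(u^3)$ — here one uses that $D$ is pulled back from $B_0$ where $j$ acts trivially, so its equivariant Chern classes have trivial $u$-filtration, exactly the mechanism behind the definition of $c_{2k,\mathbb{Z}_2}$ in Section \ref{sec:prod}.
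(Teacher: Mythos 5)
Your plan reproduces the paper's proof: the paper likewise shows that applying a chamber $\phi$ amounts to composing $SW^{Pin(2)}_{E,\mathfrak{s}_E}(q^j)$ with the forgetful map (setting $u=0$, using that $w_{b_+(X)}(H^+(X))=0$ when a chamber exists), substitutes the formula of Theorem \ref{thm:spinfam1}, and then uses the vanishing $w_l(H^+(X))=0$ for $l\ge b_+(X)-2$ when $\sigma(X)<0$. The one step you flag as delicate --- that $s_{2k,\mathbb{Z}_2}(D)$ has no $u$- or $u^2$-components when $b_1(X)=0$ --- is justified in the paper by observing that $D$ is then an honest quaternionic virtual bundle whose $Pin(2)$-action extends to $Sp(1)$, so $e_{Pin(2)}(D)^{-1}$ is a polynomial in $q$ with ordinary Segre class coefficients; this is the precise form of the argument you gesture at with ``trivial $u$-filtration,'' and with it your proposal is correct.
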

\begin{proof}
Let $\phi : B \to S(H^+(X))$ be a chamber. This defines a $\mathbb{Z}_2$-equivariant map $S^0 \times B = S(\mathbb{R}\phi) \to S(H^+(X))$, inducing a map
\[
\phi^* : H^*_{\mathbb{Z}_2}( S(H^+(X)) ) \to H^*_{\mathbb{Z}_2}( S^0 \times B ) \cong H^*(B).
\]
By the same reasoning as in Lemma \ref{lem:u=0}, we have that $SW_{E,\mathfrak{s}_E}( x^{2j} ) = \phi^* (SW_{E,\mathfrak{s}_E}^{Pin(2)}(q^j))$, so it remains to describe the map $\phi^*$. The existence of $\phi$ implies that $w_{b_+(X)}( H^+(X) ) = 0$ and hence $e_{\mathbb{Z}_2}(H^+(X))$ is divisible by $u$, by Equation (\ref{equ:eh+}). Hence, the forgetful map $H^*_{\mathbb{Z}_2}(B) \to H^*(B)$ factors through $H^*_{\mathbb{Z}_2}( S(H^+(X))) \to H^*(B)$, and it is clear that this gives the map $\phi^*$, because $\phi^* \circ \pi^* : H^*_{\mathbb{Z}_2}(B) \to H^*(B)$ is the forgetful map, where $\pi : S(H^+(X)) \to B$ is the projection. It then follows that $\phi^* (SW_{E,\mathfrak{s}_E}^{Pin(2)}(q^j))$ is given by extracting the $u^0$-term, giving
\[
SW^\phi_{E,\mathfrak{s}_E}( x^{2j} ) = \sum_{l=1}^{3} \left. \left( u^{l-3} w_{b_+(X)-l}(H^+(X)) s_{2(j+1+\sigma(X)/16),\mathbb{Z}_2}(D) \right) \right|_{u=0}.
\]
If $\sigma(X) < 0$, then Theorem \ref{thm:spinfam1} also gives $w_l(H^+(X)) = 0$ for $l > b_+(X)-3$, so the formula simplifies to $SW^\phi_{E,\mathfrak{s}_E}( x^{2j} ) = w_{b_+(X)-3}(H^+(X)) s_{2(j+1+\sigma(X)/16)}(D)$. If $b_1(X) = 0$, then $j$ acts trivially on $B$ and $D$ is a quaternionic virtual bundle. Using the inclusion $Pin(2) \subset Sp(1)$, it follows easily that
\[
e_{Pin(2)}( D )^{-1} = q^{d/2} + q^{d/2 - 1} s_2(D) + \cdots + s_{d/2}(D).
\]
Hence in this case the equivariant Segre classes of $D$ are just equal to the usual Segre classes. So it follows that  $SW^\phi_{E,\mathfrak{s}_E}( x^{2j} ) = w_{b_+(X)-3}(H^+(X)) s_{2(j+1+\sigma(X)/16)}(D)$.
\end{proof}



\bibliographystyle{amsplain}

\end{document}